\definecolor{lightyellow}{HTML}{fffcf1}
\newcommand{\reva}[1]{\color{black}#1 \color{black}}
\newtheorem{defn}{Definition}[section]
\newtheorem{remark}{Remark}[section]
\newtheorem{lemma}{Lemma}[section]
\newtheorem{thm}{Theorem}[section]
\newtheorem{prop}[thm]{Proposition}
\providecommand{\keywords}[1]
{
	\small	
	\textbf{{Keywords: }} #1
}
\title{High-order finite-difference entropy stable schemes for two-fluid relativistic plasma flow equations}
\date{}
\author[$\dagger$]{Deepak Bhoriya}
\author[$\dagger$]{Harish Kumar}
\author[$\ddagger$]{Praveen Chandrashekar}
\affil[$\dagger$]{Department of Mathematics, Indian Institute of Technology Delhi, New Delhi, India}
\affil[$\ddagger$]{Centre for Applicable Mathematics, Tata Institute of Fundamental Research, Bangalore, India}
\date{}
\begin{document}
	
	\date{}
	\maketitle
	\tableofcontents
	\begin{abstract}
		In this article, we propose high-order finite-difference entropy stable schemes for the two-fluid relativistic plasma flow equations. This is achieved by exploiting the structure of the equations, which consists of three independent flux components. The first two components describe the ion and electron flows, which are modeled using the relativistic hydrodynamics equation and the third component is Maxwell's equations. The coupling of the ion and electron flows and electromagnetic fields is via source terms only, but the source terms do not affect the entropy evolution. To design semi-discrete entropy stable schemes, we extend the entropy stable schemes for relativistic hydrodynamics in \cite{Bhoriya2020} to three dimensions. This is then coupled with entropy stable discretization of the Maxwell's equations. Finally, we use SSP-RK schemes to discretize in time. We also propose ARK-IMEX schemes to treat the stiff source terms; the resulting nonlinear set of algebraic equations is local (at each discretization point) and hence can be solved cheaply using the Newton's Method. The proposed schemes are then tested using various test problems to demonstrate their stability, accuracy and efficiency.
	\end{abstract}
	\keywords{Finite-difference entropy stable schemes, two-fluid relativistic plasma flows, balance laws, IMEX-schemes}
	
	\linenumbers
	\section{Introduction}
	
	Relativistic plasma flows play a central role in astrophysics. In these flows, fluids moving with the speed comparable to the speed of light interact with electric and magnetic fields. Some examples are pulsar winds, gamma-ray bursts, relativistic jets from active galactic nuclei, and quasars \cite{Gallant1994},\cite{Mochkovitch1995},\cite{Landau1987},\cite{Wardle1998}. To model relativistic plasma flows, often, equations of relativistic Magneto-hydrodynamics (RMHD) are used \cite{Komissarov1999},\cite{Balsara2001},\cite{DelZanna2003},\cite{Mignone2006},\cite{Komissarov2007},\cite{Balsara2016aderweno}. However, RMHD has limitations when modeling several astrophysical systems, like pulsar, gamma-ray burst, etc., see \cite{Amano2016} for a detailed discussion. Following the development of non-relativistic two-fluid plasma flow equations \cite{Shumlak2003},\cite{Hakim2006},\cite{Kumar2012},\cite{Abgrall2014},\cite{bond2016plasma},\cite{Li2020},\cite{Meena2019}, several authors have considered an analogous relativistic two-fluid plasma flow equations  \cite{Zenitani2009b},\cite{Zenitani2010},\cite{Amano2013},\cite{Barkov2014},\cite{Barkov2016}. 
	
	Two-fluid relativistic plasma flow equations describe each fluid (ion and electron) in plasma using the equations of special relativistic hydrodynamics (RHD). The fluid components are coupled via electromagnetic quantities using Lorentz force terms. Finally, the electric and magnetic fields are evolved using Maxwell's equations, where current and charges are described using fluid variables. The resulting set of PDEs is a Hyperbolic system of Balance laws with nonlinear flux and stiff source. Due to nonlinearity in the flux, the solutions will exhibit discontinuities \cite{LeVeque2002}. Hence, we need to consider the weak solutions, which can be characterized using the Rankine-Hugoniot condition across the discontinuities. As the weak solutions are non-unique, entropy inequality is imposed to avoid nonphysical solutions. More recently, even the entropy solutions for the systems have been shown to be non-unique \cite{Chiodaroli2015}, still, entropy stability is one of the few nonlinear stability estimates for the solutions, and hence it is desirable to have a numerical scheme which replicates this stability at the discrete level.
	
	As the flux and source terms are nonlinear, in general, it is not possible to find an analytical solution for the two-fluid relativistic plasma flow equations. Hence, the development of stable, efficient and accurate numerical methods is highly desirable. However, in addition to the well-known difficulties in designing stable numerical schemes for hyperbolic balance laws \cite{Godlewski1996},\cite{LeVeque2002}, there are several additional challenges in designing such a scheme for two-fluid relativistic plasma flow equations. First, we need to compute the primitive variables from the conservative variables as the analytical expressions are not available. Second, we must satisfy additional constraints for the magnetic and electric fields. Third, we have nonlinear stiff source terms, and a suitable time implicit treatment of such source is highly desirable. Furthermore, as discussed above, it is also desirable to have entropy stability of the scheme.
	
	Given these difficulties, there are few numerical methods for the two-fluid relativistic plasma flow model. In \cite{Zenitani2009a}, authors have simulated the relativistic magnetic reconnection problem by assuming the symmetric motions of the fluid species, which was further improved in \cite{Zenitani2009b} to include the independent motion of the two-fluid species under a guide field. In \cite{Amano2013}, authors used one-dimensional two-fluid relativistic plasma flow system to investigate the importance of the electric-field-dominated regime in the dynamics of a strongly magnetized plasma by simulating a standing shock front. In \cite{Barkov2014}, authors presented a third-order accurate explicit multidimensional numerical scheme for special relativistic two-fluid plasma flow equations. Here, the authors used the method of generalized Lagrange multiplier to keep the divergence error small. Divergence-free and constraint preserving multidimensional scheme has been designed in \cite{Amano2016,Balsara2016}. 
	
	In this article, we propose arbitrarily high-order finite-difference entropy stable schemes for the relativistic two-fluid plasma flow model. High order finite-difference entropy stable schemes for general hyperbolic systems were first proposed in \cite{Fjordholm2012, Fjordholm2013}. Recently, several authors have developed entropy stable finite-difference scheme for non-relativistic fluids and plasma flows \cite{Kumar2012},\cite{Chandrashekar2013},\cite{Sen2018},\cite{Duan2021}. For the relativistic fluids, Duan {\it et.al}~\cite{Duan2020}, Biswas \cite{Biswas2022}, and Bhoriya {\it et.al}~\cite{Bhoriya2020} have proposed entropy stable schemes.
	
	We proceed as follows.
	
	\begin{itemize}
		\item We first present the entropy framework for the two-fluid relativistic plasma flow equations. We also prove that the source terms do not affect entropy evolution. 
		
		\item To design entropy stable scheme, we exploit the structure of the flux. Similar to the case of \cite{Kumar2012}, the flux consists of three independent blocks. For the fluid parts, we extend the entropy conservative schemes of \cite{Bhoriya2020} to include a three-dimensional velocity field for each fluid component. This is then coupled with a Rusanov's solver based on higher-order Maxwell's discretization. The resultant scheme is then shown to be entropy conservative.
		
		\item To make the scheme entropy stable, following \cite{Fjordholm2012}, \cite{Kumar2012}, \cite{Bhoriya2020}, we design high-order diffusion operator, for each component. As the source does not affect entropy evolution, the semi-discrete scheme is then shown to be entropy stable. 
		
		\item As the source terms are stiff, in addition to the explicit Runge-Kutta method, we also propose IMEX schemes for the model, where flux contributions are treated explicitly, and the source is treated implicitly. The resulting set of nonlinear algebraic equations is local in each cell and are solved using Newton's method for nonlinear systems.
	\end{itemize}
	
	The rest of the article is organized as follows. In Section~\eqref{sec:equations}, we describe the two-fluid relativistic plasma flow equations. In Section~\eqref{sec:hyp_ent}, we discuss hyperbolicity and the entropy framework for the model. In Section~\eqref{sec:semi_disc}, we propose semi-discrete entropy stable numerical schemes for the model. This includes the derivation of the fluid entropy conservative numerical flux and the construction of the high-order entropy diffusion operators. We also describe the discretization of the Maxwell's equations. In Section~\eqref{sec:full_disc}, we present the fully discrete scheme using time explicit and IMEX schemes. In Section ~\eqref{sec:num_test}, we provide detailed numerical tests in one and two dimensions.
	
	\section{Two-fluid relativistic plasma flow equations} \label{sec:equations}
	
	The two-fluid relativistic plasma flows consist of electrons and ions, whose flows are governed by the equations of special relativistic flows. The macroscopic variables for the fluid parts are density, fluid velocity and pressure, denoted by $\rho_{\alpha}, \ \mathbf{u}_{\alpha}$, and $\ p_{\alpha}$, respectively with $\alpha \in \{i, \ e\}$. Here, the subscripts $\{i, \ e \}$ denote the variables corresponding to the ion and electron species. These are coupled to Maxwell's equations via source terms. The complete system is given as follows:
	\begin{subequations}\label{eq:TFRHD_sys}
		\begin{align}
			%
			\frac{\partial (\rho_i \Gamma_i )}{\partial t} + \nabla \cdot (\rho_i \Gamma_i \mathbf{u}_i) &= 0, \label{ion_density} \\ 
			%
			\frac{\partial ( \rho_i h_i \Gamma_i^2 \mathbf{u}_i)}{\partial t} + \nabla \cdot (\rho_i h_i \Gamma_i^2 \textbf{u}_i \mathbf{u}_i^\top + p_i \mathbf{I}) &= r_i \Gamma_i \rho_i (\mathbf{E}+\mathbf{u}_i \times \mathbf{B}), \label{ion_momentum}	\\ 
			%
			\frac{\partial \mathcal{E}_i}{\partial t} + \nabla \cdot ((\mathcal{E}_i+p_i)\mathbf{u}_i) &= r_i \Gamma_i \rho_i (\mathbf{u}_i \cdot \mathbf{E}) ,	\label{ion_energy} \\ 
			%
			\frac{\partial ( \rho_e \Gamma_e)}{\partial t} + \nabla \cdot (\rho_e \Gamma_e \mathbf{u}_e) &= 0, \label{elec_density} \\ 
			%
			\ \frac{\partial ( \rho_e h_e \Gamma_e^2 \mathbf{u}_e)}{\partial t} + \nabla \cdot (\rho_e h_e \Gamma_e^2 \textbf{u}_e \mathbf{u}_e^\top + p_e \mathbf{I}) &= r_e \Gamma_e \rho_e (\mathbf{E}+\mathbf{u}_e \times \mathbf{B}), \label{elec_momentum} \\ 
			%
			\frac{\partial \mathcal{E}_e}{\partial t} + \nabla \cdot ((\mathcal{E}_e+p_e) \mathbf{u}_e) &= r_e \Gamma_e \rho_e (\mathbf{u}_e \cdot \mathbf{E}) , \label{elec_energy} \\ 
			%
			\reva{\dfrac{\partial \mathbf{B}}{\partial t} +  \nabla \times \mathbf{E} +\kappa \nabla \psi &= 0, \label{max1}} \\
			%
			\reva{	\dfrac{\partial \mathbf{E}}{\partial t} - \nabla \times \mathbf{B} +\chi \nabla \phi &= -\mathbf{j} , \label{max2}}\\
			\reva{\dfrac{\partial \phi }{\partial t}+\chi \nabla \cdot \mathbf{E} &= \chi\rho_c,} \label{max3}\\
			\reva{\dfrac{\partial \psi }{\partial t}+\kappa \nabla \cdot \mathbf{B} &= 0\label{max4},}
		\end{align}
	\end{subequations}
	where, $\mathcal{E}_\alpha=\rho_\alpha h_\alpha \Gamma_\alpha^2 - p_\alpha$ are the energy densities, $h_\alpha$ are the specific enthalpy, $\Gamma_\alpha$ are the Lorentz factors, given by $\Gamma_\alpha = \dfrac{1}{\sqrt{1 - \mathbf{u}_\alpha^2}}$, $\mathbf{B}=(B_x,B_y,B_z)$ is the magnetic field vector, and $\mathbf{E}=(E_x,E_y,E_z)$ is the electric field vector. We have assumed the speed of light to be unity. We use notations, $q_\alpha$ for the particle charge and $m_\alpha$ for the particle mass, while the charge to mass ratios are denoted by $r_{\alpha}=\dfrac{q_\alpha}{m_\alpha}$, $\alpha \in \{i,\ e\}.$ The total charge density $\rho_c$ and the current density vector $\mathbf{j}=(j_x,j_y,j_z)$ are given by
	\[
	\rho_c = r_i \rho_i \Gamma_i  + r_e \rho_e \Gamma_e, 
	\qquad
	\mathbf{j} = r_i \rho_i \Gamma_i \mathbf {u}_i + r_e \rho_e \Gamma_e \mathbf{u}_e.
	\]
	The above system of equations is closed using the equation of state, 
	$ h_\alpha=h_\alpha(p_\alpha,\rho_\alpha),$
	where we use the ideal equation of state for $h_\alpha$, which is given by
	\begin{equation*}
		h_\alpha = 1+ \frac{\gamma_\alpha}{\gamma_\alpha - 1} \frac{p_\alpha}{\rho_\alpha} \label{h},
	\end{equation*}
	where, $\gamma_\alpha=c_{p_\alpha}/c_{V_\alpha}$ is the ratio of specific heats. Accordingly, the polytropic index $n_\alpha$, and the sound speed $c_\alpha = c_{s_\alpha}$ can be written as
	\begin{equation*}
		n_\alpha=k_\alpha-1 \qquad \text{  and  } \qquad c_\alpha^2=\frac{k_\alpha p_\alpha }{n_\alpha \rho_\alpha h_\alpha}, 
	\end{equation*}
	where $k_\alpha = \dfrac{\gamma_\alpha}{\gamma_\alpha-1}$ are constants. 
	
	Equations \eqref{ion_density} and \eqref{elec_density} are mass conservation laws for ion and electrons. Similarly, Equations \eqref{ion_momentum} and \eqref{elec_momentum} represent momentum balance, where source terms are due to Lorentz force acting on the fluid due to the electromagnetic variables. The energy conservation equations for ion and electron fluids are \eqref{ion_energy} and \eqref{elec_energy}, respectively. Here, source terms are kinetic energy contributions due to electric and magnetic fields. 
	
	\reva{To deal with the electromagnetic constraints $\nabla\cdot\mathbf B=0$ and $\nabla\cdot\mathbf E = \rho_c$, in \eqref{max1}-\eqref{max4}, we consider the perfectly hyperbolic formulation of the Maxwell's equations \cite{Munz2000}. Here, \eqref{max1} and \eqref{max2} are the evolution equations for the magnetic and electric fields, respectively. Equations \eqref{max3} and \eqref{max4} are equations for the correction potentials $\phi$ and $\chi$, and $\chi$ and $\kappa$ are penalizing speeds.} 
	\section{Analysis of continuous problem} \label{sec:hyp_ent}
	%
	Let us introduce the notations, $D_\alpha = \Gamma_\alpha \rho_\alpha$ and $\mathbf{M}_\alpha=(M_{x_\alpha},M_{y_\alpha},M_{z_\alpha})=\rho_\alpha h_\alpha \Gamma^2_\alpha \mathbf{u}_\alpha$ for $\alpha\in\{i, e\}$ . The vector of conservative variables $\mathbf{U}$ can be written as, 
	\[
	\mathbf{U}=(\mathbf{U}_i^\top, \mathbf{U}_e^\top, \mathbf{U}_m^\top)^\top, 	
	\]
	where $\mathbf{U}_i=(D_i, \mathbf{M}_i, \mathcal{E}_i)^\top$ are ion fluid variables, $\mathbf{U}_e=(D_e, \mathbf{M}_e, \mathcal{E}_e)^\top$ are electron fluid variables, and $\mathbf{U}_m = (\mathbf{B}, \ \mathbf{E}, \ \phi, \ \psi)^\top$ are Maxwell's variables. In two dimensions, let us also denote $x$- and $y$-directional fluxes as $\mathbf{f}^x$ and $\mathbf{f}^y$, respectively. From \eqref{eq:TFRHD_sys}, expressions for the fluxes can be written as,
	\[
	\mathbf{f}^x =\begin{pmatrix}
		\mathbf{f}^x_i(\mathbf{U}_i)\\
		\mathbf{f}^x_e(\mathbf{U}_e)\\
		\mathbf{f}^x_m(\mathbf{U}_m)	
	\end{pmatrix}\qquad
	\text{   and   }\qquad
	\mathbf{f}^y =\begin{pmatrix}
		\mathbf{f}^y_i(\mathbf{U}_i)\\
		\mathbf{f}^y_e(\mathbf{U}_e)\\
		\mathbf{f}^y_m(\mathbf{U}_m)	
	\end{pmatrix},
	\]
	where
	\begin{align}
		\mathbf{f}_\alpha^x = 
		\begin{pmatrix}
			D_\alpha u_{x_\alpha}\\
			M_{x_\alpha} u_{x_\alpha} + p_\alpha\\
			M_{y_\alpha} u_{x_\alpha}\\
			M_{z_\alpha} u_{x_\alpha}\\
			M_{x_\alpha}
		\end{pmatrix},
		\qquad
		\mathbf{f}_\alpha^y = 
		\begin{pmatrix}
			D_\alpha u_{y_\alpha}\\
			M_{x_\alpha} u_{y_\alpha}\\
			M_{y_\alpha} u_{y_\alpha}+p_\alpha\\
			M_{z_\alpha} u_{y_\alpha}\\
			M_{y_\alpha}  
		\end{pmatrix}, \qquad
		\mathbf{f}_m^x = 
			\begin{pmatrix}
				\kappa \psi\\
				- E_z\\
				E_y\\
				\chi \phi\\
				B_z\\
				- B_y\\
				\chi E_x\\
				\kappa B_x
			\end{pmatrix},
			\qquad
			\mathbf{f}_m^y = 
			\begin{pmatrix}
				E_z\\
				\kappa \psi\\
				- E_x\\
				- B_z\\
				\chi \phi\\  
				B_x\\
				\chi E_y\\
				\kappa B_y
		\end{pmatrix}
	\end{align}
	We note that the flux for the whole system contains three independent parts, two fluid parts modeled using the equations of special relativistic flows, and the third part is linear Maxwell's flux. These three parts are coupled via source terms, which are given as, 
	\[
	\mathbf{s}= \begin{pmatrix}
		\mathbf{s}_i(\mathbf{U}_i,\mathbf{U}_m)\\
		\mathbf{s}_e (\mathbf{U}_e,\mathbf{U}_m)\\
		\mathbf{s}_m(\mathbf{U}_i,\mathbf{U}_e)	
	\end{pmatrix},
	\]
	where,
	\begin{align*}
		\mathbf{s}_\alpha =
		\begin{pmatrix}
			0\\
			r_\alpha D_\alpha(E_x + u_{y_\alpha} B_z - u_{z_\alpha}B_y)\\
			r_\alpha D_\alpha(E_y + u_{z_\alpha} B_x - u_{x_\alpha}B_z)\\
			r_\alpha D_\alpha(E_z + u_{x_\alpha} B_y - u_{y_\alpha}B_x) \\
			\ r_\alpha D_\alpha(u_{x_\alpha}E_x + u_{y_\alpha}E_y + u_{z_\alpha}E_z)
		\end{pmatrix}\qquad
		\text{   and   }\qquad
			\mathbf{s}_m = 
			\begin{pmatrix}
				0\\
				0\\ 
				0\\
				-j_x\\
				-j_y\\ 
				-j_z\\
				\chi \rho_c\\
				0
			\end{pmatrix},
	\end{align*}
	for $\alpha\in\{i,e\}$. Using the above notations, the system~\eqref{eq:TFRHD_sys}, for the two-dimensional case can be written in the conservation form as follows:
	\begin{equation}
		\frac{\partial \mathbf{U}}{\partial t}+\frac{\partial \mathbf{f}^x}{\partial x} +\frac{\partial \mathbf{f}^y}{\partial y}= \mathbf{s} \label{conservedform}.
	\end{equation}
	Let us also introduce the vector of primitive variables  $\mathbf{W}=(\mathbf{W}_i^\top, \mathbf{W}_e^\top, \mathbf{W}_m^\top)^\top$, where $\mathbf{W}_\alpha = (\rho_\alpha, \mathbf{u}_\alpha, {p}_\alpha)^\top$ and $\mathbf{W}_m=(\mathbf{B,E},\phi,\psi)^\top$. We follow the procedure given in~\cite{Schneider1993,Bhoriya2020} to extract the primitive variables from the conservative variables. We define the set of admissible solution space 
	\begin{equation*}
		\Omega = \{ \mathbf{U} \in\mathbb{R}^{18}: \ \rho_i>0, \rho_e>0, \ p_i>0,p_e>0, \ |\mathbf{u}_i|<1, |\mathbf{u}_e| < 1 \}.
	\end{equation*}
	The eigenvalues of the system in $x$-direction are,
	\begin{align*} 
		\Lambda^x 
		= 
		\biggl\{
		& 	\frac{(1-c_i^2)u_{x_i}-(c_i/\Gamma_i) \sqrt{Q_i^x}}{1-c_i^2 |\mathbf{u}_i|^2},\
		u_{x_i}, \ u_{x_i}, \ u_{x_i},
		\frac{(1-c_i^2)u_{x_i}+(c_i/\Gamma_i) \sqrt{Q_i^x}}{1-c_i^2 |\mathbf{u}_i|^2}, 
		\nonumber
		\\  
		& 	\frac{(1-c_e^2)u_{x_e}-(c_e/\Gamma_e) \sqrt{Q_e^x}}{1-c_e^2 |\mathbf{u}_e|^2},\
		u_{x_e}, \ u_{x_e}, \ u_{x_e},
		\frac{(1-c_e^2)u_{x_e}+(c_e/\Gamma_e) \sqrt{Q_e^x}}{1-c_e^2 |\mathbf{u}_e|^2}, 
		\nonumber
		\\
		&	-\chi, -\kappa, -1, \ -1, \	1, \ 1, \ \kappa, \ \chi
			\biggr\},
	\end{align*}
	where, $Q_\alpha^x=1-u_{x_{\alpha}}^2-c_\alpha^2 (u_{y_\alpha}^2+u_{z_\alpha}^2), \ \alpha \in \{ i, \ e \}$.  For $\mathbf{U} \in \Omega$ and $\gamma_\alpha \in (1,2]$ we have $c_\alpha<1$ which implies that $Q_\alpha^x > 0$, hence all the eigenvalues are real. The complete set of right eigenvectors for the Jacobian matrix $\dfrac{\partial \mathbf{f}^x}{\partial \mathbf{U}}$ is given in Appendix~\eqref{ap:right_eig}. The Appendix also contains the eigenvalues and right eigenvectors for the jacobian matrix $\dfrac{\partial \mathbf{f}^y}{\partial \mathbf{U}}$. Consequently, we can state the following Lemma.
	\begin{lemma}
		The system~\eqref{conservedform} is hyperbolic for the states $\mathbf{U} \in \Omega$, with real eigenvalues and a complete set of eigenvectors. \label{hyperboliclemma}
	\end{lemma}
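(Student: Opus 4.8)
The plan is to exploit the block structure of the flux Jacobian already visible in the excerpt. Because $\mathbf f^x$ (and $\mathbf f^y$) splits into three groups depending only on $\mathbf U_i$, $\mathbf U_e$, and $\mathbf U_m$ respectively, the Jacobian $\partial\mathbf f^x/\partial\mathbf U$ is block diagonal, $\operatorname{diag}\bigl(\partial\mathbf f^x_i/\partial\mathbf U_i,\ \partial\mathbf f^x_e/\partial\mathbf U_e,\ \partial\mathbf f^x_m/\partial\mathbf U_m\bigr)$, and similarly in the $y$-direction. The source $\mathbf s$ is a zeroth-order term, so it does not enter the principal symbol and is irrelevant to hyperbolicity. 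Hence it suffices to analyze each diagonal block separately and then take the union of the spectra and the direct sum of the eigenspaces.

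For each fluid block $\alpha\in\{i,e\}$ the system is special relativistic hydrodynamics with a three-dimensional velocity field, whose characteristic structure in a fixed direction is classical. I would verify that the five eigenvalues listed in $\Lambda^x$ (two acoustic branches and the contact value $u_{x_\alpha}$ of algebraic multiplicity three) are exactly the roots of the characteristic polynomial of $\partial\mathbf f^x_\alpha/\partial\mathbf U_\alpha$. Reality of the acoustic eigenvalues reduces to the sign of $Q_\alpha^x=1-u_{x_\alpha}^2-c_\alpha^2(u_{y_\alpha}^2+u_{z_\alpha}^2)$; for $\mathbf U\in\Omega$ one has $|\mathbf u_\alpha|<1$, and $\gamma_\alpha\in(1,2]$ forces $c_\alpha^2<1$, so $Q_\alpha^x\ge 1-|\mathbf u_\alpha|^2>0$, exactly as already noted before the Lemma. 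For completeness one must exhibit five linearly independent right eigenvectors; the explicit vectors are collected in Appendix~\eqref{ap:right_eig}, so here I would simply assemble them and check that the corresponding $5\times5$ matrix is nonsingular on $\Omega$.

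For the Maxwell block, $\partial\mathbf f^x_m/\partial\mathbf U_m$ is a constant matrix since $\mathbf f^x_m$ is linear in $\mathbf U_m$, so its hyperbolicity is a single linear-algebra fact: the spectrum is $\{\pm1,\pm\chi,\pm\kappa\}$ and the matrix is diagonalizable because the perfectly hyperbolic Maxwell formulation of \cite{Munz2000} is symmetrizable (symmetric hyperbolic in these variables). I would record the eigenvectors, or invoke the symmetrizer, to conclude a full basis of eigenvectors for this block. Note that even if a penalizing speed happens to coincide with an acoustic or light speed, diagonalizability is what matters, not distinctness of eigenvalues.

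Finally I would combine the three blocks: the eigenvalues of $\partial\mathbf f^x/\partial\mathbf U$ form the union $\Lambda^x$, all real; a basis of $\mathbb R^{18}$ of right eigenvectors is obtained by padding each block's eigenvectors with zeros on the other two blocks, and independence across blocks is automatic since the blocks occupy disjoint coordinates. The identical argument applies to $\partial\mathbf f^y/\partial\mathbf U$ using the $y$-direction data in the Appendix. Since this holds for every $\mathbf U\in\Omega$, the system \eqref{conservedform} is hyperbolic on $\Omega$. The main obstacle is the bookkeeping for the triple contact eigenvalue $u_{x_\alpha}$ of the fluid blocks --- verifying that its three (entropy- and shear-type) eigenvectors are genuinely independent and that the acoustic eigenvectors do not degenerate --- but since $\Omega$ is open and $Q_\alpha^x>0$ strictly there, the constructed basis stays nonsingular throughout $\Omega$.
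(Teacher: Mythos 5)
Your proposal is correct and follows essentially the same route as the paper: the paper also relies on the block structure of $\partial\mathbf f^x/\partial\mathbf U$, deduces reality of the acoustic eigenvalues from $Q_\alpha^x>0$ on $\Omega$ with $\gamma_\alpha\in(1,2]$, and establishes completeness by exhibiting the explicit right eigenvectors in Appendix~\eqref{ap:right_eig}, which are precisely the per-block eigenvectors padded with zeros as you describe. No substantive difference in approach.
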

	We now introduce entropy functions for the two-fluid relativistic plasma flow equations. The entropy functions $\mathcal{U}_\alpha$ and associated entropy fluxes $\mathcal{F}^d_\alpha$ for the fluid part of the system~\eqref{conservedform} are given by,
	\begin{equation*}
		\mathcal{U}_\alpha=-\frac{\rho_\alpha \Gamma_\alpha s_\alpha}{\gamma_\alpha -1} \qquad \text{ and } \qquad \mathcal{F}^d_\alpha=-\frac{\rho_\alpha \Gamma_\alpha s_\alpha u_{d_\alpha}}{\gamma_\alpha -1}, \qquad \alpha \in \{i,e\}, \quad d=x,y. 
	\end{equation*} 
	Here, $s_\alpha = \ln(p_\alpha \rho_\alpha^{-\gamma_\alpha})$. The pair $(\mathcal{U}_\alpha,\mathcal{F}^d_\alpha)$  is called an {\em entropy-entropy flux pair}. For simplicity, we only consider the one-dimensional case, i.e., 
	\begin{equation}
		\frac{\partial \mathbf{U}}{\partial t}+\frac{\partial \mathbf{f}^x}{\partial x}= \mathbf{s} \label{conservedform_1d}.
	\end{equation}
	The extension of entropy framework to higher dimensions is straightforward. We will now prove the following result:
	\begin{prop} \label{prop:entropy}
		The smooth solutions of \eqref{conservedform_1d} satisfy the entropy equality, 
		\begin{equation*}
			\partial_t s_\alpha+u_{x_\alpha} \partial_x s_\alpha=0, \ \ \alpha \in \{i,e\}. 
		\end{equation*}
		As a consequence, every smooth function $H(s_\alpha)$ of $s_\alpha$ satisfies
		\begin{equation}
			\partial_t (\rho_\alpha \Gamma_\alpha H(s_\alpha))+ \partial_x (\rho_\alpha \Gamma_\alpha u_{x_\alpha} H(s_\alpha))=0. \label{h_s}
		\end{equation} 
		In particular, we have the following entropy equality for the smooth solutions,
		\begin{equation}
			\partial_t \mathcal{U}_\alpha+ \partial_x \mathcal{F}^x_\alpha=0. \label{entropy_pair_equality}
		\end{equation}
	\end{prop}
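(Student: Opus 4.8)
The plan is to exploit the fact that, for a single charged fluid species, the Lorentz source is orthogonal to the fluid velocity, so it drops out of the entropy balance and the problem collapses to the source-free relativistic-hydrodynamics computation of \cite{Bhoriya2020}. Fix $\alpha\in\{i,e\}$, drop the subscript $\alpha$ throughout, and write $D_t:=\partial_t+u_x\partial_x$ for the material derivative along the $\alpha$-streamline. The key structural observation is that, in the derivation of the entropy equation, the source can enter only through the combination (energy equation)$\,-\,\mathbf u\cdot$(momentum equation).

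First I would form exactly that combination from the $\alpha$-fluid part of \eqref{conservedform_1d}. On the right-hand side it produces
\[
  r D(\mathbf u\cdot\mathbf E)\;-\;\mathbf u\cdot\bigl(rD(\mathbf E+\mathbf u\times\mathbf B)\bigr)\;=\;-\,rD\,\mathbf u\cdot(\mathbf u\times\mathbf B)\;=\;0,
\]
by the scalar triple product identity; this is the only place the stiff source could have affected the entropy evolution, and it vanishes. (In covariant form this is just $u_\nu F^{\nu\mu}J_\mu=0$ for the single-species four-current $J^\mu=r\rho\,u^\mu$, by antisymmetry of the field tensor $F$.) Hence the resulting scalar balance is exactly the one obtained for the homogeneous system.

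Next I would reduce that source-free scalar balance to the stated entropy transport, following \cite{Bhoriya2020}: using $(\mathcal E+p)u_x=M_x$, the identity $\Gamma^2(1-|\mathbf u|^2)=1$ (so that $\mathcal E-\mathbf u\cdot\mathbf M=\rho h-p$), the mass conservation law of \eqref{eq:TFRHD_sys}, and the elementary identities expressing $\partial_t\Gamma$ and $\partial_x\Gamma$ in terms of $\mathbf u$ and its derivatives, one eliminates all derivatives of $\Gamma$ and $\mathbf u$ and is left with $\rho\,D_t h=D_t p$; inserting the ideal equation of state $h=1+\frac{\gamma}{\gamma-1}\frac{p}{\rho}$ and rearranging gives $D_t p=\frac{\gamma p}{\rho}D_t\rho$, that is, $\partial_t s+u_x\partial_x s=D_t(\ln p-\gamma\ln\rho)=0$, which is the first assertion.

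The remaining two assertions are then purely algebraic consequences. Multiplying $\partial_t s+u_x\partial_x s=0$ by $\rho\Gamma H'(s)$ and adding $H(s)$ times the mass law gives
\[
  \partial_t\bigl(\rho\Gamma H(s)\bigr)+\partial_x\bigl(\rho\Gamma u_x H(s)\bigr)
  = H(s)\bigl[\partial_t(\rho\Gamma)+\partial_x(\rho\Gamma u_x)\bigr]
  + \rho\Gamma H'(s)\bigl[\partial_t s+u_x\partial_x s\bigr]=0,
\]
which is \eqref{h_s}; taking $H(s)=-s/(\gamma_\alpha-1)$ yields $\rho_\alpha\Gamma_\alpha H(s_\alpha)=\mathcal U_\alpha$ and $\rho_\alpha\Gamma_\alpha u_{x_\alpha}H(s_\alpha)=\mathcal F^x_\alpha$, hence \eqref{entropy_pair_equality}. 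The main obstacle is the hydrodynamic reduction in the third step — systematically removing the (non-conserved) velocity and Lorentz-factor derivatives with the help of the mass law, since $\Gamma$ couples all three components of $\mathbf u$ — but this is precisely the calculation already carried out in \cite{Bhoriya2020}, so the only genuinely new ingredient is the one-line source cancellation $\mathbf u\cdot(\mathbf u\times\mathbf B)=0$ above.
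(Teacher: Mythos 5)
Your proposal is correct and follows essentially the same route as the paper's Appendix~A proof: the decisive step in both is the cancellation $\mathbf{u}_\alpha\cdot\mathbf{s}_{\mathbf{M}_\alpha}-\mathbf{s}_{\mathcal{E}_\alpha}=r_\alpha\Gamma_\alpha\rho_\alpha\,\mathbf{u}_\alpha\cdot(\mathbf{u}_\alpha\times\mathbf{B})=0$, after which the source-free relativistic-hydrodynamics reduction of \cite{Bhoriya2020} yields $D_t s_\alpha=0$, and \eqref{h_s}--\eqref{entropy_pair_equality} follow from the mass law and $H(s_\alpha)=-s_\alpha/(\gamma_\alpha-1)$. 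The only difference is organizational: you form the combination (energy)$-\mathbf{u}\cdot$(momentum) directly, whereas the paper arrives at the same combination through the two intermediate identities of Lemmas~A.1 and~A.2.
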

	The proof is given in the Appendix~\eqref{ap:entropy}.
	\begin{remark}
		The entropy equality~\eqref{entropy_pair_equality} in Proposition~\eqref{prop:entropy} is replaced by the entropy inequality
		\begin{equation}
			\partial_t \mathcal{U}_\alpha + \partial_x \mathcal{F}^x_\alpha \le 0, \label{ent_inq}
		\end{equation}   
		in the sense of distributions for non-smooth solutions.
	\end{remark}
	In the above analysis, we also note that the Maxwell's flux does not affect the fluid entropies. This fact will be exploited to design semi-discrete high-order numerical schemes which will satisfy the entropy inequality~\eqref{ent_inq}.
	
	\section{Semi-discrete entropy stable schemes} \label{sec:semi_disc}
	
	In this section, we will design semi-discrete entropy stable numerical schemes for the two-dimensional two-fluid relativistic plasma flow Eqns. \eqref{conservedform}. The extension to three dimensions is fairly easy. Let us consider the domain $D=I_x \times I_y$, where $I_x=(x_{min},x_{max})$ and $I_y=(y_{min}, y_{max})$ which is discretized using a uniform mesh with cells of size $\Delta x \times \Delta y$. We define $x_i = x_{min} + i \Delta x$ for $0 \le i \le N_x$ and $y_j = y_{min} + j \Delta y$ for $0 \le j \le N_y$. The cell $(i,j)$ is given by $I_{i,j} = [x_{i-1/2}, x_{i+1/2}] \times [y_{j-1/2}, y_{j+1/2}]$, where $x_{i+1/2}= \frac{x_i+x_{i+1}}{2}$ and $y_{j+1/2}= \frac{y_j+y_{j+1}}{2}$.\\
	
	\par The semi-discrete finite difference scheme for the evolution of solution $\mathbf{U}_{i,j}$ at $(x_i,y_j)$ of system \eqref{conservedform} is given by,
	\begin{equation}
		\frac{d}{dt}\mathbf{U}_{i,j}(t)
		+
		\frac{1}{\Delta x}  \left(\mathbf{F}_{i+\frac{1}{2},j}^x(t)-\mathbf{F}_{i-\frac{1}{2},j}^x(t)\right)
		+
		\frac{1}{\Delta y}  \left(\mathbf{F}_{i,j+\frac{1}{2}}^y(t)-\mathbf{F}_{i,j-\frac{1}{2}}^y(t)\right) = \mathbf{s}(\mathbf{U}_{i,j}(t)), \label{scheme}
	\end{equation} 
	where $\mathbf{F}_{i+\frac{1}{2},j}^x$, $\mathbf{F}_{i,j+\frac{1}{2}}^y$ are the numerical fluxes consistent with the continuous fluxes $\mathbf{f}^x$, $\mathbf{f}^y$, respectively. 
	\subsection{Entropy stable numerical schemes for the fluid equations}
	To simplify the discussion, we will first present the discretization of the fluid part, i.e., we consider the following differential form:
	\begin{equation}
		\frac{\partial \mathbf{U}_\alpha}{\partial t}+\frac{\partial \mathbf{f}^x_\alpha}{\partial x} +\frac{\partial \mathbf{f}^y_\alpha}{\partial y}= \mathbf{s}_\alpha(\mathbf{U}_{\alpha},\mathbf{U}_m) \label{eq:fluidpart_2d},
	\end{equation}
	for $\alpha\in\{i, e\}$. Let us now define:
	\begin{defn}
		The numerical scheme
		\begin{eqnarray}
			\frac{d}{dt}\mathbf{U}_{\alpha,i,j}(t)
			+
			\frac{1}{\Delta x}  \left(\mathbf{F}_{\alpha,i+\frac{1}{2},j}^x(t)-\mathbf{F}_{\alpha,i-\frac{1}{2},j}^x(t)\right)
			+
			\frac{1}{\Delta y}  \left(\mathbf{F}_{\alpha,i,j+\frac{1}{2}}^y(t)-\mathbf{F}_{\alpha,i,j-\frac{1}{2}}^y(t)\right) \nonumber \\= \mathbf{s}_\alpha(\mathbf{U}_{\alpha,i,j}(t),\mathbf{U}_{m,i,j}(t)), \label{eq:scheme_fluid}
		\end{eqnarray} 
		is said to be entropy conservative if the following entropy equality is satisfied
		\begin{equation}
			\frac{d}{dt}  \mathcal{U}_\alpha(\mathbf{U}_{ij})  +\frac{1}{\Delta x} \left( \hat{\mathcal{F}}_{\alpha,i+\frac{1}{2},j}^x - \hat{\mathcal{F}}_{\alpha,i-\frac{1}{2},j}^x\right)+\frac{1}{\Delta y}\left( \hat{\mathcal{F}}_{\alpha,i,j+\frac{1}{2}}^y - \hat{\mathcal{F}}_{\alpha,i,j-\frac{1}{2}}^y\right) = 0, \ \ \ \alpha \in \{i,e\}, \label{eq:semi_dis_fluid_entropy_equality}
		\end{equation}
		where $\hat{\mathcal{F}}_{\alpha,i+\frac{1}{2},j}^x$,  $\hat{\mathcal{F}}_{\alpha,i,j+\frac{1}{2}}^y$ are some numerical entropy flux functions consistent with the entropy fluxes ${\mathcal{F}}_\alpha^x$, ${\mathcal{F}}_\alpha^y$, respectively. Here $\mathbf{F}_{\alpha,i+\frac{1}{2},j}^x$,  $\mathbf{F}_{\alpha,i,j+\frac{1}{2}}^y$ are the numerical fluxes consistent with $\mathbf{f}^x$, $\mathbf{f}^y$, respectively.
		
		The scheme is said to be entropy stable if the following entropy inequality is satisfied:
		\begin{equation}
			\frac{d}{dt}  \mathcal{U}_\alpha(\mathbf{U}_{\alpha,i,j})  +\frac{1}{\Delta x} \left( \hat{\mathcal{F}}_{\alpha,i+\frac{1}{2},j}^x - \hat{\mathcal{F}}_{\alpha,i-\frac{1}{2},j}^x\right)+\frac{1}{\Delta y}\left( \hat{\mathcal{F}}_{\alpha,i,j+\frac{1}{2}}^y - \hat{\mathcal{F}}_{\alpha,i,j-\frac{1}{2}}^y\right) \le 0 \ \ \ \alpha \in \{i,e\}, \label{eq:semi_dis_fluid_entropy_inequality}
		\end{equation}
	\end{defn}
	
	To obtain the entropy stable schemes, we will first construct entropy conservative schemes and then add dissipative fluxes to obtain entropy inequality.
	\subsubsection{High-order entropy conservative schemes for fluid equations}
	Let us introduce {\em entropy variables}, 
	$$
	\mathbf{V}_\alpha(\mathbf{U}_\alpha)=\frac{\partial  \mathcal{U}_\alpha}{\partial \mathbf{U}_\alpha}$$
	and
	{\em entropy potentials}, 
	$$ \psi_\alpha^x(\mathbf{U}_\alpha)=\mathbf{V}_\alpha^\top(\mathbf{U}_\alpha) \cdot \mathbf{f}_\alpha^x(\mathbf{U}_\alpha)-\mathcal{F}_\alpha^x(\mathbf{U}_\alpha),\qquad  \psi_\alpha^y(\mathbf{U}_\alpha)=\mathbf{V}_\alpha^\top(\mathbf{U}_\alpha) \cdot \mathbf{f}_\alpha^y(\mathbf{U}_\alpha)-\mathcal{F}_\alpha^y(\mathbf{U}_\alpha)
	$$
	for $\alpha \in \{i,e\}$. A simple calculation results in the following expressions
	\begin{equation}
		\mathbf{V}_\alpha=\begin{pmatrix}
			\dfrac{\gamma_\alpha- s_\alpha}{\gamma_\alpha -1} +{\beta_\alpha} \\
			{u_{x_\alpha} \Gamma_\alpha \beta_\alpha } \\ 
			{u_{y_\alpha} \Gamma_\alpha \beta_\alpha } \\
			{u_{z_\alpha} \Gamma_\alpha \beta_\alpha } \\
			-{\Gamma_\alpha \beta_\alpha}
		\end{pmatrix}, \qquad
		\text{with } \qquad
		\beta_\alpha = \dfrac{\rho_\alpha}{p_\alpha},
		\label{eq:ent_var} 
	\end{equation}
	and
	\begin{equation}
		{\psi}_\alpha^x=\rho_\alpha \Gamma_\alpha u_{x_\alpha},\qquad {\psi}_\alpha^y=\rho_\alpha \Gamma_\alpha u_{y_\alpha}.
		\label{eq:ent_pot}
	\end{equation}
	We also introduce the following notations for jumps and averaging operations over the cell interfaces
	\begin{equation*}
		[\![a]\!]_{i+\frac{1}{2},j}=a_{i+1,j}-a_{i,j},  \qquad \bar{a}_{i+\frac{1}{2},j}=\frac{1}{2}(a_{i+1,j}+a_{i,j}),
	\end{equation*}
	\begin{equation*}
		[\![a]\!]_{i,j+\frac{1}{2}}=a_{i,j+1}-a_{i,j}, \qquad \bar{a}_{i,j+\frac{1}{2}}=\frac{1}{2}(a_{i,j+1}+a_{i,j}).
	\end{equation*}
	Let us first consider the homogeneous fluid part, i.e., we consider
	\begin{equation}
		\frac{\partial \mathbf{U}_\alpha}{\partial t}+\frac{\partial \mathbf{f}^x_\alpha}{\partial x} +\frac{\partial \mathbf{f}^y_\alpha}{\partial y}= 0. \label{eq:fluidpart_2d_homo}
	\end{equation}
	and the corresponding semi-discrete scheme
	\begin{eqnarray}
		\frac{d}{dt}\mathbf{U}_{\alpha,i,j}(t)
		+
		\frac{1}{\Delta x}  \left(\mathbf{F}_{\alpha,i+\frac{1}{2},j}^x(t)-\mathbf{F}_{\alpha,i-\frac{1}{2},j}^x(t)\right)
		+
		\frac{1}{\Delta y}  \left(\mathbf{F}_{\alpha,i,j+\frac{1}{2}}^y(t)-\mathbf{F}_{\alpha,i,j-\frac{1}{2}}^y(t)\right) =0 \label{eq:scheme_fluid_homo}
	\end{eqnarray} 
	Then we have the following result from  \cite{Tadmor1987}.
	
	\begin{thm}[{{\em Tadmor}} \cite{Tadmor1987}] 
		Let $\tilde{\mathbf{F}}^x_\alpha$ and $\tilde{\mathbf{F}}^y_\alpha$ be the consistent numerical fluxes which satisfy
		\begin{equation}
			[\![\mathbf{V}_\alpha ]\!]^{\top}_{i+\frac{1}{2},j}\,\tilde{\mathbf{F}}_{\alpha,i+\frac{1}{2},j}^x=[\![\psi^x_\alpha]\!]_{i+\frac{1}{2},j}, \ \quad \ 
			[\![\mathbf{V}_\alpha ]\!]^{\top}_{i,j+\frac{1}{2}}\,\tilde{\mathbf{F}}_{\alpha,i,j+\frac{1}{2}}^y=[\![\psi^y_\alpha]\!]_{i,j+\frac{1}{2}},
			\label{tadmor_thm}
		\end{equation}
		then the scheme~\eqref{eq:scheme_fluid_homo} with the numerical fluxes $\tilde{\mathbf{F}}^x_\alpha$ and $\tilde{\mathbf{F}}^y_\alpha$ is second order accurate and entropy conservative, i.e., the entropy equality
		\begin{equation*}
			\frac{d}{dt}  \mathcal{U}_\alpha(\mathbf{U}_{ij})  +\frac{1}{\Delta x} \left( \tilde{\mathcal{F}}_{\alpha,i+\frac{1}{2},j}^x - \tilde{\mathcal{F}}_{\alpha,i-\frac{1}{2},j}^x\right)+\frac{1}{\Delta y}\left( \tilde{\mathcal{F}}_{\alpha,i,j+\frac{1}{2}}^y - \tilde{\mathcal{F}}_{\alpha,i,j-\frac{1}{2}}^y\right) = 0,
		\end{equation*}
		is satisfied with the consistent entropy numerical fluxes,
		\begin{equation*}
			\tilde{\mathcal{F}}_{\alpha,i+\frac{1}{2},j}^x=\bar{\mathbf{V}}_{\alpha,i+\frac{1}{2},j}^{\top}\tilde{\mathbf{F}}_{\alpha,i+\frac{1}{2},j}^x-\bar{\psi}_{\alpha,i+\frac{1}{2},j}^x, 
			\qquad \text{ and } \qquad 
			\tilde{\mathcal{F}}_{\alpha,i,j+\frac{1}{2}}^y=\bar{\mathbf{V}}_{\alpha,i,j+\frac{1}{2}}^{\top}\tilde{\mathbf{F}}_{\alpha,i,j+\frac{1}{2}}^y-\bar{\psi}_{\alpha,i,j+\frac{1}{2}}^y.
		\end{equation*}
	\end{thm}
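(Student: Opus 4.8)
The plan is to reproduce Tadmor's classical argument: contract the semi-discrete scheme with the entropy variables and show that the contracted flux differences telescope. Since $\mathbf{V}_\alpha = \partial \mathcal{U}_\alpha / \partial \mathbf{U}_\alpha$, multiplying \eqref{eq:scheme_fluid_homo} from the left by $\mathbf{V}_{\alpha,i,j}^\top$ and using the chain rule $\frac{d}{dt}\mathcal{U}_\alpha(\mathbf{U}_{\alpha,i,j}) = \mathbf{V}_{\alpha,i,j}^\top \frac{d}{dt}\mathbf{U}_{\alpha,i,j}$ reduces the theorem to proving that, separately in each coordinate direction, $\mathbf{V}_{\alpha,i,j}^\top$ applied to the numerical-flux difference equals $\tilde{\mathcal{F}}^x_{\alpha,i+\frac12,j} - \tilde{\mathcal{F}}^x_{\alpha,i-\frac12,j}$ (and analogously in $y$), with $\tilde{\mathcal{F}}^x_\alpha$, $\tilde{\mathcal{F}}^y_\alpha$ as defined in the statement.

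The core step is the single-interface identity
\begin{equation*}
	\tilde{\mathcal{F}}^x_{\alpha,i\pm\frac12,j} - \mathbf{V}_{\alpha,i,j}^\top \tilde{\mathbf{F}}^x_{\alpha,i\pm\frac12,j} = -\psi^x_{\alpha,i,j}.
\end{equation*}
To obtain it, I would write $\bar{\mathbf{V}}_{\alpha,i\pm\frac12,j} - \mathbf{V}_{\alpha,i,j} = \pm\tfrac12 [\![\mathbf{V}_\alpha]\!]_{i\pm\frac12,j}$, substitute the definition $\tilde{\mathcal{F}}^x_{\alpha,i\pm\frac12,j} = \bar{\mathbf{V}}_{\alpha,i\pm\frac12,j}^\top \tilde{\mathbf{F}}^x_{\alpha,i\pm\frac12,j} - \bar{\psi}^x_{\alpha,i\pm\frac12,j}$, apply Tadmor's relation \eqref{tadmor_thm} to replace $[\![\mathbf{V}_\alpha]\!]^\top \tilde{\mathbf{F}}^x_\alpha$ by $[\![\psi^x_\alpha]\!]$, and simplify $\pm\tfrac12 [\![\psi^x_\alpha]\!]_{i\pm\frac12,j} - \bar{\psi}^x_{\alpha,i\pm\frac12,j} = -\psi^x_{\alpha,i,j}$. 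Subtracting the $i+\frac12$ and $i-\frac12$ versions of this identity cancels $\psi^x_{\alpha,i,j}$ and yields exactly $\mathbf{V}_{\alpha,i,j}^\top(\tilde{\mathbf{F}}^x_{\alpha,i+\frac12,j} - \tilde{\mathbf{F}}^x_{\alpha,i-\frac12,j}) = \tilde{\mathcal{F}}^x_{\alpha,i+\frac12,j} - \tilde{\mathcal{F}}^x_{\alpha,i-\frac12,j}$; the $y$-direction is handled identically with $\psi^y_\alpha$. Adding the two directional contributions produces the stated entropy equality. Consistency of $\tilde{\mathcal{F}}^x_\alpha$ with $\mathcal{F}^x_\alpha$ is immediate: setting $\mathbf{U}_{\alpha,i+1,j} = \mathbf{U}_{\alpha,i,j} = \mathbf{U}_\alpha$ and using consistency of $\tilde{\mathbf{F}}^x_\alpha$ collapses the definition to $\mathbf{V}_\alpha^\top \mathbf{f}^x_\alpha - \psi^x_\alpha = \mathcal{F}^x_\alpha$, by the definition of the entropy potential $\psi^x_\alpha$.

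For the second-order accuracy claim I would note that the scalar constraint \eqref{tadmor_thm} admits a numerical flux that is symmetric in its two arguments, and that any consistent symmetric two-point flux $\tilde{\mathbf{F}}^x_\alpha(\mathbf{U}_L,\mathbf{U}_R)$ has an $O(\Delta x^2)$ (respectively $O(\Delta y^2)$) truncation error: expanding about the interface midpoint, symmetry annihilates the leading error term, so $\frac{1}{\Delta x}(\tilde{\mathbf{F}}^x_{\alpha,i+\frac12,j} - \tilde{\mathbf{F}}^x_{\alpha,i-\frac12,j})$ approximates $\partial_x \mathbf{f}^x_\alpha$ to second order. I expect the telescoping identity to be the conceptual crux, although it is pure algebra once \eqref{tadmor_thm} is inserted; the one point that genuinely needs care — the existence of a symmetric flux solving the underdetermined scalar relation — is classical and may be taken directly from \cite{Tadmor1987}.
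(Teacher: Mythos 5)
Your argument is correct and is exactly the classical Tadmor argument that the paper relies on: the paper itself gives no proof of this theorem (it defers to \cite{Tadmor1987}), and the contraction with $\mathbf{V}_{\alpha,i,j}^\top$ followed by the single-interface identity $\bar{\mathbf{V}}^\top\tilde{\mathbf{F}} - \bar{\psi} - \mathbf{V}_{i,j}^\top\tilde{\mathbf{F}} = \pm\tfrac12[\![\psi]\!] - \bar{\psi} = -\psi_{i,j}$ is precisely the telescoping mechanism the authors reuse in their proof of the subsequent theorem for the explicit fluxes \eqref{eq:num_flux_x_ent_conser}--\eqref{eq:num_flux_y_ent_conser}. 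Your algebra at both interfaces checks out, the consistency of $\tilde{\mathcal{F}}^x_\alpha$ follows as you say from $\mathbf{V}_\alpha^\top\mathbf{f}^x_\alpha - \psi^x_\alpha = \mathcal{F}^x_\alpha$, and deferring the second-order accuracy (via symmetry of the flux) to the cited reference is appropriate here.
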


	We consider the $x$-directional identity of Eqn.~\eqref{tadmor_thm} and observe that the equation~\eqref{tadmor_thm} contains five unknowns, $\tilde{\mathbf{F}}^x_\alpha=(F^x_{\alpha,1} ,\ F^x_{\alpha,2},\ F^x_{\alpha,3},\ F^x_{\alpha,4},\ F^x_{\alpha,5})^{\top}$. Hence, in general, we will have non-uniqueness for the solutions of this algebraic equation. Several authors~\cite{Ismail2009,Chandrashekar2013} have presented different techniques to find an affordable entropy conservative flux. We will follow the procedure of~\cite{Chandrashekar2013,Bhoriya2020} to derive the expression for the fluxes.  
	
	Consider the flux  at $(i+\frac{1}{2},j)$. To simplify the notation, we suppress the cell indices $i,j$, and let
	\[
	[\![ \cdot ]\!] = (\cdot)_{i+1,j} - (\cdot)_{i,j}, \qquad (\bar\cdot) = \frac{(\cdot)_{i,j} + (\cdot)_{i+1,j}}{2}
	\]
	Define $a^{\ln}=\frac{[\![a]\!]}{[\![\log a]\!]}$ as the logarithmic average for a strictly positive scalar $a$. 
	
	Applying the jump condition $[\![ab]\!]=\bar{a}[\![b]\!]+\bar{b}[\![a]\!]$ on the equation $\Gamma_\alpha= \dfrac{1}{\sqrt{1 - \mathbf{u}^2_\alpha}}$ written as $\Gamma_\alpha^2 = 1 + m_{x_\alpha}^2 + m_{y_\alpha}^2 + m_{z_\alpha}^2$, we obtain
	\begin{equation*}
		[\![\Gamma_\alpha]\!] = \frac{1}{\overline{\Gamma_\alpha}} (\overline{m_{x_\alpha}} [\![m_{x_\alpha}]\!] + \overline{m_{y_\alpha}} [\![m_{y_\alpha}]\!]+ \overline{m_{z_\alpha}} [\![m_{z_\alpha}]\!]) ,
	\end{equation*}
	where, $m_{x_\alpha}=\Gamma_\alpha u_{x_\alpha}$, $m_{y_\alpha}=\Gamma_\alpha u_{y_\alpha}$ and  $m_{z_\alpha}=\Gamma_\alpha u_{z_\alpha}$. Using this, we can write the jump in $\mathbf{V}_\alpha$ in terms of the jump in $\rho_\alpha$, $\beta_\alpha$, $m_{x_\alpha}$, $m_{y_\alpha}$ and $m_{z_\alpha}$ as follows:
	\begin{equation*}
		{[\![\mathbf{V}_\alpha]\!]}=
		\begin{pmatrix}
			\frac{[\![\rho_\alpha]\!]}{{\rho_\alpha}^{\ln}} +
			k_{\alpha} [\![\beta_\alpha]\!] \\
			\overline{m_{x_\alpha}}[\![ \beta_\alpha]\!] + \bar{\beta_\alpha}[\![ m_{x_\alpha}]\!] \\
			\overline{m_{y_\alpha}}[\![ \beta_\alpha]\!] + \bar{\beta_\alpha}[\![ m_{y_\alpha}]\!] \\
			\overline{m_{z_\alpha}}[\![ \beta_\alpha]\!] + \bar{\beta_\alpha}[\![ m_{z_\alpha}]\!] \\
			-\bar{\Gamma_\alpha}[\![\beta_\alpha]\!]-
			\frac{\bar{\beta_\alpha} \overline{m_{x_\alpha}}}{\bar{\Gamma_\alpha}}  [\![m_{x_\alpha}]\!] -
			\frac{\bar{\beta_\alpha} \overline{m_{y_\alpha}}}{\bar{\Gamma_\alpha}}  [\![m_{y_\alpha}]\!] -
			\frac{\bar{\beta_\alpha} \overline{m_{z_\alpha}}}{\bar{\Gamma_\alpha}}  [\![m_{z_\alpha}]\!]
		\end{pmatrix},
	\end{equation*}
	where $\beta_\alpha=\frac{\rho_\alpha}{p_\alpha}$,  and $k_{\alpha}=\left(\dfrac{1}{\gamma-1} \dfrac{1}{{\beta^{\ln}_\alpha}}+1  \right)$. Furthermore, $[\![\psi^x_\alpha]\!]$ can be written as
	\begin{equation*}
		[\![\psi^x_\alpha]\!]=\overline{\rho_\alpha}[\![  m_{x_\alpha}]\!] +\overline{m_{x_\alpha}} [\![ \rho_\alpha ]\!].  
	\end{equation*}
	and Eqn.~\eqref{tadmor_thm} becomes
	\begin{equation*}
		F_{\alpha,1}^x[\![ V_{\alpha,1} ]\!] + F_{\alpha,2}^x[\![ V_{\alpha,2} ]\!] + F_{\alpha,3}^x[\![ V_{\alpha,3} ]\!] + F_{\alpha,4}^x[\![ V_{\alpha,4} ]\!] + F_{\alpha,5}^x[\![ V_{\alpha,5} ]\!]= [\![ \psi^x_\alpha ]\!], 
	\end{equation*}
	which simplifies to,
	\begin{gather*} 
		\left(\frac{F_{\alpha,1}^x}{{\rho_\alpha}^{\ln}} \right) [\![\rho_\alpha]\!]
		+\left(\overline{\beta_\alpha} F_{\alpha,2}^x - \frac{\bar{\beta_\alpha} \overline{m_{x_\alpha}}}{\bar{\Gamma_\alpha}}F_{\alpha,5}^x  \right) [\![m_{x_\alpha}]\!]
		+\left(\overline{\beta_\alpha} F_{\alpha,3}^x - \frac{\bar{\beta_\alpha} \overline{m_{y_\alpha}}}{\bar{\Gamma_\alpha}}F_{\alpha,5}^x  \right) [\![m_{y_\alpha}]\!]
		\\
		+\left(\overline{\beta_\alpha} F_{\alpha,4}^x - \frac{\bar{\beta_\alpha} \overline{m_{z_\alpha}}}{\bar{\Gamma_\alpha}}F_{\alpha,5}^x  \right) [\![m_{z_\alpha}]\!]
		+ \bigg( k_{\alpha} F_{\alpha,1}^x+ \overline{m_{x_\alpha}} F_{\alpha,2}^x+ \overline{m_{y_\alpha}} F_{\alpha,3}^x+ \overline{m_{z_\alpha}} F_{\alpha,4}^x-F_{\alpha,5}^x \bar{\Gamma_\alpha} \bigg)[\![\beta_\alpha]\!]
		\\
		= \bar{\rho_\alpha}[\![  m_{x_\alpha}]\!] +\overline{m_{x_\alpha}} [\![ \rho_\alpha ]\!].
	\end{gather*}
	We want this equation to hold for all possible values of the jumps. This is possible if the coefficients of $[\![  \rho_\alpha]\!]$, $[\![  \beta_\alpha]\!]$, $[\![  m_{x_\alpha}]\!]$, $[\![  m_{y_\alpha}]\!]$, and $[\![ m_{z_\alpha}]\!]$, agree on both sides, which yields the following fluxes,
	\begin{gather}
		\renewcommand{\arraystretch}{1.5}
		\mathbf{\tilde F}^x_{\alpha,i+\frac{1}{2},j} = \mathbf{\tilde{F}}^x_\alpha(\mathbf U_{\alpha,i,j}, \mathbf U_{\alpha,i+1,j}) = \begin{pmatrix}
			{\rho_\alpha^{\ln}} \overline{m_{x_\alpha} } \vspace{0.2cm} \\
			\frac{1}{\overline{\beta_\alpha}} \big( \frac{\overline{\beta_\alpha} \overline{m_{x_\alpha}}}{\bar{\Gamma_\alpha}}F_{\alpha,5}^x + \overline{\rho_\alpha}
			\big)	\\
			\frac{\overline{m_{y_\alpha}}}{\overline{\Gamma_\alpha}}F_{\alpha,5}^x \\ 
			\frac{\overline{m_{z_\alpha}}}{\overline{\Gamma_\alpha}}F_{\alpha,5}^x  
			\vspace{0.2cm} \\ 
			\frac{-\overline{\Gamma_\alpha}{\big( k_{\alpha} {\rho_\alpha^{\ln}}\overline{m_{x_\alpha}}
					+\frac{\overline{m_{x_\alpha}} \ \overline{\rho_\alpha}}{\overline{\beta_\alpha}}}
				\big)} {\big(    
				\overline{m_{x_\alpha}}^2 
				+ \overline{m_{y_\alpha}}^2 
				+ \overline{m_{z_\alpha}}^2 
				-{(\overline{\Gamma_\alpha})}^2  \big)}
		\end{pmatrix}_{i + \frac{1}{2},j} 	\label{eq:num_flux_x_ent_conser}
	\end{gather}
	It is easy to verify that $\tilde{{\mathbf F}}_\alpha^x$ is consistent with the flux $\mathbf{f}_{\alpha}^x$. 	 Similarly, we can derive the expression for $\mathbf{\tilde F}_\alpha^y$ to get 
	\begin{gather}
		\renewcommand{\arraystretch}{1.5}
		\mathbf{\tilde F}^y_{\alpha,i,j+\frac{1}{2}} = \mathbf{\tilde{F}}^y_\alpha(\mathbf U_{\alpha,i,j}, \mathbf U_{\alpha,i,j+1}) = \begin{pmatrix}
			{\rho_\alpha^{\ln}} \overline{m_{y_\alpha} } \vspace{0.2cm} \\
			%
			%
			\frac{\overline{m_{x_\alpha}}}{\overline{\Gamma_\alpha}}F_{{\alpha,5}}^y	\\
			\frac{1}{\overline{\beta_\alpha}} \bigg( \frac{\bar{\beta_\alpha} \overline{m_{y_\alpha}}}{\overline{\Gamma_\alpha}}F_{{\alpha,5}}^y + \overline{\rho_\alpha}
			\bigg)\\ 
			\frac{\overline{m_{z_\alpha}}}{\overline{\Gamma_\alpha}}F_{{\alpha,5}}^y
			\vspace{0.2cm} \\ 
			\frac{-\overline{\Gamma_\alpha}{\left( k_{\alpha} {\rho_\alpha^{\ln}} \overline{m_{y_\alpha} } +
					\frac{\overline{m_{y_\alpha}} \ \overline{\rho_\alpha}}{\bar{\beta_\alpha}}\right)}
			} {\left(    \overline{m_{x_\alpha}}^2 + \overline{m_{y_\alpha}}^2 + \overline{m_{z_\alpha}}^2 -(\overline{\Gamma_\alpha})^2  \right)}
		\end{pmatrix}_{i, j+\frac{1}{2}}, 	\label{eq:num_flux_y_ent_conser}%
	\end{gather}
	which is consistent with ${\mathbf{f}}^y_\alpha$.
	
	\begin{thm} The numerical scheme \eqref{eq:scheme_fluid}, with the numerical fluxes \eqref{eq:num_flux_x_ent_conser} and \eqref{eq:num_flux_y_ent_conser} is second-order accurate and entropy conservative, i.e., it satisfies \eqref{eq:semi_dis_fluid_entropy_equality}.
	\end{thm}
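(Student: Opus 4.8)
The plan is to treat the flux terms with Tadmor's framework and to handle the Lorentz source separately by showing it is orthogonal to the fluid entropy variables, so that it makes no contribution to the discrete entropy balance. Accordingly the proof splits into three pieces: (i) consistency and symmetry of the two-point fluxes \eqref{eq:num_flux_x_ent_conser}--\eqref{eq:num_flux_y_ent_conser}; (ii) the entropy-conservation property of the corresponding homogeneous scheme, via Tadmor's jump relation \eqref{tadmor_thm}; (iii) the identity $\mathbf V_\alpha^\top\mathbf s_\alpha(\mathbf U_\alpha,\mathbf U_m)=0$. First I would check consistency: setting the two interface states equal makes every jump vanish and every arithmetic or logarithmic average collapse to the common value. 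The first four entries of $\tilde{\mathbf F}^x_\alpha$ then reduce at once using $m_{d_\alpha}=\Gamma_\alpha u_{d_\alpha}$ and $\rho_\alpha/\beta_\alpha=p_\alpha$; for the fifth entry the denominator becomes $m_{x_\alpha}^2+m_{y_\alpha}^2+m_{z_\alpha}^2-\Gamma_\alpha^2=-1$ by the relativistic identity $\Gamma_\alpha^2=1+m_{x_\alpha}^2+m_{y_\alpha}^2+m_{z_\alpha}^2$, and the numerator becomes $-\Gamma_\alpha m_{x_\alpha}\big(k_\alpha\rho_\alpha+\rho_\alpha/\beta_\alpha\big)=-\Gamma_\alpha m_{x_\alpha}\rho_\alpha h_\alpha$ (since $k_\alpha\rho_\alpha+\rho_\alpha/\beta_\alpha=\rho_\alpha+\tfrac{\gamma_\alpha}{\gamma_\alpha-1}p_\alpha=\rho_\alpha h_\alpha$ at a single state), so $F^x_{\alpha,5}\to\rho_\alpha h_\alpha\Gamma_\alpha^2u_{x_\alpha}=M_{x_\alpha}$, as required; the argument for $\tilde{\mathbf F}^y_\alpha$ is identical. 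Symmetry of $\tilde{\mathbf F}^x_\alpha,\tilde{\mathbf F}^y_\alpha$ in their two arguments is immediate since every arithmetic and logarithmic average is symmetric. I would also record that the denominator never vanishes on $\Omega$: a Cauchy--Schwarz estimate gives $\Gamma_{\alpha,L}\Gamma_{\alpha,R}\ge 1+m_{x_\alpha,L}m_{x_\alpha,R}+m_{y_\alpha,L}m_{y_\alpha,R}+m_{z_\alpha,L}m_{z_\alpha,R}$, whence $(\overline{\Gamma_\alpha})^2\ge 1+\overline{m_{x_\alpha}}^2+\overline{m_{y_\alpha}}^2+\overline{m_{z_\alpha}}^2$, so the denominator is $\le-1$ and the fluxes are well defined.

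Next I would observe that the fluxes satisfy Tadmor's jump relation \eqref{tadmor_thm} by construction: the computation preceding the statement rewrote $[\![\mathbf V_\alpha]\!]$ and $[\![\psi^x_\alpha]\!]$ in terms of the independent jumps $[\![\rho_\alpha]\!],[\![\beta_\alpha]\!],[\![m_{x_\alpha}]\!],[\![m_{y_\alpha}]\!],[\![m_{z_\alpha}]\!]$ and chose the five components of $\tilde{\mathbf F}^x_\alpha$ precisely so that the coefficient of each jump agrees on both sides, i.e. $[\![\mathbf V_\alpha]\!]^\top\tilde{\mathbf F}^x_\alpha=[\![\psi^x_\alpha]\!]$ holds identically; the $y$-identity follows from the same construction with $m_{x_\alpha}$ and $m_{y_\alpha}$ interchanged. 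By the theorem of Tadmor quoted above, the homogeneous scheme \eqref{eq:scheme_fluid_homo} with these fluxes is second-order accurate and conserves $\mathcal U_\alpha$ with numerical entropy fluxes $\tilde{\mathcal F}^x_\alpha=\bar{\mathbf V}_\alpha^\top\tilde{\mathbf F}^x_\alpha-\bar\psi^x_\alpha$ and $\tilde{\mathcal F}^y_\alpha=\bar{\mathbf V}_\alpha^\top\tilde{\mathbf F}^y_\alpha-\bar\psi^y_\alpha$, which are consistent with $\mathcal F^x_\alpha,\mathcal F^y_\alpha$ because $\psi^x_\alpha=\mathbf V_\alpha^\top\mathbf f^x_\alpha-\mathcal F^x_\alpha$. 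Concretely, contracting \eqref{eq:scheme_fluid_homo} with $\mathbf V_{\alpha,i,j}$, using $\mathbf V_{\alpha,i,j}^\top\tfrac{d}{dt}\mathbf U_{\alpha,i,j}=\tfrac{d}{dt}\mathcal U_\alpha(\mathbf U_{\alpha,i,j})$ and the telescoping identity $\mathbf V_{\alpha,i,j}^\top\big(\tilde{\mathbf F}^x_{\alpha,i+\frac12,j}-\tilde{\mathbf F}^x_{\alpha,i-\frac12,j}\big)=\tilde{\mathcal F}^x_{\alpha,i+\frac12,j}-\tilde{\mathcal F}^x_{\alpha,i-\frac12,j}$ (which follows from $\bar{\mathbf V}_{\alpha,i\pm\frac12,j}=\mathbf V_{\alpha,i,j}\pm\tfrac12[\![\mathbf V_\alpha]\!]_{i\pm\frac12,j}$, the jump relation \eqref{tadmor_thm}, and $\bar\psi^x_{\alpha,i+\frac12,j}-\bar\psi^x_{\alpha,i-\frac12,j}=\tfrac12\big([\![\psi^x_\alpha]\!]_{i+\frac12,j}+[\![\psi^x_\alpha]\!]_{i-\frac12,j}\big)$), together with the analogous identity in $y$, gives the telescoped entropy balance.

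The remaining ingredient, new relative to \cite{Bhoriya2020}, is the source term, for which I would show $\mathbf V_\alpha^\top\mathbf s_\alpha(\mathbf U_\alpha,\mathbf U_m)=0$ pointwise: substituting \eqref{eq:ent_var}, the momentum rows contribute $r_\alpha D_\alpha\Gamma_\alpha\beta_\alpha\,\mathbf u_\alpha\cdot(\mathbf E+\mathbf u_\alpha\times\mathbf B)$ and the energy row contributes $-r_\alpha D_\alpha\Gamma_\alpha\beta_\alpha\,\mathbf u_\alpha\cdot\mathbf E$; the $\mathbf E$-terms cancel and $\mathbf u_\alpha\cdot(\mathbf u_\alpha\times\mathbf B)=0$, so the sum vanishes. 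This is the discrete counterpart of the fact, already used in Proposition~\eqref{prop:entropy} and the remark following it, that the Lorentz source does not affect the fluid entropy. Contracting the full scheme \eqref{eq:scheme_fluid} with $\mathbf V_{\alpha,i,j}$ and combining the chain rule, the flux telescoping above (with $\hat{\mathcal F}^x_\alpha:=\tilde{\mathcal F}^x_\alpha$, $\hat{\mathcal F}^y_\alpha:=\tilde{\mathcal F}^y_\alpha$), and $\mathbf V_{\alpha,i,j}^\top\mathbf s_\alpha=0$ then yields exactly the semi-discrete entropy equality \eqref{eq:semi_dis_fluid_entropy_equality}; second-order accuracy is inherited from Tadmor's theorem since $\tilde{\mathbf F}^x_\alpha,\tilde{\mathbf F}^y_\alpha$ are consistent and symmetric.

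I expect the main obstacle to be the algebraic bookkeeping in Step (i): verifying consistency of the fifth flux component, where the apparent $0/0$ must be resolved through the relativistic identity $\Gamma_\alpha^2=1+m_{x_\alpha}^2+m_{y_\alpha}^2+m_{z_\alpha}^2$, and confirming that its denominator stays bounded away from zero on $\Omega$. The source orthogonality and the telescoping identities are then routine once the algebraic structure is laid out.
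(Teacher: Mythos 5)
Your proof is correct and follows essentially the same route as the paper: contract the scheme \eqref{eq:scheme_fluid} with $\mathbf{V}_{\alpha,i,j}^\top$, invoke Tadmor's theorem for the flux part (the fluxes were built to satisfy \eqref{tadmor_thm}), and use the pointwise orthogonality $\mathbf{V}_\alpha^\top\mathbf{s}_\alpha=0$ to kill the source contribution. You merely supply details the paper leaves implicit — the consistency check of the fifth flux component, the non-vanishing of its denominator on $\Omega$, the telescoping identity, and the explicit cancellation $\mathbf u_\alpha\cdot(\mathbf u_\alpha\times\mathbf B)=0$ — all of which are verified correctly.
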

	\begin{proof}
		Following \cite{Tadmor1987} and \cite{Fjordholm2012}, multiplying \eqref{eq:scheme_fluid} with $\mathbf{V}_{\alpha,i,j}^\top$, we get,
		\begin{eqnarray*}
			\frac{d}{dt}  \mathcal{U}_\alpha(\mathbf{U}_{\alpha,i,j}) +\frac{1}{\Delta x} \left( \tilde{\mathcal{F}}_{\alpha,i+\frac{1}{2},j}^x - \tilde{\mathcal{F}}_{\alpha,i-\frac{1}{2},j}^x\right)+\frac{1}{\Delta y}\left( \tilde{\mathcal{F}}_{\alpha,i,j+\frac{1}{2}}^y - \tilde{\mathcal{F}}_{\alpha,i,j-\frac{1}{2}}^y\right) \\ =\mathbf{V}_{\alpha,i,j}^\top\cdot \mathbf{s}_\alpha(\mathbf{U}_{\alpha,i,j}(t),\mathbf{U}_{m,i,j}(t))
		\end{eqnarray*}
		Now, we note that
		$$
		\mathbf{V}_{\alpha,i,j}^\top\cdot \mathbf{s}_\alpha(\mathbf{U}_{\alpha,i,j}(t),\mathbf{U}_{m,i,j}(t))=0
		$$
		to arrive at the equality \eqref{eq:semi_dis_fluid_entropy_equality}.
	\end{proof}

	The entropy conservative fluxes derived above are only second-order accurate. Following \cite{Lefloch2002}, we can use second-order fluxes to construct $2p^{th}$-order accurate fluxes for any positive integer $p.$ In particular, the $4^{th}$-order ($p=2$) $x$-directional entropy conservative flux 
	$\tilde{\mathbf{{F}}}^{x,4}_{\alpha,i+\frac{1}{2},j}$ is given by,
	\begin{equation*}
		\tilde{\mathbf{{F}}}^{x,4}_{\alpha,i+\frac{1}{2},j}=\frac{4}{3}\tilde{\mathbf{{F}}}^x_{\alpha}(\mathbf{U}_{\alpha,i,j},\mathbf{U}_{\alpha,i+1,j})-\frac{1}{6} \bigg( \tilde{\mathbf{{F}}}^x_{\alpha} (\mathbf{U}_{\alpha,i-1,j},\mathbf{U}_{\alpha,i+1,j})+
		\tilde{\mathbf{{F}}}^x_{\alpha}(\mathbf{U}_{\alpha,i,j},\mathbf{U}_{\alpha,i+2,j}) \bigg). 
	\end{equation*}
	Similarly, we can obtain the fourth-order entropy conservative flux $\tilde{\mathbf{{F}}}^{y,4}_{\alpha,i+\frac{1}{2},j}$ in $y$-direction. Combining this we have the following remarks.
	\begin{remark}
		Replacing second-order fluxes with the higher-order fluxes in \eqref{eq:scheme_fluid}, we arrive at high-order, entropy conservative schemes.
	\end{remark}
	\begin{remark}
		The result from \cite{Lefloch2002} gives only even order accurate fluxes.  So, for $q^{th}$-order (q is an odd integer) accurate scheme, we use a $(q+1)^{th}$-order (an even number) accurate flux.
	\end{remark}
	
	\subsubsection{High-order entropy stable schemes for fluid equations}
	The entropy conservative schemes presented above will produce high-frequency oscillations near the shocks as they are central fluxes which do not dissipate entropy. Following \cite{Tadmor1987}, we introduce modified fluxes which will ensure entropy dissipation at the shocks. We consider a modified numerical flux of the form,
	\begin{equation}
		\begin{aligned}
			\mathbf{F}_{\alpha,i+\frac{1}{2},j}^x =\tilde{\mathbf{{F}}}_{\alpha,i+\frac{1}{2},j}^x - \frac{1}{2} \textbf{D}_{\alpha,i+\frac{1}{2},j}^x[\![ \mathbf{V}_\alpha]\!]_{i+\frac{1}{2},j},
			\\
			\mathbf{F}_{\alpha,i,j+\frac{1}{2}}^y = \tilde{\mathbf{{F}}}_{\alpha,i,j+\frac{1}{2}}^y - \frac{1}{2} \textbf{D}_{\alpha,i,j+\frac{1}{2}}^y[\![ \mathbf{V}_\alpha]\!]_{i,j+\frac{1}{2}},
			\label{es_numflux}
		\end{aligned}
	\end{equation}
	where $\textbf{D}^x_{\alpha,i+\frac{1}{2},j}$ and $\textbf{D}^y_{\alpha,i,j+\frac{1}{2}}$ are symmetric positive definite matrices. Then we have the following Lemma: 
	
	\begin{lemma}[{\em Tadmor}\cite{Tadmor1987}] The numerical scheme~\eqref{scheme} with the numerical fluxes~\eqref{es_numflux} is entropy stable, i.e., the entropy inequality
		\begin{equation*}
			\frac{d}{dt}  \mathcal{U}_\alpha(\mathbf{U}_{ij})  +\frac{1}{\Delta x} \left( \hat{\mathcal{F}}_{\alpha,i+\frac{1}{2},j}^x - \hat{\mathcal{F}}_{\alpha,i-\frac{1}{2},j}^x\right)+\frac{1}{\Delta y}\left( \hat{\mathcal{F}}_{\alpha,i,j+\frac{1}{2}}^y - \hat{\mathcal{F}}_{\alpha,i,j-\frac{1}{2}}^y\right) \le 0, \ \ \ \alpha \in \{i,e\},
		\end{equation*}
		is satisfied with the consistent numerical entropy flux functions,
		$$
		\hat{\mathcal{F}}^{x}_{\alpha,i+\frac{1}{2},j}=  \tilde{\mathcal{F}}^{x}_{\alpha,i+\frac{1}{2},j} + \frac{1}{2}(\bar{\mathbf{V}}_\alpha)^\top_{i+\frac{1}{2},j}  \mathbf{D}_{\alpha,i+\frac{1}{2},j}^{x}[\![ \mathbf{V}_\alpha]\!]_{i+\frac{1}{2},j}, 
		$$ 
		$$
		\hat{\mathcal{F}}^{y}_{\alpha,i,j+\frac{1}{2}}=    \tilde{\mathcal{F}}^{y}_{\alpha,i,j+\frac{1}{2}} + \frac{1}{2}(\bar{\mathbf{V}}_\alpha)^\top_{i,j+\frac{1}{2}}  \mathbf{D}_{\alpha,i,j+\frac{1}{2}}^{y}[\![ \mathbf{V}_\alpha]\!]_{i,j+\frac{1}{2}}. $$ 
	\end{lemma}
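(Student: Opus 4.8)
The plan is to verify the claimed entropy inequality by the standard Tadmor-type energy argument, contracting the semi-discrete scheme with the entropy variables and exploiting the decomposition of the modified flux \eqref{es_numflux} into an entropy conservative part and a dissipative part. First I would multiply the scheme \eqref{scheme}, restricted to the fluid block $\alpha$ with the numerical fluxes \eqref{es_numflux}, by $\mathbf{V}_{\alpha,i,j}^\top$. The time term gives $\frac{d}{dt}\mathcal{U}_\alpha(\mathbf{U}_{\alpha,i,j})$ since $\mathbf{V}_\alpha = \partial\mathcal{U}_\alpha/\partial\mathbf{U}_\alpha$; the source contribution $\mathbf{V}_{\alpha,i,j}^\top\mathbf{s}_\alpha(\mathbf{U}_{\alpha,i,j},\mathbf{U}_{m,i,j})$ vanishes by the identity already established in the proof of the previous theorem (which is, in turn, the discrete analogue of Proposition~\eqref{prop:entropy}). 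So I am left with analyzing the flux-difference terms contracted against $\mathbf{V}_{\alpha,i,j}^\top$.

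Next I would split each numerical flux as $\mathbf{F}^x_{\alpha,i+\frac12,j} = \tilde{\mathbf{F}}^x_{\alpha,i+\frac12,j} - \frac12\mathbf{D}^x_{\alpha,i+\frac12,j}[\![\mathbf{V}_\alpha]\!]_{i+\frac12,j}$, and similarly in $y$. The contribution of the conservative parts $\tilde{\mathbf{F}}^x_\alpha,\tilde{\mathbf{F}}^y_\alpha$ is handled exactly as in Tadmor's theorem quoted above: using the entropy conservation condition \eqref{tadmor_thm} together with the jump identity $[\![ab]\!]=\bar a[\![b]\!]+\bar b[\![a]\!]$, one rewrites $\mathbf{V}_{\alpha,i,j}^\top(\tilde{\mathbf{F}}^x_{\alpha,i+\frac12,j}-\tilde{\mathbf{F}}^x_{\alpha,i-\frac12,j})$ as a telescoping difference of the numerical entropy fluxes $\tilde{\mathcal{F}}^x_{\alpha,i\pm\frac12,j}=\bar{\mathbf{V}}^\top_{\alpha,i\pm\frac12,j}\tilde{\mathbf{F}}^x_{\alpha,i\pm\frac12,j}-\bar\psi^x_{\alpha,i\pm\frac12,j}$, and likewise in $y$. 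This reproduces the equality part of the estimate with $\tilde{\mathcal{F}}$ in place of $\hat{\mathcal{F}}$.

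It then remains to account for the dissipative terms. Contracting $-\frac12\mathbf{D}^x_{\alpha,i+\frac12,j}[\![\mathbf{V}_\alpha]\!]_{i+\frac12,j}$ (and its $i-\frac12$ counterpart) with $\mathbf{V}_{\alpha,i,j}^\top$, I would use the elementary telescoping identity
\begin{equation*}
\mathbf{V}_{\alpha,i,j}^\top\left(\mathbf{D}^x_{\alpha,i+\frac12,j}[\![\mathbf{V}_\alpha]\!]_{i+\frac12,j} - \mathbf{D}^x_{\alpha,i-\frac12,j}[\![\mathbf{V}_\alpha]\!]_{i-\frac12,j}\right)
= \Delta\!\left(\bar{\mathbf{V}}_{\alpha,i+\frac12,j}^\top\mathbf{D}^x_{\alpha,i+\frac12,j}[\![\mathbf{V}_\alpha]\!]_{i+\frac12,j}\right) - \tfrac12[\![\mathbf{V}_\alpha]\!]_{i+\frac12,j}^\top\mathbf{D}^x_{\alpha,i+\frac12,j}[\![\mathbf{V}_\alpha]\!]_{i+\frac12,j} - \tfrac12[\![\mathbf{V}_\alpha]\!]_{i-\frac12,j}^\top\mathbf{D}^x_{\alpha,i-\frac12,j}[\![\mathbf{V}_\alpha]\!]_{i-\frac12,j},
\end{equation*}
where $\Delta(\cdot)$ denotes the forward difference in $i$; this is checked by writing $\mathbf{V}_{\alpha,i,j}=\bar{\mathbf{V}}_{\alpha,i+\frac12,j}-\frac12[\![\mathbf{V}_\alpha]\!]_{i+\frac12,j}=\bar{\mathbf{V}}_{\alpha,i-\frac12,j}+\frac12[\![\mathbf{V}_\alpha]\!]_{i-\frac12,j}$. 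The divergence (telescoping) pieces combine with $\tilde{\mathcal{F}}^x$ to produce exactly the stated numerical entropy flux $\hat{\mathcal{F}}^x_{\alpha,i+\frac12,j}=\tilde{\mathcal{F}}^x_{\alpha,i+\frac12,j}+\frac12(\bar{\mathbf{V}}_\alpha)^\top_{i+\frac12,j}\mathbf{D}^x_{\alpha,i+\frac12,j}[\![\mathbf{V}_\alpha]\!]_{i+\frac12,j}$, and analogously $\hat{\mathcal{F}}^y$; the leftover quadratic terms are $-\frac{1}{2\Delta x}[\![\mathbf{V}_\alpha]\!]^\top\mathbf{D}^x[\![\mathbf{V}_\alpha]\!]$-type contributions. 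Since $\mathbf{D}^x_{\alpha,i+\frac12,j}$ and $\mathbf{D}^y_{\alpha,i,j+\frac12}$ are symmetric positive definite by hypothesis, every such quadratic term is nonnegative, so moving them to the right-hand side yields the inequality \eqref{eq:semi_dis_fluid_entropy_inequality}, i.e.\ entropy stability. Consistency of $\hat{\mathcal{F}}^x_\alpha$ with $\mathcal{F}^x_\alpha$ (and of $\hat{\mathcal{F}}^y_\alpha$ with $\mathcal{F}^y_\alpha$) follows because at a constant state the jumps vanish and $\tilde{\mathcal{F}}$ is already consistent by Tadmor's theorem. I do not expect a genuine obstacle here; the only place demanding care is the bookkeeping that matches the telescoping remainder of the dissipative contraction with the precise form of $\hat{\mathcal{F}}$ stated in the lemma, together with keeping the vanishing of the source contraction explicit so that the argument transfers verbatim from the homogeneous case to the scheme \eqref{scheme} with the source present.
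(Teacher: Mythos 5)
Your argument is the standard Tadmor contraction proof, which is precisely what the paper relies on here: the lemma is stated by citation to \cite{Tadmor1987} and the paper supplies no proof of its own, so your reconstruction — splitting \eqref{es_numflux} into the entropy conservative part handled via \eqref{tadmor_thm} and a dissipative part, using $\mathbf{V}_{\alpha,i,j}=\bar{\mathbf{V}}_{\alpha,i+\frac12,j}-\frac12[\![\mathbf{V}_\alpha]\!]_{i+\frac12,j}=\bar{\mathbf{V}}_{\alpha,i-\frac12,j}+\frac12[\![\mathbf{V}_\alpha]\!]_{i-\frac12,j}$ to telescope the dissipation, invoking positive semi-definiteness of $\mathbf{D}^d_\alpha$, and noting $\mathbf{V}_{\alpha,i,j}^\top\mathbf{s}_\alpha=0$ — contains all the intended ingredients. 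One bookkeeping point worth fixing: since the dissipative flux enters the scheme with prefactor $-\frac{1}{2\Delta x}$, your own telescoping identity yields $\hat{\mathcal{F}}^{x}_{\alpha,i+\frac12,j}=\tilde{\mathcal{F}}^{x}_{\alpha,i+\frac12,j}-\frac{1}{2}(\bar{\mathbf{V}}_\alpha)^\top_{i+\frac12,j}\mathbf{D}^{x}_{\alpha,i+\frac12,j}[\![\mathbf{V}_\alpha]\!]_{i+\frac12,j}$ (minus, not plus), with residual $-\frac{1}{4\Delta x}\big([\![\mathbf{V}_\alpha]\!]^\top_{i+\frac12,j}\mathbf{D}^x_{\alpha,i+\frac12,j}[\![\mathbf{V}_\alpha]\!]_{i+\frac12,j}+[\![\mathbf{V}_\alpha]\!]^\top_{i-\frac12,j}\mathbf{D}^x_{\alpha,i-\frac12,j}[\![\mathbf{V}_\alpha]\!]_{i-\frac12,j}\big)\le 0$; the ``$+$'' you assert is copied from the lemma statement, which appears to carry a sign typo relative to Tadmor's original, and is inconsistent with the identity you derived two lines earlier. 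This does not undermine the argument: consistency of $\hat{\mathcal{F}}$ and the entropy inequality both hold once the sign is corrected.
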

	
	\noindent We will use \textit{Rusanov type} diffusion operators for the matrix $\textbf{D}_\alpha$, which is given by,
	\begin{equation}  \label{diffusiontype}
		\mathbf{D}_{\alpha,i+\frac{1}{2},j}^x = {\mathbf{\tilde{R}}}_{\alpha,i+\frac{1}{2},j}^x \Lambda_{\alpha,i+\frac{1}{2},j}^x {\mathbf{\tilde{R}}}_{\alpha,i+\frac{1}{2},j}^{x \top}, \text{ \ \ and \ \ } \textbf{D}_{\alpha,i,j+\frac{1}{2}}^y = {\mathbf{\tilde{R}}}_{\alpha,i,j+\frac{1}{2}}^y \Lambda_{\alpha,i,j+\frac{1}{2}}^y {\mathbf{\tilde{R}}}_{\alpha,i,j+\frac{1}{2}}^{y \top}.
	\end{equation}
	where ${\mathbf{\tilde{R}}^d}_\alpha,\, d \in \{x,y\}$,  are matrices of the entropy scaled right eigenvectors and ${\Lambda^d_\alpha}$ are $5 \times 5$ diagonal matrices of the form
	\[
	{\Lambda^d_\alpha}=  
	\left( \max_{1 \leq k \leq 5} |\Lambda_{\alpha_k}^d|\right) \mathbf{I}_{5 \times 5}, 
	\]
	where $\{\Lambda_{\alpha_k}^d: 1 \leq k \leq 5 \},\, d \in \{x,y\}$, are the eigenvalues of the flux Jacobian. Following~\cite{Barth1999}, we have $\partial_{\mathbf{V}_\alpha} \mathbf{U}_\alpha = \mathbf{\tilde{R}}_\alpha^{d} (\mathbf{\tilde{R}}_\alpha^{d})^{\top}$ for $d \in\{x,y\}$, hence,
	$$
	\mathbf{\tilde{R}}^{d}_\alpha \Lambda_\alpha^{d} ( \mathbf{\tilde{R}}^{d}_\alpha )^{(-1)} [\![ \mathbf{U}_\alpha]\!] \approx \mathbf{\tilde{R}}^{d}_\alpha \Lambda^{d}_\alpha ( \mathbf{\tilde{R}}^{d}_\alpha )^{(-1)} \partial_{\mathbf{V}_\alpha} \mathbf{U}_\alpha [\![ \mathbf{V}_\alpha]\!]=  \mathbf{\tilde{R}}^{d}_\alpha \Lambda^{d}_\alpha ( \mathbf{\tilde{R}}^{d}_\alpha )^{\top} [\![ \mathbf{V}_\alpha]\!],
	$$
	i.e., the entropy diffusion operator used here is similar to the {\em Roe} diffusion operator when the entropy scaled right eigenvectors are used. The complete expressions for $\mathbf{\tilde{R}}^d_\alpha$ are derived in Appendix~\eqref{ap:scaled_eig}.

	With the choice of diffusion operator~\eqref{diffusiontype}, the numerical scheme~\eqref{scheme} with the numerical flux~\eqref{es_numflux} is entropy stable.  However, the scheme is only first-order accurate due to the presence of the first-order jump terms $[\![\mathbf{V}_\alpha]\!]_{i+\frac{1}{2},j}$ and $[\![\mathbf{V}_\alpha]\!]_{i,j+\frac{1}{2}}$. A straightforward way to increase the order of accuracy is to approximate the jumps in the diffusive term of~\eqref{es_numflux} using a higher-order reconstruction process. However, proving the entropy stability of the resulting scheme is not possible. Instead, we follow the process prescribed in \cite{Fjordholm2012} and introduce the {\em scaled entropy variables}. We illustrate the procedure for the $x$-direction only, as the $y$-directional case is similar. Define the change of variables
	\[
	\mathcal{V}_{\alpha,{m,j}}^{x,\pm}\,=\, (\mathbf{\tilde{R}}^{x}_{\alpha,{i\pm\frac{1}{2},j}} )^\top\mathbf{V}_{\alpha,{m,j}}, \qquad \textrm{$m$ are neighbours of cell $(i,j)$ along $x$}
	\]
	Using ENO procedure we choose a stencil of cells and construct the polynomials $P^{x,\pm}_{i,j}(x)$ of degree $k$ and evaluate it at the faces of cell $(i,j)$
	\[
	\mathcal{\tilde V}_{\alpha,i,j}^{x,\pm} = P_{i,j}^{x,\pm}(x_{i \pm \frac{1}{2}})
	\]
	Converting back to the entropy variables,
	$$\mathbf{\tilde{V}}_{\alpha,{i,j}}^{x,\pm}\,=\, \left\lbrace (\mathbf{\tilde{R}}^{x}_{\alpha,{i\pm\frac{1}{2},j}})^\top\right\rbrace ^{(-1)}\tilde{\mathcal{V}}_{\alpha,{i,j}}^{x,\pm}$$ 
	are the corresponding $k$-th order reconstructed values for $\mathbf{V}_{\alpha}$. Hence the high-order entropy stable numerical flux is given by,
	\begin{equation}
		\mathbf{F}_{\alpha,i+\frac{1}{2},j}^{x,k}\,=\,\tilde{\mathbf{F}}_{\alpha,i+\frac{1}{2},j}^{x,2p}\,-\,\frac{1}{2}\,\mathbf{D}_{\alpha,i+\frac{1}{2},j}^x[\![ \mathbf{\tilde{V}}^x_\alpha]\!]_{i+\frac{1}{2},j}
		\label{eq:entropy_stable_flux}
	\end{equation}
	where $[\![ \mathbf{\tilde{V}}^x_\alpha]\!]_{i+\frac{1}{2},j}$ stands for, $$[\![ \mathbf{\tilde{V}}^x_\alpha]\!]_{i+\frac{1}{2},j}\,=\,\mathbf{\tilde{V}}^{x,-}_{\alpha,{i+1,j}}\,-\,\mathbf{\tilde{V}}^{x,+}_{\alpha,{i,j}}.$$
	As only even order entropy conservative fluxes $\tilde{\mathbf{F}}_{\alpha,i+\frac{1}{2},j}^{x,2p}$ are available, we choose $p \in \mathbb{N}$ as
	\begin{itemize}
		\item $p=k/2$ if $k$ is even,
		\item $p=(k+1)/2$ if $k$ is odd,
	\end{itemize}
	where $k$ is the order of the scheme.
	Following  \cite{Fjordholm2012}, a sufficient condition for the numerical flux~\eqref{eq:entropy_stable_flux} to be entropy stable is that the reconstruction process for ${\mathcal{V}}$ must satisfy the {\em sign preserving property}. For the second-order reconstruction, we use the {\em min-mod}  reconstruction, which satisfies this property. Following \cite{Fjordholm2013}, we use ENO reconstruction as it satisfies the  {\em sign preserving property}. For third and fourth-order schemes, we use fourth-order entropy conservative flux along with the third and fourth-order ENO reconstruction of the scaled entropy variables.
	We now have the following result:
	\begin{thm}
		The numerical scheme \eqref{eq:scheme_fluid} with entropy stable flux \eqref{eq:entropy_stable_flux} and ENO reconstruction satisfies the entropy inequality \eqref{eq:semi_dis_fluid_entropy_inequality}.
	\end{thm}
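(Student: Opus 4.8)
The plan is to reproduce, for the relativistic two-fluid setting, the entropy-stability argument of Tadmor and of Fjordholm--Mishra--Tadmor, relying on three facts already established above: the flux \eqref{eq:entropy_stable_flux} is a high-order entropy-conservative flux minus a Rusanov-type dissipation built from the entropy-scaled right eigenvectors $\tilde{\mathbf R}^d_\alpha$; the matrices $\mathbf D^d_\alpha$ in \eqref{diffusiontype} are symmetric positive semidefinite; and the source satisfies $\mathbf V_\alpha^\top\mathbf s_\alpha=0$.

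First I would contract the semi-discrete scheme \eqref{eq:scheme_fluid} (with the flux \eqref{eq:entropy_stable_flux} in both directions) by $\mathbf V_{\alpha,i,j}^\top$. The temporal term becomes $\tfrac{d}{dt}\mathcal U_\alpha(\mathbf U_{\alpha,i,j})$ by the definition of the entropy variables, and the source contribution $\mathbf V_{\alpha,i,j}^\top\mathbf s_\alpha(\mathbf U_{\alpha,i,j},\mathbf U_{m,i,j})$ vanishes, exactly as in the entropy-conservation theorem. For the flux-difference term I would split \eqref{eq:entropy_stable_flux} into the high-order entropy-conservative part $\tilde{\mathbf F}^{x,2p}_{\alpha}$ and the dissipative part $-\tfrac12\mathbf D^x_\alpha[\![\tilde{\mathbf V}^x_\alpha]\!]$, and likewise in $y$. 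The conservative part: the high-order combination from \cite{Lefloch2002} is again entropy conservative, i.e.\ it admits consistent numerical entropy fluxes $\tilde{\mathcal F}^{x,2p}_\alpha$, $\tilde{\mathcal F}^{y,2p}_\alpha$ for which $\mathbf V_{\alpha,i,j}^\top$ times the corresponding flux difference telescopes exactly. The dissipative part: inserting $\mathbf V_{\alpha,i,j}=\bar{\mathbf V}_{\alpha,i\pm1/2,j}\mp\tfrac12[\![\mathbf V_\alpha]\!]_{i\pm1/2,j}$ at the two faces splits it into a telescoping difference of the dissipative entropy-flux corrections $\tfrac12(\bar{\mathbf V}_\alpha)^\top\mathbf D^x_\alpha[\![\tilde{\mathbf V}^x_\alpha]\!]$ plus a face-local residual $-\tfrac14[\![\mathbf V_\alpha]\!]^\top\mathbf D^x_\alpha[\![\tilde{\mathbf V}^x_\alpha]\!]$, and analogously for the two $y$-faces. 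Collecting all pieces yields the local identity that coincides with \eqref{eq:semi_dis_fluid_entropy_inequality} with the inequality replaced by equality-to-(sum of face residuals), where $\hat{\mathcal F}^x_\alpha$, $\hat{\mathcal F}^y_\alpha$ are the numerical entropy fluxes of the dissipation Lemma of Tadmor with $[\![\mathbf V_\alpha]\!]$ replaced by the reconstructed jump $[\![\tilde{\mathbf V}_\alpha]\!]$.

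The only substantive step, and the expected obstacle, is to show that each face residual is nonpositive, i.e.\ $[\![\mathbf V_\alpha]\!]^\top_{i+1/2,j}\mathbf D^x_{\alpha,i+1/2,j}[\![\tilde{\mathbf V}^x_\alpha]\!]_{i+1/2,j}\ge 0$ and similarly at the other faces. Here I would pass to the entropy-scaled variables: writing $\mathbf D^x_{\alpha,i+1/2,j}=\tilde{\mathbf R}^x_{\alpha,i+1/2,j}\Lambda^x_{\alpha,i+1/2,j}(\tilde{\mathbf R}^x_{\alpha,i+1/2,j})^\top$ with $\Lambda^x_\alpha$ diagonal and nonnegative, and using that both cells adjacent to the face $i+1/2$ employ the \emph{same} matrix $\tilde{\mathbf R}^x_{\alpha,i+1/2,j}$ in the definition of $\mathcal V^{x,\pm}_\alpha$, one gets $(\tilde{\mathbf R}^x_{\alpha,i+1/2,j})^\top[\![\mathbf V_\alpha]\!]_{i+1/2,j}=[\![\mathcal V^x_\alpha]\!]_{i+1/2,j}$ and $(\tilde{\mathbf R}^x_{\alpha,i+1/2,j})^\top[\![\tilde{\mathbf V}^x_\alpha]\!]_{i+1/2,j}=[\![\tilde{\mathcal V}^x_\alpha]\!]_{i+1/2,j}$, hence
\[
[\![\mathbf V_\alpha]\!]^\top_{i+1/2,j}\mathbf D^x_{\alpha,i+1/2,j}[\![\tilde{\mathbf V}^x_\alpha]\!]_{i+1/2,j}=\sum_{k=1}^{5}\bigl(\Lambda^x_{\alpha,i+1/2,j}\bigr)_k\,\bigl([\![\mathcal V^x_\alpha]\!]_{i+1/2,j}\bigr)_k\,\bigl([\![\tilde{\mathcal V}^x_\alpha]\!]_{i+1/2,j}\bigr)_k .
\]
By the sign-preserving property of the minmod and ENO reconstructions \cite{Fjordholm2012,Fjordholm2013}, each reconstructed scaled jump has the same sign as the corresponding unreconstructed one, so every summand is nonnegative; the $y$-faces are identical. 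Thus each face residual is $\le 0$, the local balance gives \eqref{eq:semi_dis_fluid_entropy_inequality}, and the proof is complete. The main obstacle is precisely this sign-preservation input, which we do not reprove but quote from \cite{Fjordholm2012,Fjordholm2013}; everything else is bookkeeping that mirrors the entropy-conservative case.
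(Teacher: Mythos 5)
Your proposal is correct and follows essentially the same route as the paper: the authors' proof is a one-line appeal to the Fjordholm--Mishra--Tadmor entropy-stability argument combined with the orthogonality $\mathbf V_{\alpha,i,j}^\top\mathbf s_\alpha=0$ and the sign property of ENO, and your write-up is precisely a detailed expansion of that argument (telescoping of the entropy-conservative part, diagonalization of $\mathbf D^d_\alpha$ in the entropy-scaled basis, and sign preservation of the reconstructed scaled jumps to make each face residual nonpositive).
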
	
	\begin{proof}
		The proof follows from \cite{Fjordholm2012} using the fact that,
		$$
		\mathbf{V}_{\alpha,i,j}^\top\cdot \mathbf{s}_\alpha(\mathbf{U}_{\alpha,i,j}(t),\mathbf{U}_{m,i,j}(t))=0
		$$
		and the sign property of ENO reconstruction.
	\end{proof}
	
	\begin{remark}
		The above result holds for any discretization of Maxwell's equations. 
	\end{remark}
	
	\subsection{Discretization of Maxwell's equations}
	
	To spatially discretize the Maxwell's equations  \eqref{max1}-\eqref{max4}, we use the finite-difference scheme based on the Rusanov solver. The $x$- and $y$-directional numerical fluxes are given by
	\begin{subequations}
		\begin{align*}
			\mathbf{F}^x_{m,i+\frac{1}{2},j} = \dfrac{1}{2} ( \mathbf{f}^x_m(\mathbf{U}_{m,i,j}) + \mathbf{f}^x_m(\mathbf{U}_{m,i+1,j}) )
			-
			\dfrac{c}{2} (\mathbf{U}_{m,i+1,j}- \mathbf{U}_{m,i,j}),  \\
			\mathbf{F}^y_{m,i,j+\frac{1}{2}} = \dfrac{1}{2} ( \mathbf{f}^y_m(\mathbf{U}_{m,i,j}) + \mathbf{f}^y_m(\mathbf{U}_{m,i,j+1}) )
			-
			\dfrac{c}{2} (\mathbf{U}_{m,i,j+1}- \mathbf{U}_{m,i,j}). 
		\end{align*}
	\end{subequations}
	where $c=1$ is the speed of light. To obtain higher accuracy, we use \textit{min-mod} limiter \cite{LeVeque2002} to reconstruct the variables $\mathbf{U}_{m}$ at the cell faces to get the second-order scheme. Furthermore, we use third and fifth-order WENO reconstructions \cite{Jiang1996} to obtain the third- and fourth-order Rusanov's solver for the Maxwell's equations. 

\section{Fully discrete scheme} \label{sec:full_disc}

Let $\mathbf{U}^n$ be the discrete solution at time $t^n$, where the time step is $\Delta t = t^{n+1} - t^n$. The semi-discrete scheme~\eqref{scheme} can be expressed as 
\begin{equation}
	\frac{d }{dt}\mathbf{U}_{i,j}(t)
	=
	\mathcal{L}_{i,j}(\mathbf{U}(t)) + \mathbf{s}(\mathbf{U}_{i,j}(t)), \qquad \mathbf{U}_{i,j}(t^n)=\mathbf{U}^n_{i,j}
	\label{fullydiscrete}
\end{equation} 
where,
\begin{equation*}
	\mathcal{L}_{i,j}(\mathbf{U}(t))
	=
	- \frac{1}{\Delta x}  \left(\mathbf{F}_{i+\frac{1}{2},j}^x(t)-\mathbf{F}_{i-\frac{1}{2},j}^x(t)\right)
	-
	\frac{1}{\Delta y}  \left(\mathbf{F}_{i,j+\frac{1}{2}}^y(t)-\mathbf{F}_{i,j-\frac{1}{2}}^y(t)\right).
\end{equation*}
We propose fully explicit and IMEX schemes for the above system.
\subsection{Explicit schemes} \label{explicit}
For the explicit time discretizations, we use explicit strong stability preserving Runge Kutta (SSP-RK) methods \cite{Gottlieb2001}. The second and third-order accurate SSP-RK schemes are given as follows.
\begin{enumerate}
	\item Set $\mathbf{U}^{0} \ = \ \mathbf{U}^n$.
	\item For $m=1,\dots,k+1$, compute,
	\begin{equation*}
		\mathbf{U}_{i,j}^{(m)} \ 
		= \
		\sum_{l=0}^{m-1} \left[ \alpha_{ml}\mathbf{U}_{i,j}^{(l)}
		+
		\beta_{ml}\Delta t \big(\mathcal{L}_{i,j}(\mathbf{U}^{(l)}) + \mathbf{s}(\mathbf{U}_{i,j}^{(l)}) \big) \right]
	\end{equation*}
	where $\alpha_{ml}$ and $\beta_{ml}$ are given in Table~\eqref{table:ssp}.
	\item Finally, $\mathbf{U}_{i,j}^{n+1} \ =  \ \mathbf{U}_{i,j}^{(k+1)}$.
\end{enumerate}

\begin{table}[ht]
	\centering
	\begin{tabular}{ l|ccc|ccc }
		\hline
		Order & \multicolumn{3}{|c|}{$\alpha_{il}$} & \multicolumn{3}{|c}{$\beta_{il}$} \\
		\hline
		\multirow{2}{*}{2} & 1 & & & 1 & & \\
		& 1/2 & 1/2 & & 0 & 1/2 & \\
		\hline
		\multirow{3}{*}{3} & 1 & & & 1 & & \\
		& 3/4 & 1/4 & & 0 & 1/4 & \\
		& 1/3 & 0 & 2/3 & 0 & 0 & 2/3\\
		\hline
	\end{tabular}
	\caption[h]{Coefficients for SSP Runge-Kutta time stepping} 
	\label{table:ssp}
\end{table}
The fourth order RK-SSP is given by,
\begin{subequations}
	\begin{align*}
		\mathbf{U}^{(1)}_{i,j} &= \textbf{U}^n_{i,j} + 0.39175222700392 \;(\Delta t) \left( \mathcal{L}_{i,j}(\mathbf{U}^n)+\mathbf{s}(\mathbf{U}^n_{i,j}) \right), \\
		\mathbf{U}^{(2)}_{i,j} &= 0.44437049406734 \;\textbf{U}^n_{i,j} + 0.55562950593266\; \mathbf{U}^{(1)}_{i,j}\\&+0.36841059262959\;  (\Delta t) \left( \mathcal{L}_{i,j}(\mathbf{U}^{(1)})+\mathbf{s}(\mathbf{U}^{(1)}_{i,j}) \right), \\
		\mathbf{U}^{(3)}_{i,j} &= 0.62010185138540\; \textbf{U}^n_{i,j} + 0.37989814861460\; \mathbf{U}^{(2)}_{i,j} \\ &+0.25189177424738\;  (\Delta t) \left( \mathcal{L}_{i,j}(\mathbf{U}^{(2)})+\mathbf{s}(\mathbf{U}^{(2)}_{i,j}) \right), \\
		\mathbf{U}^{(4)}_{i,j} &= 0.17807995410773\; \textbf{U}^n_{i,j} + 0.82192004589227\; \mathbf{U}^{(3)}_{i,j} \\&+ 0.54497475021237\;  (\Delta t) \left( \mathcal{L}_{i,j}(\mathbf{U}^{(3)})+\mathbf{s}(\mathbf{U}^{(3)}_{i,j}) \right), \\
		\textbf{U}^{n+1}_{i,j} &= 0.00683325884039 \;\textbf{U}^n + 0.51723167208978\; \mathbf{U}^{(2)}_{i,j} + 0.12759831133288 \;\mathbf{U}^{(3)}_{i,j} \\
		&+ 0.34833675773694\; \mathbf{U}^{(4)}_{i,j}   + 0.08460416338212 \; (\Delta t) \left( \mathcal{L}_{i,j}(\mathbf{U}^{(3)})+\mathbf{s}(\mathbf{U}^{(3)}_{i,j}) \right)\\
		&+ 0.22600748319395 \; (\Delta t) \left( \mathcal{L}_{i,j}(\mathbf{U}^{(4)})+\mathbf{s}(\mathbf{U}^{(4)}_{i,j}) \right).
	\end{align*}
\end{subequations}
We use a second-order spatial accurate scheme with a second-order explicit time scheme, which is denoted by, {\bf O2-ES-Exp}. Similarly, the third-order spatial accurate scheme is time-discretized with the third-order RK method above and denoted by, {\bf O3-ES-Exp}. The fourth-order scheme consists of fourth-order spatial discretization with the fourth-order time update above and denoted by {\bf O4-ES-Exp}.

\subsection{ARK-IMEX schemes} 
\label{sec:imex_scheme}
The source terms are stiff when the charge to mass ratio $r_\alpha$ is large, which results in a time-step restriction when using explicit time integration schemes. To overcome this difficulty, we will use the Additive Runge Kutta Implicit Explicit~(ARK IMEX) time stepping scheme from\cite{Pareschi2005}. Here, the flux terms will be treated explicitly, and the source terms will be treated implicitly. We write the semi-discrete scheme~\eqref{scheme} in a simplified form as follows
\begin{equation*}
	\dfrac{\partial \mathbf{U}}{\partial t} = F_{NS}(\mathbf{U}(t))+ F_S(\mathbf{U}(t)),  
\end{equation*}
where $F_{NS}(\mathbf{U}(t)) = \mathcal{L}_{i,j}(\mathbf{U}(t))$ denotes the non-stiff flux terms, and $F_S(\mathbf{U}(t)) = \mathbf{s}(\mathbf{U}_{i,j}(t))$ denotes the stiff source terms. Note that the source terms contain only the solution value in the parent cell and not any spatial derivatives, and hence they do not depend on the solution in neighboring cells. 

\subsubsection{Second order time discretization.} \label{o2_lstable}
For the second-order discretization, we use L-Stable second-order accurate ARK IMEX scheme from \cite{Pareschi2005}. The method has two implicit stages and is given by
\begin{subequations}
	\begin{align*}
		\mathbf{U}^{(1)} &= \mathbf{U}^n + \Delta t \ \big( \beta F_S(\mathbf{U}^{(1)}(t^n)) \big), \\
		\mathbf{U}^{(2)} &= \mathbf{U}^n + \Delta t \ \big( F_{NS}(\mathbf{U}^{(1)}(t^n)) + (1-2\beta) F_S(\mathbf{U}^{(1)}(t^n))  + \beta F_S(\mathbf{U}^{(2)}(t^n)) \big), \\
		\mathbf{U}^{(n+1)} &= \mathbf{U}^n + \Delta t \ \bigg( \dfrac{1}{2}F_{NS}(\mathbf{U}^{(1)}(t^n)) + \dfrac{1}{2}F_{NS}(\mathbf{U}^{(2)}(t^n)) + \dfrac{1}{2} F_S(\mathbf{U}^{(1)}(t^n))
		+ \dfrac{1}{2} F_S(\mathbf{U}^{(2)}(t^n)) \bigg),
	\end{align*}
\end{subequations}
where $\beta = 1 - \dfrac{1}{\sqrt{2}}$. Note that, to evaluate $\mathbf{U}^{(1)}$ and $\mathbf{U}^{(2)}$, we need to solve a nonlinear system of algebraic equations. However, the resulting equations are only local, i.e., we get an independent set of equations for each cell. We solve these equations using the Newton's method, where the convergence and the next direction search is based on the backtracing line search framework. A detailed procedure is given in \cite{Dennis1996}. In the higher-order method, we will use the same process to update the internal IMEX steps.

\subsubsection{Third and fourth-order time discretization.} \label{o34_lstable}
For the higher-order discretization, we use L-Stable ARK IMEX methods from \cite{Kennedy2003}. We ignore the subscripts $\{i,j\}$ for simplicity. The ARK scheme from \cite{Kennedy2003} has the following steps.
\begin{enumerate}
	\item Set $\mathbf{U}^{0} \ = \ \mathbf{U}^n$.
	\item For $m=1,\dots,k$, compute,
	\begin{equation*}
		\mathbf{U}^{(m)}  
		= \mathbf{U}^n  +
		\Delta t \sum_{l=0}^{m-1}a_{ml}^{[NS]} F_{NS}( \mathbf{U}^{l})
		+
		\Delta t \sum_{l=0}^{m}a_{ml}^{[S]} F_{S}( \mathbf{U}^{l}),
	\end{equation*}
	where, $\mathbf{U}^{(m)}$ approximates $\mathbf{U}(t^n + c_m \Delta t).$
	\item Finally,
	\begin{equation*}
		\mathbf{U}^{n+1}  
		= \mathbf{U}^n  +
		\Delta t \sum_{l=0}^{k} b_{l}^{[NS]} F_{NS}( \mathbf{U}^{l})
		+
		\Delta t \sum_{l=0}^{k} b_{l}^{[S]}   F_{S}( \mathbf{U}^{l}),
	\end{equation*}
	where $k$ is the order of the desired time discretization~($k$ ranges from 3 to 5), and  coefficients $a_{ml}^{[NS]}$, $a_{ml}^{[S]}$, $b_{l}^{[NS]}$, $b_{l}^{[S]}$, $c_l$ are defined accordingly by the order of accuracy and stability considerations.  
\end{enumerate}
We present coefficients for the third-order scheme in Appendix~\eqref{ap:ark_coeff}. The coefficients for the fourth-order time update can be found in \cite{Kennedy2003}. We denote the second-order IMEX scheme with {\bf O2-ES-IMEX}, third-order IMEX scheme with {\bf O3-ES-IMEX} and fourth-order IMEX scheme with {\bf O4-ES-IMEX}.

\section{Numerical results} \label{sec:num_test}
In this section, we present numerical results for various test cases. We set specific heat ratios $\gamma_i=\gamma_e=4/3$, unless stated otherwise. In all the test cases, we take initial values of the potentials $\phi$ and $\psi$ to be zero and potential speeds $\kappa=\chi=1$. In some test cases, we need to consider the resistive effects; following \cite{Amano2016}, we modify the momentum \eqref{ion_momentum},~\eqref{elec_momentum} and energy equations \eqref{ion_energy},~\eqref{elec_energy} for the fluid parts by adding resistive terms as follows,

\begin{subequations}
	\begin{align}
		\frac{\partial ( \rho_\alpha h_\alpha \Gamma_\alpha^2 \mathbf{u}_\alpha)}{\partial t} + \nabla \cdot (\rho_\alpha h_\alpha \Gamma_\alpha^2 \textbf{u}_\alpha \mathbf{u}_\alpha^\top + p_\alpha \mathbf{I}) &= r_\alpha \Gamma_\alpha \rho_\alpha (\mathbf{E}+\mathbf{u}_\alpha \times \mathbf{B}) + \boldsymbol{R}_\alpha  , \label{resist_momentum}	\\ 
		%
		\frac{\partial (\mathcal{E}_\alpha)}{\partial t} + \nabla \cdot ((\mathcal{E}_\alpha+p_\alpha)\mathbf{u}_\alpha) &= r_\alpha \Gamma_\alpha \rho_\alpha (\mathbf{u}_\alpha \cdot \mathbf{E}) + R_\alpha^0,	\label{resist_energy} 
	\end{align}
\end{subequations}
for $\alpha \in \{i,e\}$. Following\cite{Amano2016}, an anti-symmetry relationship, $(\boldsymbol{R}_e, {R}_e^0)=(-\boldsymbol{R}_i, -R_i^0)$, has been assumed for the conservation of the total momentum and energy density. The expressions for the terms $\boldsymbol{R}_i$ and $R_i^0$ are given by 
\begin{align*}
	\boldsymbol{R}_i =  -\eta \dfrac{\omega_p^2}{r_i-r_e} (\mathbf{j}- \rho_0 \boldsymbol{\Phi}), \qquad
	{R}_i^0 =  -\eta \dfrac{\omega_p^2}{r_i-r_e} (\rho_c- \rho_0 {\Lambda}),
\end{align*}
where,
\begin{eqnarray} \label{source_var}
	\omega_p^2 &= r_i^2 \rho_i + r_e^2 \rho_e, \qquad
	\boldsymbol{\Phi} = \dfrac{r_i \rho_i \Gamma_i \mathbf {u}_i + r_e \rho_e \Gamma_e \mathbf{u}_e}{\omega_p^2},
	\\
	\Lambda &= \dfrac{r_i^2 \rho_i \Gamma_i  + r_e^2 \rho_e \Gamma_e }{\omega_p^2}, \qquad
	\rho_0 = \Lambda \rho_c - \mathbf{j} \cdot \boldsymbol{\Phi} \nonumber
\end{eqnarray}
Here $\eta$ is the resistivity constant and $\omega_p$ is the total plasma frequency. The total plasma skin depth $d_p$ is defined as $d_p = \frac{1}{\omega_p}$. For the time update using ARK IMEX scheme, we couple the resistive terms with the other source terms and treat them implicitly, i.e., we modify $F_S$ to include the resistive terms.
\par To compare the numerical results with published work consistently, for all the test cases taken from \cite{Balsara2016}, we multiply the source term for Maxwell's equations by 4$\pi$, i.e., we consider,
\begin{equation}
	\mathbf{s}_m = 4\pi
	\begin{pmatrix}
			0\\
			0\\ 
			0\\
			-j_x\\
			-j_y\\ 
			-j_z\\
			\chi \rho_c\\
			0
	\end{pmatrix}.
	\label{eq:modifed_max_source}
\end{equation}
Consequently, in the test cases from \cite{Balsara2016}, the total plasma skin depth is given by $d_p=\frac{\sqrt{2}}{\omega_p}$ with $\omega_p^2=4\pi(r_i^2 \rho_i + r_e^2 \rho_e).$

\subsection{One-dimensional test cases}

For the one-dimensional test cases, the time step is chosen using 	
\[
{\Delta t} = \text{CFL} \cdot \min \left\{ \dfrac{\Delta x}{\Lambda_{max}^x(\mathbf{U}_i)} : 1 \le i \le N_x\right\}, \textrm{ where } \Lambda_{max}^x(\mathbf{U}_i) = \max\{|\Lambda^x_k(\mathbf{U}_i)|: 1 \le k \le 18\}.
\]
We take CFL to be $0.8$, for  the explicit ({\bf O2-ES-Exp}, {\bf O3-ES-Exp} and {\bf O4-ES-Exp}), and ARK IMEX schemes~({\bf O2-ES-IMEX}, {\bf O3-ES-IMEX} and {\bf O4-ES-IMEX}), unless stated otherwise.

\subsubsection{Accuracy test} \label{test:1d_smooth} 
\begin{table}[ht]
	\centering
	\begin{tabular}{|c|c|c|c|c|c|c|}
		\hline
		Number of cells & \multicolumn{2}{|c}{{\bf O2-ES-Exp} } &
		\multicolumn{2}{|c}{{\bf O3-ES-Exp}} &
		\multicolumn{2}{|c|}{{\bf O4-ES-Exp}} \\
		\hline
		-- & $L^1$ error & Order & $L^1$ error & Order & $L^1$ error & Order \\
		\hline
		50 & 4.65050e-02 & -- & 1.45793e-03 & -- & 1.49240e-04 & -- \\
		100 & 1.46231e-02 & 1.6691 & 2.11487e-04 & 2.7853 & 1.13502e-05 & 3.7168 \\
		200 & 4.06705e-03 & 1.8462 & 2.68696e-05 & 2.9765 & 8.01398e-07 & 3.8241 \\
		400 & 1.10159e-03 & 1.8844 & 3.38714e-06 & 2.9878 & 5.55355e-08 & 3.8510 \\
		800 & 2.96489e-04 & 1.8935 & 4.12164e-07 & 3.0388 & 3.74592e-09 & 3.8900 \\
		1600 & 7.83571e-05 & 1.9198 & 4.74082e-08 & 3.1200 & 2.48382e-10 & 3.9147 \\
		3200 & 2.04466e-05 & 1.9382 & 5.12307e-09 & 3.2101 & 1.77044e-11 & 3.8104 \\
		\hline
	\end{tabular}
	\caption[h]{\nameref{test:1d_smooth}: $L^1$ errors and order of convergence for $\rho_i$ using the explicit entropy stable schemes {\bf O2-ES-Exp}, {\bf O3-ES-Exp} and {\bf O4-ES-Exp}.}
	\label{table:order_exp}
\end{table}

\begin{table}[ht]
	\centering
	\begin{tabular}{|c|c|c|c|c|c|c| }
		\hline
		Number of cells & \multicolumn{2}{|c}{{\bf O2-ES-IMEX} } &
		\multicolumn{2}{|c}{{\bf O3-ES-IMEX}} &
		\multicolumn{2}{|c|}{{\bf O4-ES-IMEX}} \\
		\hline
		-- & $L^1$ error & Order & $L^1$ error & Order & $L^1$ error & Order \\
		\hline
		50 & 4.65793e-02 & -- & 1.47651e-03 & -- & 1.49978e-04 & -- \\
		100 & 1.45983e-02 & 1.6739 & 2.12698e-04 & 2.7953 & 1.13298e-05 & 3.7266 \\
		200 & 4.05330e-03 & 1.8486 & 2.70403e-05 & 2.9756 & 7.99773e-07 & 3.8244 \\
		400 & 1.09574e-03 & 1.8872 & 3.40147e-06 & 2.9909 & 5.54390e-08 & 3.8506 \\
		800 & 2.94528e-04 & 1.8954 & 4.13433e-07 & 3.0404 & 3.73935e-09 & 3.8900 \\
		1600 & 7.77465e-05 & 1.9216 & 4.75597e-08 & 3.1198 & 2.47509e-10 & 3.9172 \\
		3200 & 2.02807e-05 & 1.9387 & 5.14011e-09 & 3.2099 & 1.63939e-11 & 3.9162 \\
		\hline
	\end{tabular}
	\caption[h]{\nameref{test:1d_smooth}:  $L^1$ errors and order of convergence for $\rho_i$ using the ARK IMEX entropy stable schemes {\bf O2-ES-IMEX}, {\bf O3-ES-IMEX} and {\bf O4-ES-IMEX}.}
	\label{table:order_imex}
\end{table}

To test the accuracy and order of convergence of the proposed scheme, we consider a test case with smooth solution. We follow a forced solution approach of \cite{Kumar2012} to modify the smooth test case from \cite{Bhoriya2020}. We add forcing term $\mathcal{S}(x,t)$ to write the one-dimensional modified system~\eqref{conservedform} as
\begin{equation*}
	\frac{\partial \mathbf{U}}{\partial t}+\frac{\partial \mathbf{f}^x}{\partial x} = \mathbf{s} + \mathcal{S}(x,t) 
\end{equation*}
with 
\begin{equation*}
	\mathcal{S}(x,t)=\left(\mathbf{0}_{13},-\dfrac{1}{\sqrt{3}}(2+\sin(2 \pi (x-0.5t))), 0, -3\pi\cos(2 \pi (x-0.5t)),\frac{2}{\sqrt{3}}(2+\sin(2 \pi (x-0.5t))),0 \right)^\top.	
\end{equation*}
The initial ion and electron densities are $\rho_i = \rho_e = 2+\sin(2 \pi x)$, with initial velocities $u_{x_i}=u_{x_e} = 0.5$ and initial pressures $p_i=p_e=1$. The $y-$magnetic component is $B_y=2 \sin(2 \pi (x))$ and the $z-$electric field component is $E_z = -\sin(2\pi (x)$. All other primitive variables are set to zero. The computational domain is $I=[0,1]$ with periodic boundary conditions. The charge to mass ratios are $r_i = 1$ and $r_e = -2$, consequently we have non-zero electric field source term $\mathbf{j}$.  The ion-electron adiabatic index is $\gamma=5/3$.  For this problem, it is easy to verify that the exact solution is $\rho_i = \rho_e = 2+\sin(2 \pi (x-0.5 t))$.

In Table~\eqref{table:order_exp} and Table~\eqref{table:order_imex}, we have presented the $L^1$-errors for $\rho_i$ at different resolutions for explicit ({\bf O2-ES-Exp}, {\bf O3-ES-Exp} and {\bf O4-ES-Exp}) and ARK-IMEX schemes ({\bf O2-ES-IMEX}, {\bf O3-ES-IMEX}, and {\bf O4-ES-IMEX}) at the final time $t=2.0$. For both explicit and IMEX schemes, we observe a consistent order of accuracy at various resolutions. Furthermore, we also note that the errors for both explicit and IMEX schemes of same order are comparable at a given resolution.

\subsubsection{Relativistic Brio--Wu test problem with finite plasma skin depth} \label{test:1d_brio} 
This is a shock tube problem with finite plasma skin depth from \cite{Balsara2016,Amano2016} and is a two-fluid extension of the relativistic analogue of the Brio-Wu shock tube problem given in~\cite{Balsara2001}. Maxwell's source  terms~\eqref{eq:modifed_max_source} are included, but no resistive effects are considered. The computational domain is $[-0.5,0.5]$ with Neumann boundary conditions at both boundaries. The initial discontinuity is assumed to be placed at $x=0.0$. Following \cite{Balsara2016}, the initial Riemann data is given by,
\begin{align*}
	\mathbf{W_L}=\begin{pmatrix}
		\rho_i \\ p_i \\ \rho_e \\ p_e \\ B_x \\ B_y
	\end{pmatrix}_{L}
	=
	\begin{pmatrix}
		0.5 \\ 0.5 \\ 0.5 \\ 0.5 \\ \sqrt{\pi} \\ \sqrt{4 \pi}
	\end{pmatrix},
	\qquad
	\mathbf{W_R}=\begin{pmatrix}
		\rho_i \\ p_i \\ \rho_e \\ p_e \\ B_x \\ B_y
	\end{pmatrix}_{R}
	=
	\begin{pmatrix}
		0.0625 \\ 0.05 \\ 0.0625 \\ 0.05 \\ \sqrt{\pi} \\ -\sqrt{4 \pi}
	\end{pmatrix}.
\end{align*}
All other variables are set to zero.  Following \cite{Balsara2016}, the specific heat ratios are set to $\gamma_i=\gamma_e=2.0$, and we compute the solution till final time $t=0.4$.

First, we consider charge to mass ratios as $r_i=-r_e=10^3/\sqrt{4 \pi}$, thus the plasma skin depth is $10^{-3}/\sqrt{\rho_i}$. Consequently, we expect solutions to be close to the RMHD results whenever the resolution of the cells is larger than the plasma skin depth.
\begin{figure}[!htbp]
	\begin{center}
		\subfigure[Plots of total density $\rho_i+\rho_e$ for second-order schemes {\bf O2-ES-Exp} and {\bf O2-ES-IMEX} using $400$, $1600$ and $6400$ cells.]{
			\includegraphics[width=3.0in, height=2.3in]{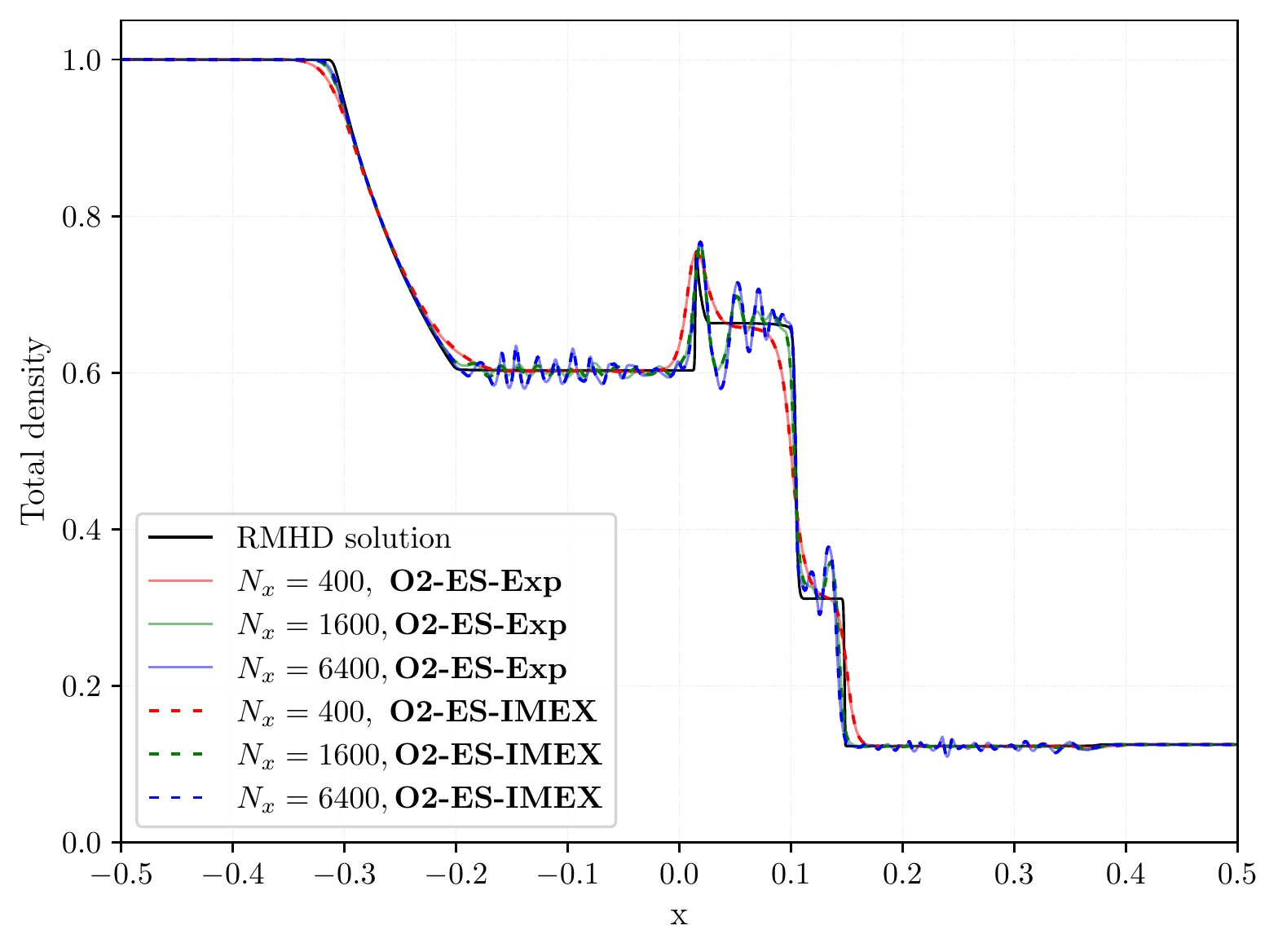}
			\label{fig:brio_rho_o2}}
		\subfigure[Plots of total density $\rho_i+\rho_e$, for second ({\bf O2-ES-Exp} and {\bf O2-ES-IMEX}), third ({\bf O3-ES-Exp} and {\bf O3-ES-IMEX}), and fourth ({\bf O4-ES-Exp} and {\bf O4-ES-IMEX}) order schemes using $400$ cells.]{
			\includegraphics[width=3.0in, height=2.3in]{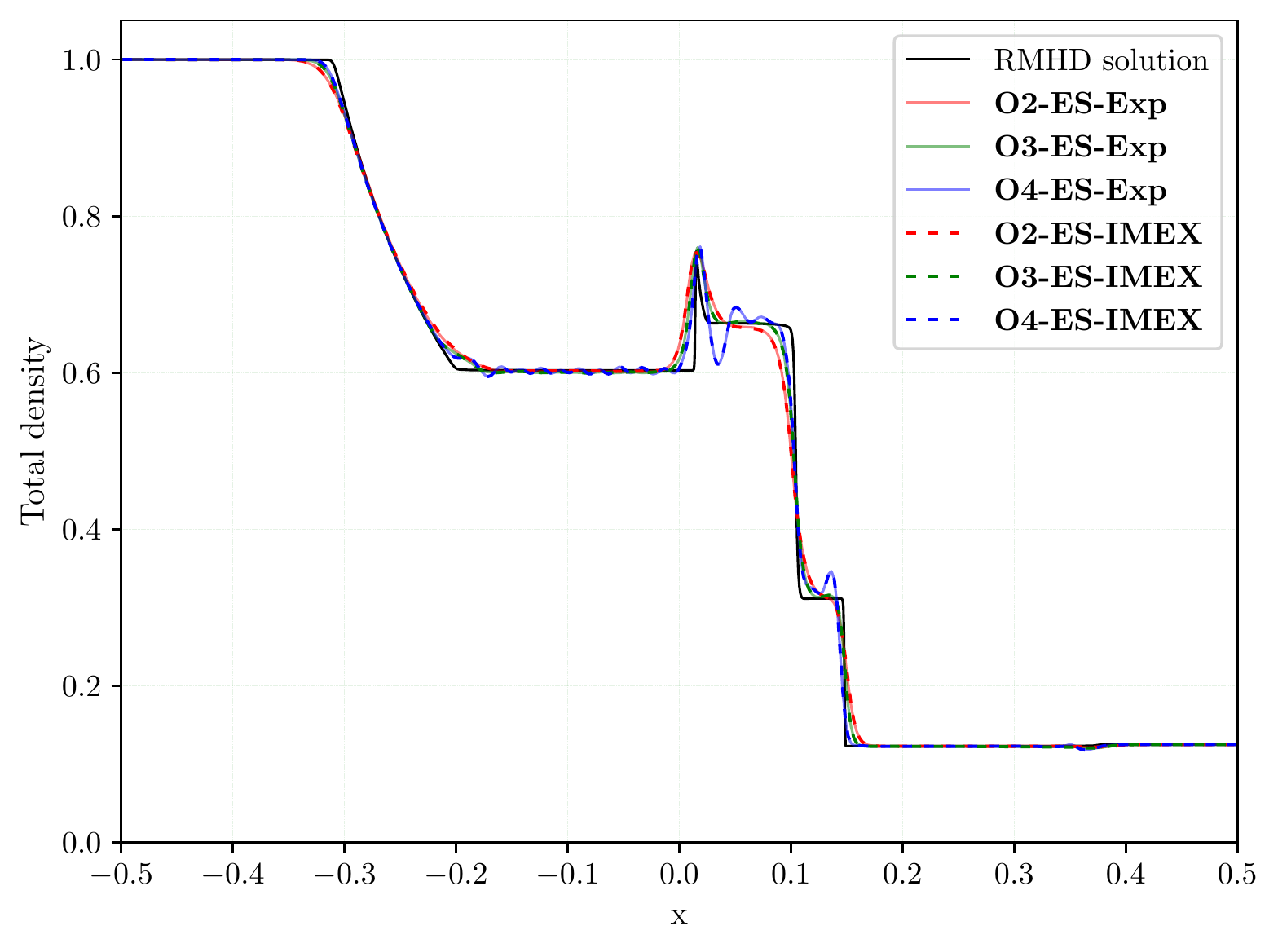}
			\label{fig:brio_rho_all}}
		\subfigure[Plots of total ion entropy for second ({\bf O2-ES-Exp} and {\bf O2-ES-IMEX}), third ({\bf O3-ES-Exp} and {\bf O3-ES-IMEX}), and fourth ({\bf O4-ES-Exp} and {\bf O4-ES-IMEX}) order schemes using $400$ cells.]{
			\includegraphics[width=2.9in, height=2.3in]{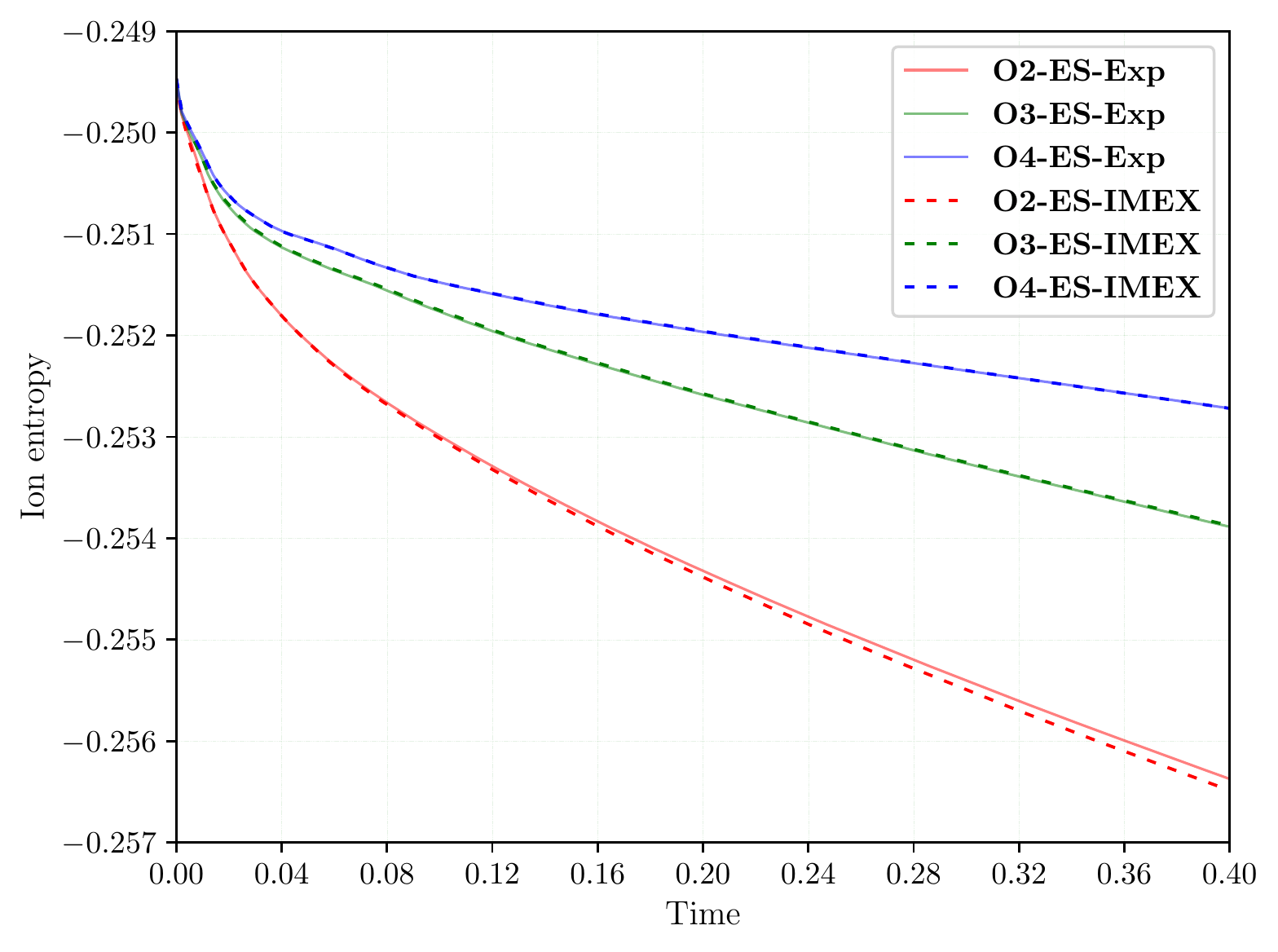}
			\label{fig:brio_ent_ion}}
		\subfigure[Plots of total electron entropy for second ({\bf O2-ES-Exp} and {\bf O2-ES-IMEX}), third ({\bf O3-ES-Exp} and {\bf O3-ES-IMEX}), and fourth ({\bf O4-ES-Exp} and {\bf O4-ES-IMEX}) order schemes using $400$ cells.]{
			\includegraphics[width=2.9in, height=2.3in]{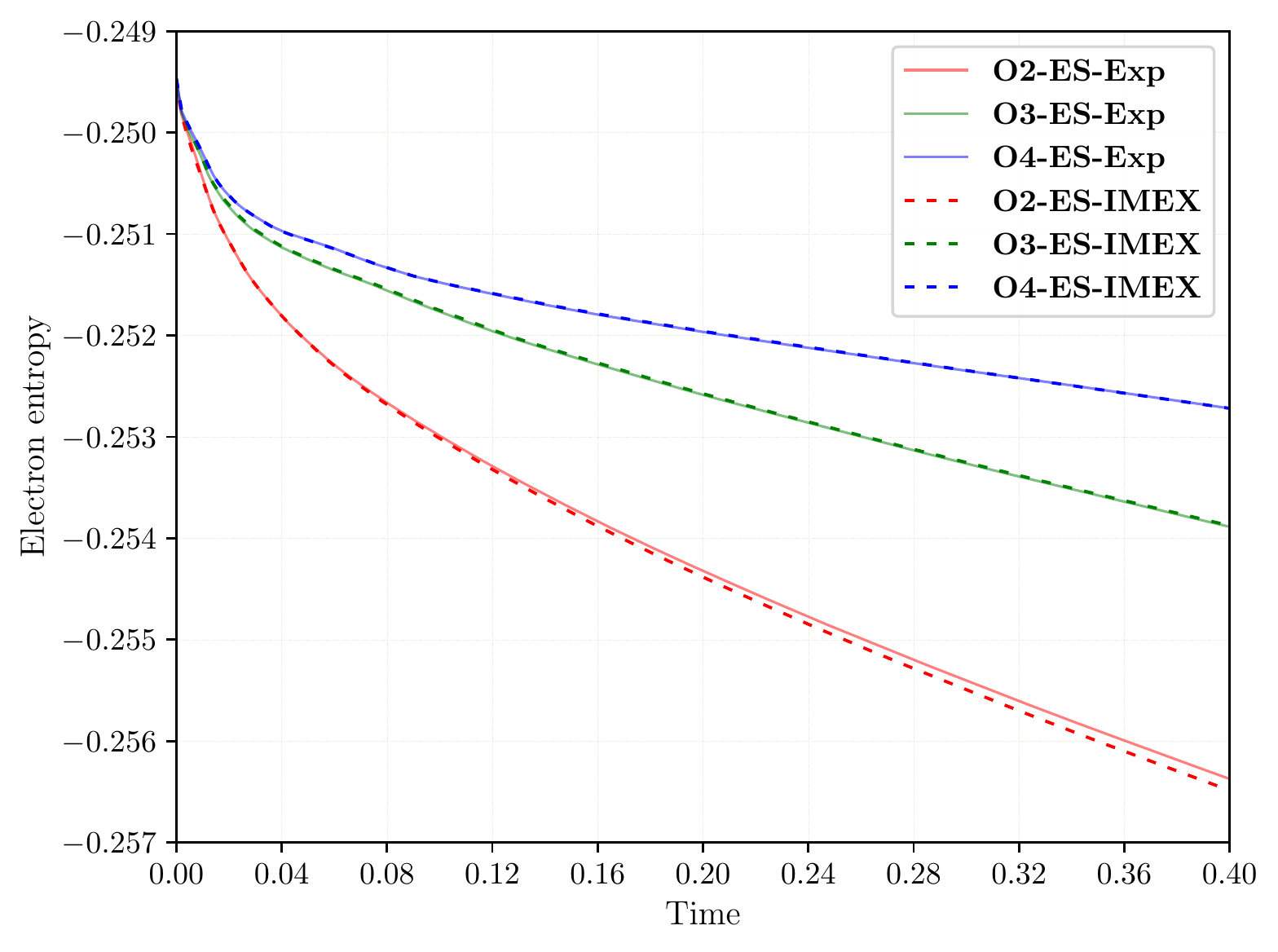}
			\label{fig:brio_ent_elec}}
		\caption{Numerical results for \nameref{test:1d_brio} using different schemes and grid resolutions.}
		\label{fig:brio}
	\end{center}
\end{figure}

\begin{figure}[!htbp]
	\begin{center}
		\subfigure[Plots of total density $\rho_i+\rho_e$, for second ({\bf O2-ES-Exp} and {\bf O2-ES-IMEX}), third ({\bf O3-ES-Exp} and {\bf O3-ES-IMEX}), and fourth ({\bf O4-ES-Exp} and {\bf O4-ES-IMEX}) order schemes with $r_i=-r_e=10^4/\sqrt{4 \pi}$ using $400$ cells.]{
			\includegraphics[width=0.48\textwidth]{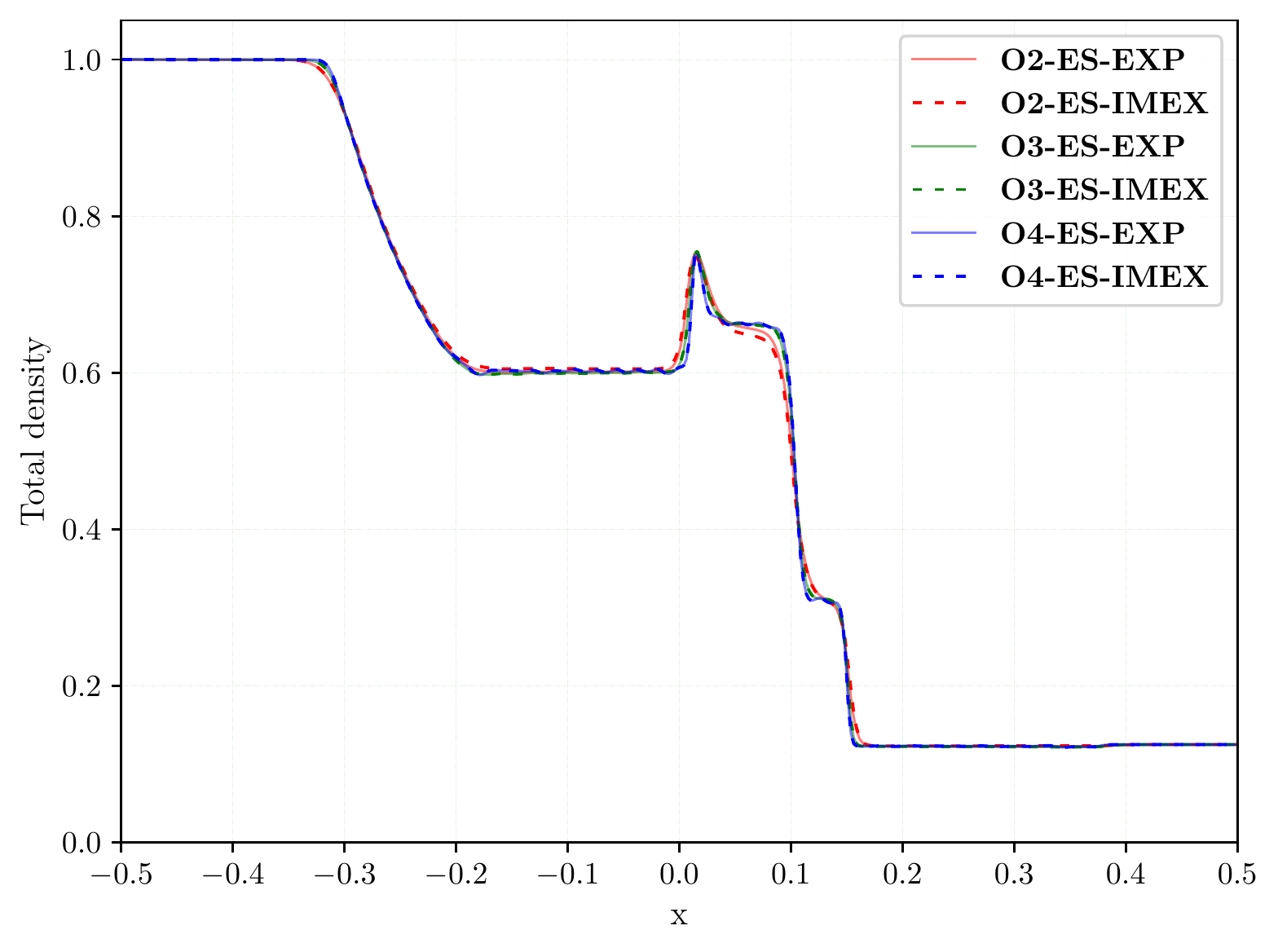}
			\label{fig:brio_ri4_all_scheme}}
		\subfigure[Plots of total density $\rho_i+\rho_e$, for second ({\bf O2-ES-Exp} and {\bf O2-ES-IMEX}), third ({\bf O3-ES-Exp} and {\bf O3-ES-IMEX}), and fourth ({\bf O4-ES-Exp} and {\bf O4-ES-IMEX}) order schemes with $r_i=-r_e=10^5/\sqrt{4 \pi}$ using $400$ cells.]{
			\includegraphics[width=0.48\textwidth]{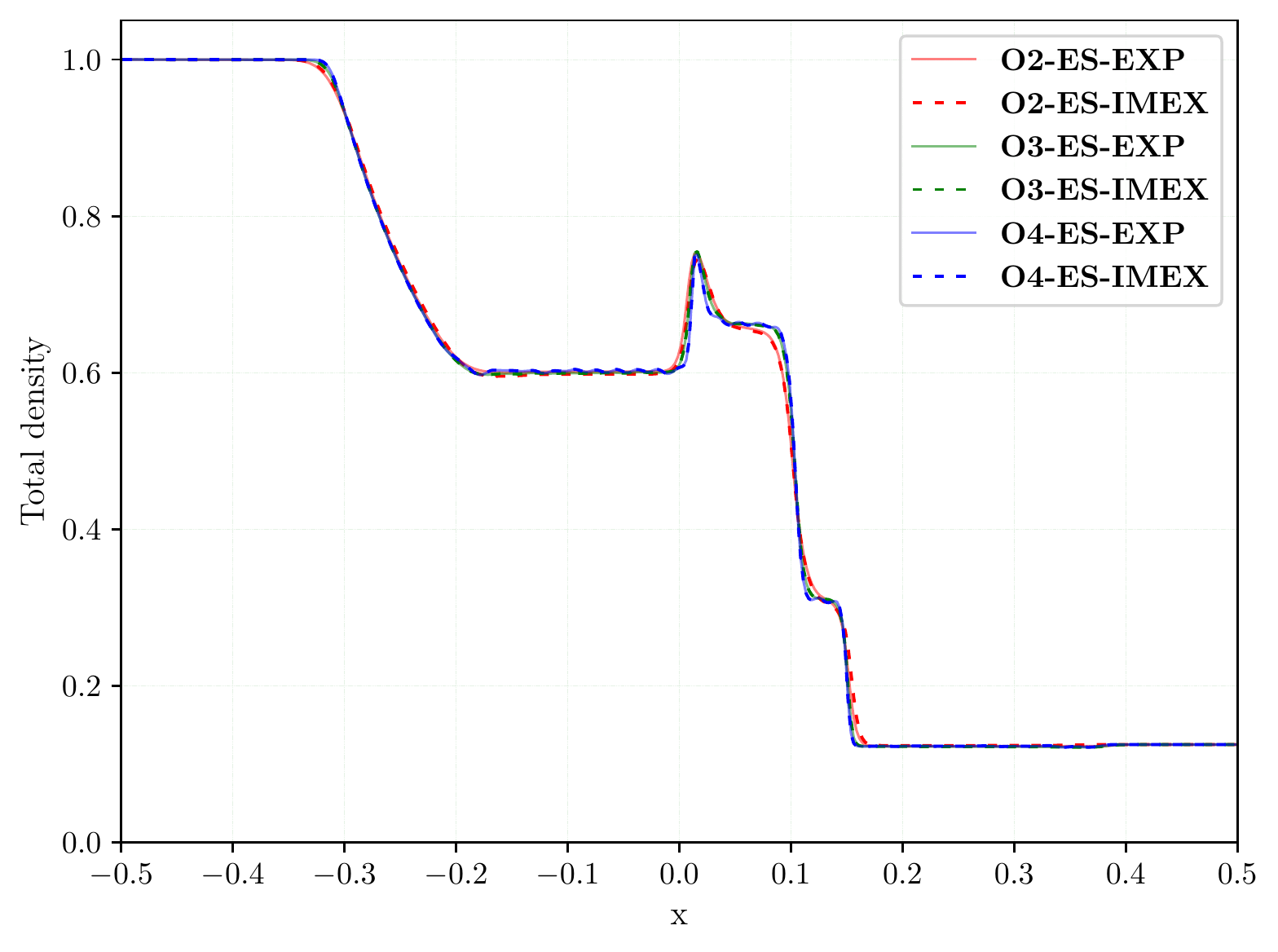}
			\label{fig:brio_ri5_all_scheme}}
		\subfigure[Plots of total density $\rho_i+\rho_e$, for {\bf O4-ES-IMEX} scheme for various stiff cases at $400$ cells.]{
			\includegraphics[width=0.48\textwidth]{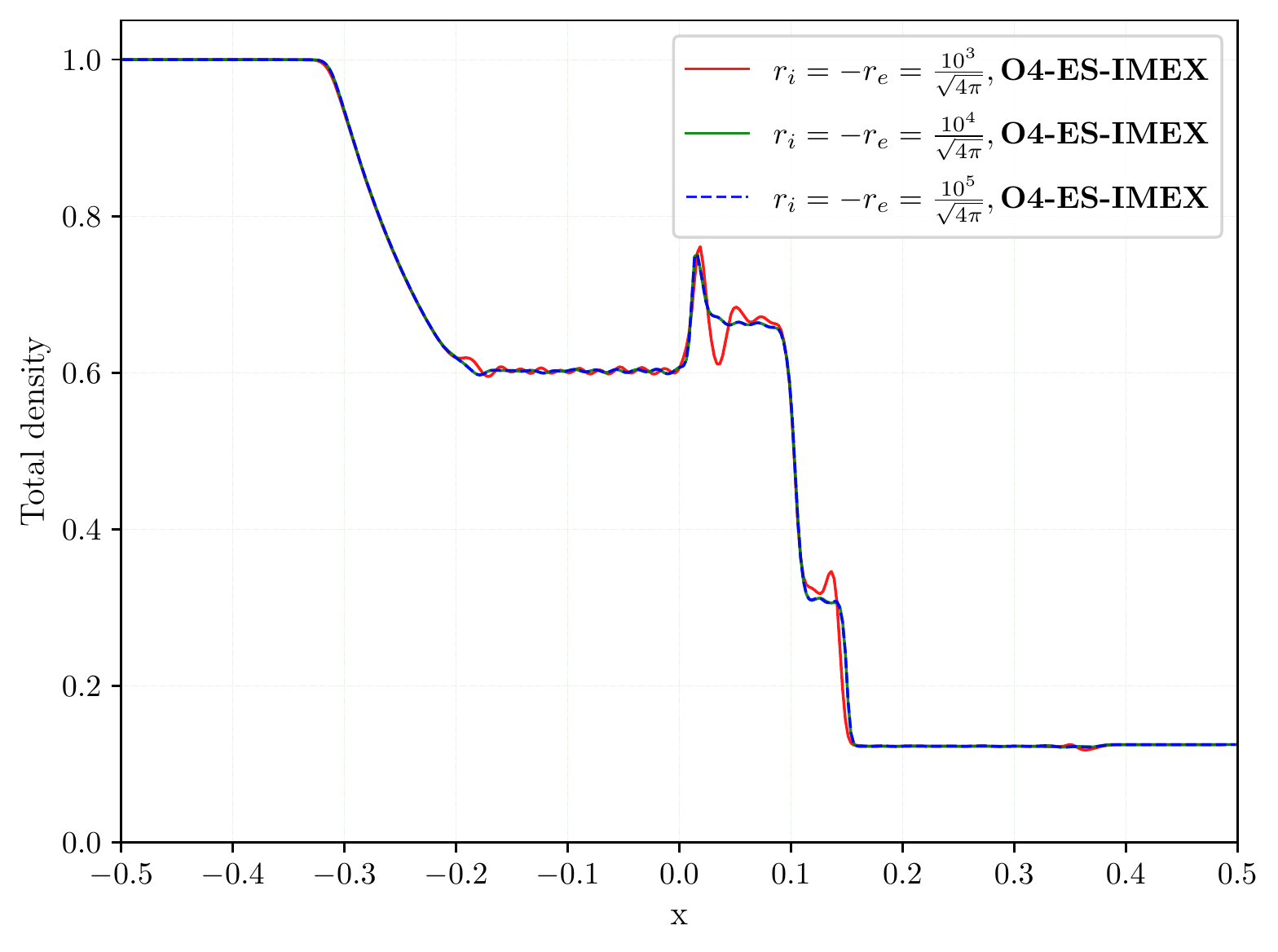}
			\label{fig:brio_o4_all_ri}}
		\caption{Numerical results for \nameref{test:1d_brio} using various stiff cases at 400 cells.}
		\label{fig:brio_stiff_analysis}
	\end{center}
\end{figure}

To compare with the results of~\cite{Balsara2016}, in Figure \eqref{fig:brio_rho_o2}, we plot the total density $\rho_i+\rho_e$ for {\bf O2-ES-Exp} and {\bf O2-ES-IMEX} schemes using $400$, $1600$ and $6400$ cells. To compare the solution with RMHD solution, we have computed ``RMHD" solution using $r_i=-r_e=10^5/\sqrt{4 \pi}$
at $6400$ cells and {\bf O2-ES-IMEX} scheme. We observe that on a resolution of $400$ cells, the cell size $\Delta x = 0.0025$ is slightly larger than the plasma skin depth in the entire domain; thus, the profile roughly matches with the RMHD results~\cite{Balsara2001}, while at the higher resolutions, $1600$ and $6400$, the cell size is much smaller than the plasma skin depth, consequently, appearance of the two-fluid effect in the form of dispersive waves is observed. We also note that both explicit ({\bf O2-ES-Exp}) and IMEX ({\bf O2-ES-IMEX}) schemes have similar performance in capturing all the waves and additional effects at the various resolutions.

To compare second-order schemes, with the third and fourth-order schemes, in Figure~\eqref{fig:brio_rho_all}, we plot total density for explicit ({\bf O2-ES-Exp}, {\bf O3-ES-Exp} and {\bf O4-ES-Exp}) and ARK-IMEX schemes
({\bf O2-ES-IMEX}, {\bf O3-ES-IMEX}, and {\bf O4-ES-IMEX}) using $400$ cells. We observe that the third and fourth-order schemes are able to capture the additional dispersive effects even on the coarser mesh. We also note that fourth-order schemes are more effective in capturing these effects compared to the third-order scheme. Furthermore, the  third-order schemes ({\bf O3-ES-Exp} and {\bf O3-ES-IMEX}) have similar performance, and similarly, the fourth-order schemes ({\bf O4-ES-Exp} and {\bf O4-ES-IMEX}) have similar performance. 

We can also observe this from the time evolution of total entropy in the domain for ion flow in Figure~\eqref{fig:brio_ent_ion} and electron flow in Figure \eqref{fig:brio_ent_elec}. We note that the same order schemes have almost the same entropy decay behaviour. Furthermore, third and fourth-order schemes have smaller entropy dissipation compared to the second-order schemes, while fourth-order schemes have less entropy dissipation compared to third-order schemes.

\begin{table}[ht]
	\centering
	\begin{tabular}{ |c|c|c|c|c|c|c|c|c|}
		\hline
		&\multicolumn{4}{|c}{$ r_i =-r_e = 10^4/ \sqrt{4 \pi}$ } &
		
		\multicolumn{4}{|c|}{$ r_i =-r_e = 10^5/ \sqrt{4 \pi}$} \\
		\hline
		\hline
		Number of cells & $100$ & $200$ & $400$ & $800$ & $100$ & $200$ & $400$ & $800$ \\
		\hline
		\hline
		{\bf O2-ES-Exp }   &  3.24s & 5.34s & 9.80s & 17.90s &    85.68s  &  133.99s &  228.22s &  413.55s  \\
		\hline
		{\bf O2-ES-IMEX}  &  0.49s & 1.30s & 4.25s & 14.28s   & 0.50s & 1.22s & 3.70s & 12.11s \\
		\hline
		\hline
		{\bf O3-ES-Exp}   &  2.23s & 3.83s & 6.94s & 13.04s & 32.88s & 53.33s & 96.31s & 182.08s \\
		\hline
		{\bf O3-ES-IMEX}  & 0.72s & 2.02s & 6.31s & 22.14s & 0.97s & 2.69s & 8.51s & 30.01s \\
		\hline
		\hline
		{\bf O4-ES-Exp}   & 6.03s & 10.72s & 20.01s & 37.98s & 61.96s & 108.51s & 198.32s & 374.54s \\
		\hline
		{\bf O4-ES-IMEX}  & 1.30s & 3.78s & 12.86s & 44.90s & 1.27s & 3.82s & 12.87s & 47.00s \\
		\hline
	\end{tabular}
	\caption[h]{\nameref{test:1d_brio}: Computational times using $ r_i =-r_e = 10^4/ \sqrt{4 \pi}$ and $ r_i =-r_e = 10^5/ \sqrt{4 \pi}$ for the second ({\bf O2-ES-Exp} and {\bf O2-ES-IMEX}), third ({\bf O3-ES-Exp} and {\bf O3-ES-IMEX}), and fourth ({\bf O4-ES-Exp} and {\bf O4-ES-IMEX}) order schemes.}
	\label{table:time_analysis}
\end{table}	

We also consider the stiff source terms with $r_i=-r_e=10^4/\sqrt{4 \pi}$ and $r_i=-r_e=10^5/\sqrt{4 \pi}$ leading to very small plasma skin depth, which will require substantially more cells to resolve the dispersive effects. In Figure \eqref{fig:brio_stiff_analysis}, we show solutions obtained from different schemes using 400 cells. For both the values of $r_i$, all the schemes produce solutions close to RMHD solution, with third and fourth-order schemes performing better than the second-order schemes. Also, there is no visible difference in Explicit and IMEX scheme of same order. To further show the effects of higher values of $r_i$, in Figure \eqref{fig:brio_o4_all_ri}, we show solutions using {\bf O4-ES-IMEX} scheme on $400$ cells, for different values of $r_i$. For $r_i=10^3/\sqrt{4 \pi}$, we can see the dispersive effects are resolved. However, for $r_i=10^4/\sqrt{4 \pi}$ and $r_i=10^5/\sqrt{4 \pi}$, the scheme was not able to resolve the small scale oscillation; nevertheless, the scheme produces a stable solution thanks to the built in entropy stability property of the scheme.

To demonstrate the effectiveness of ARK-IMEX schemes over the explicit schemes for the stiff source terms, in Table~\eqref{table:time_analysis}, we present the computational time for the cases $r_i=-r_e=10^4/\sqrt{4 \pi}$ and $r_i=-r_e=10^5/\sqrt{4 \pi}$ on $100$, $200$, $400$, and $800$ cells. We use the MPI-parallelized code on a CPU with 10 cores. At the lower resolutions, the source terms will govern the time step as time step given by the CFL restriction is large. Hence, we observe consistently that IMEX schemes outperform the explicit schemes on the coarser mesh.  In the case of $r_i=-r_e=10^4/\sqrt{4 \pi}$, we see that IMEX outperform the explicit schemes up to $400$ cells. On $800$ cells, we observe that the IMEX schemes are more expensive than the explicit schemes as at this resolution, stable time step is governed by the CFL condition, not the source terms. In the case of $r_i=-r_e=10^5/\sqrt{4 \pi}$, we observe that IMEX schemes consistently outperform explicit scheme even on $800$ cells as the stiffness in the source is very strong and hence it decides the stable time step for explicit schemes.

Given the similar accuracy of the explicit and IMEX schemes of the same order, and the ability of IMEX schemes to overcome time-step restriction due to stiff source terms, especially at the lower resolutions, in the remaining test cases, we will present numerical results only for the ARK-IMEX schemes. We have also computed the results using explicit schemes (whenever source terms are not stiff) also, and the solutions are similar to those of the IMEX schemes of the same order. 

\subsubsection{Self-similar current sheet with finite resistivity} \label{test:1d_sheet} 
The test case was first considered in~\cite{Komissarov2007}, and the two-fluid relativistic extension is presented in~\cite{Amano2016}, which is used here. We consider the modified Equations \eqref{resist_momentum} and \eqref{resist_energy} to capture the resistive effects.

For resistive RMHD solution \cite{Komissarov2007}, only the $B_y$-component has non-zero variation, whose time evolution includes the resistive effects and is governed by the diffusion equation,
\begin{equation} \label{current_diff}
	\dfrac{\partial B_y}{\partial t} - D \dfrac{\partial^2 B_y}{\partial x^2} =0.
\end{equation}
The relation between the diffusion coefficient and resistivity constant is given by the expression $D=\eta c^2$. An exact self similar solution of Eqn.~\eqref{current_diff} in the resistive RMHD regime~\cite{Komissarov2007} is given by 
\begin{equation}
	\label{eq:Byrmhd}
	B_y(x,t) = B_0 \text{  erf}\left(\dfrac{x}{2 \sqrt{Dt}} \right),
\end{equation}
where ``erf" is the error function. 

For two-fluid relativistic plasma equations, we consider the initial $B_y$ at the time $t=1.0$, using  $B_0=1.0$ and $\eta c^2 =0.01$. The domain is $[-1.5,1.5]$ with Neumann boundary conditions. The charge to mass ratios are $r_i = -r_e=10^3$, the ion and electron density are 0.5, and the ion and electron pressures are 25.0. The $z$-component of the ion and electron velocity is given by
\[
u_{z_i}=-u_{z_e}=\dfrac{B_0}{r_i \rho_i \sqrt{\pi D}} \exp\left(-\dfrac{x^2}{4D}\right).
\]
All other variables are set to zero. We evolve the solution from the initial time $t=1.0$ to final time $t=9.0$ using ARK-IMEX schemes {\bf O2-ES-IMEX}, {\bf O3-ES-IMEX} and {\bf O4-ES-IMEX}, and the solutions using 400 cells are shown in Figure~\eqref{fig:sheet}. We observe that, in the presence of friction terms as in Eqn.~\eqref{resist_momentum},~\eqref{resist_energy}, the obtained $B_y$ profile for all the schemes agrees with the resistive RMHD analytical expression~\eqref{eq:Byrmhd}.  Furthermore, all the schemes are highly accurate and there is no visible difference in the solution profiles.

\begin{figure}[!htbp]
	
	\begin{center}
		\subfigure{
			\includegraphics[width=3.5in, height=2.6in]{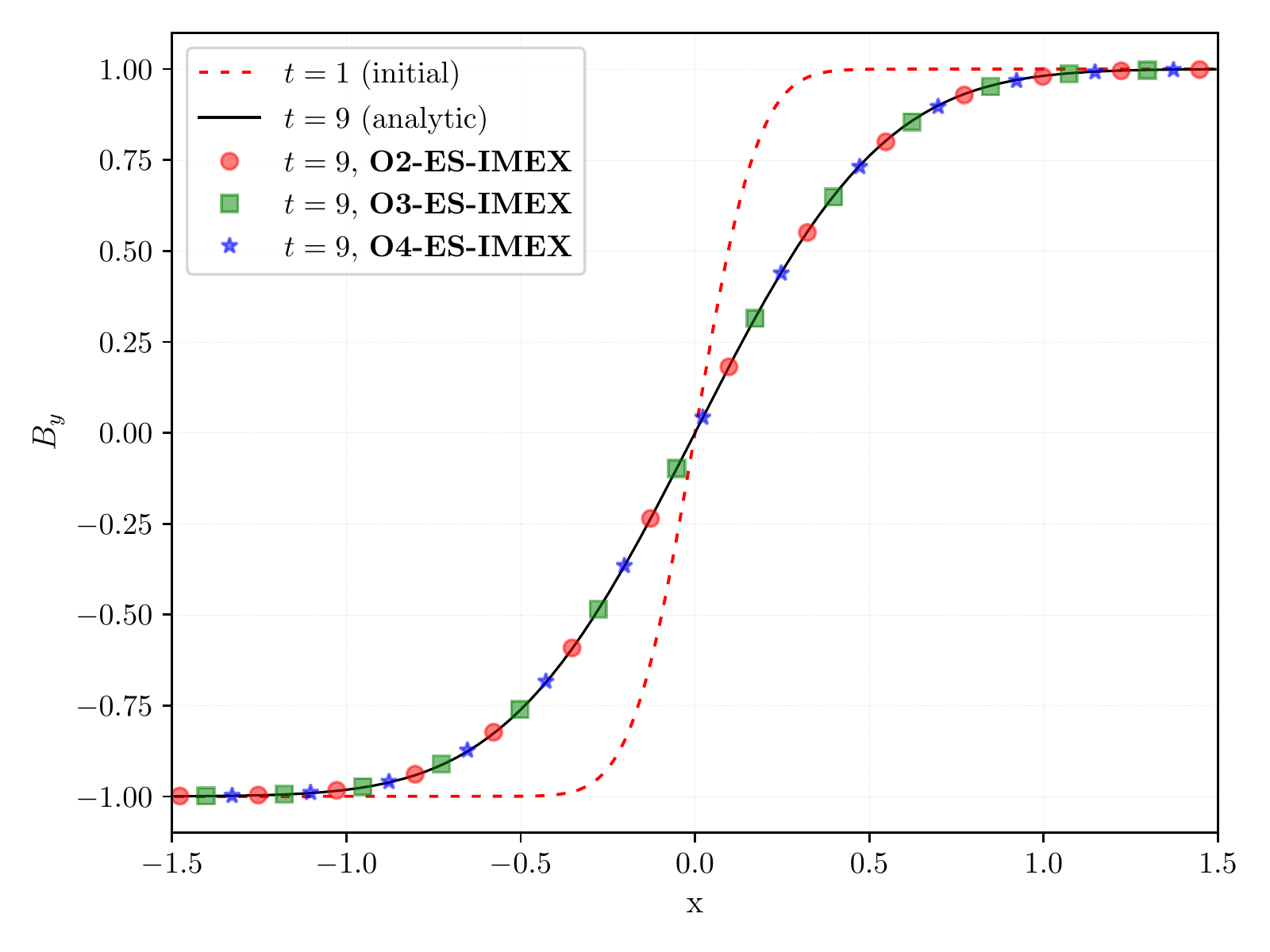}
		}
		\caption{\nameref{test:1d_sheet}: Comparison of the $B_y$ profile for ARK-IMEX schemes ({\bf O2-ES-IMEX}, {\bf O3-ES-IMEX}, and {\bf O4-ES-IMEX}) using $400$ cells.}
		\label{fig:sheet}
	\end{center}
\end{figure}

\subsection{Two-dimensional test cases}
For two-dimensional test cases, the time step is chosen using,
\[
{\Delta t} = \text{CFL} \cdot \min \left\{ \left(\dfrac{\Delta x}{\Lambda_{max}^x(\mathbf{U}_{i,j})} + \dfrac{\Delta y}{\Lambda_{max}^y(\mathbf{U}_{i,j})}\right) : 1 \le i \le N_x, \ 1 \le y \le N_y \right\}
\]
where $\Lambda_{max}^x(\mathbf{U}_{i,j}) =  \max \{|\Lambda^x_k(\mathbf{U}_{i,j})| : 1 \le k \le 18 \}$ and $\Lambda_{max}^y(\mathbf{U}_{i,j}) = \max\{|\Lambda^y_k(\mathbf{U}_{i,j})|: 1 \le k \le 18\}$. We take CFL to be $0.45$ and present the numerical results for the ARK-IMEX schemes~({\bf O3-ES-IMEX}, and {\bf O4-ES-IMEX}). We have not presented the results for second-order scheme {\bf O2-ES-IMEX} as the scheme is less accurate when compared to the third and fourth order schemes. 

\reva{To monitor the divergence of magnetic field, we also plot the evolution of the $L^1$-norm of $\nabla \cdot \mathbf{B}$, which is calculated as,
	\begin{equation}
		\Delta x \Delta y\sum_{i=1}^{N_x} \sum_{j=1}^{N_y} |  (\nabla\cdot\mathbf{B})_{i,j}|
	\end{equation}
	where $(\nabla \cdot \mathbf{B})_{i,j}$ is evaluated using the central difference approximation in each direction, i.e.
	\begin{equation}
		(\nabla \cdot \mathbf{B})_{i,j}:= \dfrac{B_{x,i+1,j} - B_{x,i-1,j}}{2 \Delta x}
		+ \dfrac{B_{y,i,j+1} - B_{y,i,j-1}}{2 \Delta y}.
		\label{eq:dis_divB}    
	\end{equation}
}

\subsubsection{Relativistic Orzag-Tang test case} \label{test:2d_ot} 
Orzag-Tang test case was first proposed for the non-relativistic MHD systems in~\cite{Orszag1979}. Here, we consider the two-fluid relativistic extension of the test case, presented in \cite{Balsara2016}. To have consistent comparison with the results in \cite{Balsara2016}, we consider the Maxwell's equations with scaled source \eqref{eq:modifed_max_source}. The computational domain is $[0.0,1.0] \times [0.0,1.0]$ with periodic boundary conditions, and the initial conditions are given by,
\begin{align*}
\renewcommand{\arraystretch}{1.5}
	\begin{pmatrix*}[c]
		\rho_i \\ u_{x_i} \\ u_{y_i} \\ p_i
	\end{pmatrix*} = 
	\begin{pmatrix*}[c]
		\frac{25}{72 \pi} \\ - \frac{\sin(2 \pi y)}{2} \\ \frac{\sin(2 \pi x)}{2} \\ \frac{5}{24 \pi}
	\end{pmatrix*}, \qquad 
	\begin{pmatrix*}[c]
		\rho_e \\u_{x_e} \\ u_{y_e} \\ p_e
	\end{pmatrix*} = 
	\begin{pmatrix*}[c]
		\frac{25}{72 \pi} \\ - \frac{\sin(2 \pi y)}{2} \\  \frac{\sin(2 \pi x)}{2} \\  \frac{5}{24 \pi}
	\end{pmatrix*}, \qquad 
	\begin{pmatrix*}[c]
		B_x \\ B_y
	\end{pmatrix*} = 
	\begin{pmatrix*}[c]
		-\sin(2 \pi y) \\  \sin (4 \pi x)
	\end{pmatrix*}
\end{align*}
The initial electric field is $-\mathbf{u}_i \times \mathbf{B}$ and all other variables are set to zero. We use the charge to mass ratios of $r_i=-r_e=10^3/\sqrt{4 \pi}$, thus the plasma skin depth approximates to $3.0 \times 10^{-3}$. We choose adiabatic indices $\gamma_i=\gamma_e = 5/3$ and simulate till time $t = 1.0$ on a mesh with $200\times 200$ cells.

\begin{figure}[!htbp]
	\begin{center}
		\subfigure[Plot of total density $\rho_i +\rho_e$.]{
			\includegraphics[width=2.9in, height=2.5in]{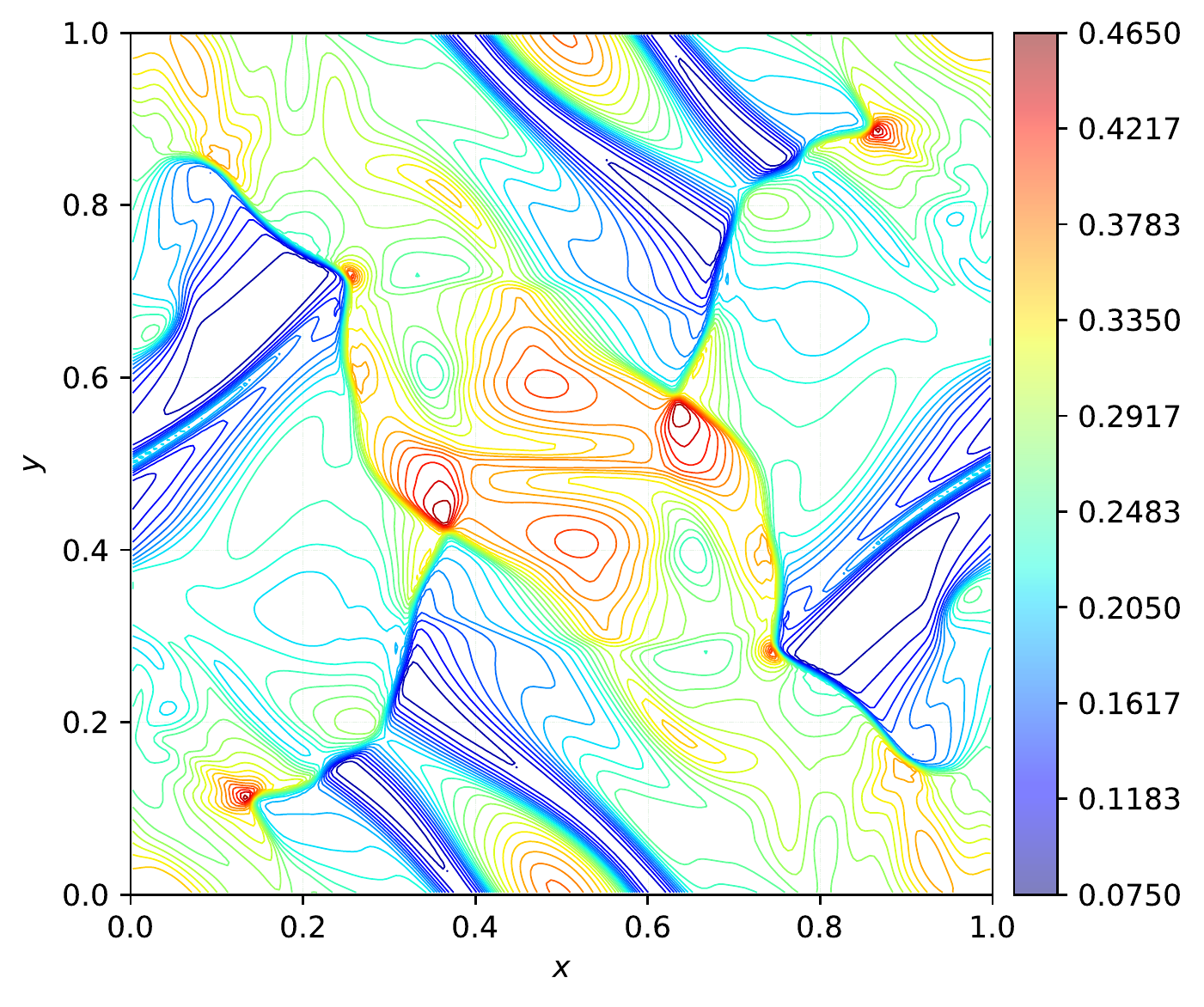}
			\label{fig:ot_o3_imp_rho}}
		\subfigure[Plot of total pressure $p_{i} + p_e$.]{
			\includegraphics[width=2.9in, height=2.5in]{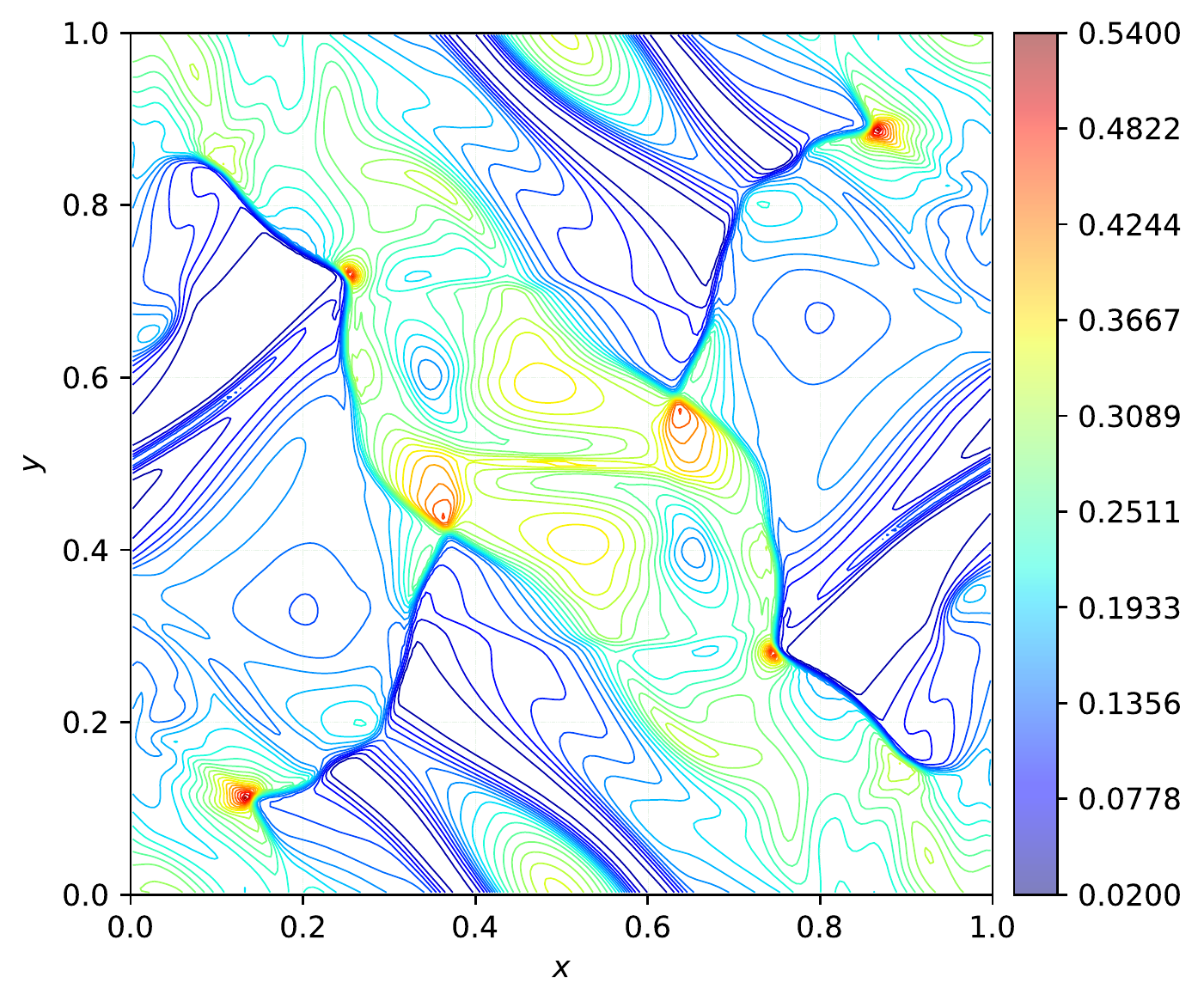}
			\label{fig:ot_o3_imp_p}}
		\subfigure[Plot of ion Lorentz factor $\Gamma_i$.]{
			\includegraphics[width=2.9in, height=2.5in]{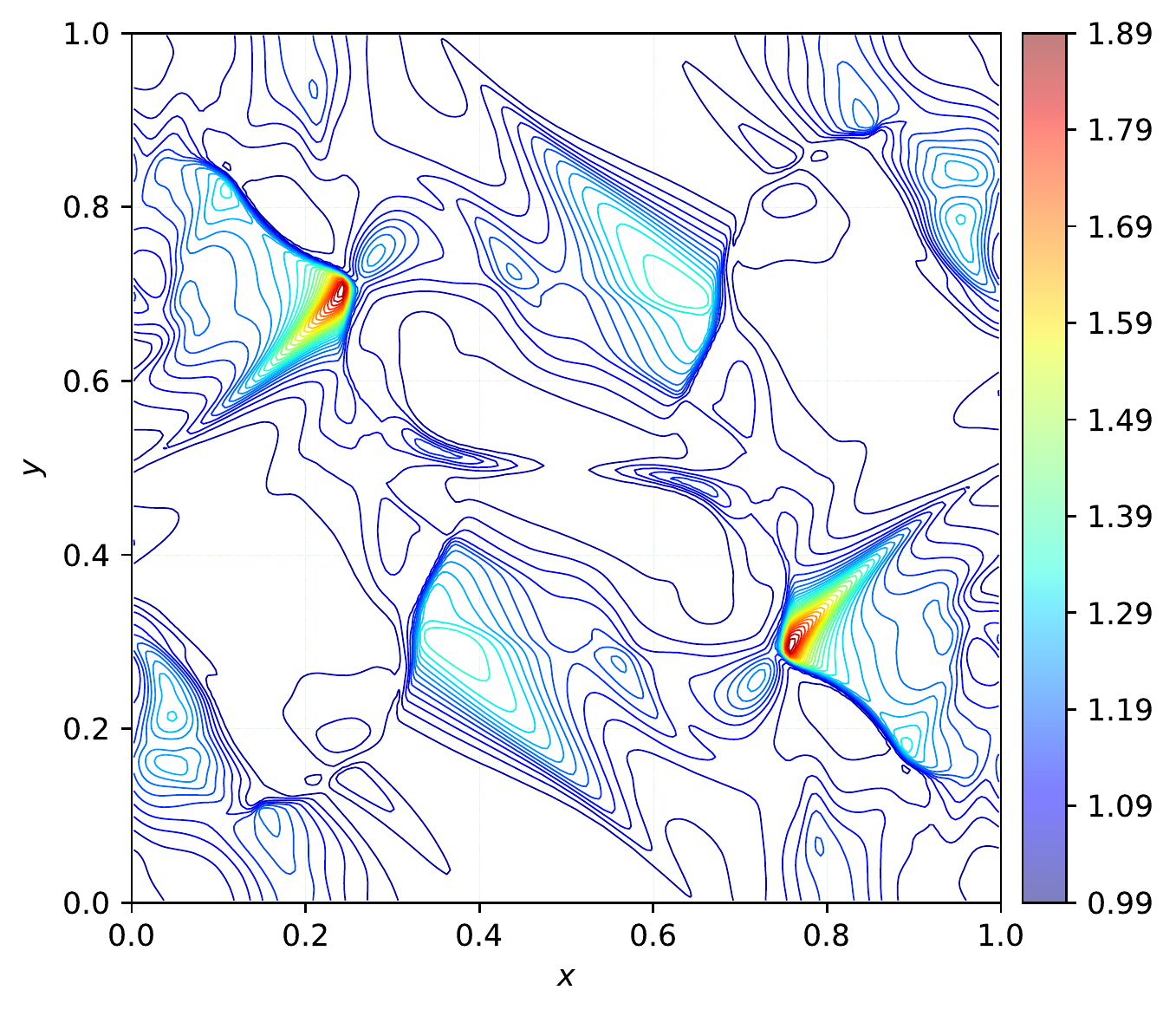}
			\label{fig:ot_o3_imp_lorentz}}
		\subfigure[Plot of magnitude of the Magnetic field, $\dfrac{|\mathbf{B}|^2}{2}$.]{
			\includegraphics[width=2.9in, height=2.5in]{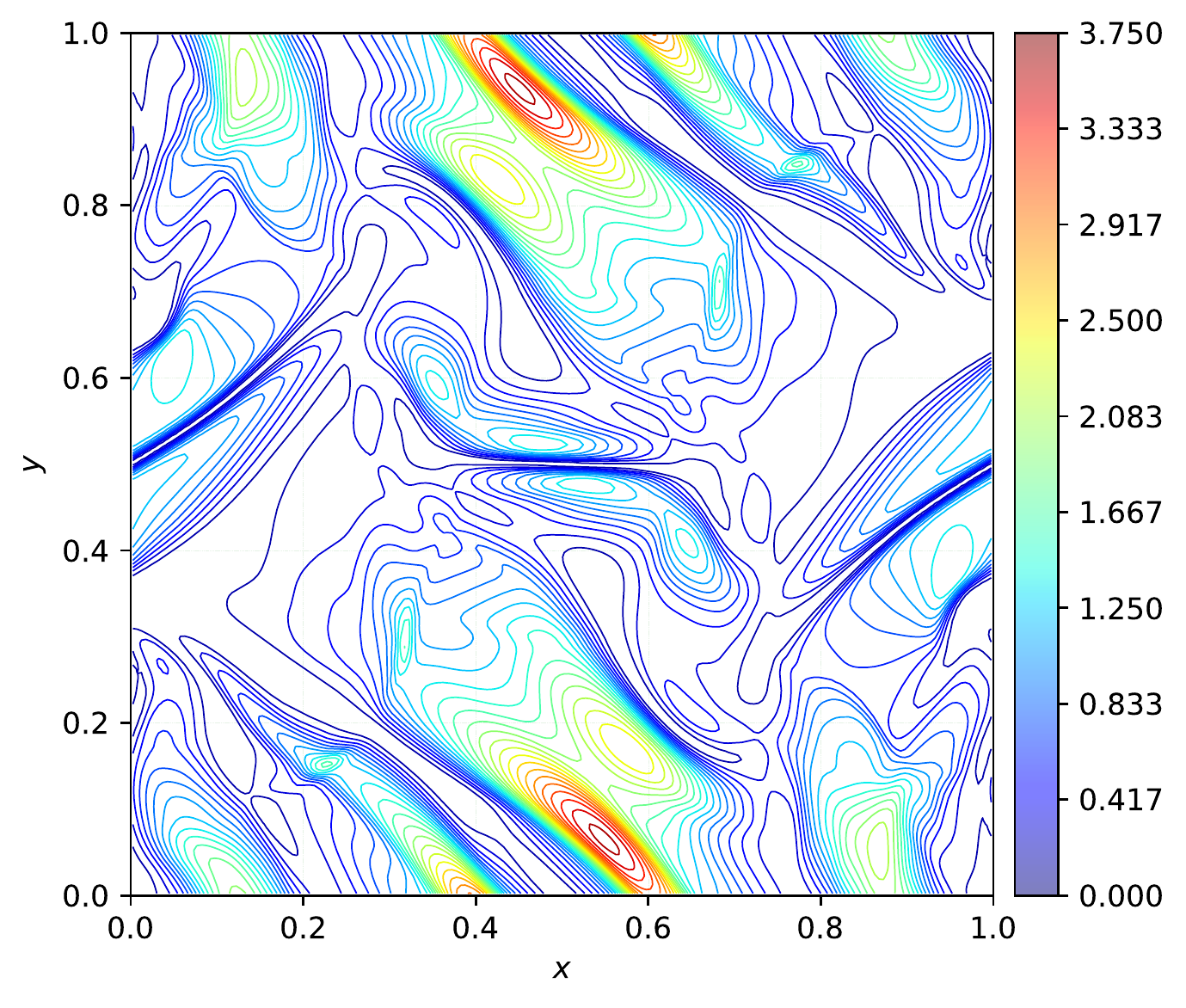}
			\label{fig:ot_o3_imp_magBby2}}
		\caption{\nameref{test:2d_ot}: Plots of total density, total pressure, Ion Lorentz factor, and magnitude of magnetic field $\dfrac{|\mathbf{B}|^2}{2}$ using {\bf O3-ES-IMEX} scheme and $200\times 200$ cells at time $t=1.0$. We have plotted 30 contours for each variable.}
		\label{fig:ot_o3}
	\end{center}
\end{figure}
\begin{figure}[!htbp]
	\begin{center}
		\subfigure[Plot of total density $\rho_i +\rho_e$.]{
			\includegraphics[width=2.9in, height=2.5in]{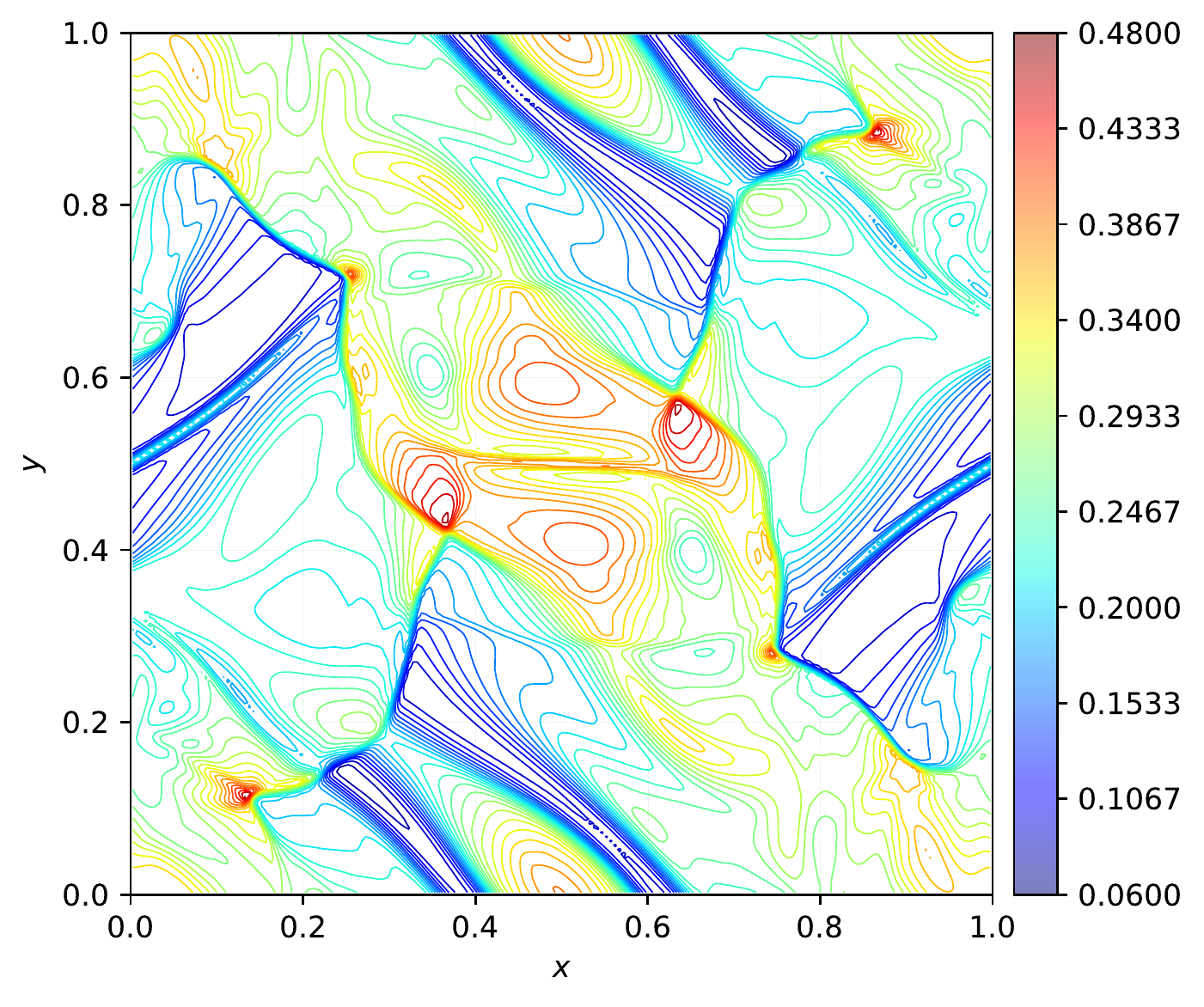}
			\label{fig:ot_o4_imp_rho}}
		\subfigure[Plot of total pressure $p_{i} + p_e$.]{
			\includegraphics[width=2.9in, height=2.5in]{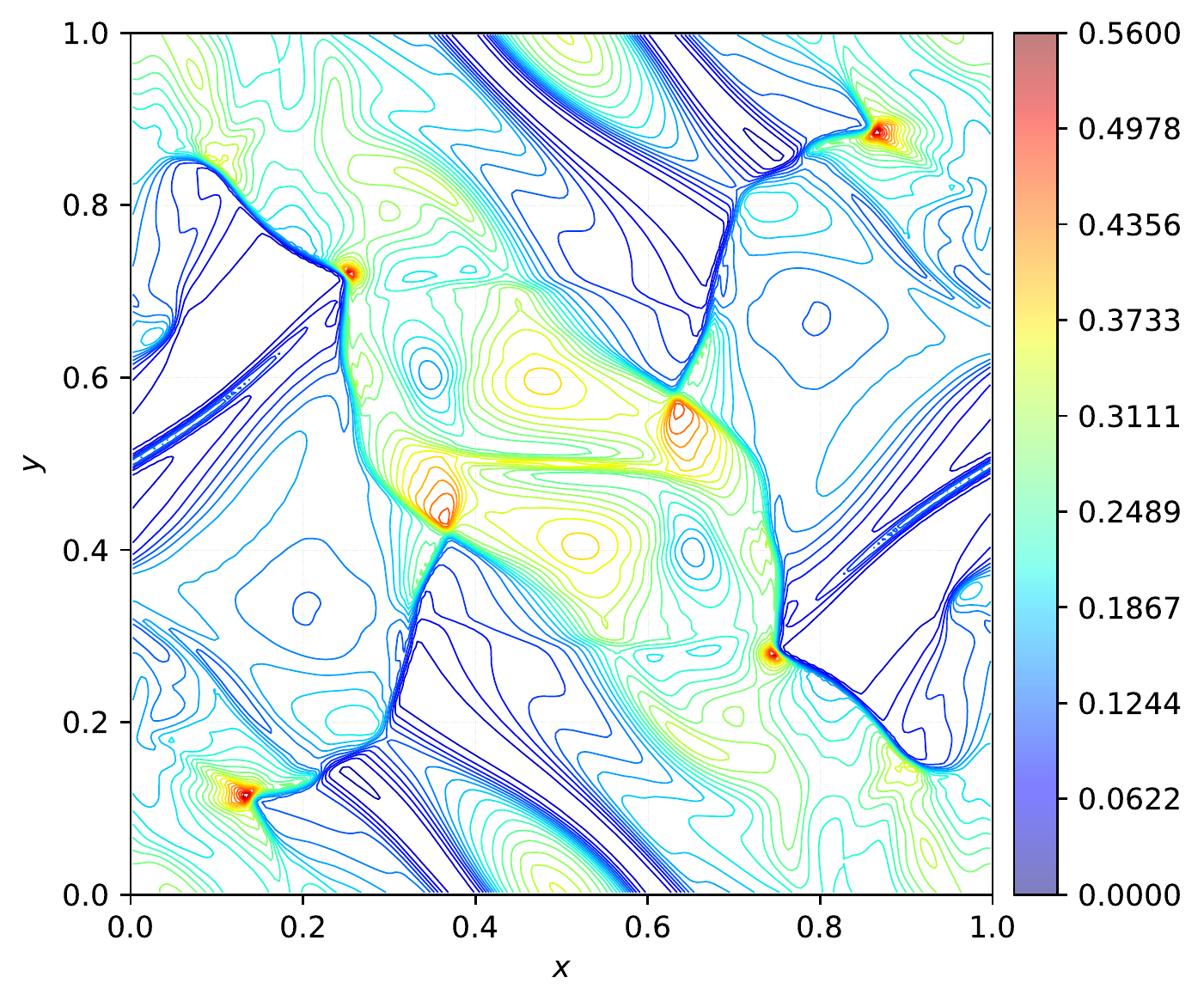}
			\label{fig:ot_o4_imp_p}}
		\subfigure[Plot of ion Lorentz factor $\Gamma_i$.]{
			\includegraphics[width=2.9in, height=2.5in]{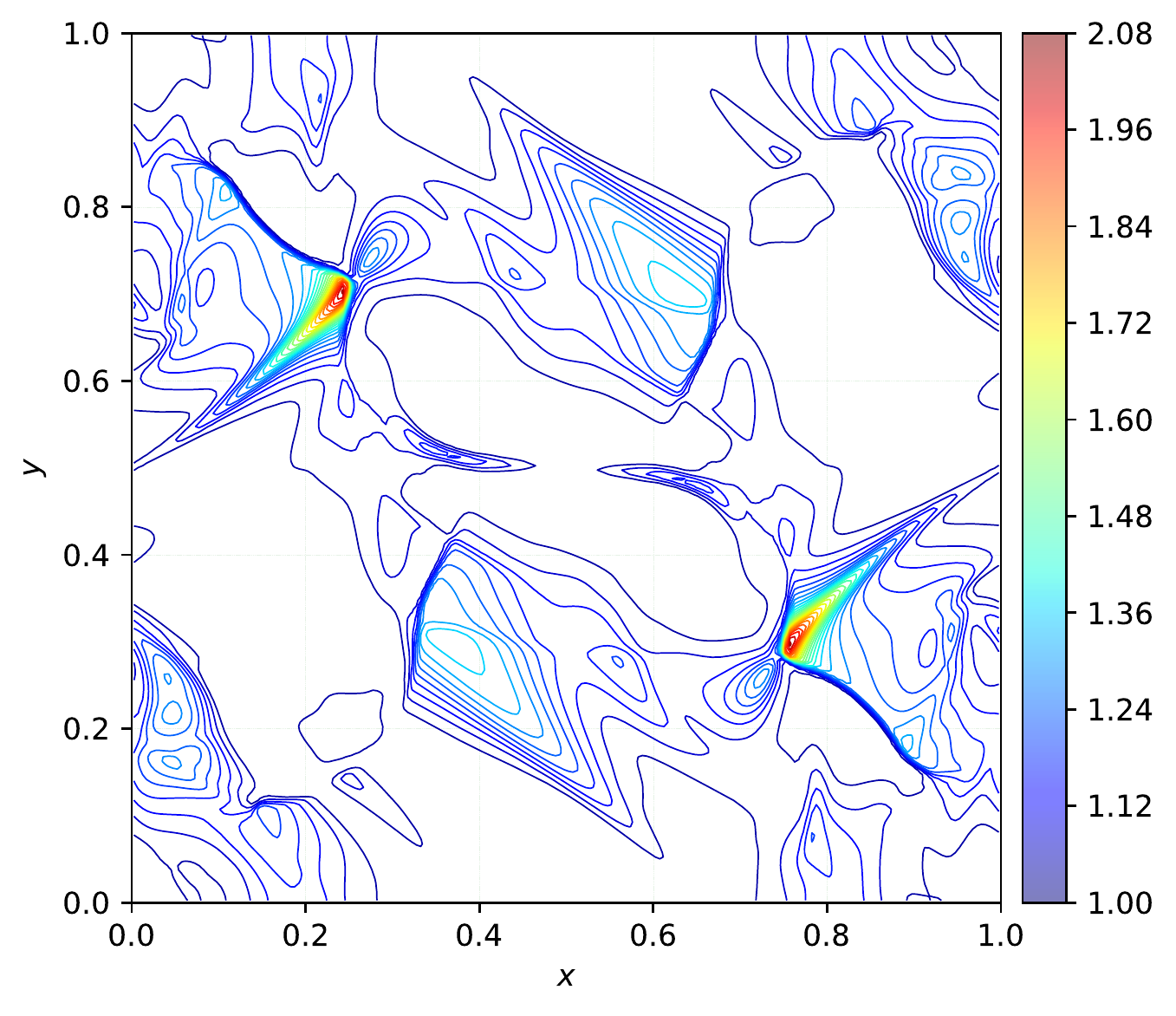}
			\label{fig:ot_o4_imp_lorentz}}
		\subfigure[Plot of magnitude of the Magnetic field, $\dfrac{|\mathbf{B}|^2}{2}$.]{
			\includegraphics[width=2.9in, height=2.5in]{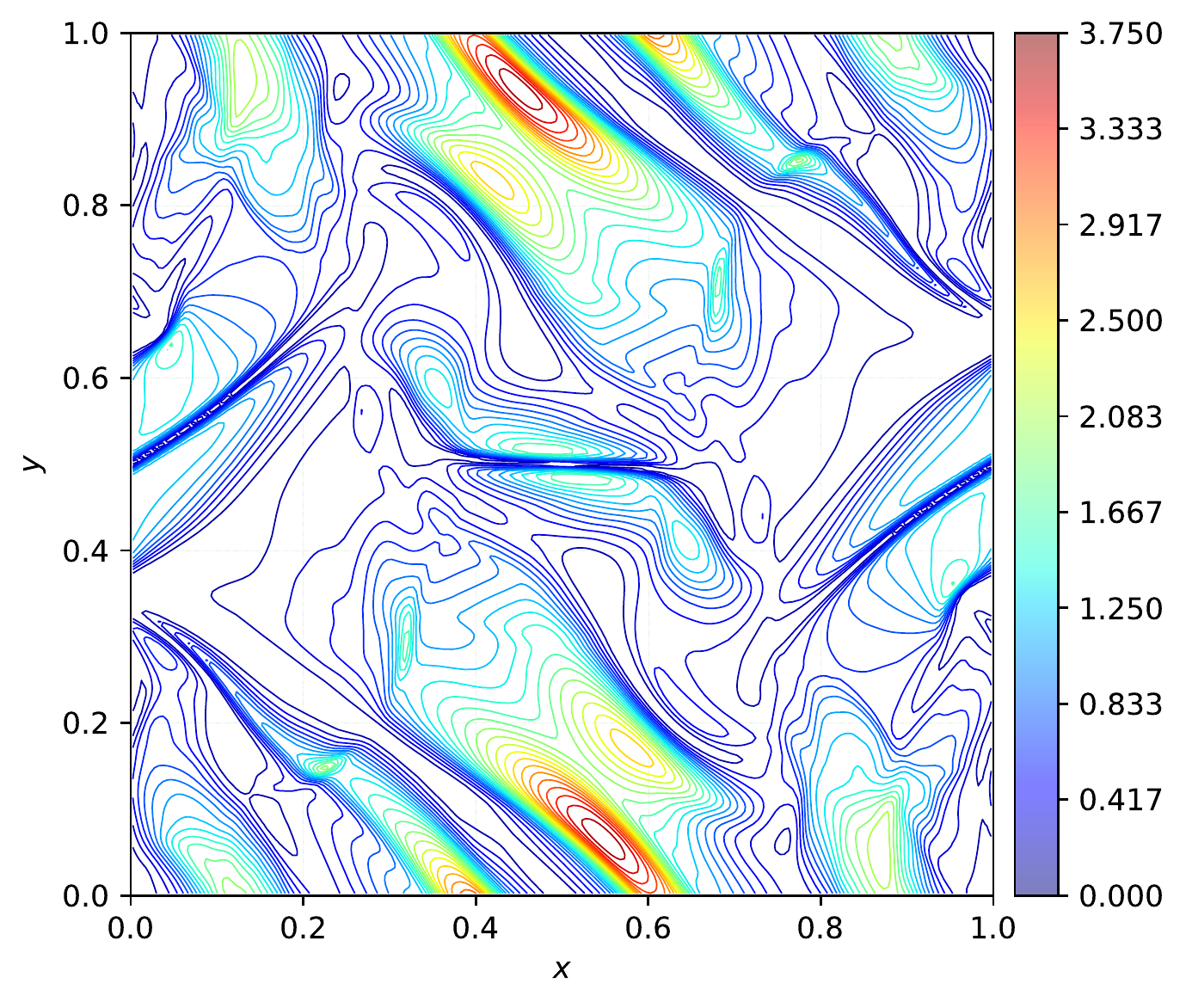}
			\label{fig:ot_o4_imp_magBby2}}
		\caption{\nameref{test:2d_ot}:  Plots of total density, total pressure, Ion Lorentz factor, and magnitude of magnetic field $\dfrac{|\mathbf{B}|^2}{2}$ using {\bf O4-ES-IMEX} scheme and $200\times 200$ cells at time $t=1.0$. We have plotted 30 contours for each variable.}
		\label{fig:ot_o4}
	\end{center}
\end{figure}

\begin{figure}[!htbp]
	\begin{center}
		\includegraphics[width=3.0in, height=2.3in]{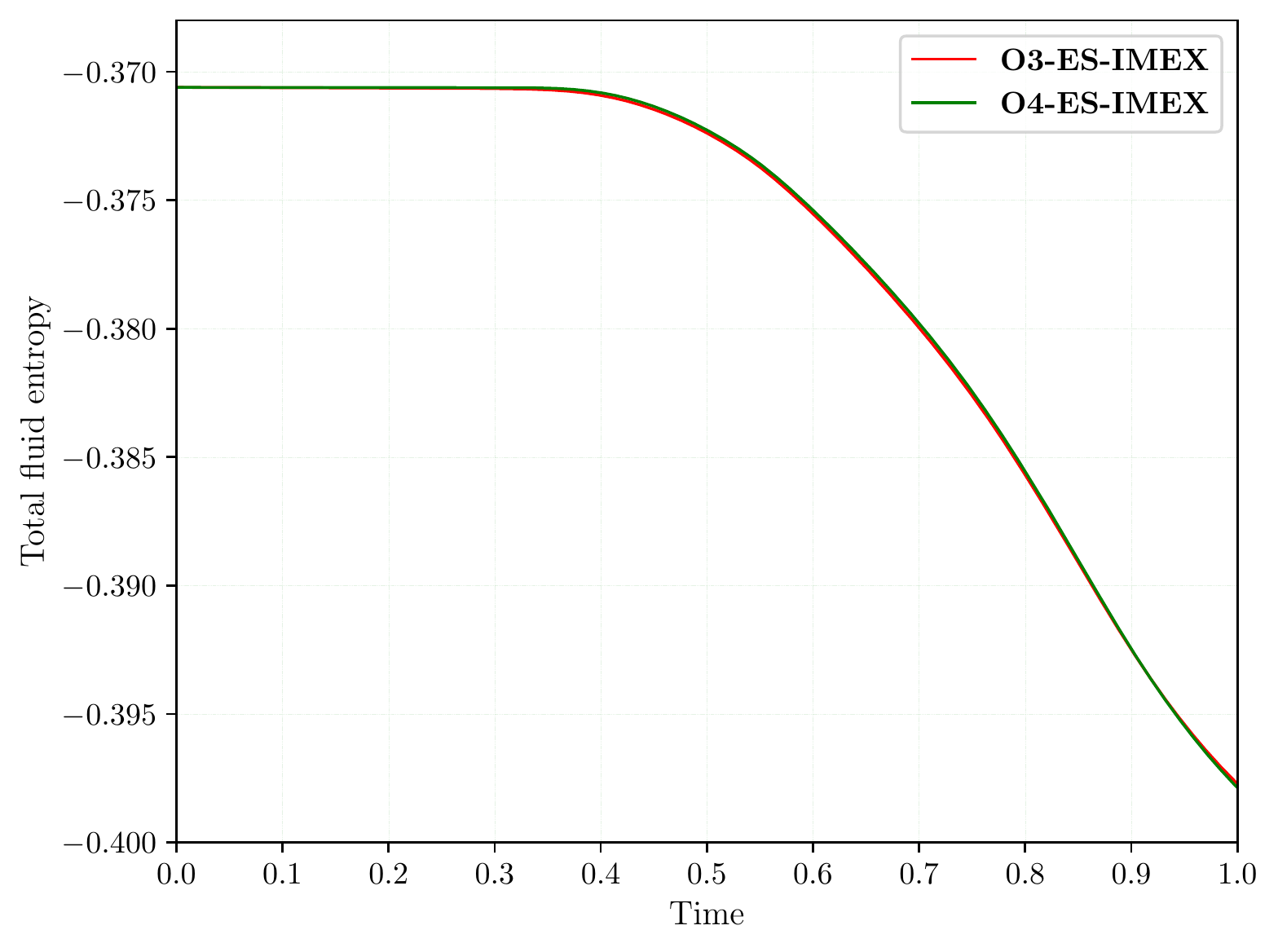}
		\caption{\nameref{test:2d_ot}: Total fluid entropy $\mathcal{U}_i+\mathcal{U}_e$ evolution for the schemes {\bf ES-O3-IMEX} and {\bf ES-O4-IMEX}.}
		\label{fig:ot_entropy}
	\end{center}
\end{figure}

Figure \eqref{fig:ot_o3} and \eqref{fig:ot_o4} show the total density, total pressure, ion Lorentz factor and magnitude of the magnetic field for {\bf O3-ES-IMEX} and {\bf O4-ES-IMEX} schemes, respectively. The results for both the schemes are comparable to those in \cite{Balsara2016}, and both the schemes have similar performance. As the cell size is higher than the plasma skin depth, the solution is comparable to those of the RMHD case.

Figure \eqref{fig:ot_entropy} shows the time evolution of the total fluid entropy $\mathcal{U}_i+\mathcal{U}_e$ for {\bf O3-ES-IMEX} and {\bf O4-ES-IMEX} schemes. Initially, we do not see significant decay as discontinuities are not present in the solution. However, at time approximately $t=0.4$, the solutions start to develop discontinuities and we see sharp decay in the entropy. We also observe that the third-order scheme decays slightly more entropy than the fourth-order scheme, however, the difference is very small.

\reva{
	\begin{figure}[!htbp]
		\begin{center}
			\subfigure[Plot of $|2\Delta x (\nabla \cdot \mathbf{B})_{i,j}|$ using the {\bf O3-ES-IMEX} scheme at time $t=1.0$.]{
				\includegraphics[width=2.9in, height=2.5in]{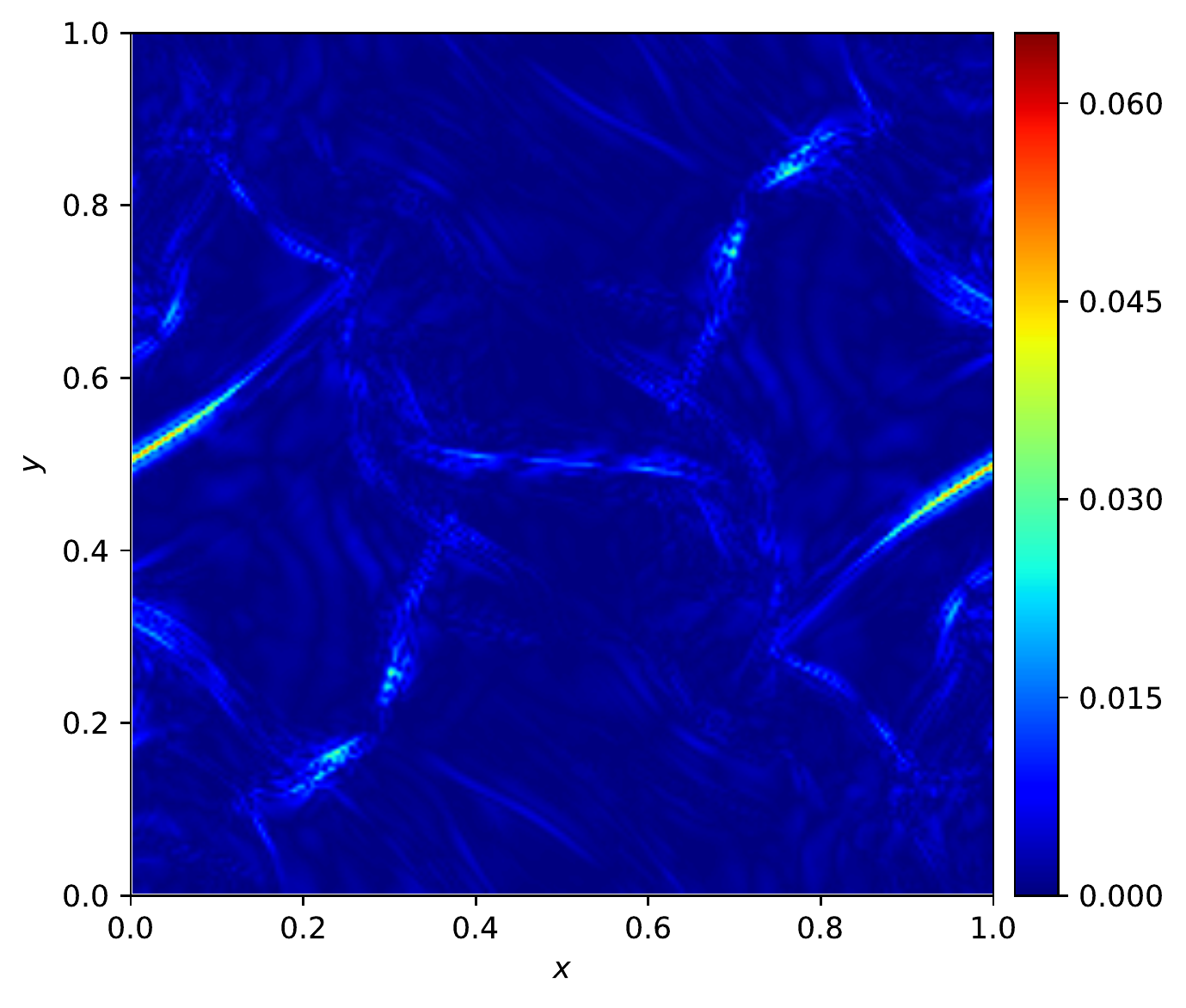}
				\label{fig:ot_2Dnorm_o3}}
			\quad
			\subfigure[Plot of $|2\Delta x (\nabla \cdot \mathbf{B})_{i,j}|$ using the {\bf O4-ES-IMEX} scheme at time $t=1.0$.]{
				\includegraphics[width=2.9in, height=2.5in]{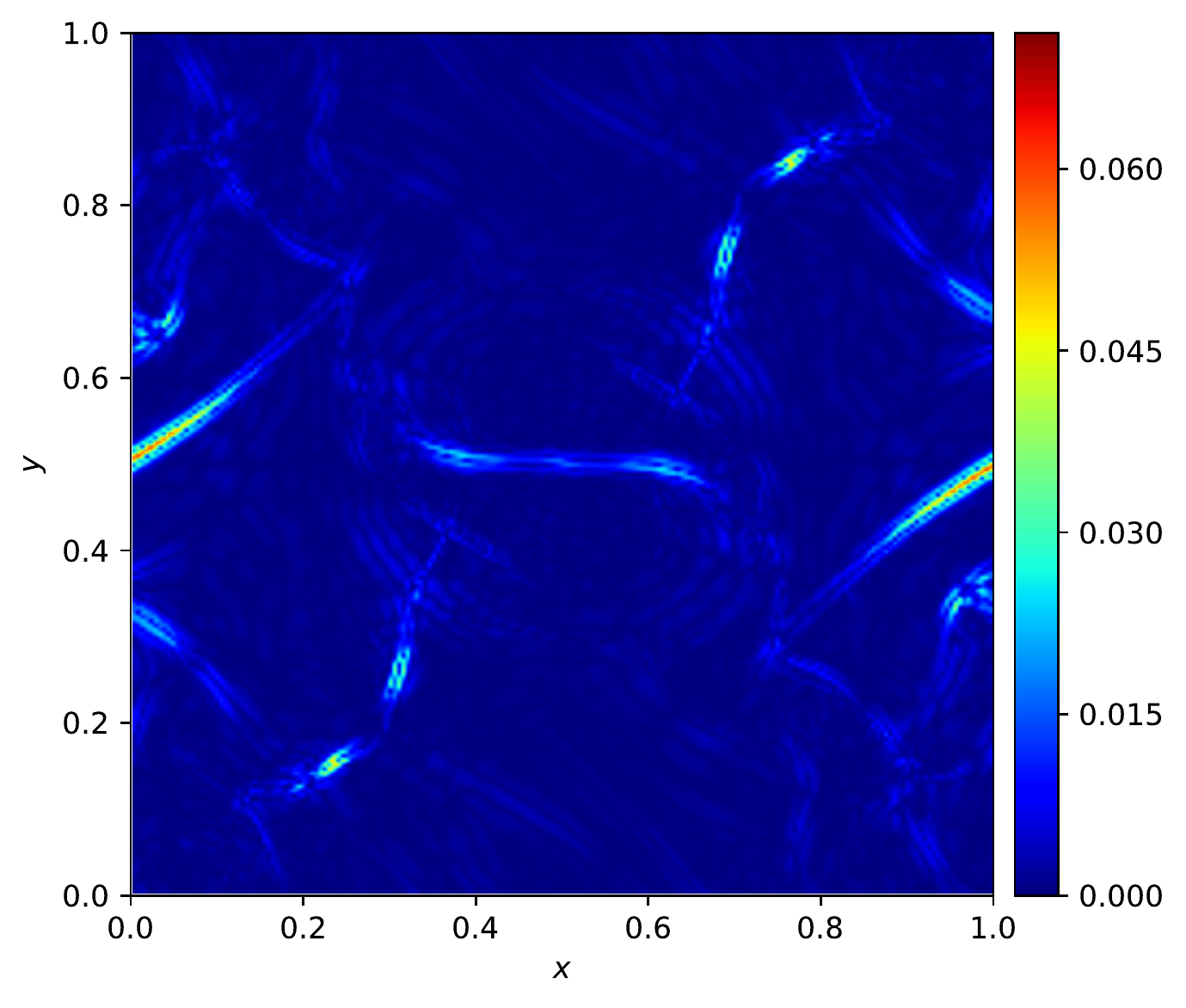}
				\label{fig:ot_2Dnorm_o4}}
			\quad
			\subfigure[Time evolution of $L^1$-norms of divergence of $\mathbf{B}$ till time $t=2.0$ for {\bf O3-ES-IMEX} and {\bf O4-ES-IMEX} schemes.]{
				\includegraphics[width=3.5in, height=2.5in]{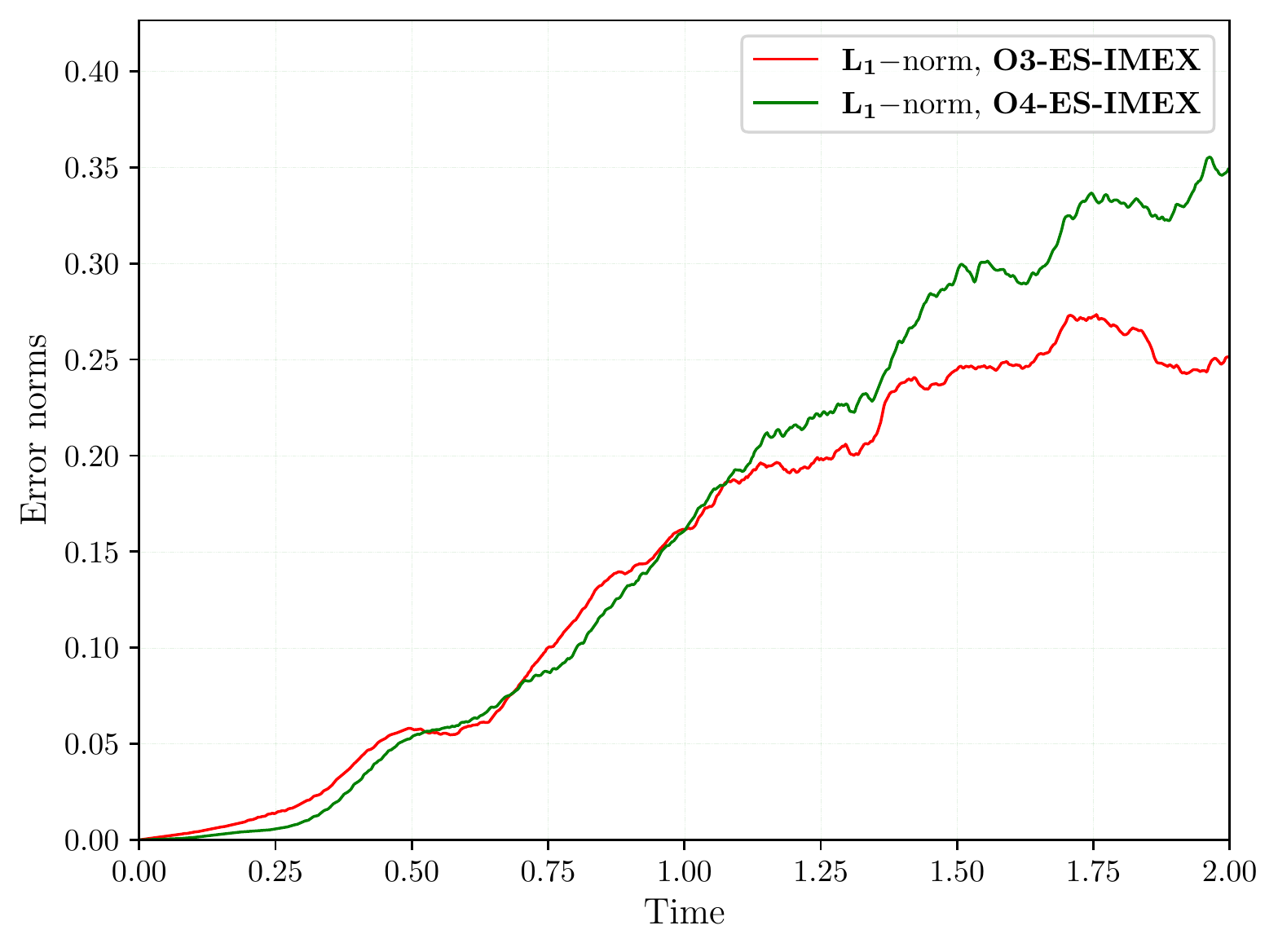}
				\label{fig:ot_l1norm}}
			\caption{\nameref{test:2d_ot}: \reva{Plots of the ($|2\Delta x (\nabla \cdot \mathbf{B})_{i,j}|$) and time evolution of $L^1$-norm of divergence of $\mathbf{B}$ for {\bf O3-ES-IMEX} and {\bf O4-ES-IMEX} schemes using $200\times 200$ cells.}}
			\label{fig:norms_ot}
		\end{center}
	\end{figure}
	
	In Figs.~\ref{fig:ot_2Dnorm_o3} and \ref{fig:ot_2Dnorm_o4}, we have plotted the absolute value of the undivided divergence of the magnetic field, i.e., $|2\Delta x (\nabla \cdot \mathbf{B})_{i,j}|$ (Note that $\Delta x=\Delta y$) for {\bf O3-ES-IMEX} and {\bf O4-ES-IMEX} schemes, respectively \cite{Balsara2016}. We observe the highest values of the divergence error correlate with the location of the discontinuities in the solutions. Furthermore, both schemes have similar absolute maximum errors. To observe the long-time behavior of the divergence errors, in Fig.~\ref{fig:ot_l1norm}, we have plotted the time evolution of $L^1$-norm of divergence of $\mathbf{B}$ for both the schemes till time $t=2$. We observe that both schemes have similar $ L^1$ errors.
}

\subsubsection{Relativistic two-fluid blast problem} \label{test:2d_blast} 
\begin{figure}[!htbp]
	\begin{center}
		\subfigure[Plot of $\log(\rho_i+\rho_e)$.]{
			\includegraphics[width=2.9in, height=2.5in]{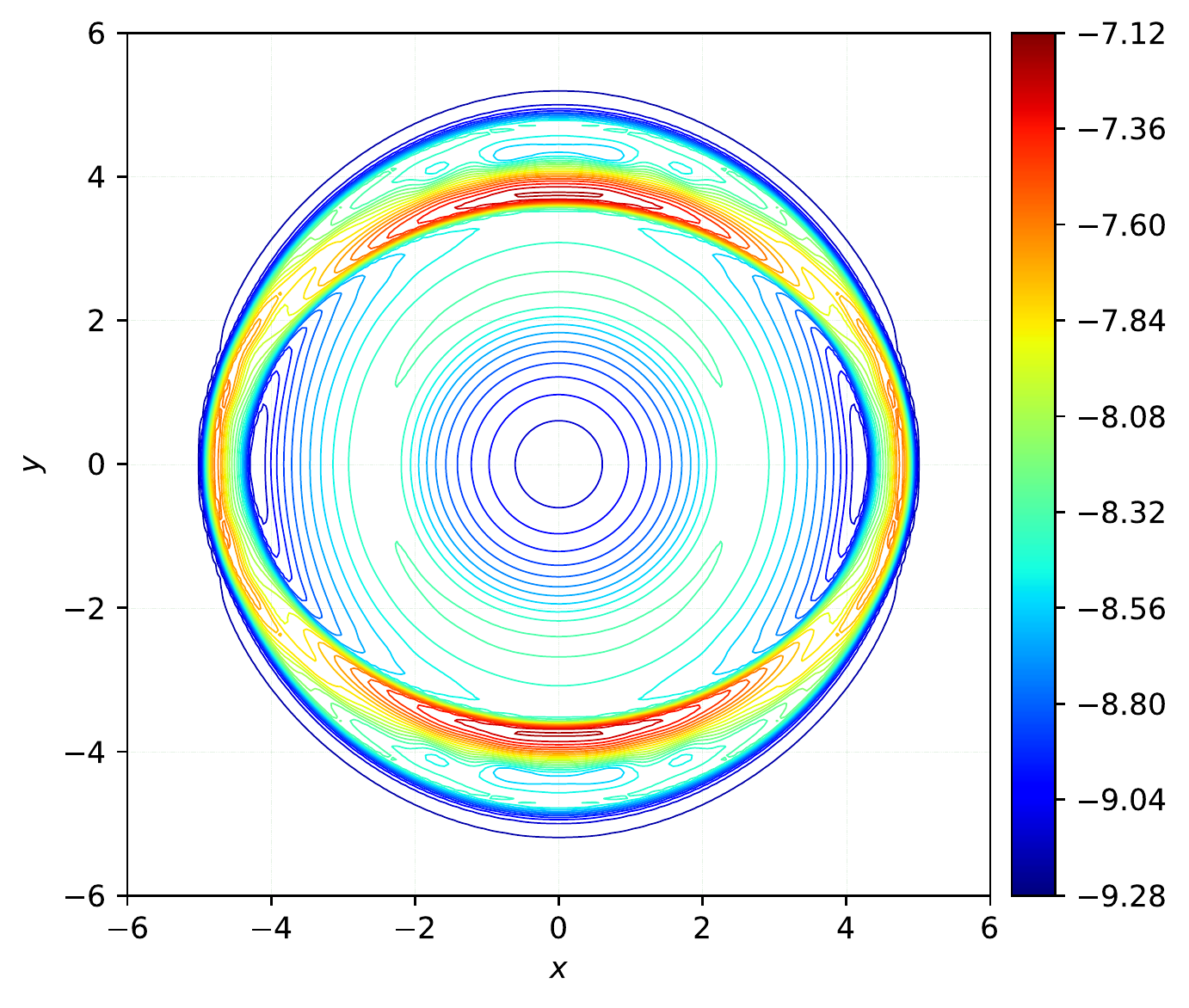}
			\label{fig:blast_o3_imp_0p1_logrho}}
		\subfigure[Plot of $\log(p_i+p_e)$.]{
			\includegraphics[width=2.9in, height=2.5in]{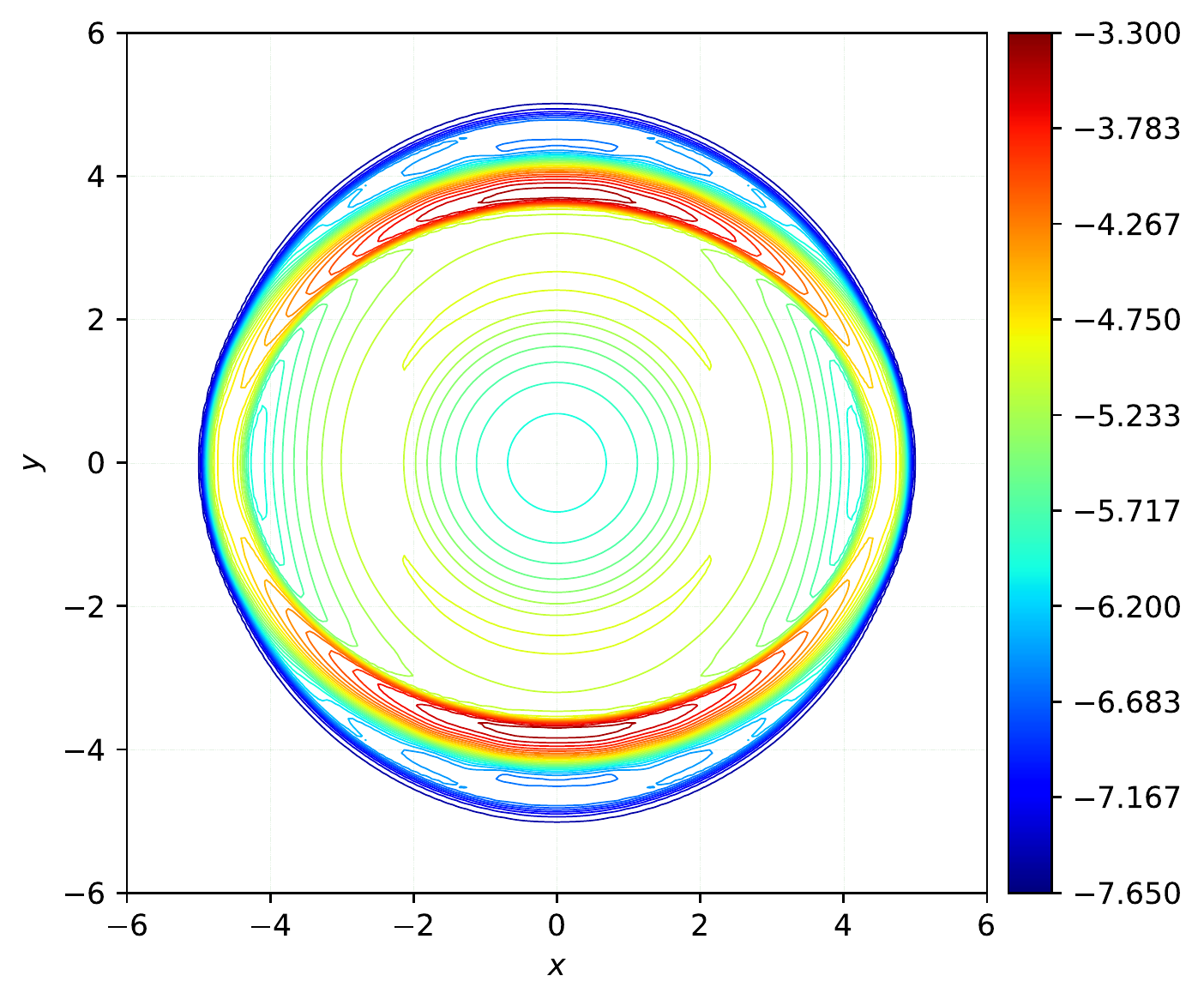}
			\label{fig:blast_o3_imp_0p1_logp}}
		\subfigure[Ion Lorentz factor $\Gamma_i$.]{
			\includegraphics[width=2.9in, height=2.5in]{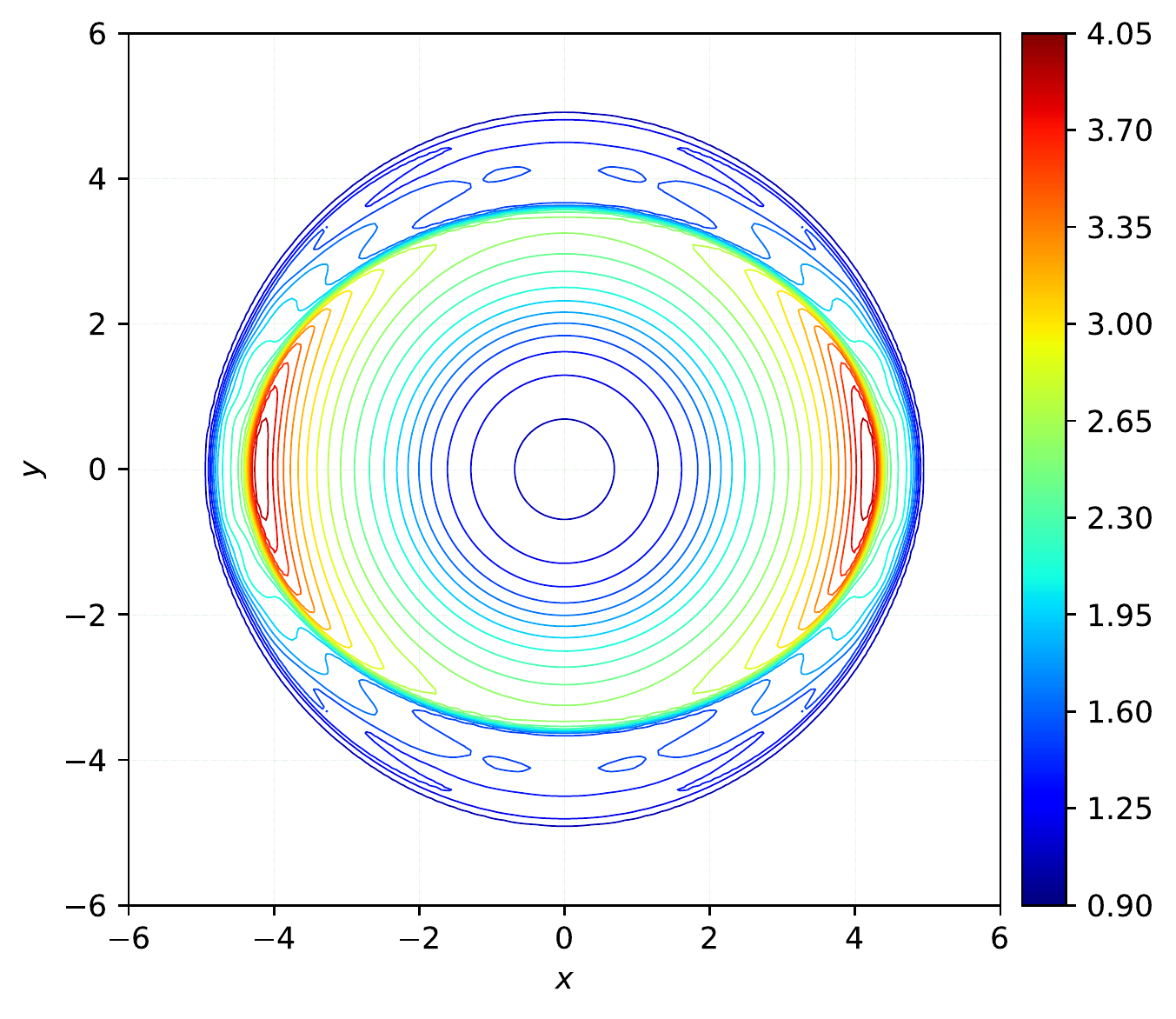}
			\label{fig:blast_o3_imp_0p1_lorentz}}
		\subfigure[Magnitude of the Magnetic field $\dfrac{|\mathbf{B}|^2}{2}$.]{
			\includegraphics[width=2.9in, height=2.5in]{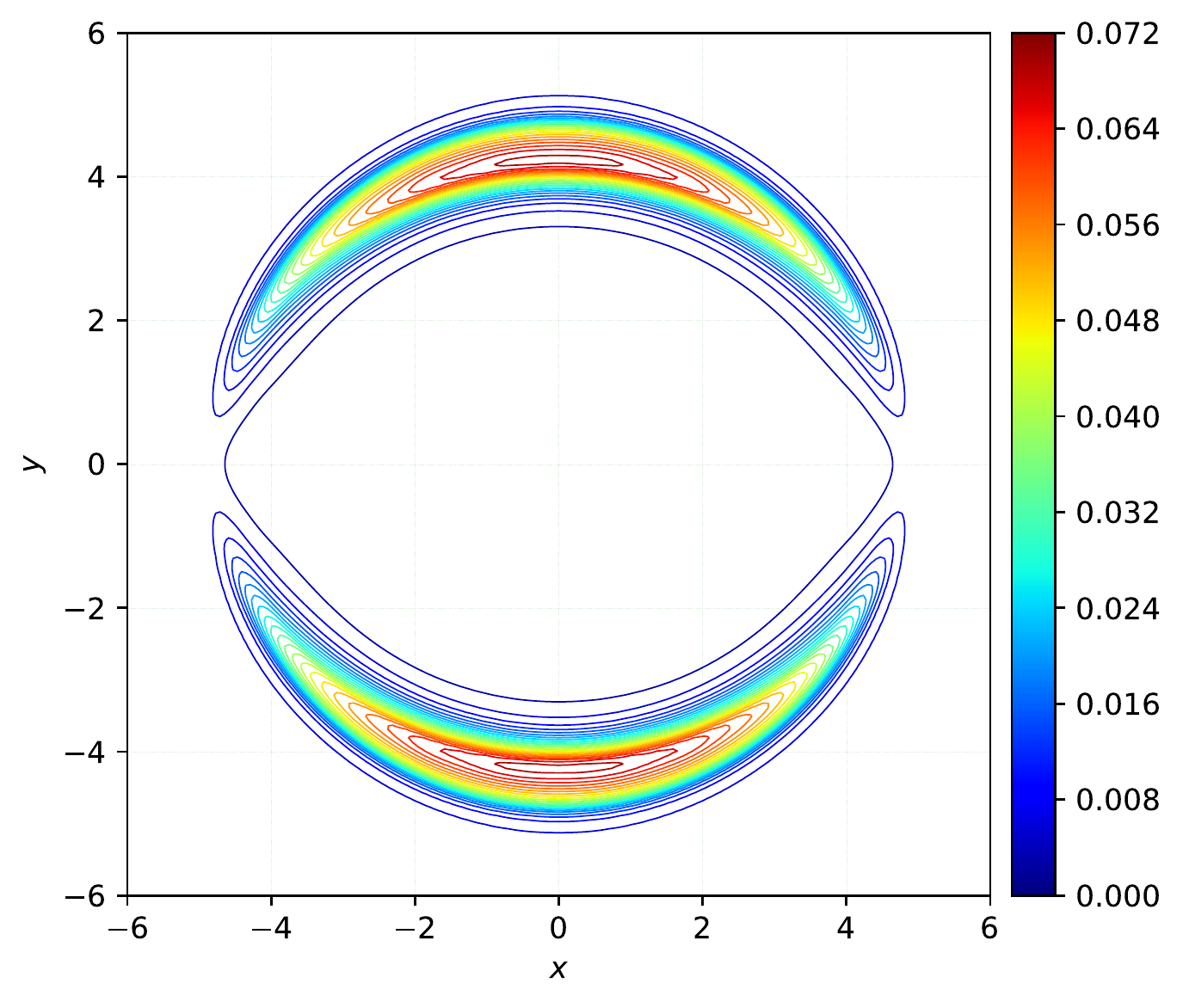}
			\label{fig:blast_o3_imp_0p1_magBby2}}
		\caption{\nameref{test:2d_blast}: Plot for the weakly magnetized medium $B_0=0.1$, using {\bf O3-ES-IMEX} scheme with $200\times 200$ cells. We have plotted 30 contours for each variable.}
		\label{fig:blast_o3_0_1}
	\end{center}
\end{figure}
\begin{figure}[!htbp]
	\begin{center}
		\subfigure[Plot of $\log(\rho_i+\rho_e)$.]{
			\includegraphics[width=2.9in, height=2.5in]{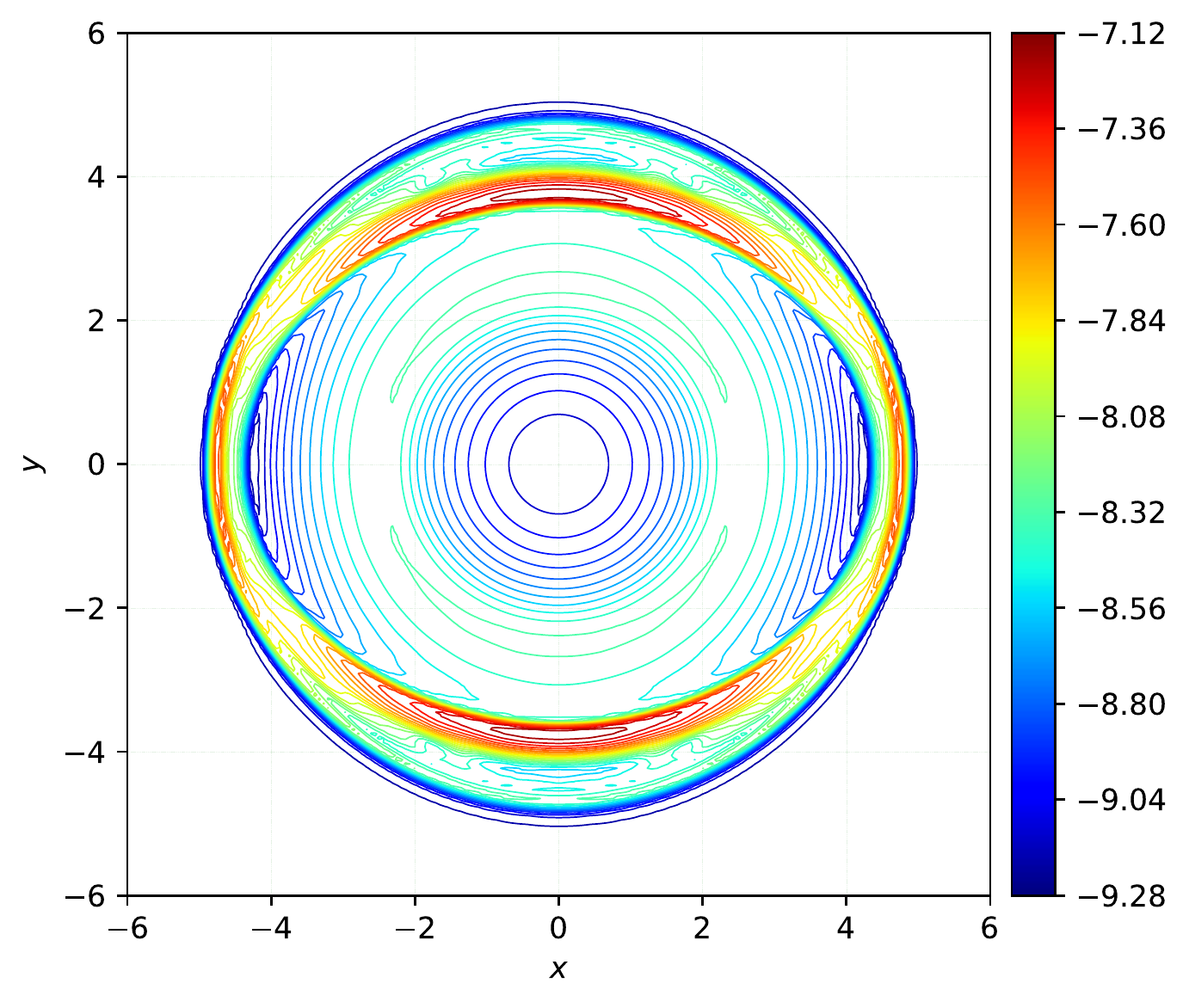}
			\label{fig:blast_o4_imp_0p1_logrho}}
		\subfigure[Plot of $\log(p_i+p_e)$.]{
			\includegraphics[width=2.9in, height=2.5in]{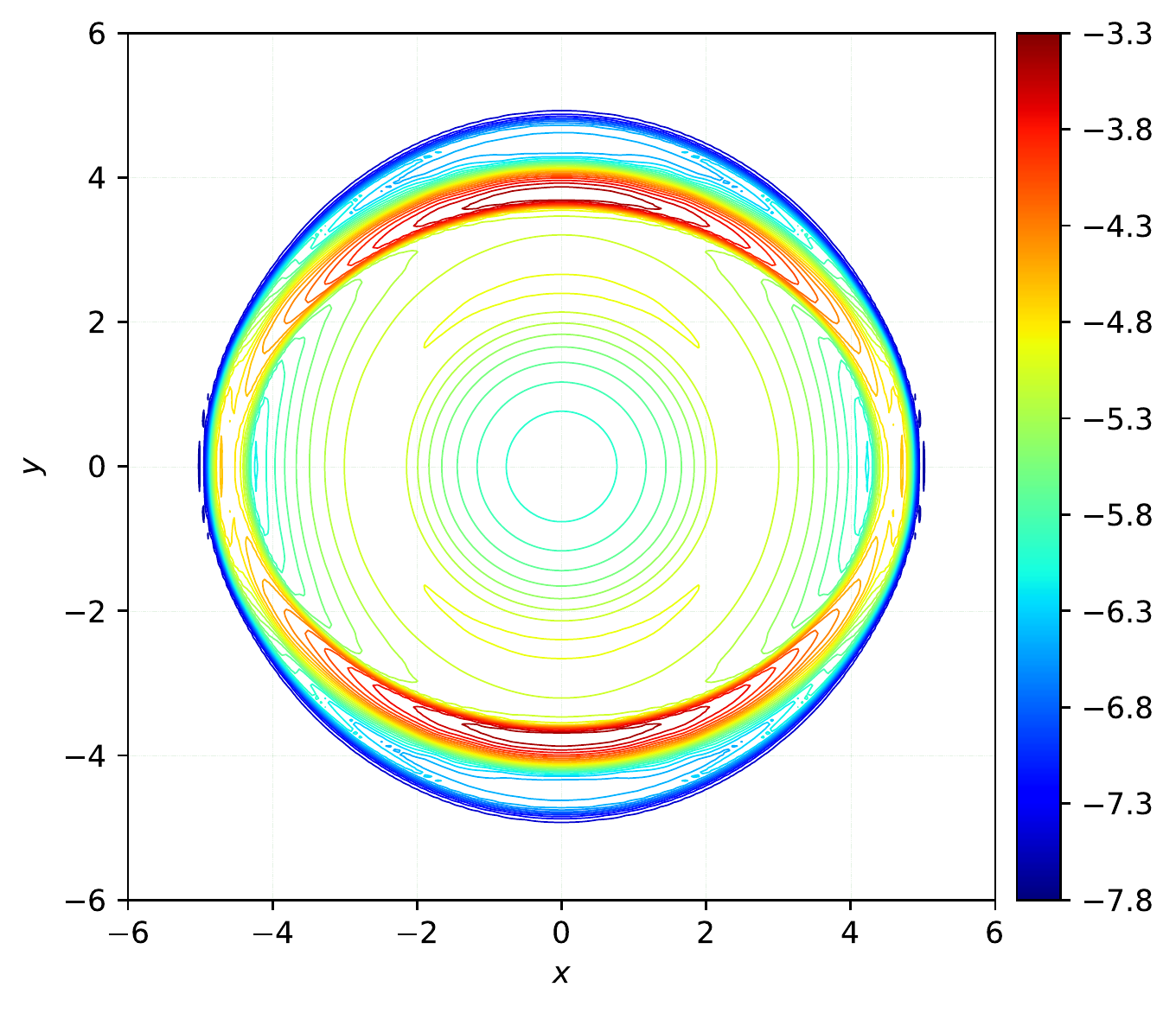}
			\label{fig:blast_o4_imp_0p1_logp}}
		\subfigure[Plot of ion Lorentz factor $\Gamma_i$.]{
			\includegraphics[width=2.9in, height=2.5in]{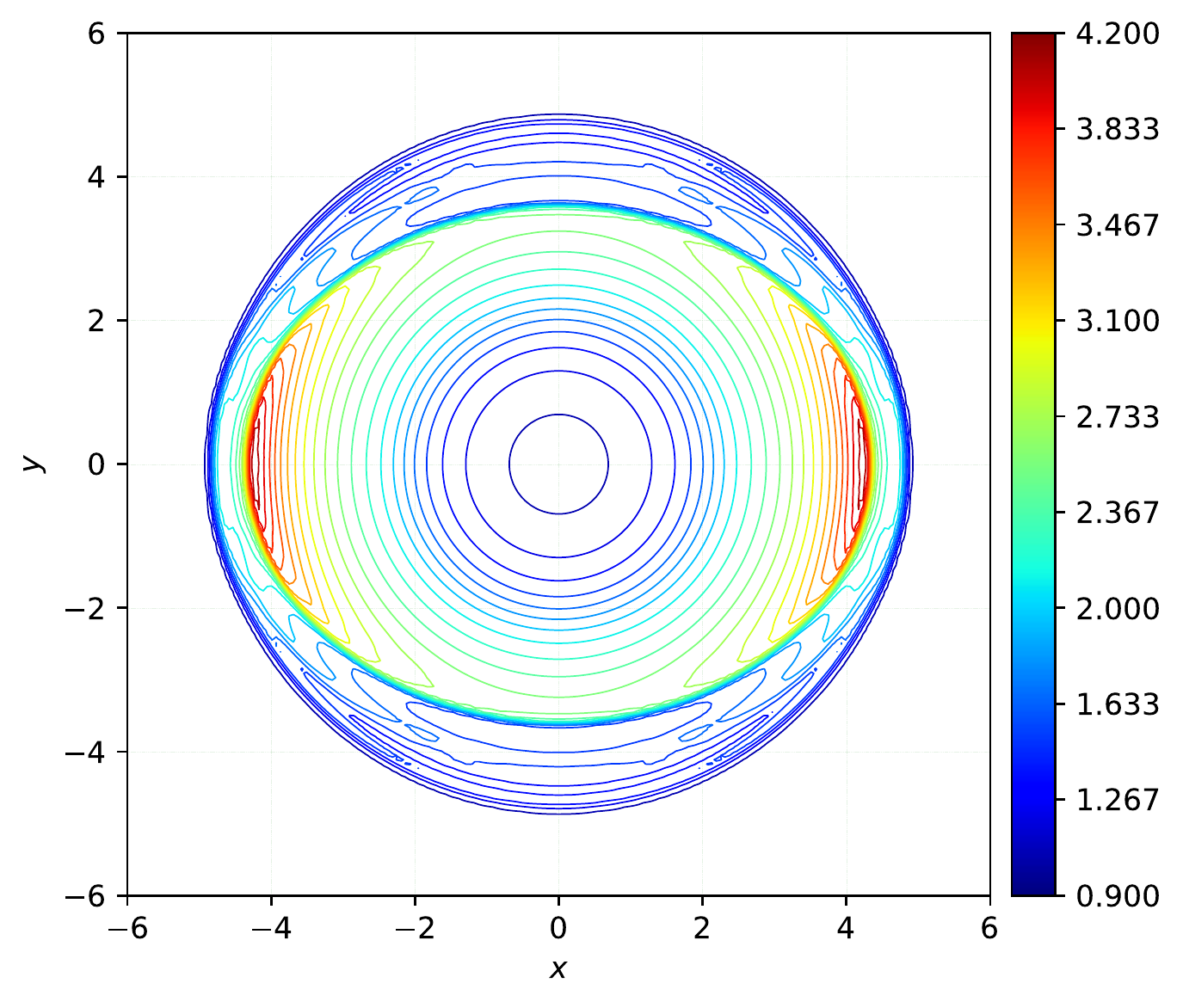}
			\label{fig:blast_o4_imp_0p1_lorentz}}
		\subfigure[Plot of magnitude of the Magnetic field $\dfrac{|\mathbf{B}|^2}{2}$.]{
			\includegraphics[width=2.9in, height=2.5in]{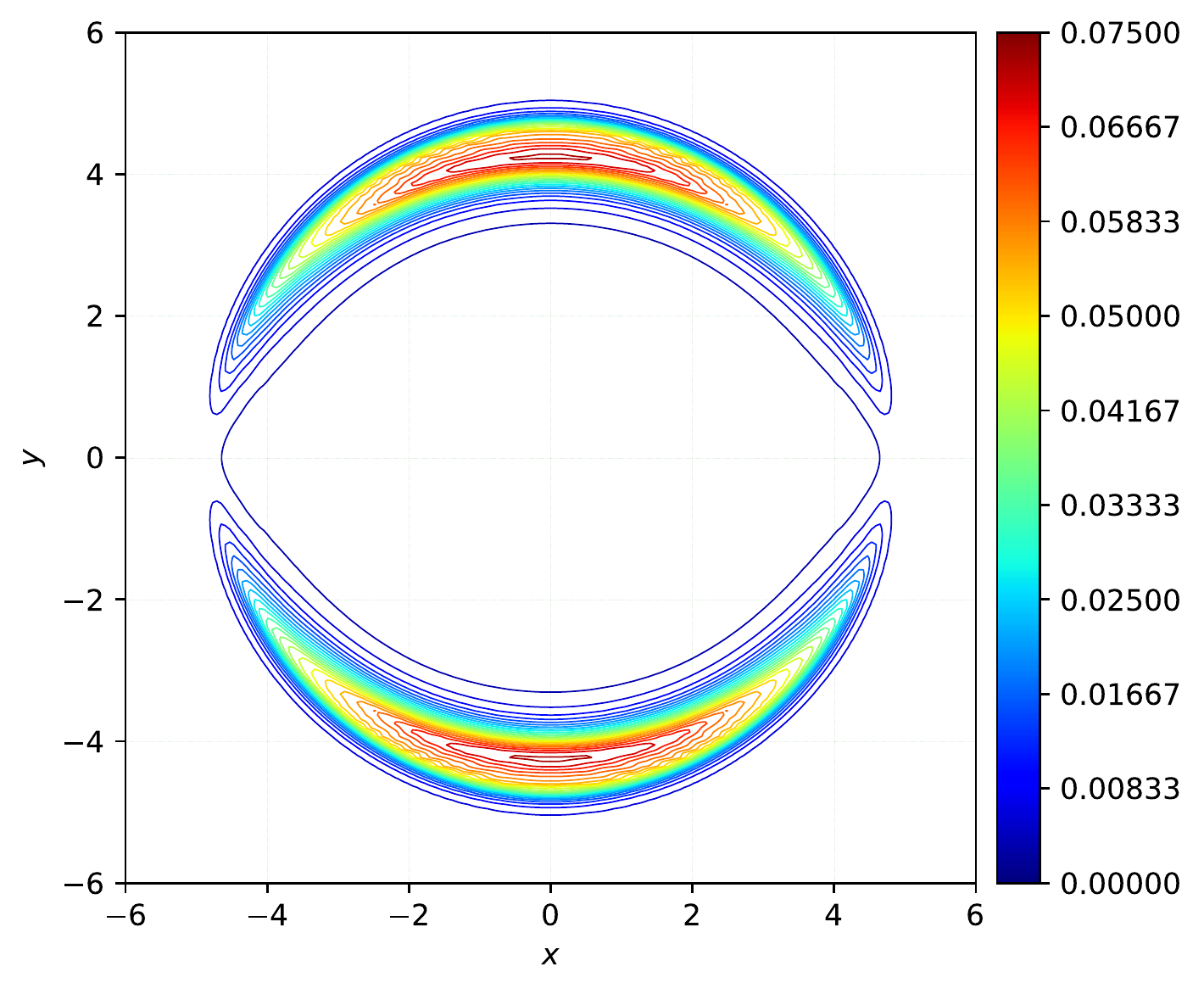}
			\label{fig:blast_o4_imp_0p1_magBby2}}
		\caption{\nameref{test:2d_blast}: Plot for the weakly magnetized medium $B_0=0.1$, using {\bf O4-ES-IMEX} scheme with $200\times 200$ cells. We have plotted 30 contours for each variable.}
		\label{fig:blast_o4_0_1}
	\end{center}
\end{figure}

\begin{figure}[!htbp]
	\begin{center}
		\subfigure[Plot of $\log(\rho_i+\rho_e)$.]{
			\includegraphics[width=2.9in, height=2.5in]{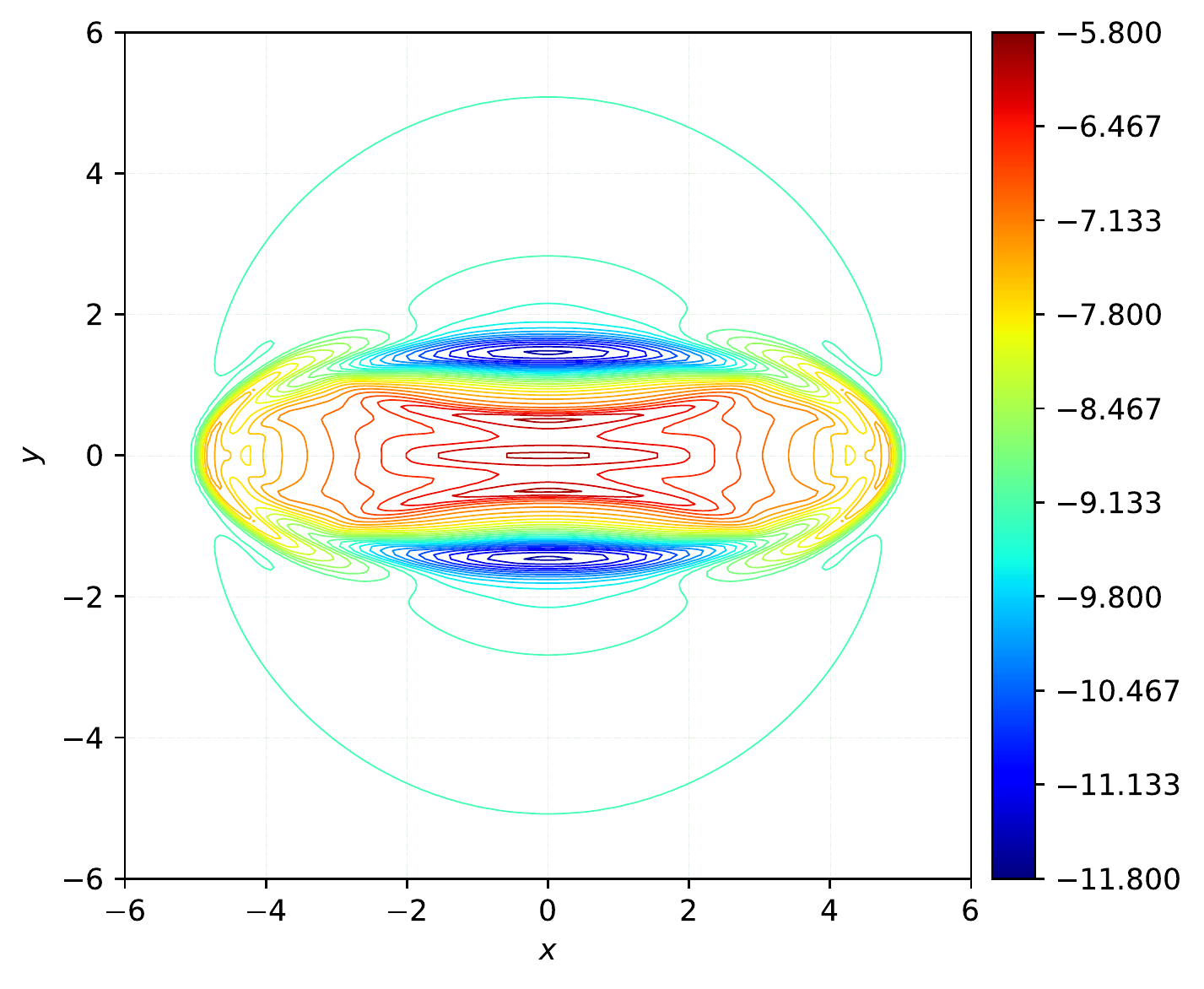}
			\label{fig:blast_o3_imp_1p0_logrho}}
		\subfigure[Plot of $\log(p_i+p_e)$.]{
			\includegraphics[width=2.9in, height=2.5in]{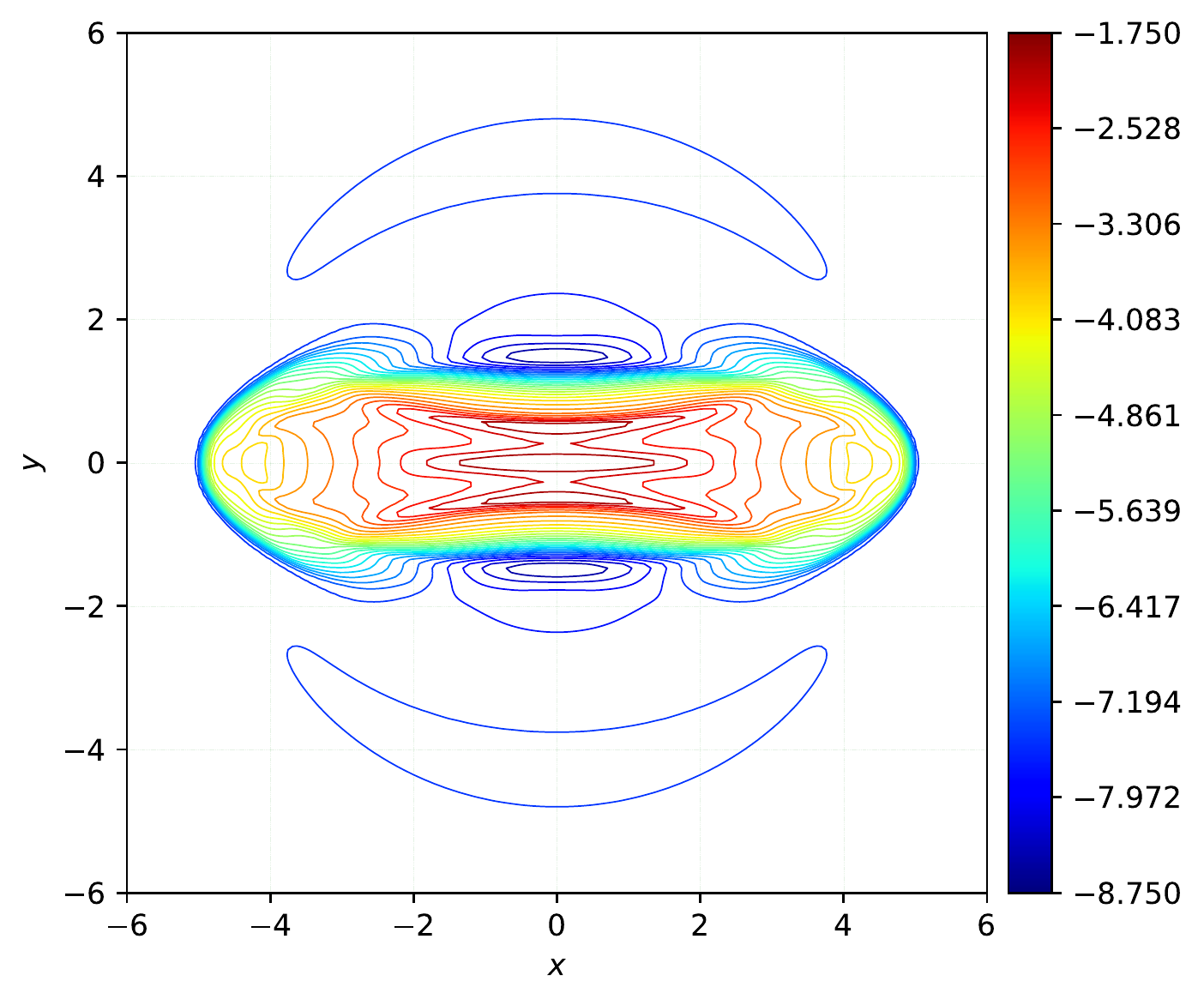}
			\label{fig:blast_o3_imp_1p0_logp}}
		\subfigure[Plot of ion Lorentz factor $\Gamma_i$.]{
			\includegraphics[width=2.9in, height=2.5in]{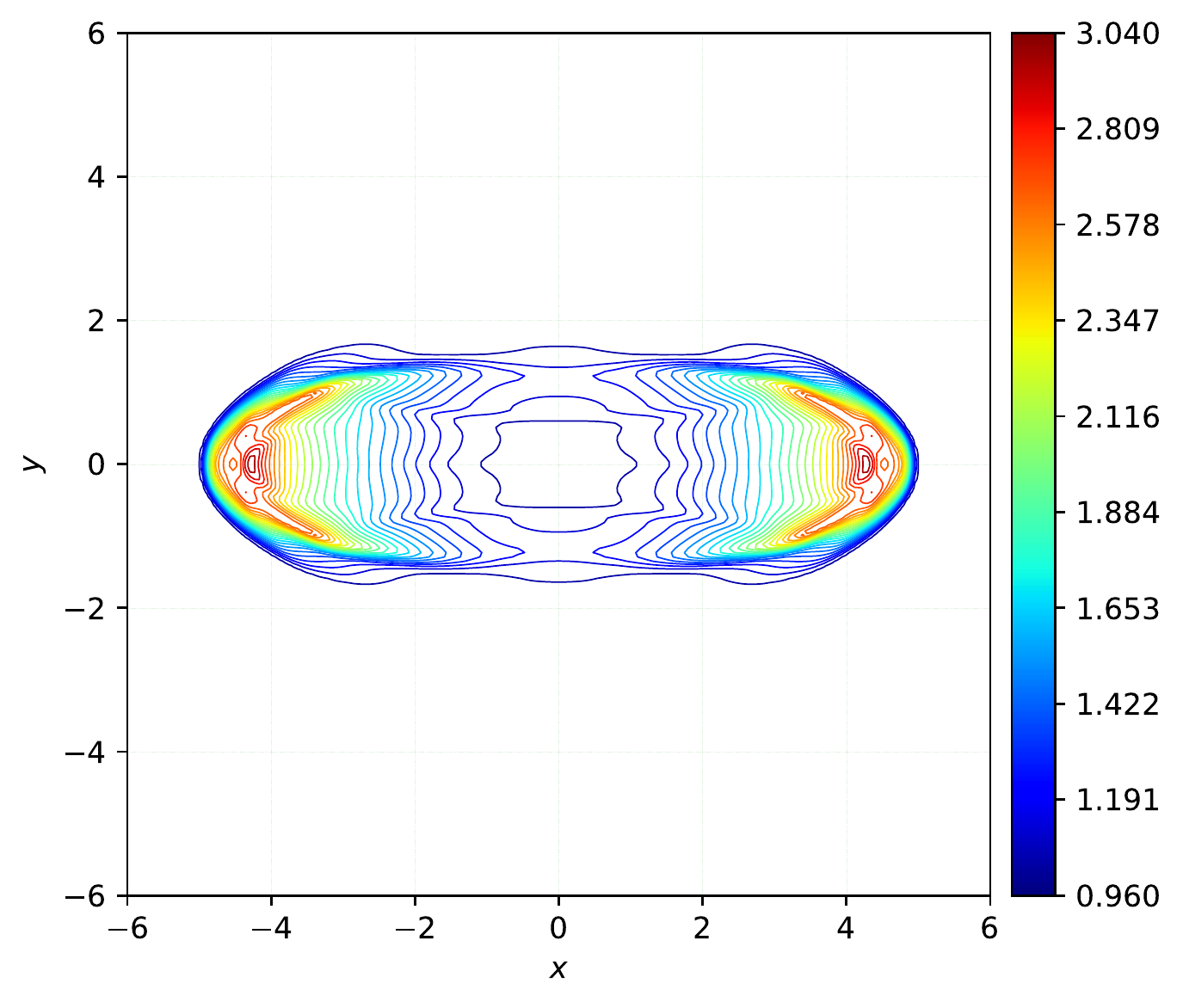}
			\label{fig:blast_o3_imp_1p0_lorentz}}
		\subfigure[Plot of magnitude of the Magnetic field $\dfrac{|\mathbf{B}|^2}{2}$.]{
			\includegraphics[width=2.9in, height=2.5in]{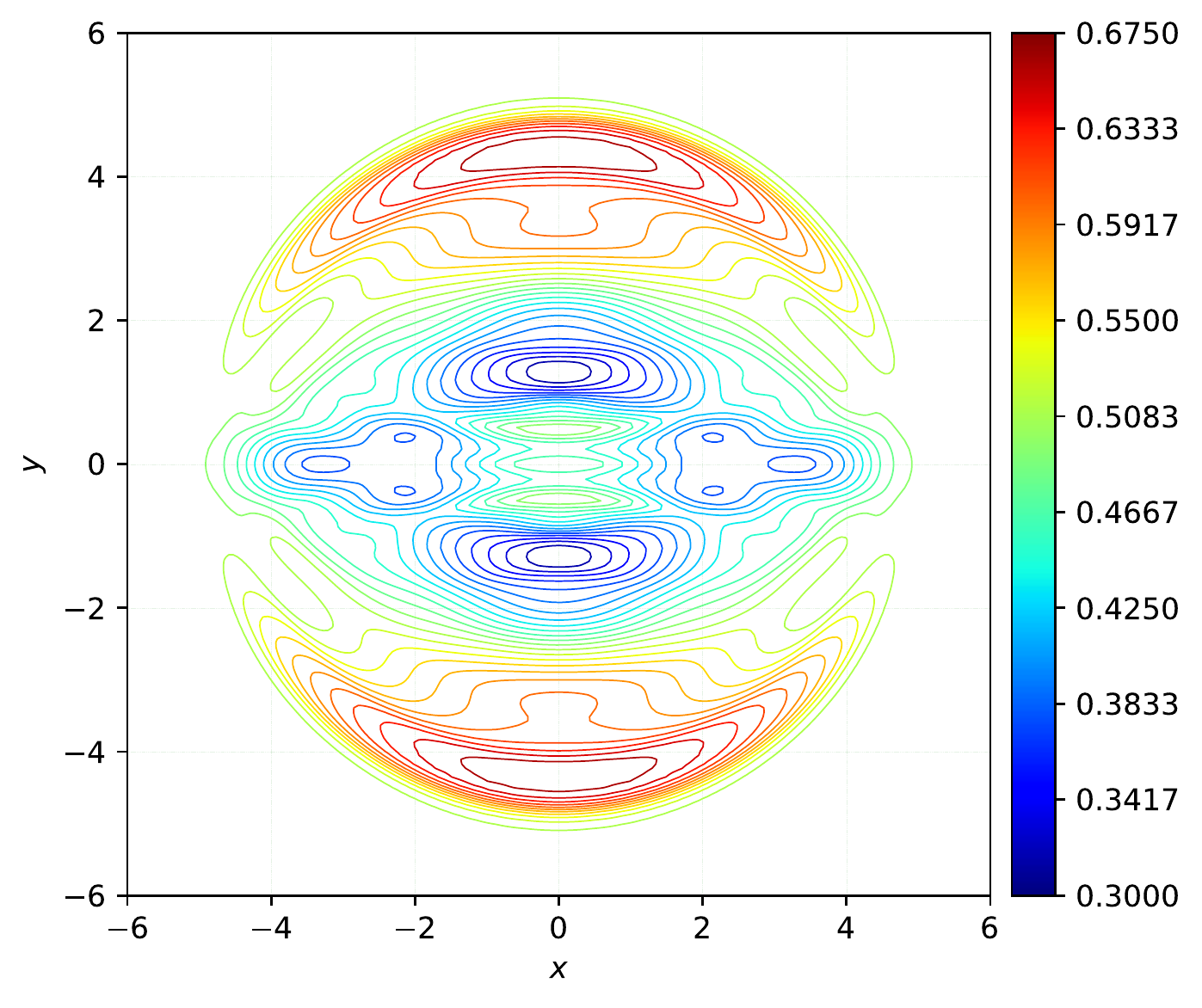}
			\label{fig:blast_o3_imp_1p0_magBby2}}
		\caption{\nameref{test:2d_blast}: Plot for the strongly magnetized medium $B_0=1.0$, using {\bf O3-ES-IMEX} scheme with $200\times 200$ cells. We have plotted 30 contours for each variable.}
		\label{fig:blast_o3_1_0}
	\end{center}
\end{figure}
\begin{figure}[!htbp]
	\begin{center}
		\subfigure[Plot of $\log(\rho_i+\rho_e)$.]{
			\includegraphics[width=2.9in, height=2.5in]{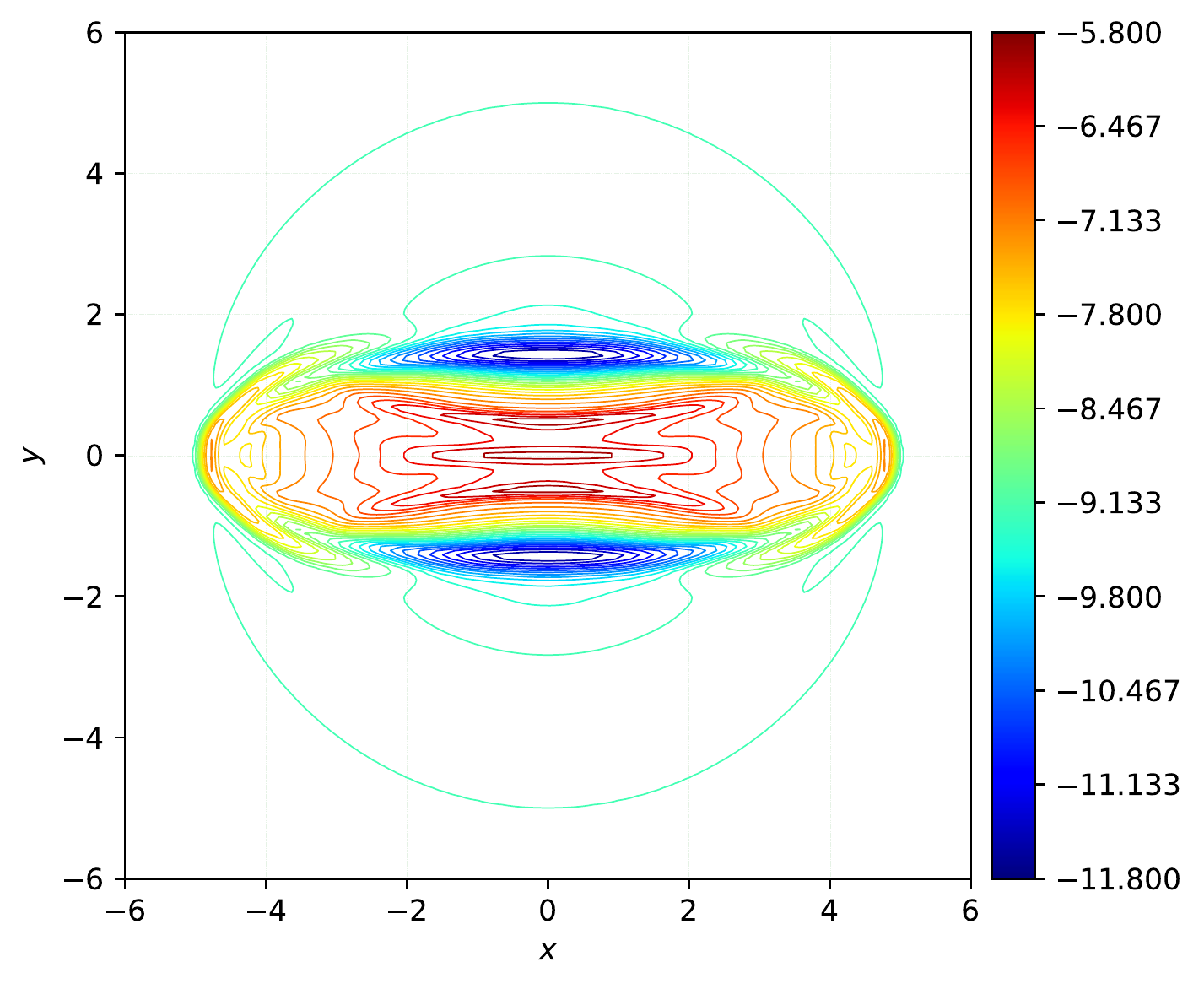}
			\label{fig:blast_o4_imp_1p0_logrho}}
		\subfigure[Plot of $\log(p_i+p_e)$.]{
			\includegraphics[width=2.9in, height=2.5in]{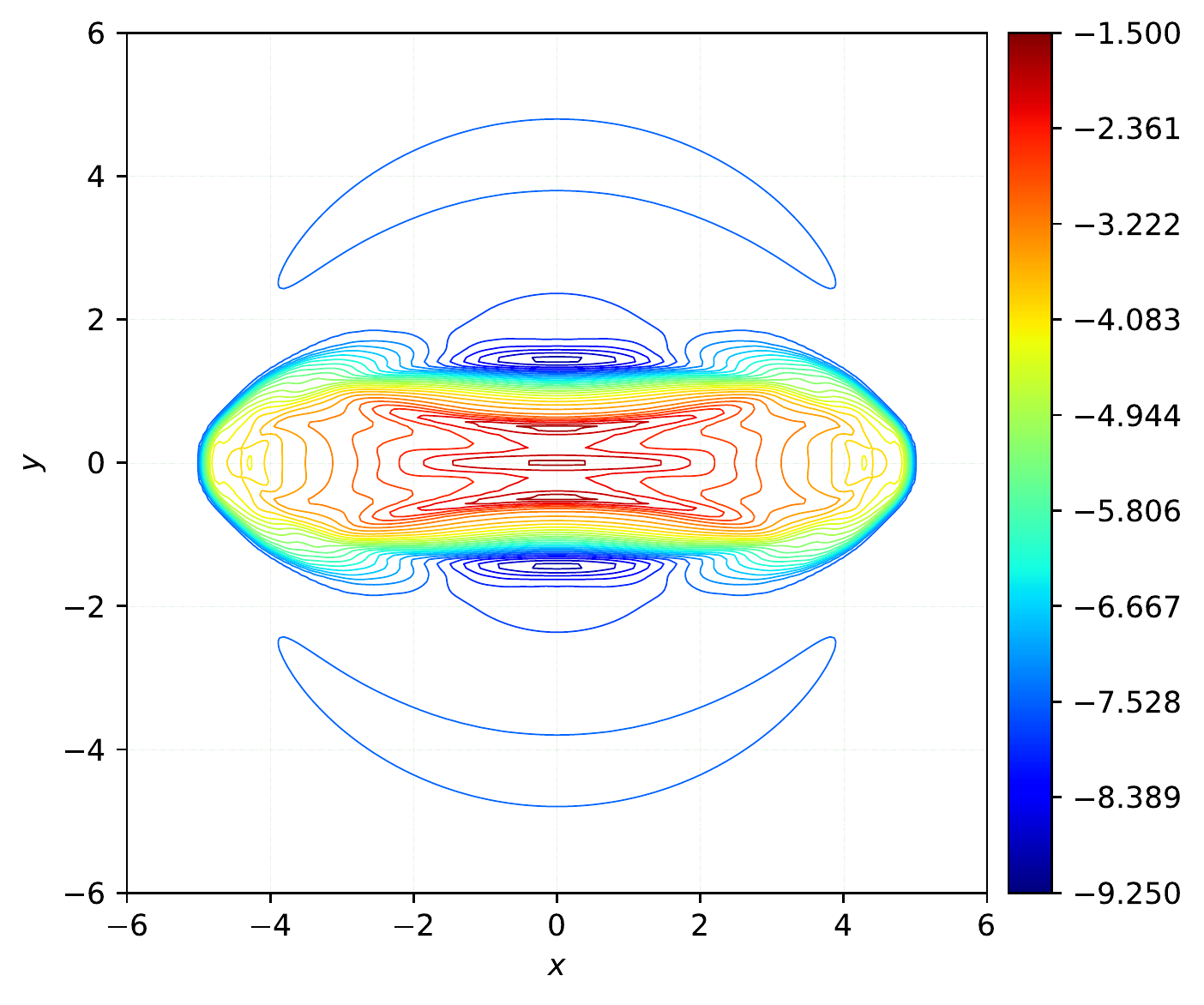}
			\label{fig:blast_o4_imp_1p0_logp}}
		\subfigure[Plot of ion Lorentz factor $\Gamma_i$.]{
			\includegraphics[width=2.9in, height=2.5in]{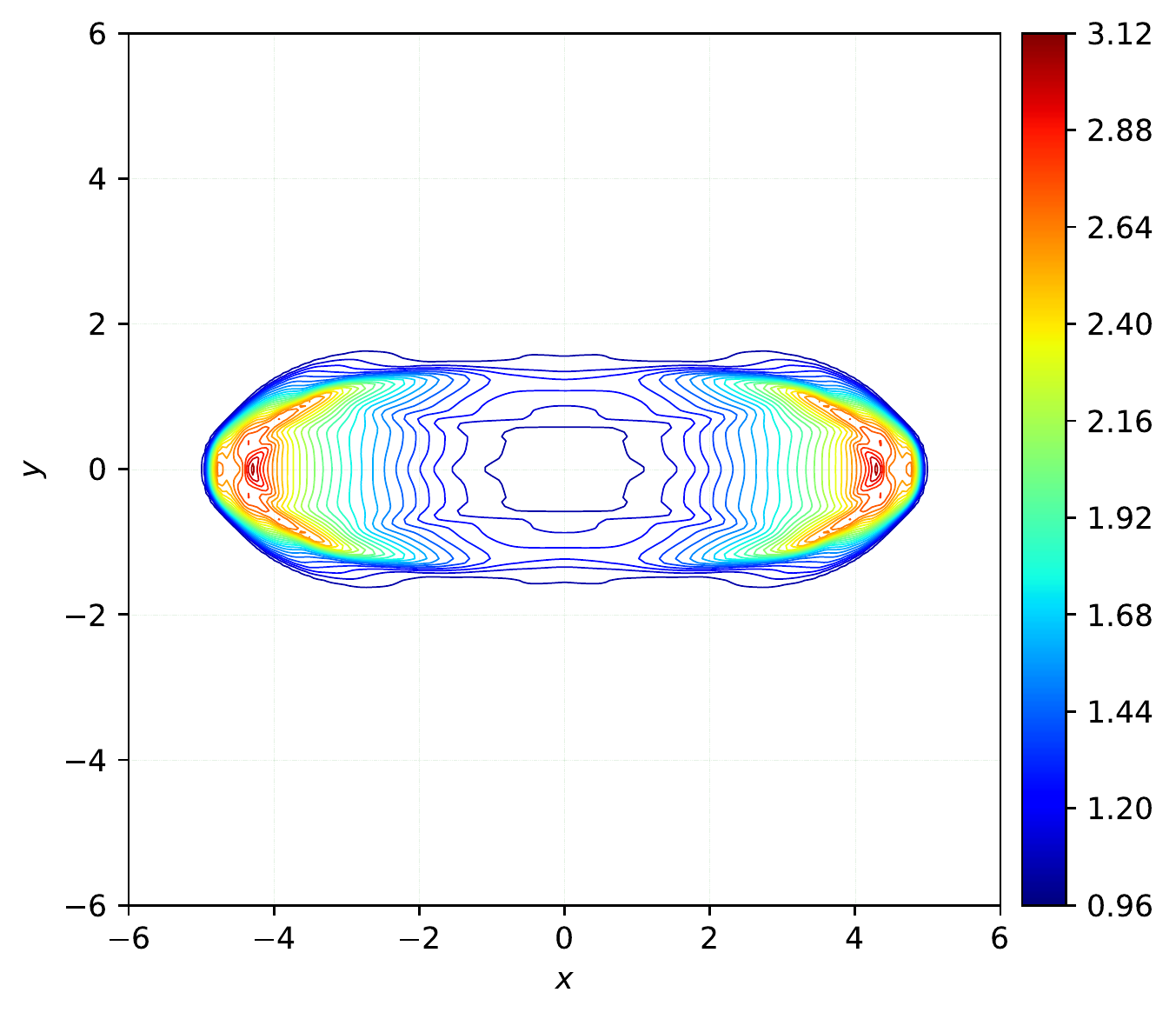}
			\label{fig:blast_o4_imp_1p0_lorentz}}
		\subfigure[Plot magnitude of the Magnetic field $\dfrac{|\mathbf{B}|^2}{2}$.]{
			\includegraphics[width=2.9in, height=2.5in]{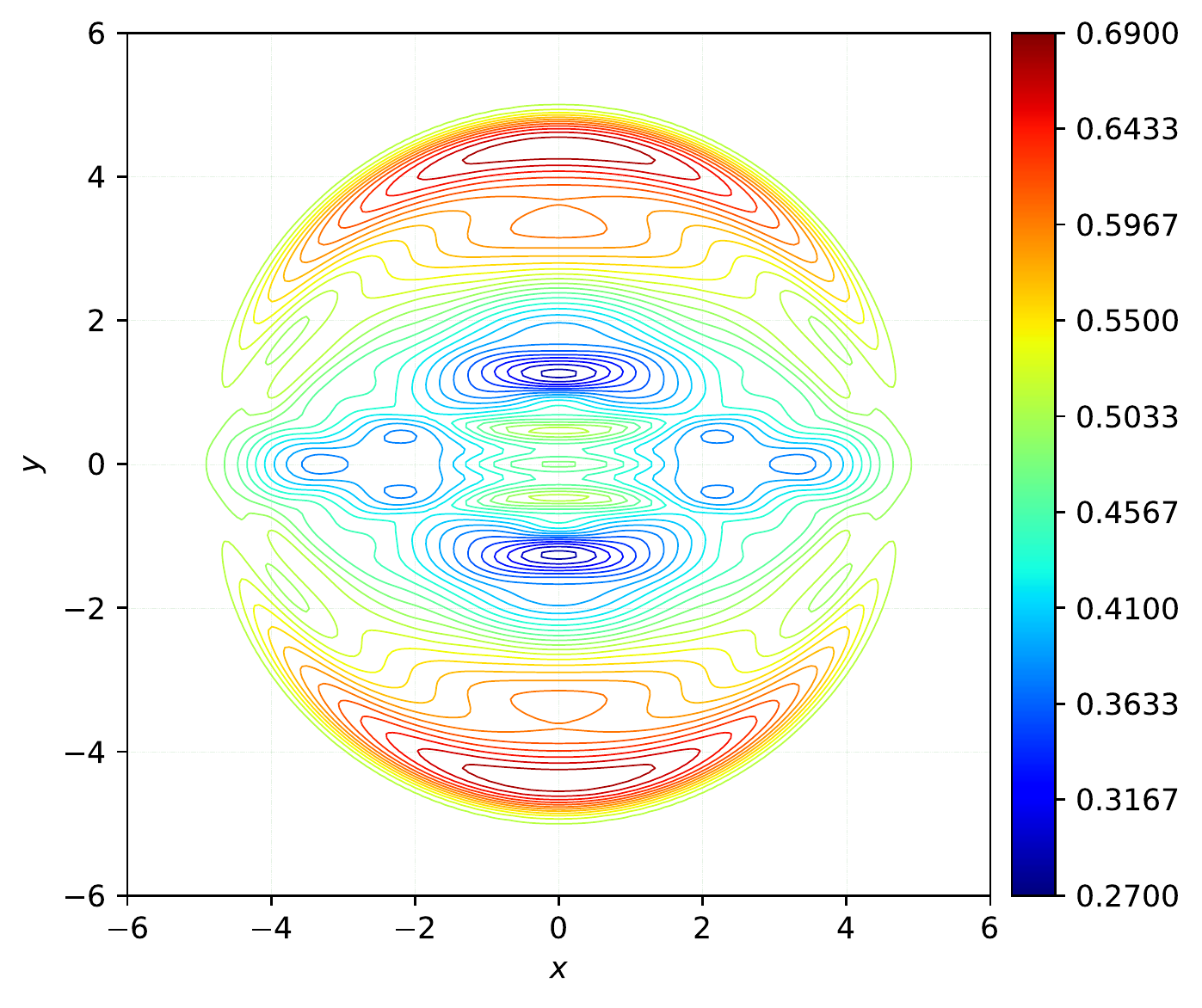}
			\label{fig:blast_o4_imp_1p0_magBby2}}
		\caption{\nameref{test:2d_blast}: Plot for the strongly magnetized medium $B_0=1.0$, using {\bf O4-ES-IMEX} scheme with $200\times 200$ cells. We have plotted 30 contours for each variable.}
		\label{fig:blast_o4_1_0}
	\end{center}
\end{figure}
\begin{figure}[!htbp]
	\begin{center}
		\subfigure[Total entropy fluid decay for weakly magnetized medium, $B_0=0.1$.]{
			\includegraphics[width=2.9in, height=2.2in]{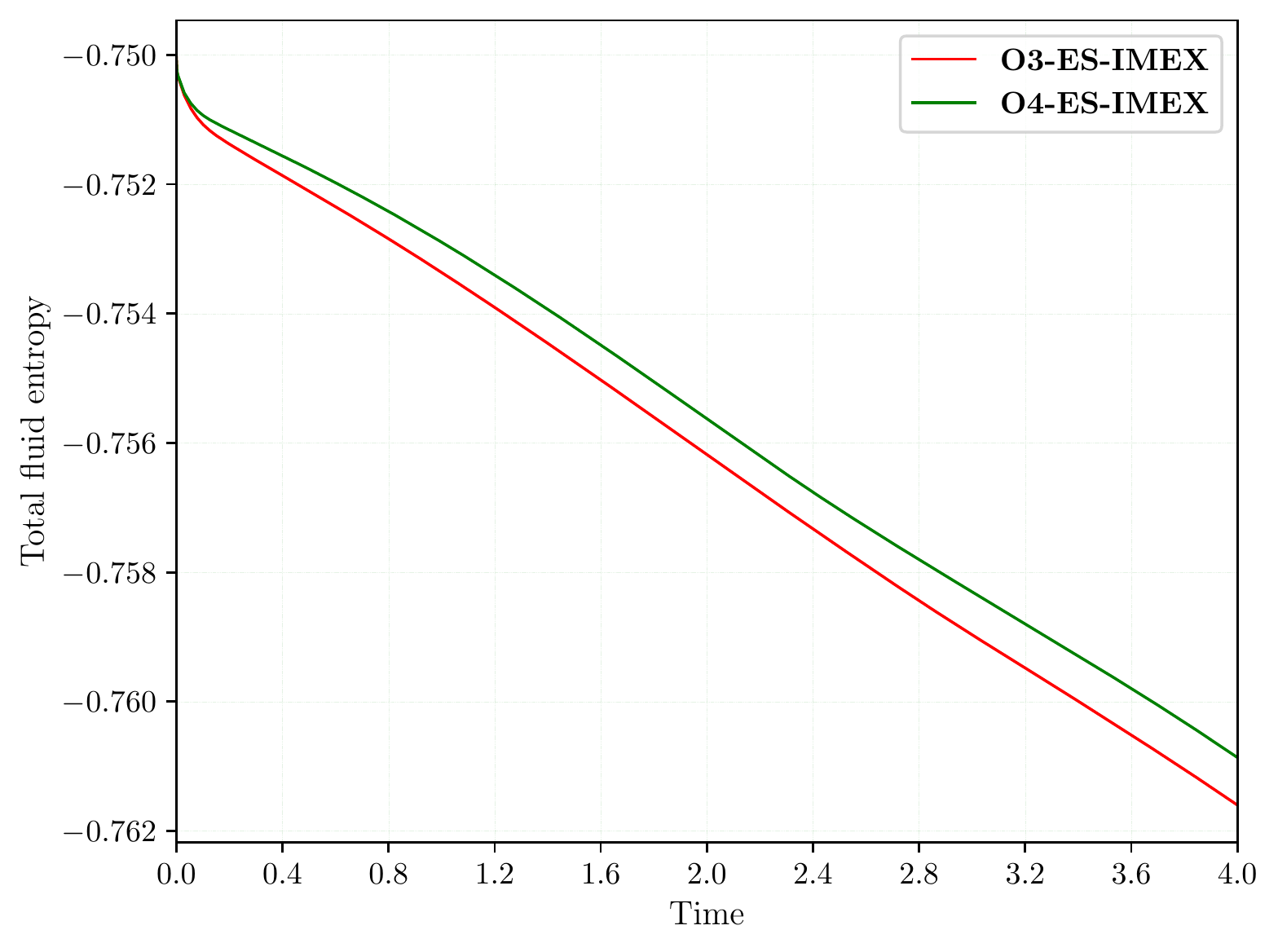}
			\label{fig:blast_0p1_entropy}}
		\subfigure[Total fluid entropy decay for strongly magnetized medium, $B_0=1.0$.]{
			\includegraphics[width=2.9in, height=2.2in]{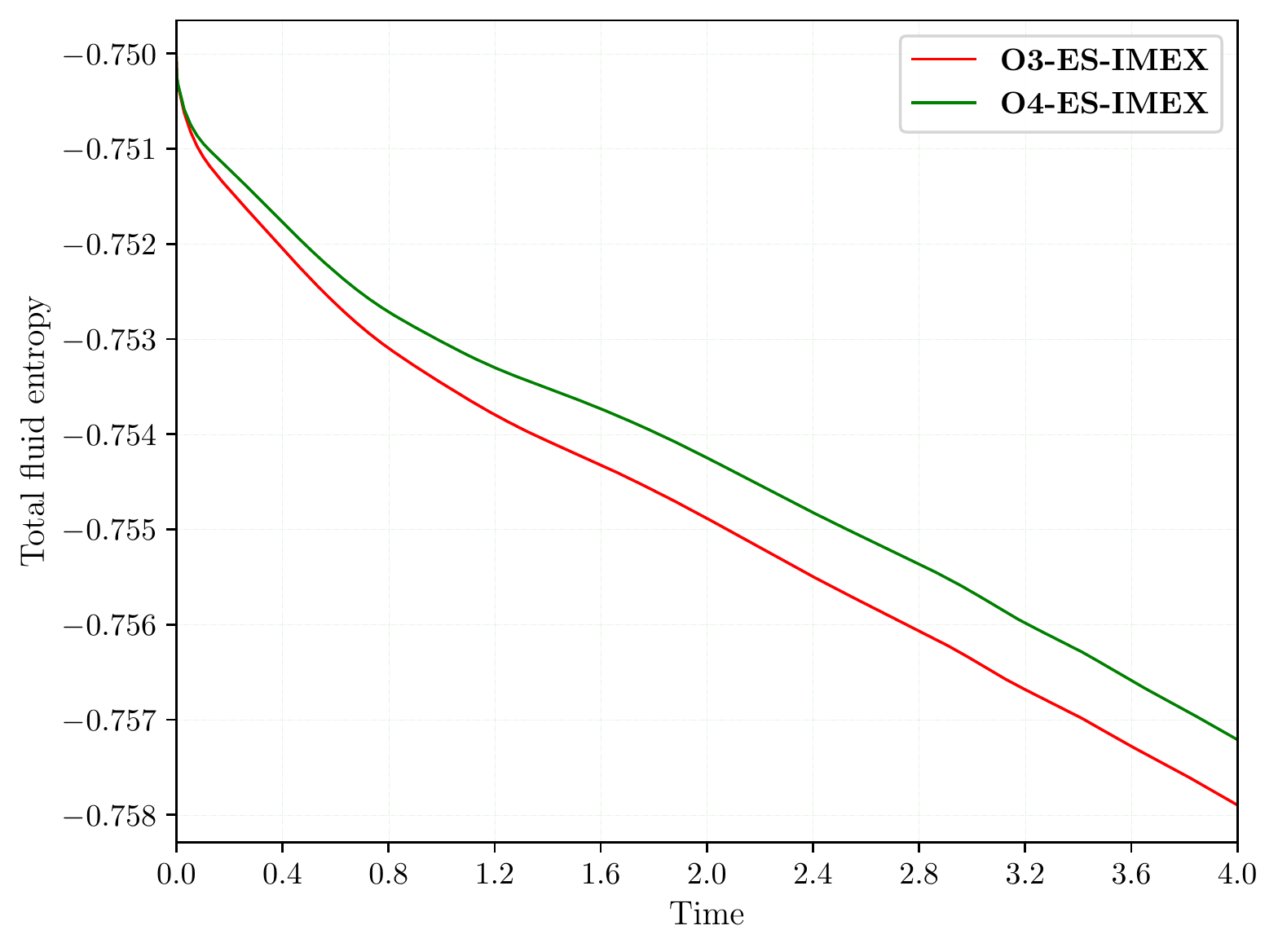}
			\label{fig:blast_1p0_entropy}}
		\caption{\nameref{test:2d_blast}: Total fluid entropy $\mathcal{U}_i+\mathcal{U}_e$ evolution of the schemes {\bf ES-O3-IMEX} and {\bf ES-O4-IMEX} for the weakly and strongly magnetized medium.}
		\label{fig:blast_entropy}
	\end{center}
\end{figure}

We consider the two-fluid relativistic extension of the strong cylindrical RMHD explosion described in~\cite{Komissarov1999}. We follow ~\cite{Amano2016,Balsara2016} to consider a computational domain of $[-6,6]\times[-6,6]$ with Neumann boundary conditions. To describe the initial fluid profile, consider $\rho_{in}=10^{-2}$, $p_{in}=1.0$, $\rho_{out}=10^{-4}$ and $p_{out}=5 \times 10^{-4}$. Also, let us denote the radial distance $r=\sqrt{x^2+y^2}$ from the center of the computational domain. Inside the disc of radius 0.8, i.e., $r<0.8$, we define the densities $\rho_i=\rho_e=0.5 \times \rho_{in}$ and the pressures $p_i=p_e=0.5 \times p_{in}$. Also, outside the disc of radius 1.0, i.e., $r>1$, we define the densities $\rho_i=\rho_e=0.5 \times \rho_{out}$ and the pressures $p_i=p_e=0.5 \times p_{out}$. In the range $0.8 \le r \le 1.0$, the densities and pressures are defined using a linear profile, such that the densities and the pressures decreases with increase in the radius $r$ and at the boundaries $r=0.8$ and $r=1.0$ they matches the prescribed constant states. The magnetic field has been initialized in the $x$-direction only, as $B_x=B_0$. We set all remaining variables to zero. The charge to mass ratios are taken to be $r_i=-r_e=10^3$ and  $\gamma_i=\gamma_e=4/3$.

The solutions are computed on a mesh of $200 \times 200$ cells using the {\bf O3-ES-IMEX} and {\bf O4-ES-IMEX} schemes till final time $t=4.0$. We consider two values of parameter $B_0$, such that $B_0=0.1$ denoting the case of the weakly magnetized medium,  and $B_0=1.0$ denoting the  case of the strongly magnetized medium. In Figures~\eqref{fig:blast_o3_0_1} and \eqref{fig:blast_o4_0_1}, we plot the results for weakly magnetized medium for  {\bf O3-ES-IMEX} and {\bf O4-ES-IMEX} schemes, respectively. The figures show the log to the base 10 of total density and total pressure; we also plot the ion Lorentz factor and magnitude of the magnetic field. It can be observed that both the schemes produce stable results and are able to capture the waves accurately. Furthermore, the results are comparable to those in \cite{Amano2016} and \cite{Balsara2016}. 

Similarly, in the case of strongly magnetized medium we plot the results for {\bf O3-ES-IMEX} and {\bf O4-ES-IMEX} schemes in Figures \eqref{fig:blast_o3_1_0} and \eqref{fig:blast_o4_1_0}, respectively. We observe a substantial change in the solution profile. Again both the schemes are able to capture the blast waves accurately.

In Figure \eqref{fig:blast_entropy}, we plot the evolution of the total fluid entropy $\mathcal{U}_i+\mathcal{U}_e$ for the {\bf O3-ES-IMEX} and {\bf O4-ES-IMEX} schemes in weakly and strongly magnetized mediums. As the solutions contain many discontinuities, we see a strong decay in the entropies. Furthermore, {\bf O4-ES-IMEX} scheme decays less entropy than the {\bf O3-ES-IMEX}  scheme.

\reva{
	\begin{figure}[!htbp]
		\begin{center}
			\subfigure[Plot of $|2\Delta x (\nabla \cdot \mathbf{B})_{i,j}|$ for $B_0=0.1$ using the {\bf O3-ES-IMEX} scheme at time $t=4.0$.]{
				\includegraphics[width=2.9in, height=2.5in]{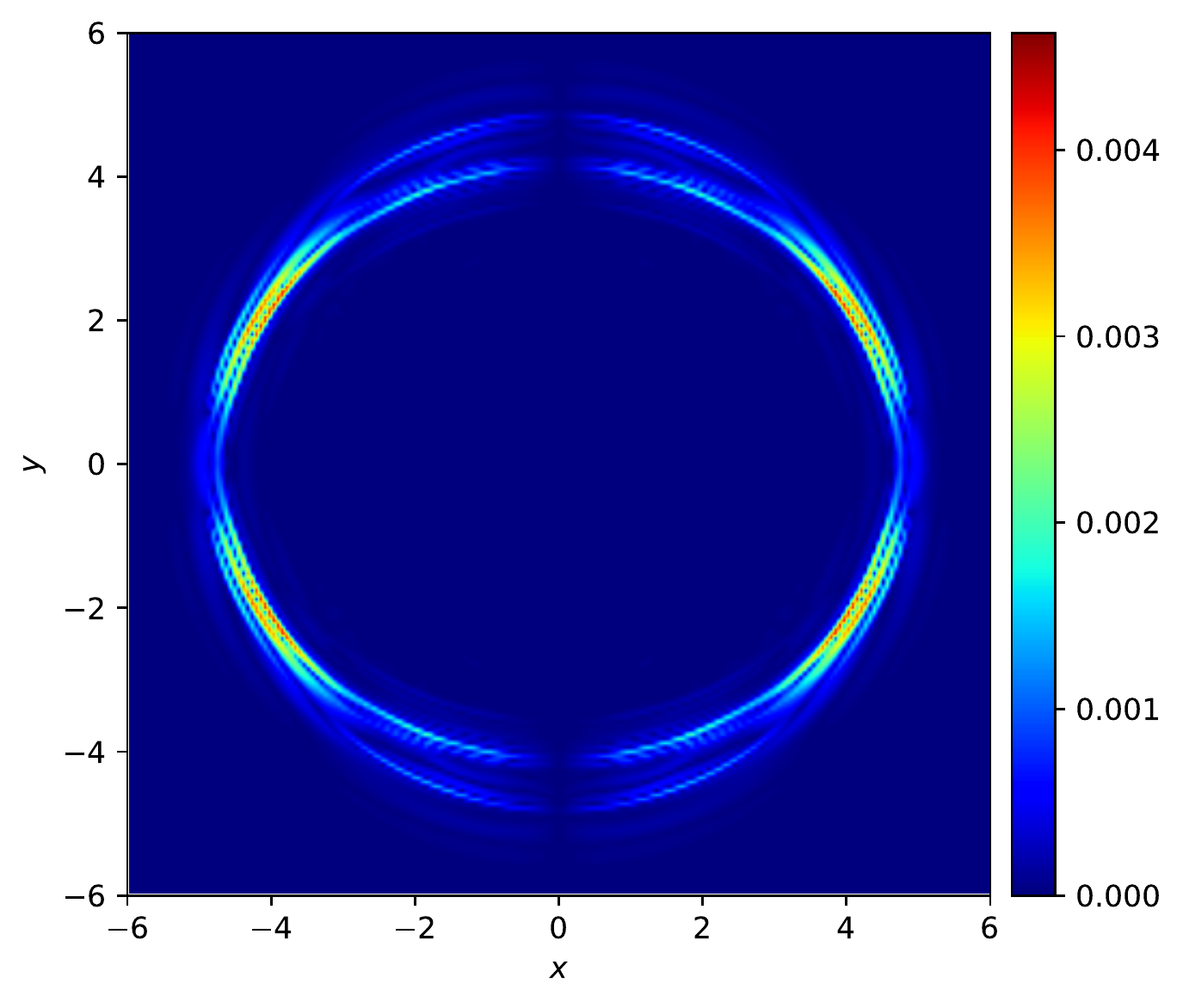}
				\label{fig:blast_0p1_2Dnorm_o3}}
			\quad
			\subfigure[Plot of $|2\Delta x (\nabla \cdot \mathbf{B})_{i,j}|$ for $B_0=0.1$ using the {\bf O4-ES-IMEX} scheme at time $t=4.0$.]{
				\includegraphics[width=2.9in, height=2.5in]{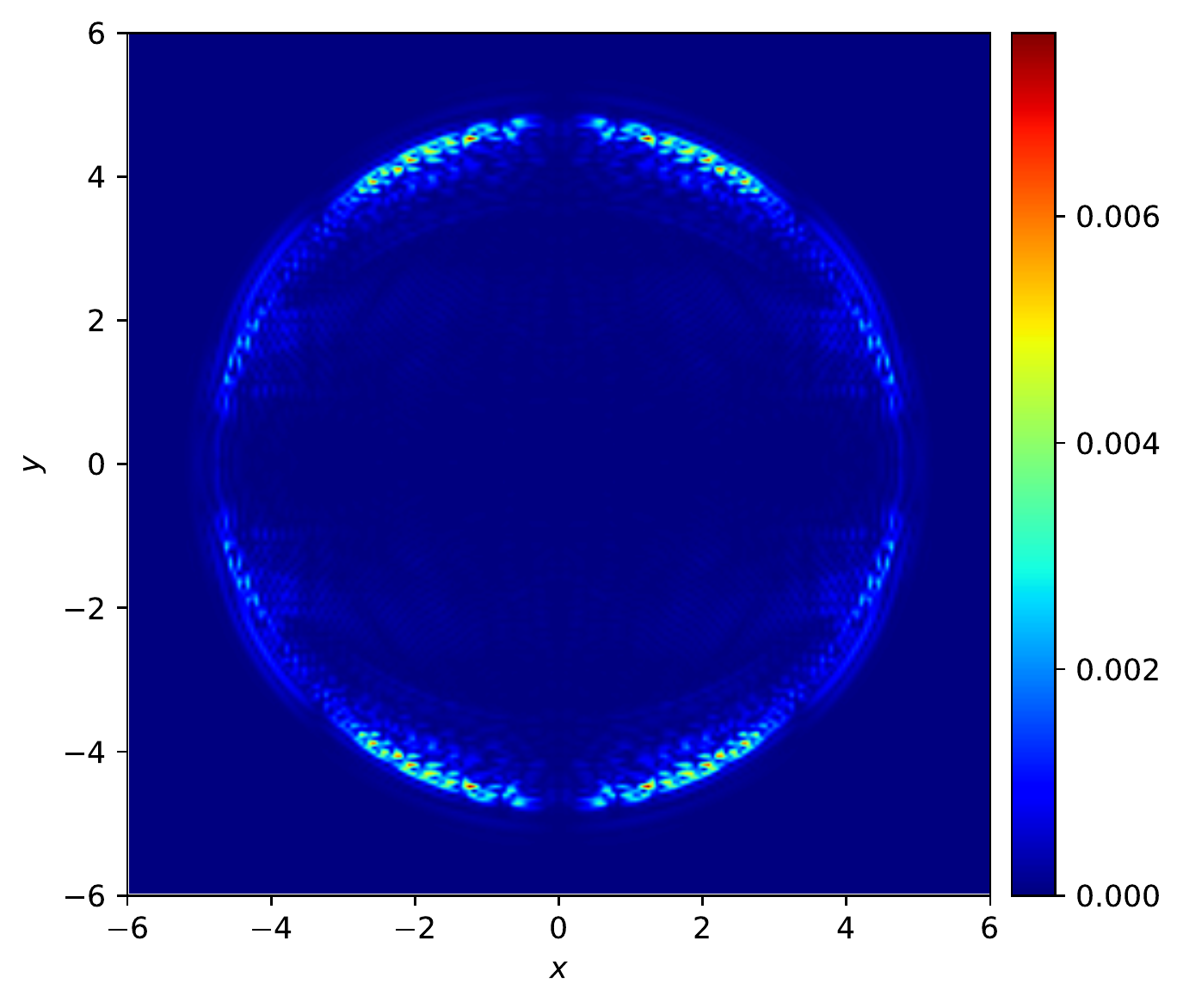}
				\label{fig:blast_0p1_2Dnorm_o4}}
			\quad
			\subfigure[Time evolution of $L^1$-norms of divergence of $\mathbf{B}$ for $B_0=0.1$ till time $t=4.0$ using {\bf O3-ES-IMEX} and {\bf O4-ES-IMEX} schemes.]{
				\includegraphics[width=2.9in, height=2.0in]{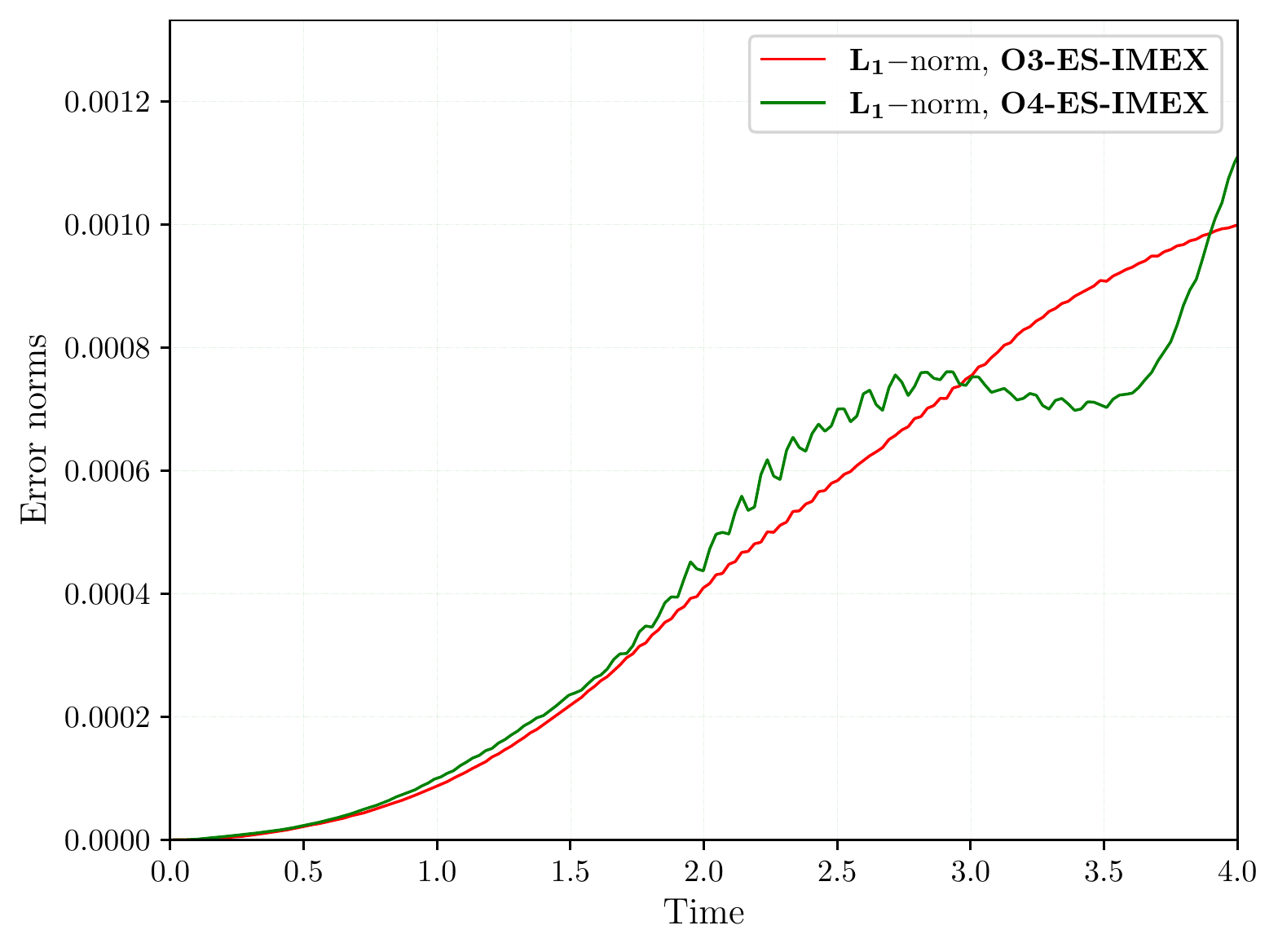}
				\label{fig:blast_0p1_l1norm}}
			\quad
			\subfigure[Plot of $|2\Delta x (\nabla \cdot \mathbf{B})_{i,j}|$ for $B_0=1.0$ using the {\bf O3-ES-IMEX} scheme at time $t=4.0$.]{
				\includegraphics[width=2.9in, height=2.5in]{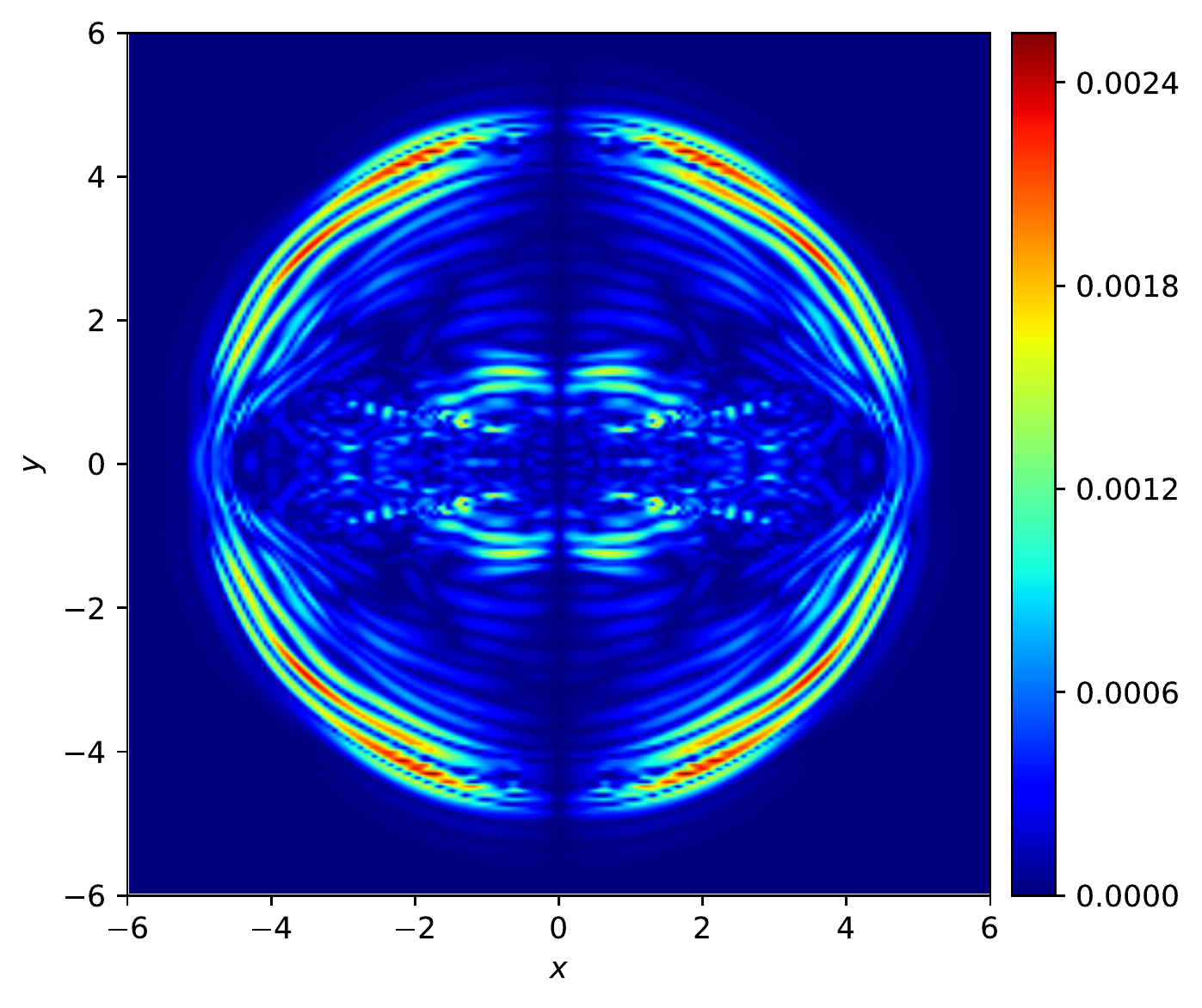}
				\label{fig:blast_1p0_2Dnorm_o3}}
			\quad
			\subfigure[Plot of $|2\Delta x (\nabla \cdot \mathbf{B})_{i,j}|$ for $B_0=1.0$ using the {\bf O4-ES-IMEX} scheme at time $t=4.0$.]{
				\includegraphics[width=2.9in, height=2.5in]{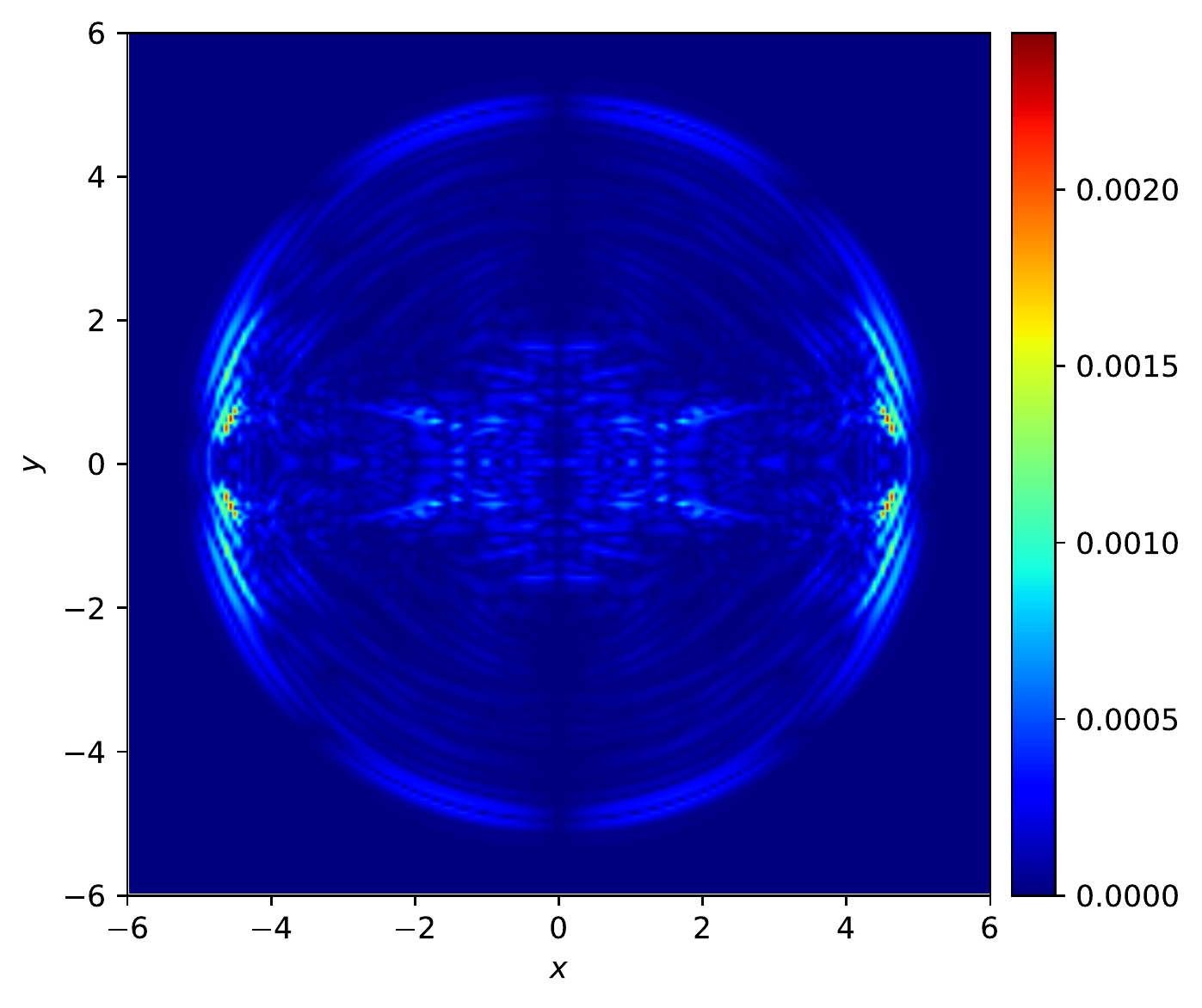}
				\label{fig:blast_1p0_2Dnorm_o4}}
			\quad
			\subfigure[Time evolution of $L^1$-norms of divergence of $\mathbf{B}$ for $B_0=1.0$ till time $t=4.0$ using {\bf O3-ES-IMEX} and {\bf O4-ES-IMEX} schemes.]{
				\includegraphics[width=2.9in, height=2.0in]{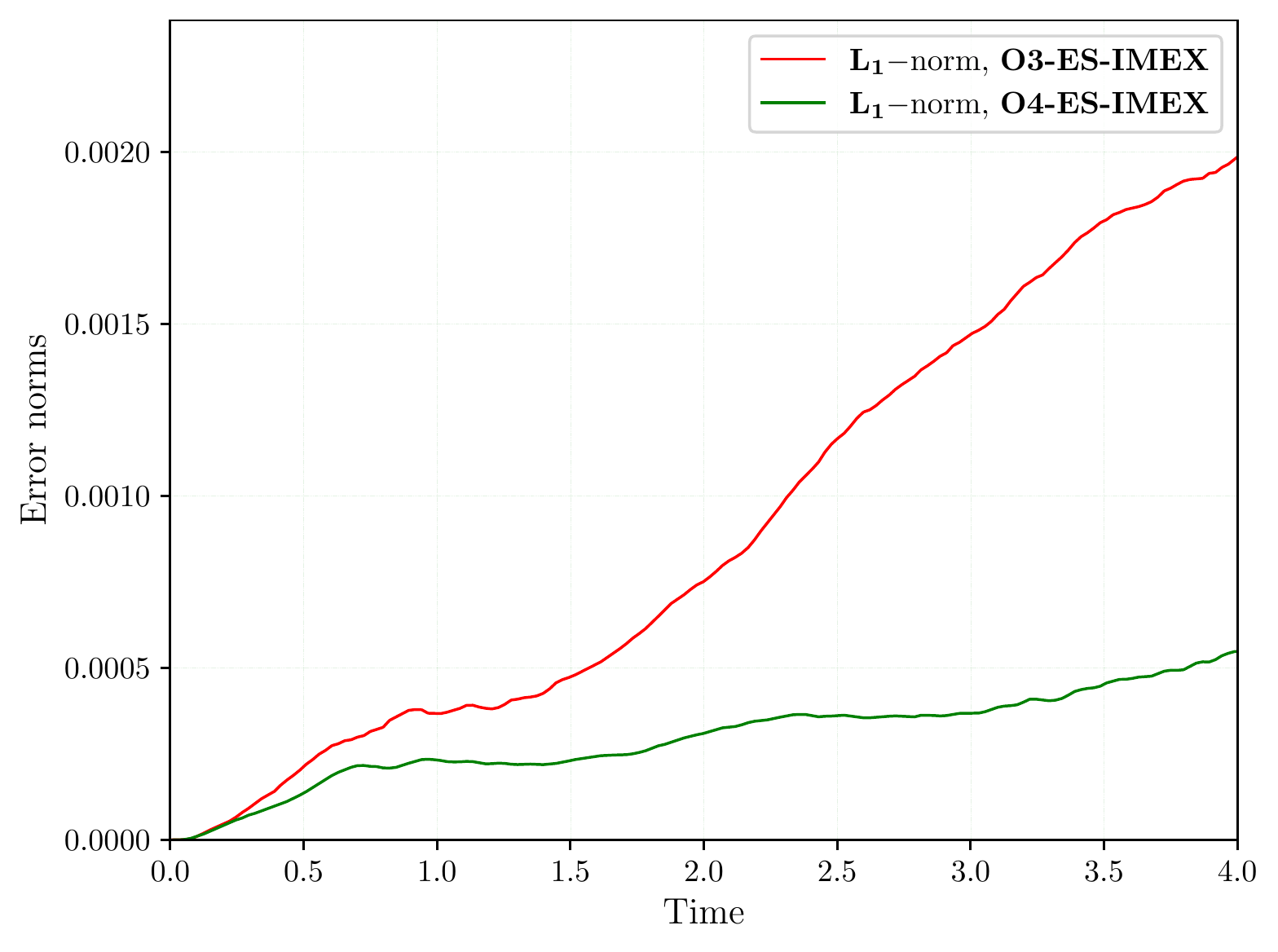}
				\label{fig:blast_1p0_l1norm}}
			\caption{\nameref{test:2d_blast}: \reva{Plots of the ($|2\Delta x (\nabla \cdot \mathbf{B})_{i,j}|$) and time evolution of $L^1$-norms of divergence of $\mathbf{B}$ for $B_0=0.1$ and $B_0=1.0$ and schemes {\bf O3-ES-IMEX} and {\bf O4-ES-IMEX} using $200\times 200$ cells.}}
			\label{fig:norms_blast}
		\end{center}
	\end{figure}
	
	In Fig.~\ref{fig:norms_blast} we have presented results for errors in the divergence of the magnetic field. In Figs.~\ref{fig:blast_0p1_2Dnorm_o3} and \ref{fig:blast_0p1_2Dnorm_o4}, we plot the absolute value of undivided divergence of the magnetic field for $B_0=0.1$ using {\bf O3-ES-IMEX} and {\bf O4-ES-IMEX} schemes, respectively \cite{Balsara2016}. For both schemes, we note that the errors are well-controlled and concentrated where we have steep gradients in the flows. Furthermore, the errors for {\bf O3-ES-IMEX} are more spread out than the {\bf O4-ES-IMEX} scheme. In Fig.~\ref{fig:blast_0p1_l1norm}, we have plotted the time evolution of the $L^1$-norm of the divergence for $B_0=0.1$ using both schemes. We note that the errors stay relatively small for both the schemes.
	
	In Figs.~\ref{fig:blast_1p0_2Dnorm_o3} and \ref{fig:blast_1p0_2Dnorm_o4}, we plot the absolute value of undivided divergence of magnetic field for $B_0=1.0$ using {\bf O3-ES-IMEX} and {\bf O4-ES-IMEX} schemes, respectively \cite{Balsara2016}. We observe that the errors for the {\bf O3-ES-IMEX} schemes are much more spread out when compared with the {\bf O4-ES-IMEX} scheme. This can also be observed from Fig.~\ref{fig:blast_1p0_l1norm}, where we plot the time evolution of the ${L^1}$-norm of divergence of the magnetic field. We note that {\bf O4-ES-IMEX} scheme outperforms {\bf O3-ES-IMEX} scheme. 
}

\subsubsection{Relativistic two-fluid GEM challenge problem} \label{test:2d_gem} 
In this test, we consider two-fluid relativistic Geospace Environment Modeling (GEM) magnetic reconnection problem from \cite{Amano2016}, which is an extension of the non-relativistic GEM magnetic reconnection problem given in \cite{Birn2001}. We consider the modified Equations \eqref{resist_momentum} and \eqref{resist_energy} to capture the resistive effects. The computational domain is $[-L_x/2,L_x/2] \times [-L_y/2,L_y/2]$ where $L_x=8\pi$ and $L_y=4\pi$, with periodic boundary conditions at $x=\pm L_x/2$ and conducting wall boundary at $y=\pm L_y/2$ boundary. The ion-electron mass ratio is taken to be $m_i/m_e = 25$ with $m_i=1$. Accordingly, we have $r_i=1.0$ and $r_e=-25.0.$ The unperturbed $x$-component of the magnetic field is taken to be $B_x(y) = B_0 \tan (y/d)$ with $B_0=1.0$, where $d=1.0$ is the thickness of the current sheet. On the other hand, unperturbed $y$- and $z$-components of the magnetic field are assumed to be zero. After the perturbation, the initial conditions become
\begin{align*}
	\begin{pmatrix}
		\rho_i  \\ 
		u_{z_i}  \\ 
		p_i  \\ \\ 
		\rho_e \\
		u_{z_e}  \\
		p_e \\ \\ 
		B_x  \\ 
		B_y
	\end{pmatrix} = 
	\begin{pmatrix}
		n  \\ 
		\frac{c}{2d}\frac{B_0 \text{sech}^2(y/d)}{n}  \\ 
		0.2 + \frac{B_0^2  \text{sech}^2(y/d)}{4} \frac{5}{24 \pi}  \\ \\ 
		\frac{m_e}{m_p} n  \\
		-(u_z)_i \\
		- p_i \\ \\ 
		B_0 \tan (y/d) - B_0 \psi_0 \frac{\pi}{L_y} \cos(\frac{\pi x}{L_x}) \sin(\frac{\pi y}{L_y}) 
		\\ 
		B_0 \psi_0 \frac{\pi}{L_x} \sin(\frac{\pi x}{L_x}) \cos(\frac{\pi y}{L_y})
	\end{pmatrix}
\end{align*}
where $n=\mathrm{sech}^2(y/d)+0.2$. All other variables are set to zero. We set resistivity constant as $\eta=0.01$, and adiabatic indices as $\gamma_i =\gamma_e= 4.0/3.0$.

Figures \eqref{fig:gem_o3_40} to \eqref{fig:gem_o4_80} show the total density, $z$-component of magnetic field, $x$-component of ion velocity and $x$-component of electron velocity on a mesh of $512\times 256$ cells. The plots also contain field line for $(B_x,B_y)$.  The solution is shown at time $t=40$ and $t=80$ using {\bf O3-ES-IMEX} and {\bf O4-ES-IMEX} schemes. We observe that solutions from both the schemes are similar and agree with the solutions presented in \cite{Amano2016}. 

Figure \eqref{fig:gem_recon} shows the time evolution of the reconnected magnetic flux, 
$$
\psi(t) = \dfrac{1}{2 B_0} \int_{-L_x/2}^{L_x/2}
| B_y(x,y=0,t) | dx.
$$
at various resolutions. We also compare it with the results presented in \cite{Amano2016}. It is observed that both schemes are able to capture the reconnected flux at various resolutions. Only for times greater than $t=80$, we see a difference in the reconnection flux for the different schemes, but at this time, the flow is already quite turbulent.

\begin{figure}[!htbp]
	\begin{center}
		\subfigure[Total Density ($\rho_i + \rho_e$)]{
			\includegraphics[width=2.9in, height=1.5in]{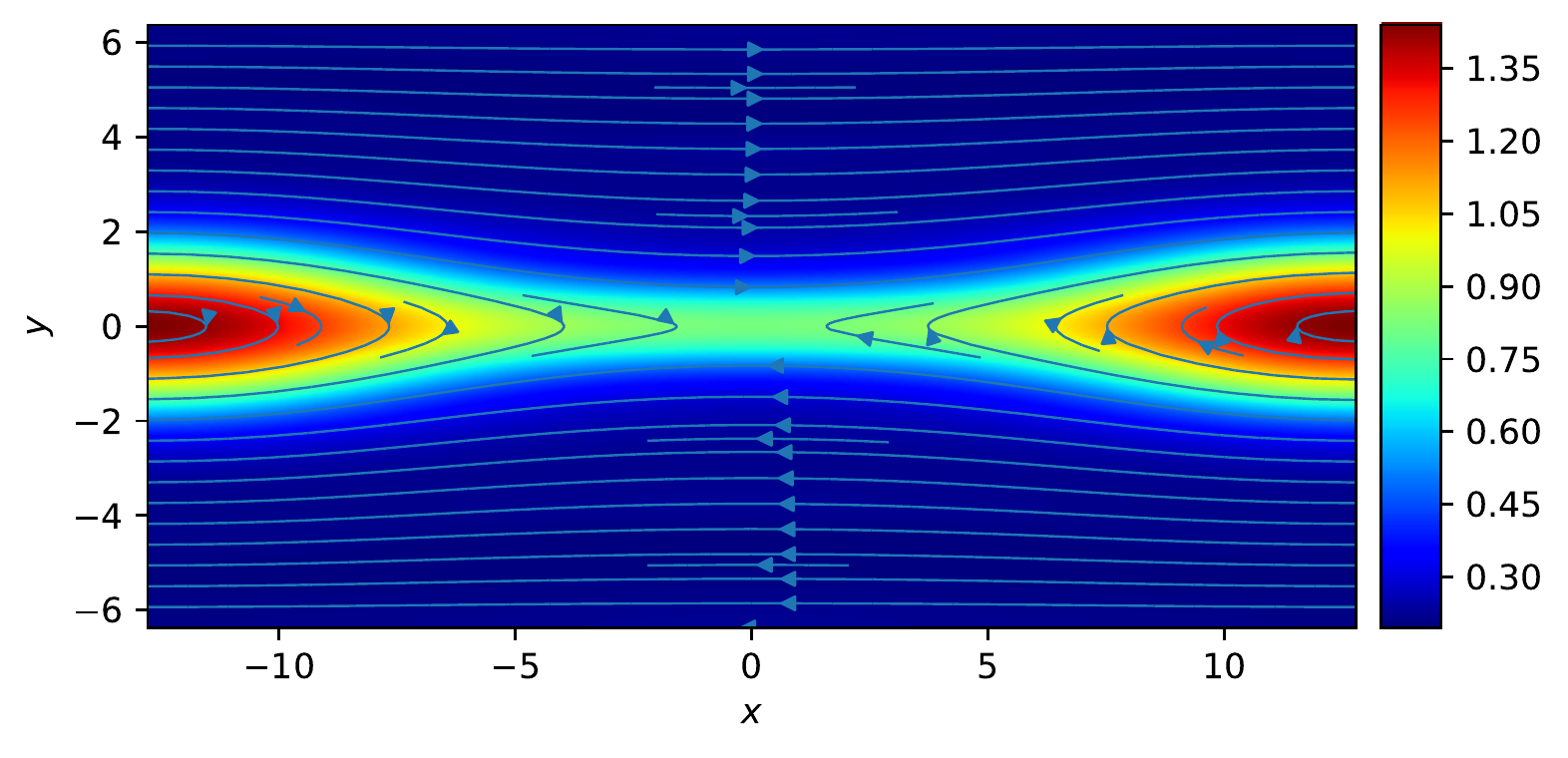}
			\label{fig:gem_o3_imp_40_rho}}
		\subfigure[$B_z$]{
			\includegraphics[width=2.9in, height=1.5in]{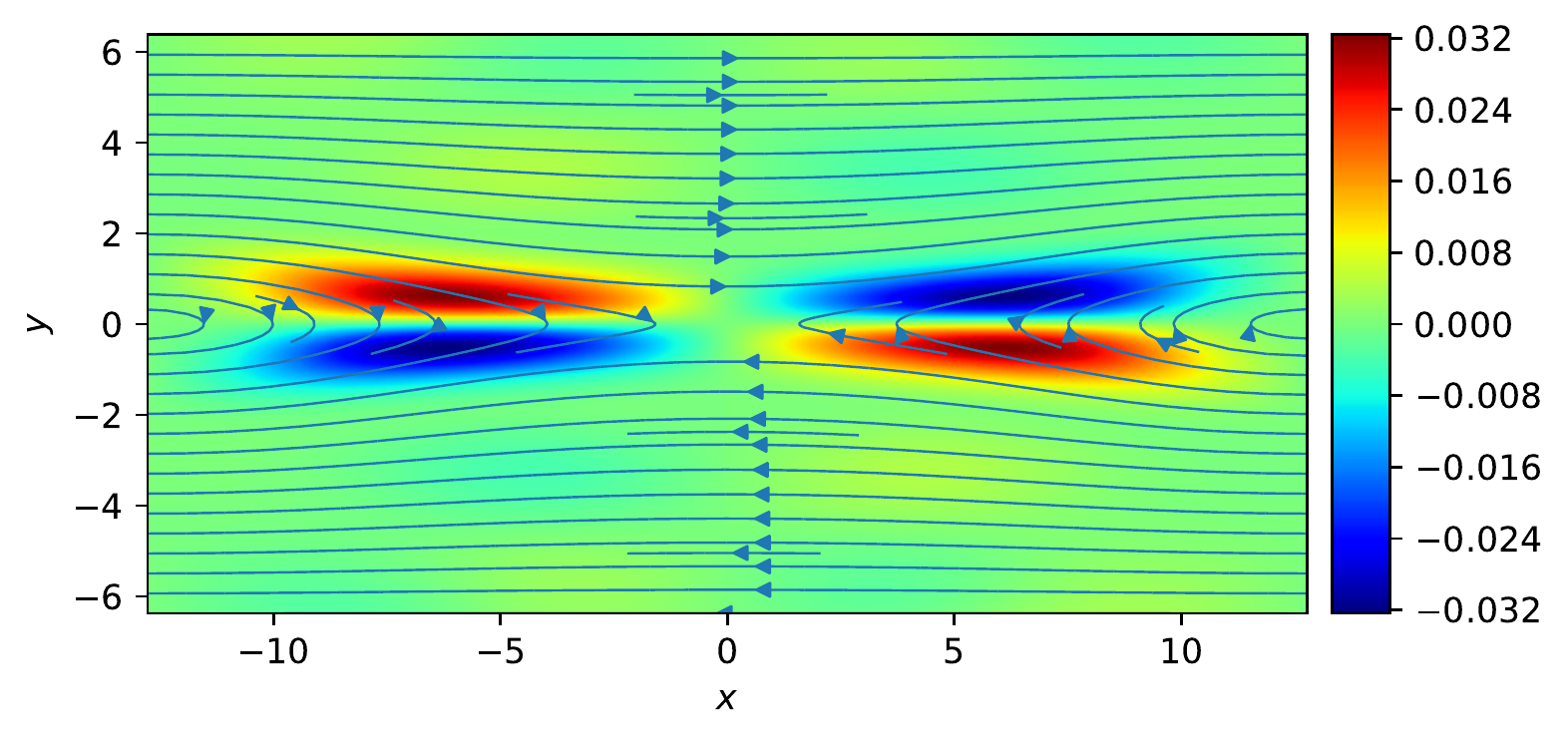}
			\label{fig:gem_o3_imp_40_Bz}}
		\subfigure[Ion x-velocity]{
			\includegraphics[width=2.9in, height=1.5in]{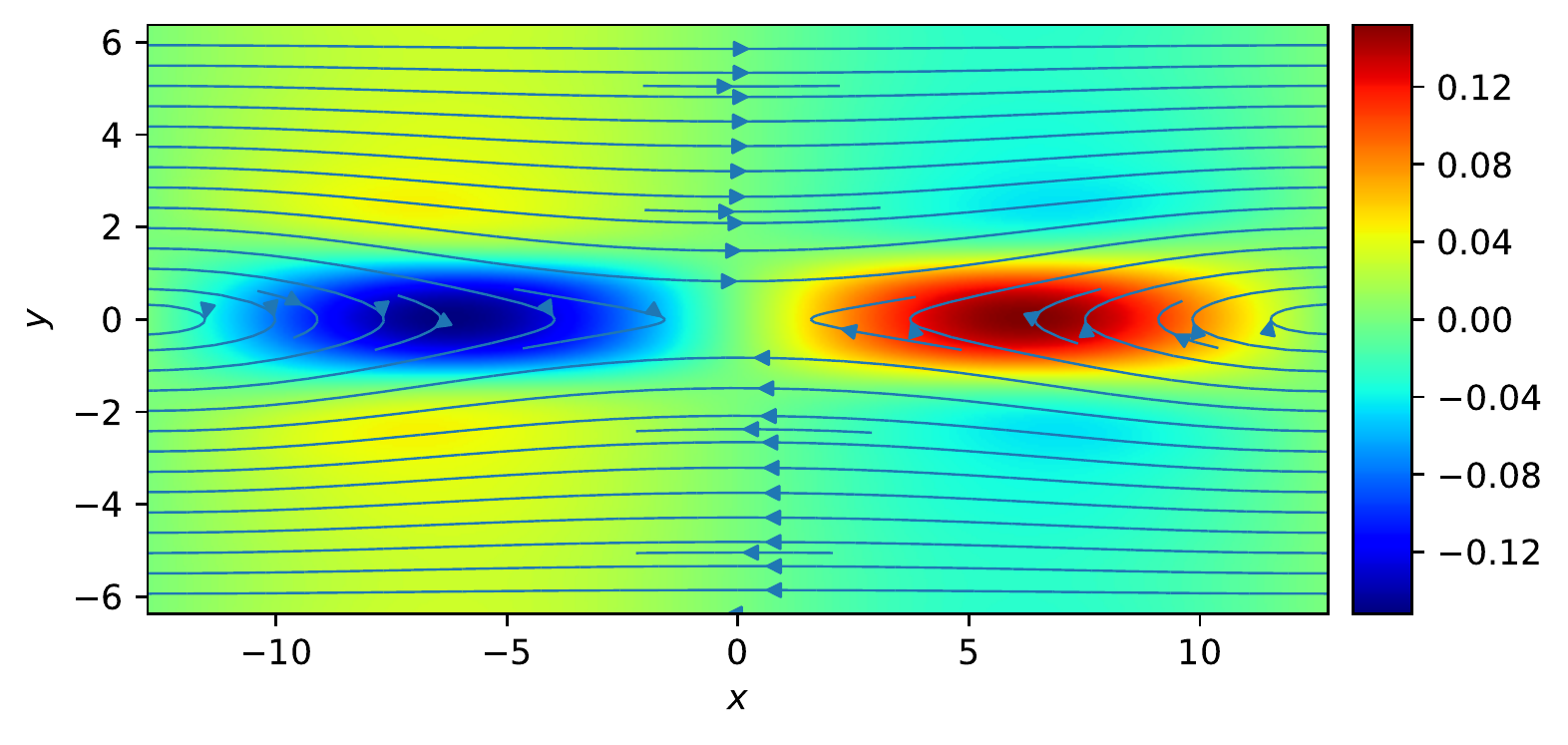}
			\label{fig:gem_o3_imp_40_uxi}}
		\subfigure[Electron x-velocity]{
			\includegraphics[width=2.9in, height=1.5in]{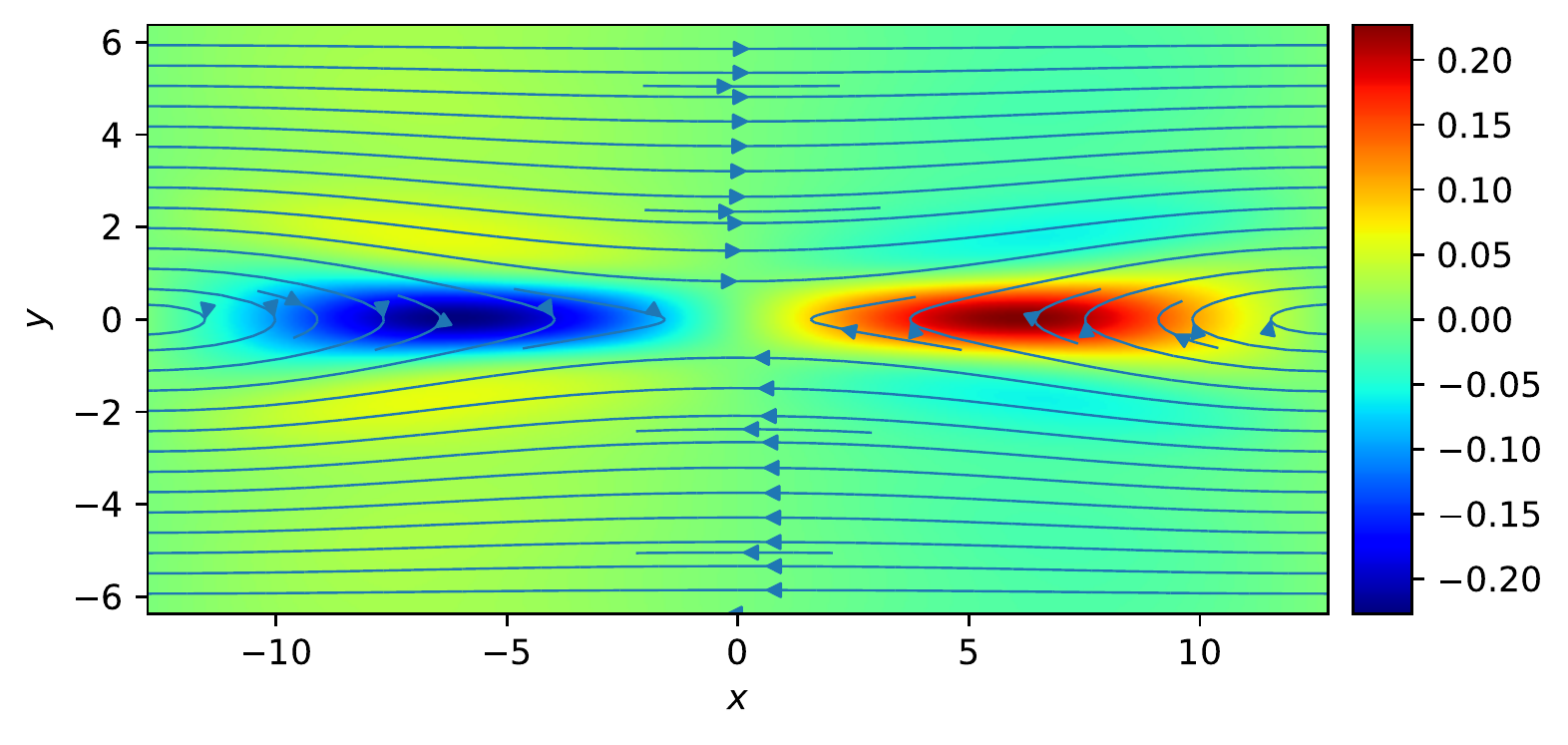}
			\label{fig:gem_o3_imp_40_uxe}}
		\caption{\nameref{test:2d_gem}: Plots for the total density, $B_z$-component, Ion $x$-velocity, and Electron $x$-velocity on the mess $512 \times 256$, using scheme {\bf O3-ES-IMEX}, at time t=40.0. }
		\label{fig:gem_o3_40}
	\end{center}
\end{figure}

\begin{figure}[!htbp]
	\begin{center}
		\subfigure[Total Density ($\rho_i + \rho_e$)]{
			\includegraphics[width=2.9in, height=1.5in]{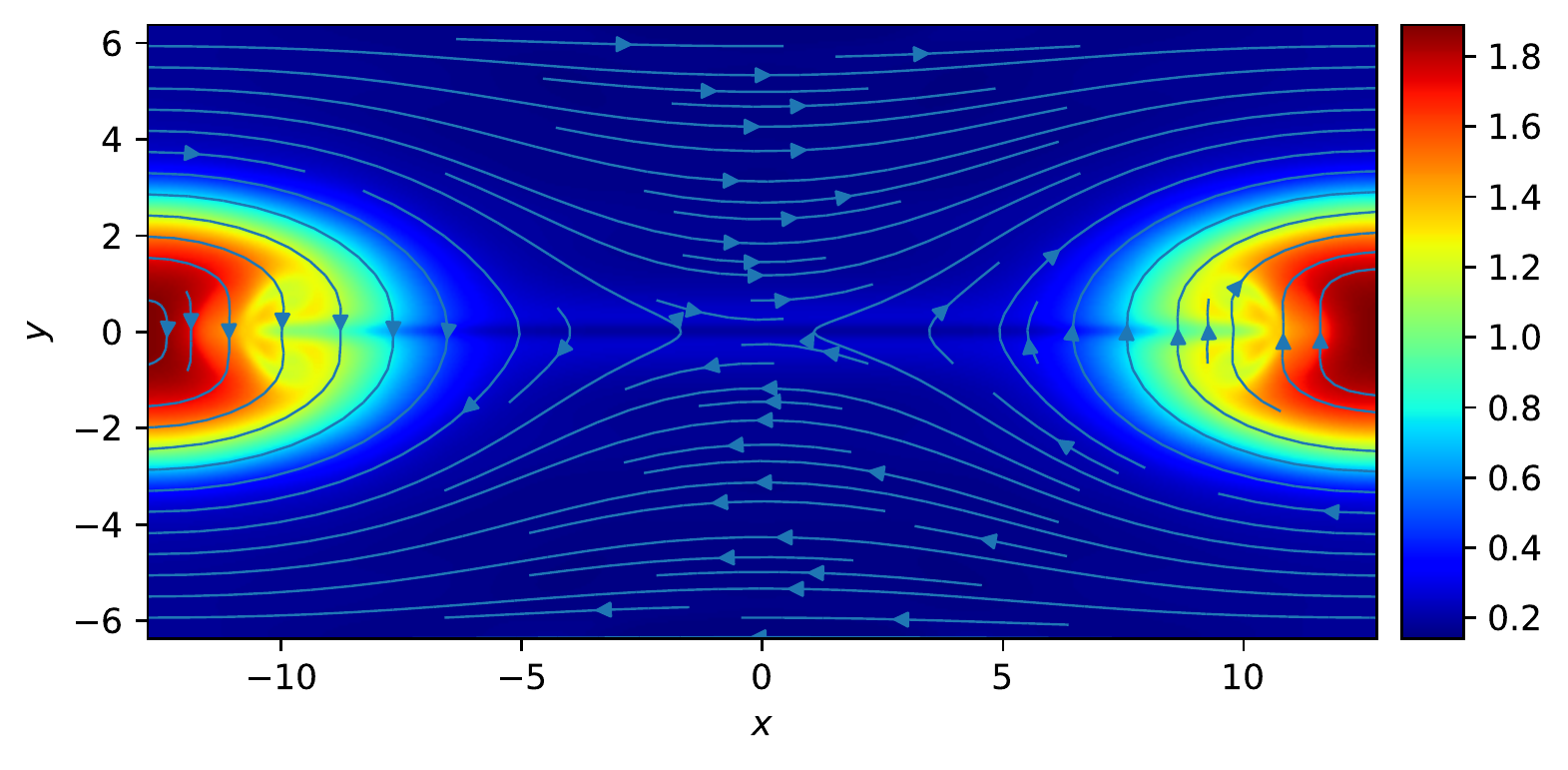}
			\label{fig:gem_o3_imp_80_rho}}
		\subfigure[$B_z$]{
			\includegraphics[width=2.9in, height=1.5in]{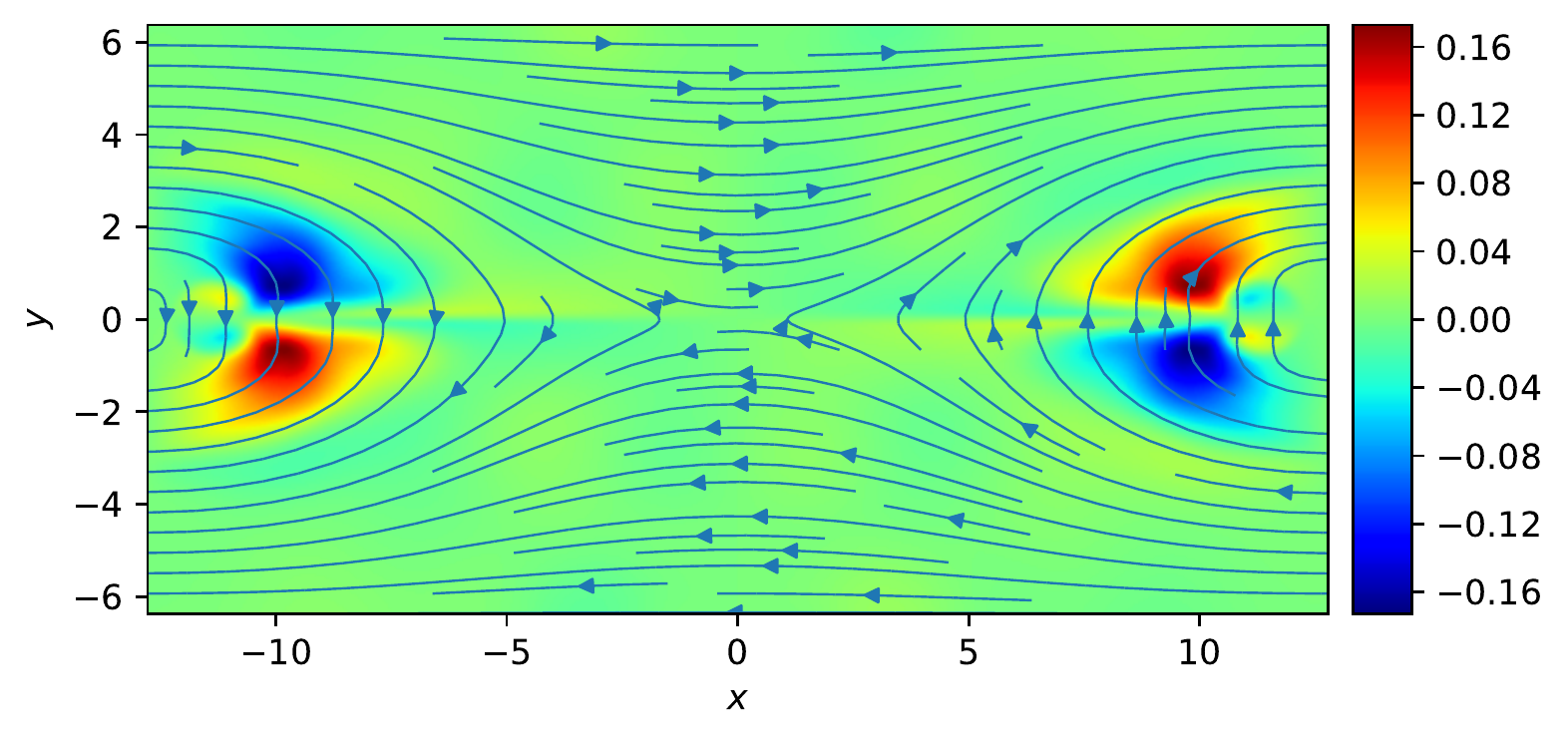}
			\label{fig:gem_o3_imp_80_Bz}}
		\subfigure[Ion x-velocity]{
			\includegraphics[width=2.9in, height=1.5in]{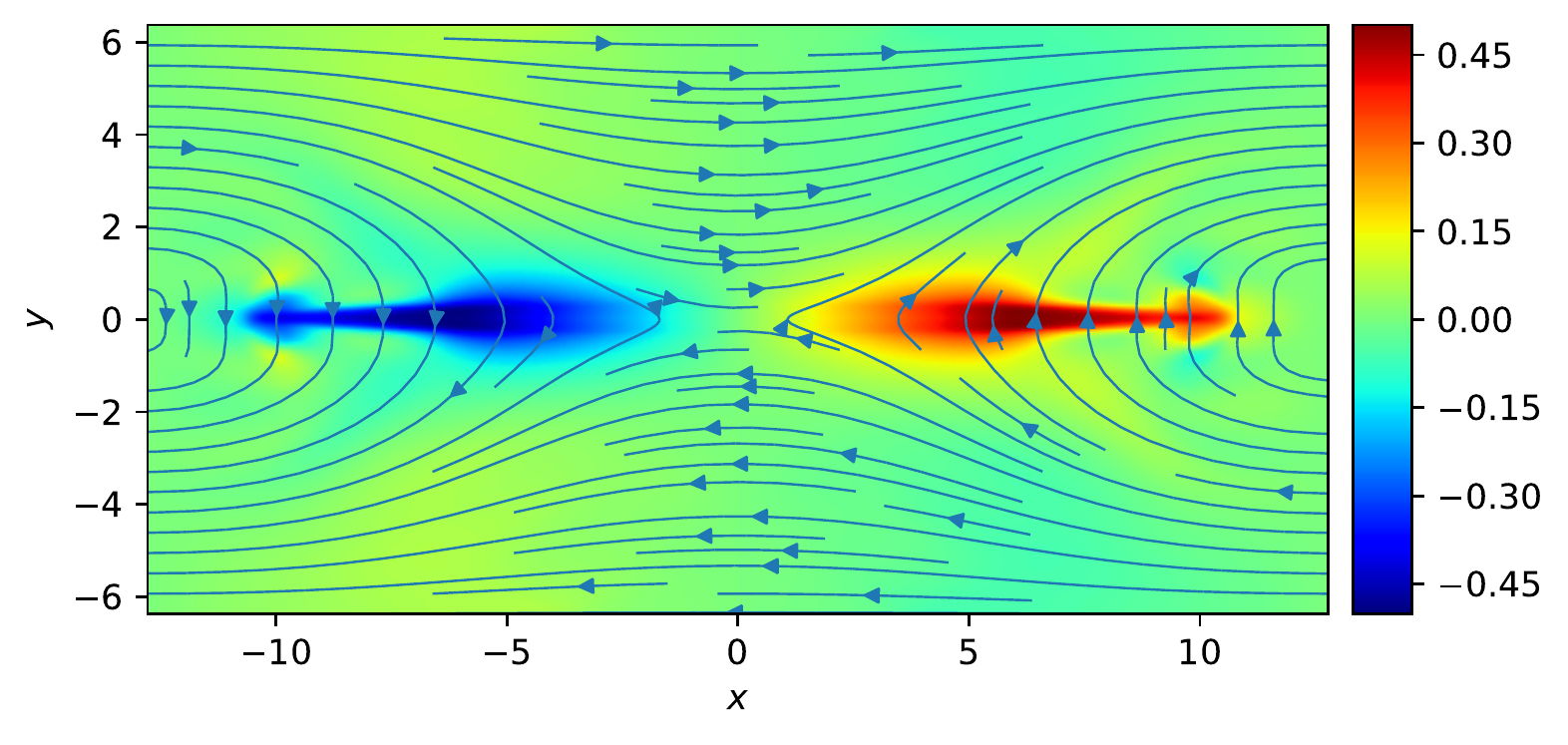}
			\label{fig:gem_o3_imp_80_uxi}}
		\subfigure[Electron x-velocity]{
			\includegraphics[width=2.9in, height=1.5in]{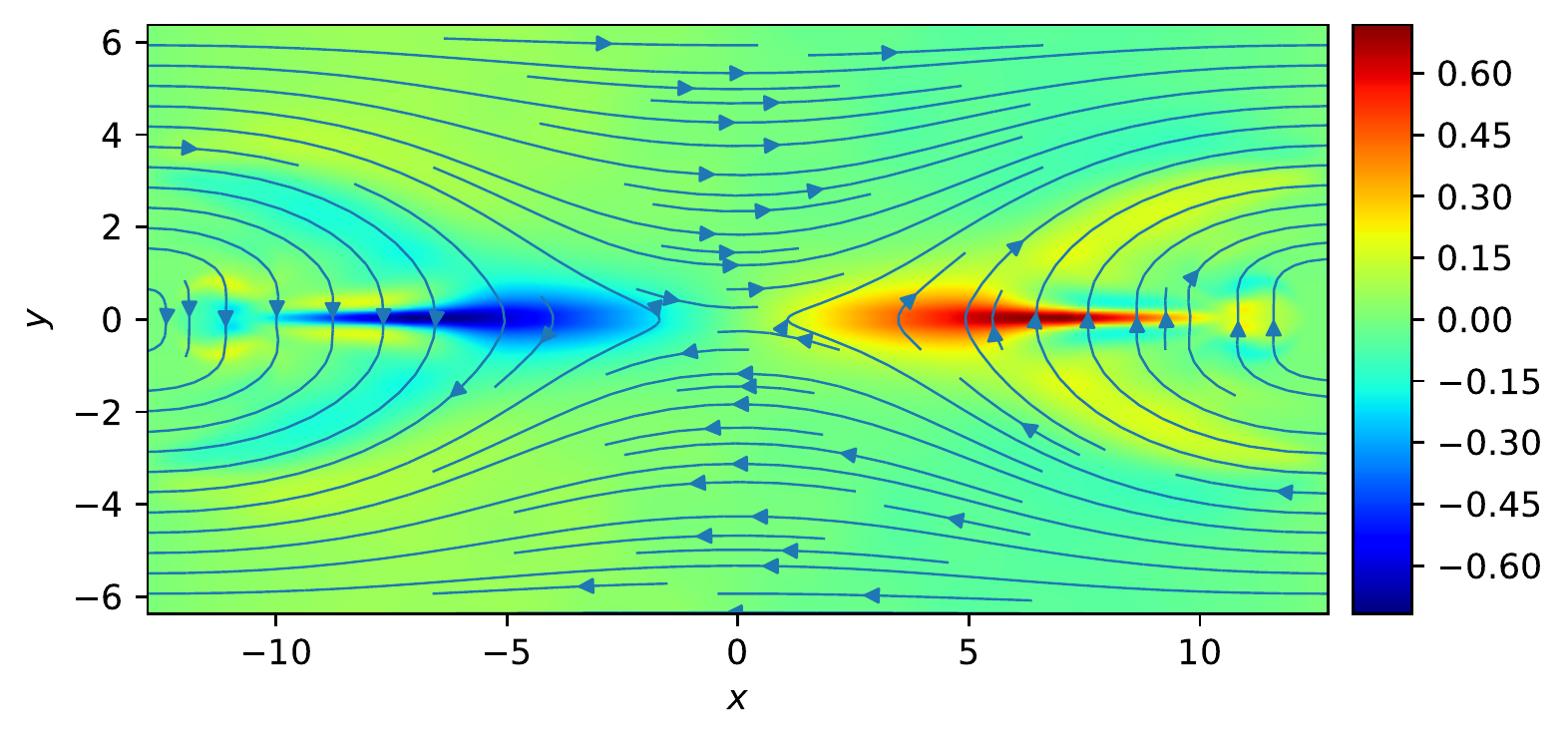}
			\label{fig:gem_o3_imp_80_uxe}}
		\caption{\nameref{test:2d_gem}: Plots for the total density, $B_z$-component, Ion $x$-velocity, and Electron $x$-velocity on the mess $512 \times 256$, using scheme {\bf O3-ES-IMEX}, at time t=80.0. }
		\label{fig:gem_o3_80}
	\end{center}
\end{figure}

\begin{figure}[!htbp]
	\begin{center}
		\subfigure[Total Density ($\rho_i + \rho_e$)]{
			\includegraphics[width=2.9in, height=1.5in]{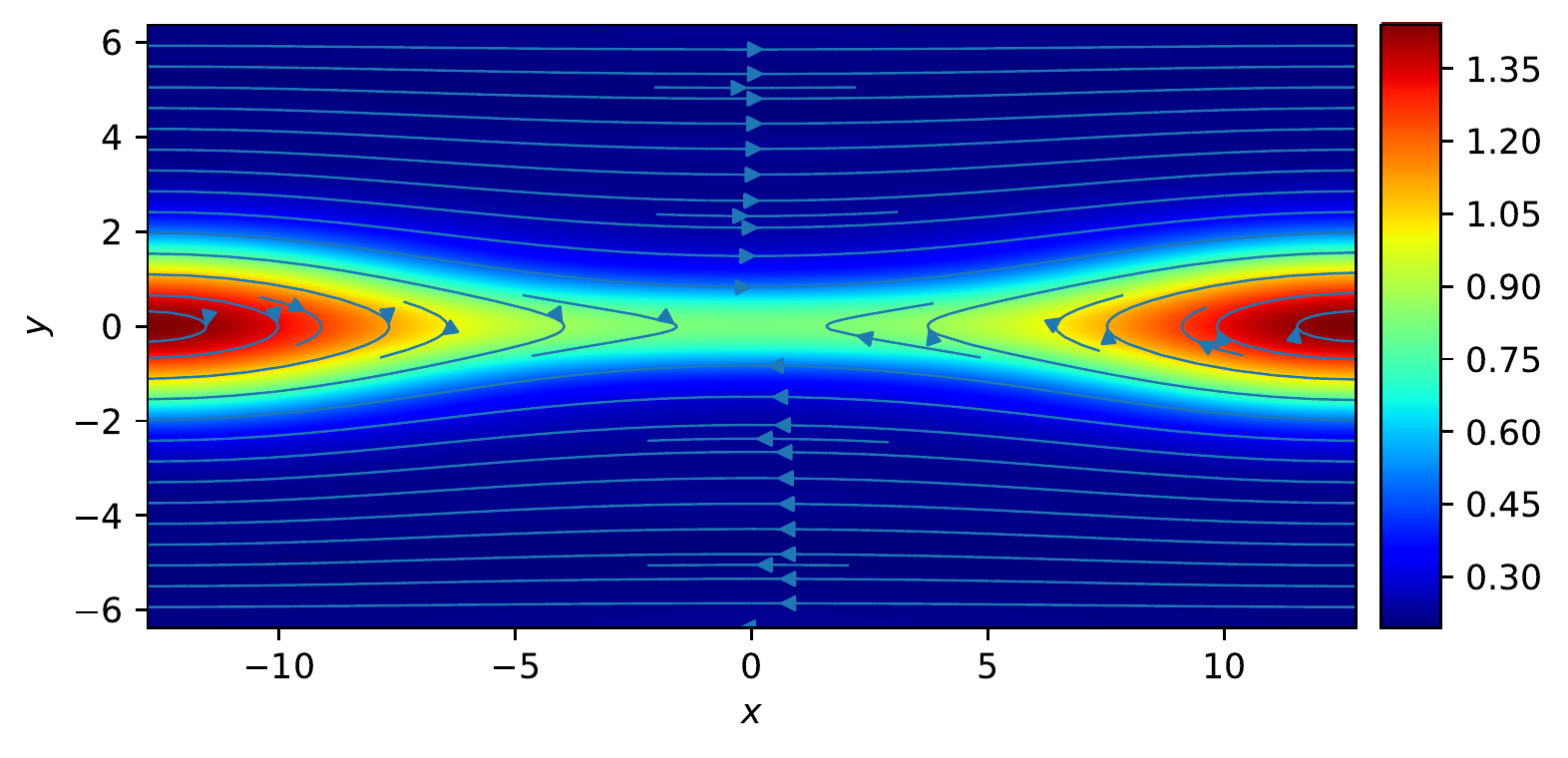}
			\label{fig:gem_o4_imp_40_rho}}
		\subfigure[$B_z$]{
			\includegraphics[width=2.9in, height=1.5in]{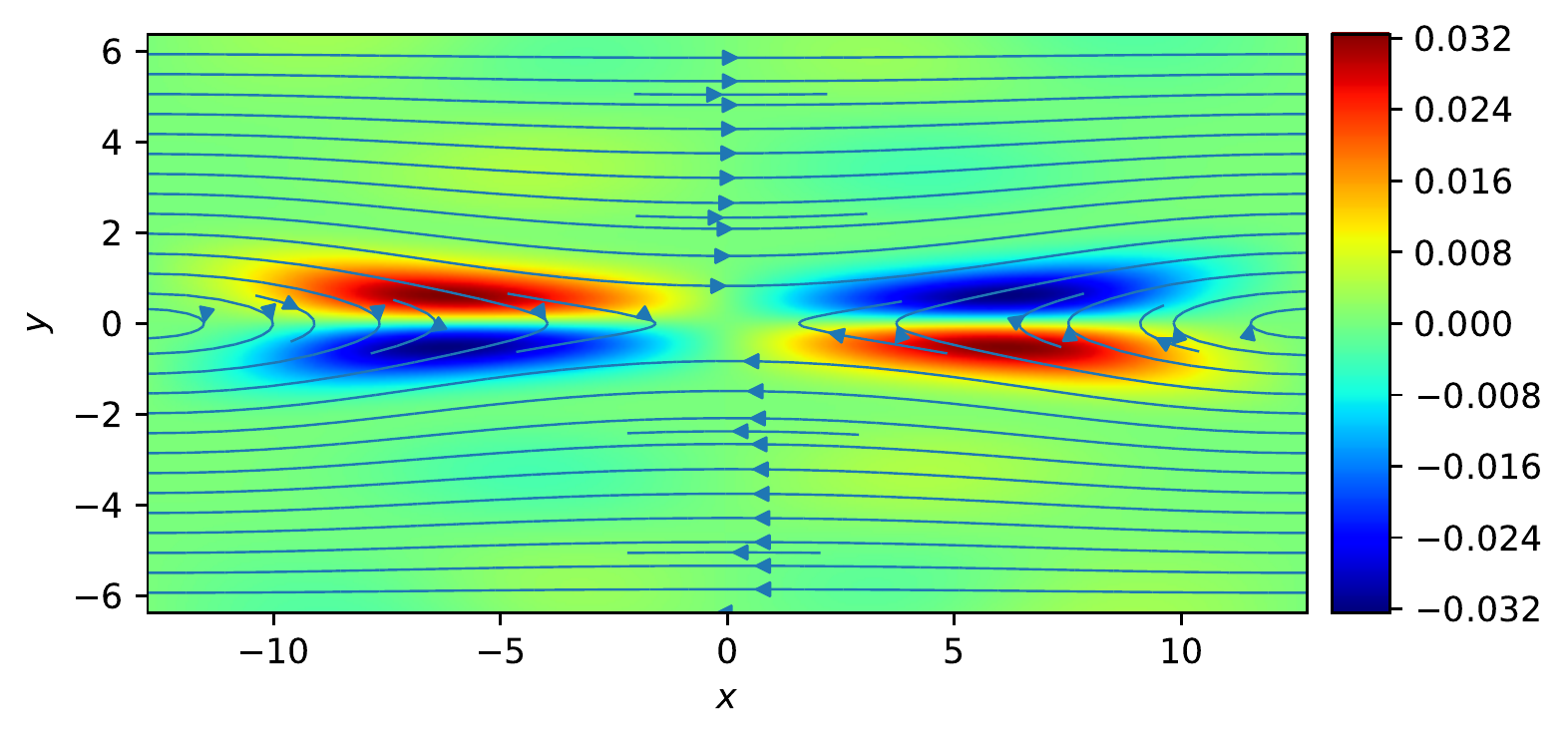}
			\label{fig:gem_o4_imp_40_Bz}}
		\subfigure[Ion x-velocity]{
			\includegraphics[width=2.9in, height=1.5in]{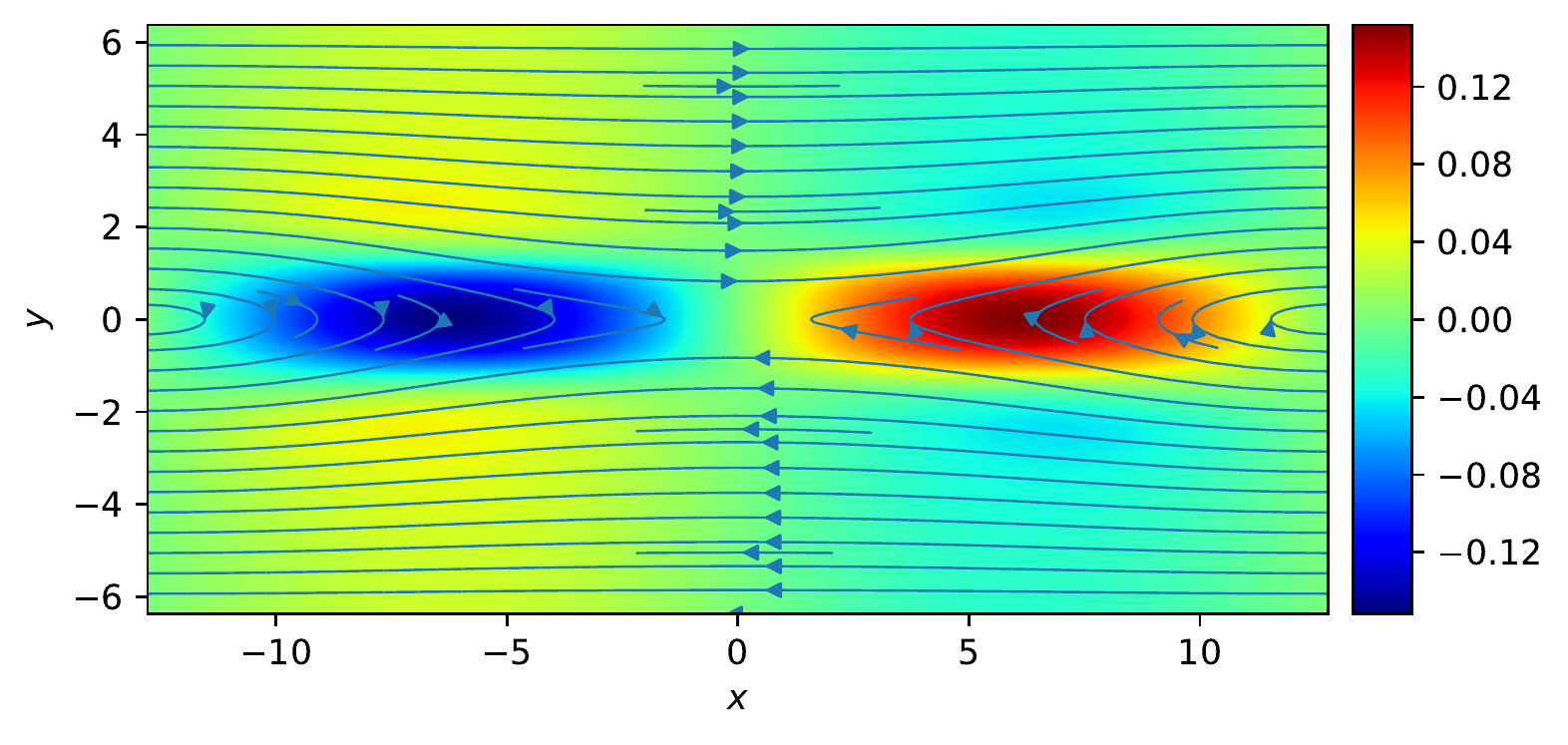}
			\label{fig:gem_o4_imp_40_uxi}}
		\subfigure[Electron x-velocity]{
			\includegraphics[width=2.9in, height=1.5in]{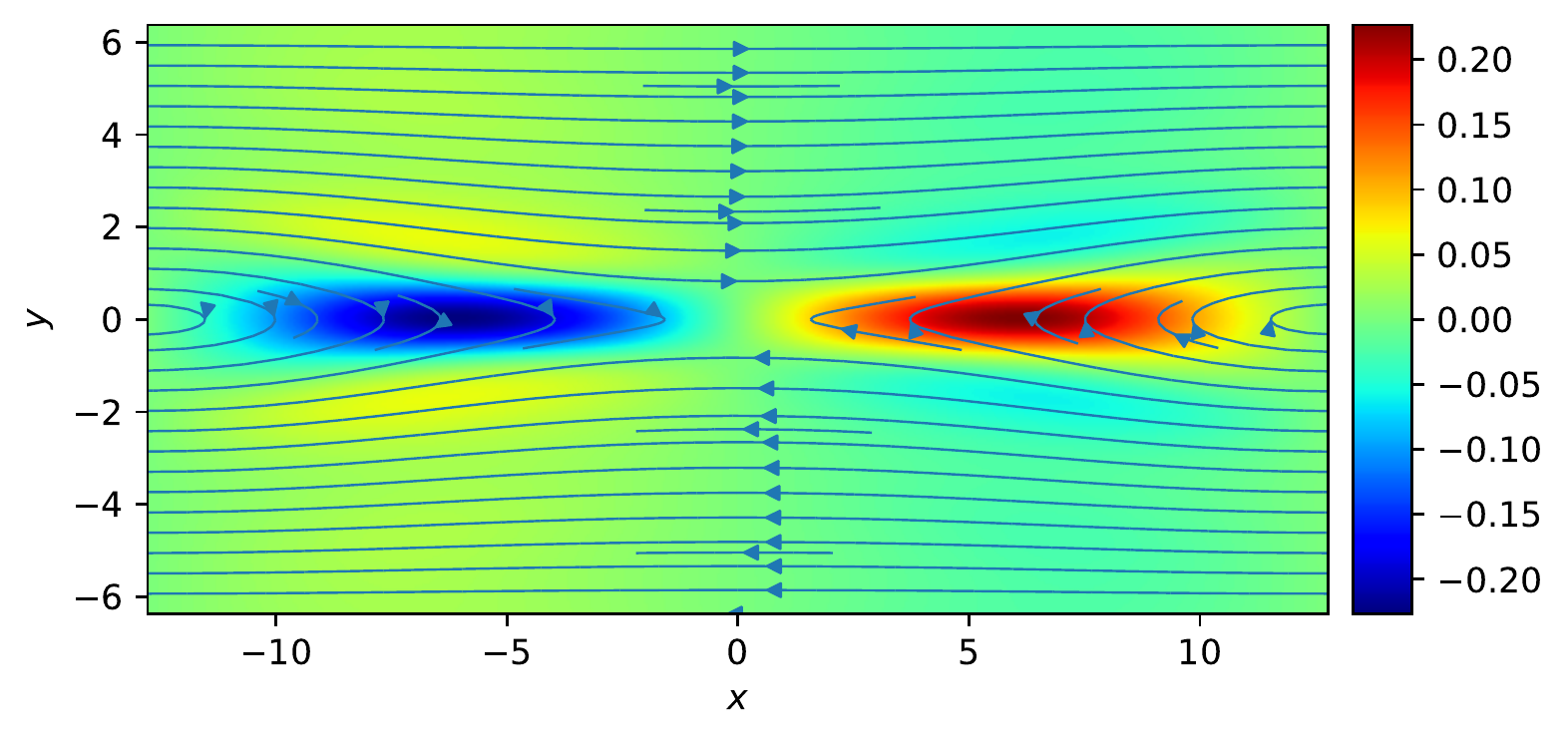}
			\label{fig:gem_o4_imp_40_uxe}}
		\caption{\nameref{test:2d_gem}: Plot for the total density, $B_z$-component, Ion $x$-velocity, and Electron $x$-velocity on the mess $512 \times 256$, using scheme {\bf O4-ES-IMEX}, at time t=40.0. }
		\label{fig:gem_o4_40}
	\end{center}
\end{figure}
\begin{figure}[!htbp]
	\begin{center}
		\subfigure[Total Density ($\rho_i + \rho_e$)]{
			\includegraphics[width=2.9in, height=1.5in]{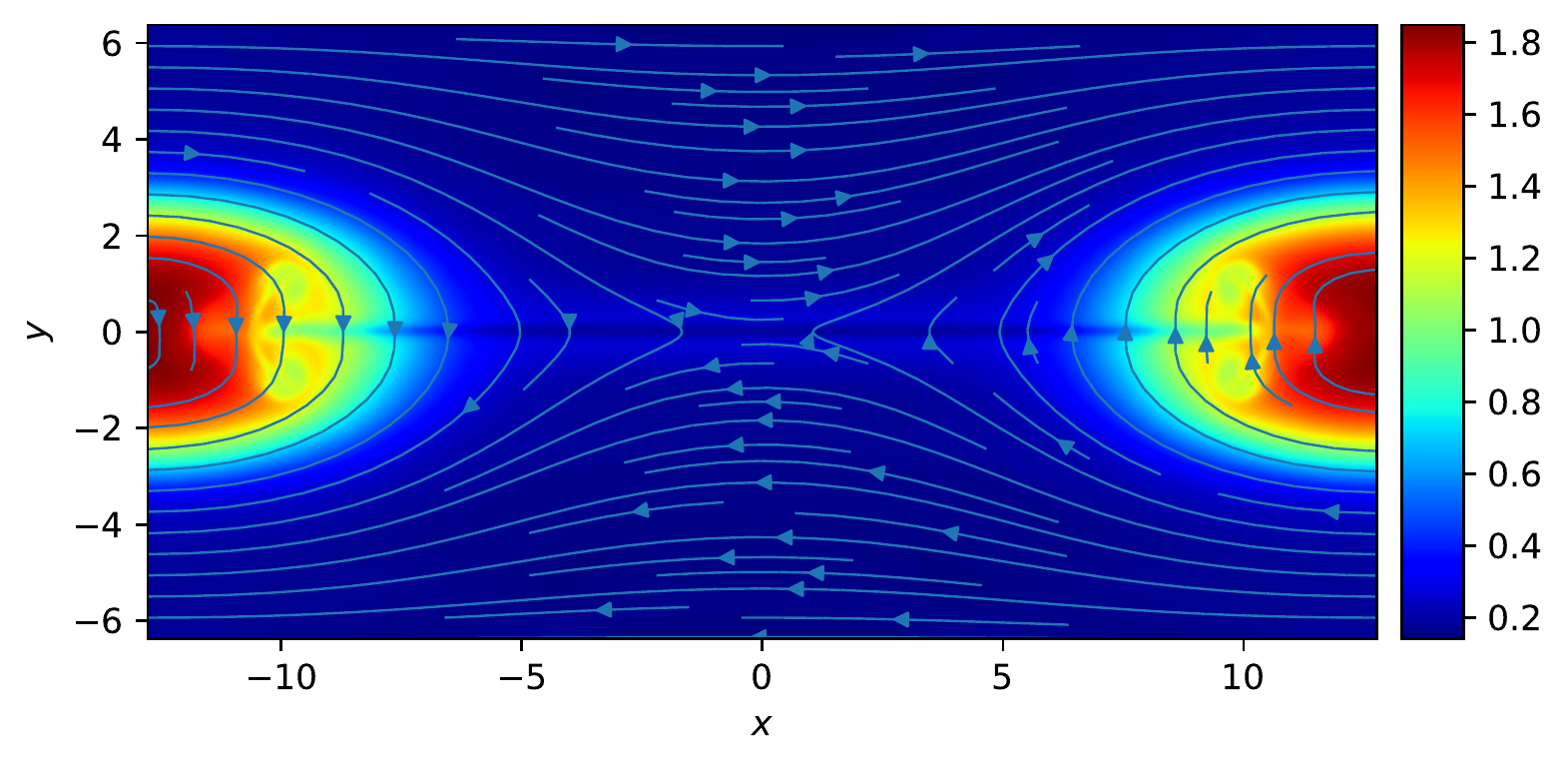}
			\label{fig:gem_o4_imp_80_rho}}
		\subfigure[$B_z$]{
			\includegraphics[width=2.9in, height=1.5in]{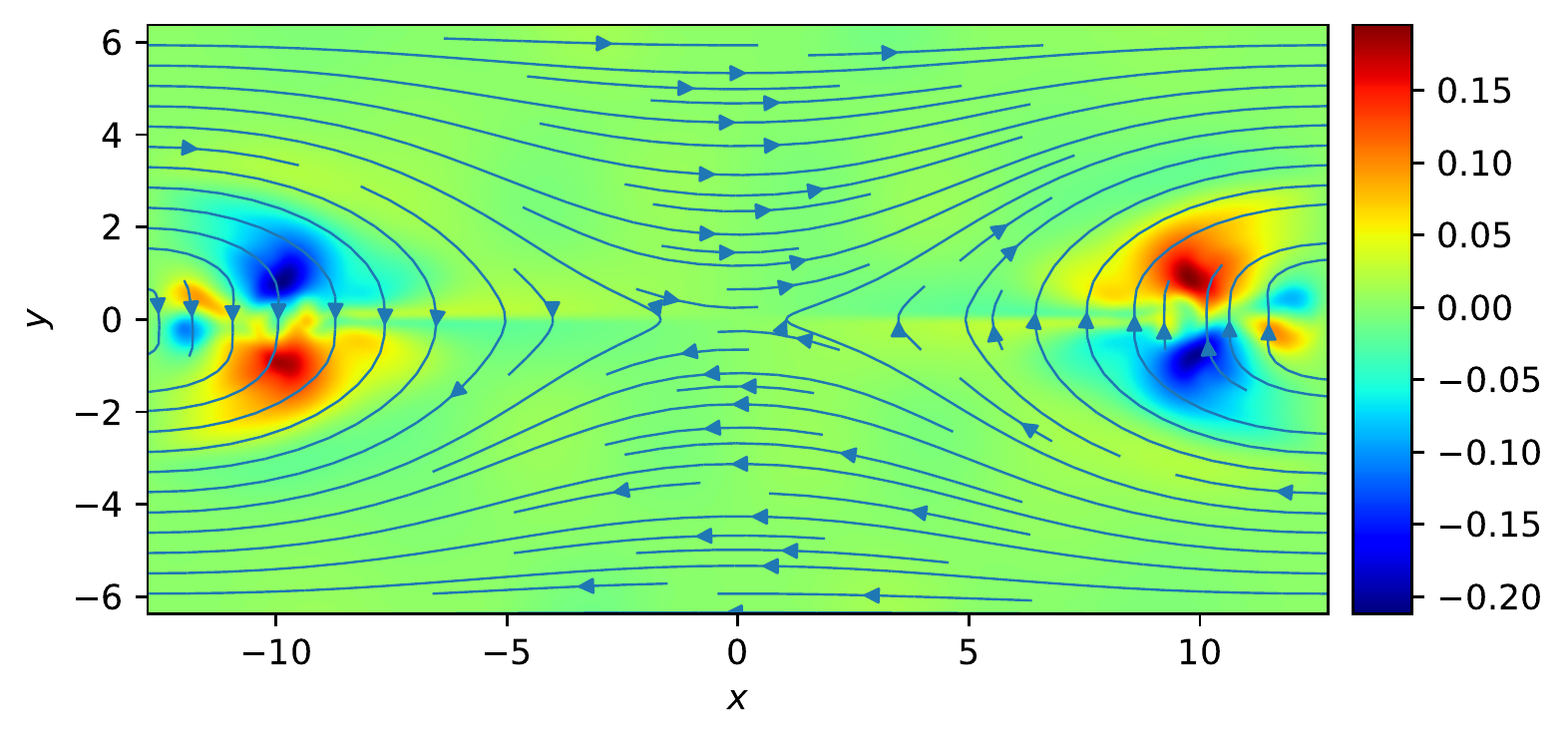}
			\label{fig:gem_o4_imp_80_Bz}}
		\subfigure[Ion x-velocity]{
			\includegraphics[width=2.9in, height=1.5in]{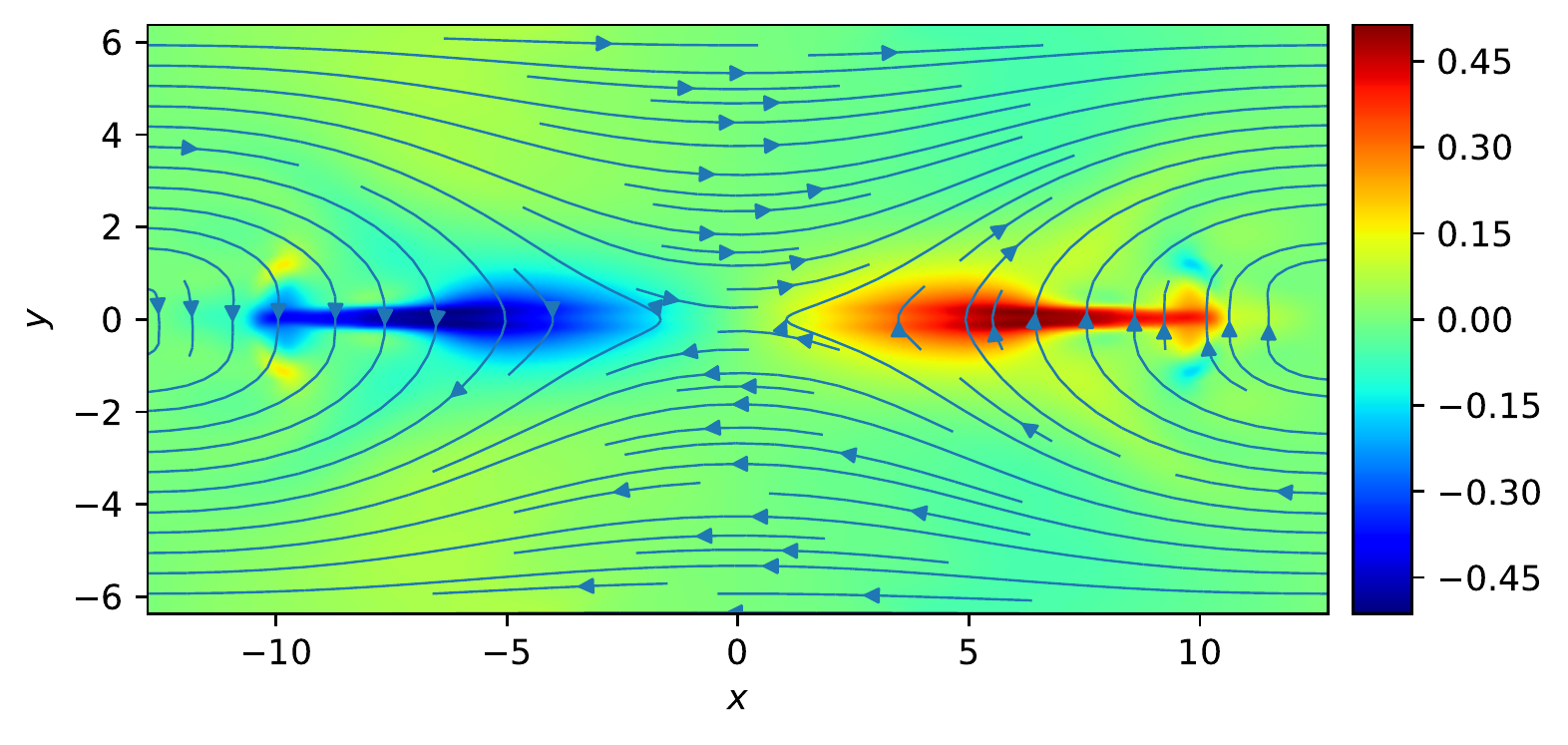}
			\label{fig:gem_o4_imp_80_uxi}}
		\subfigure[Electron x-velocity]{
			\includegraphics[width=2.9in, height=1.5in]{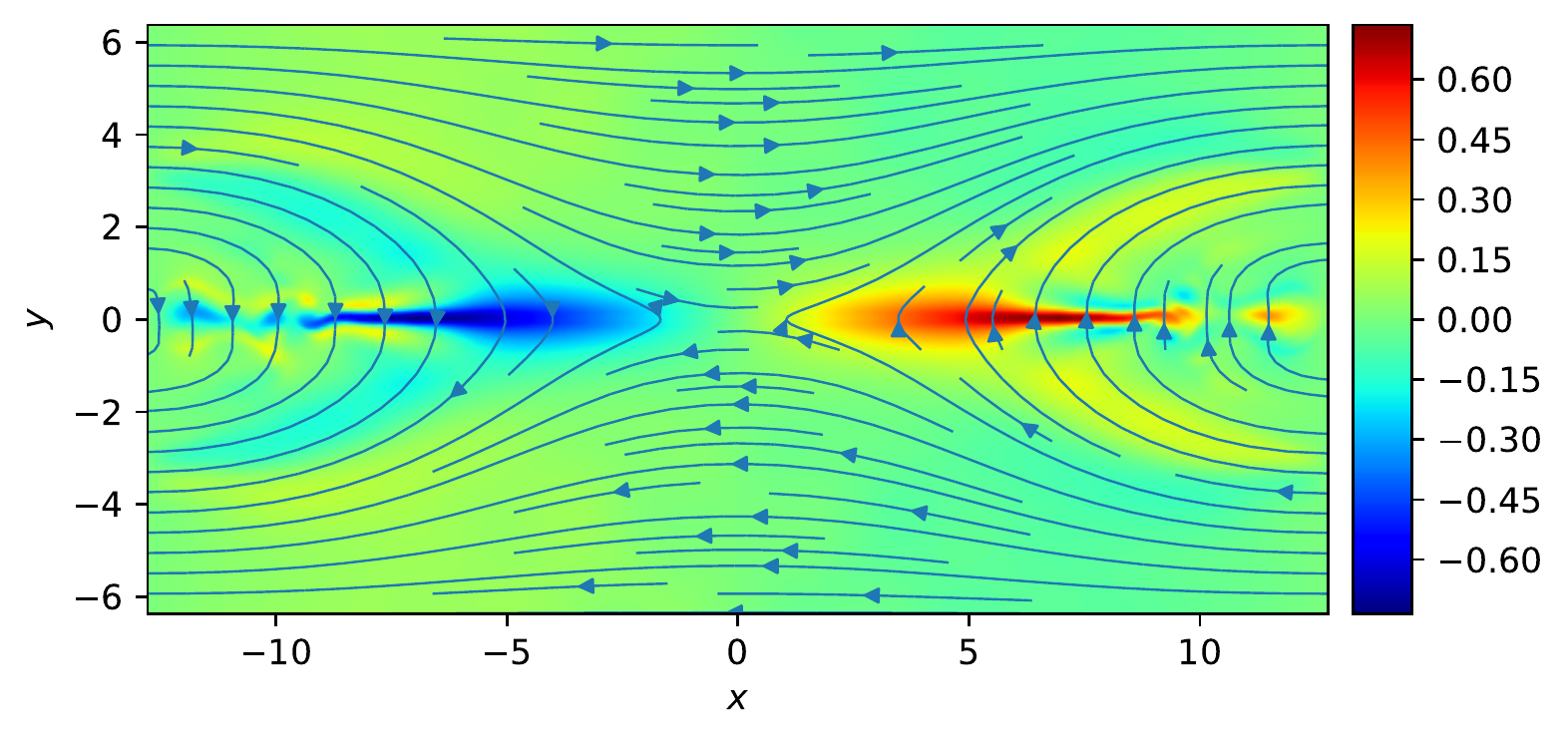}
			\label{fig:gem_o4_imp_80_uxe}}
		\caption{\nameref{test:2d_gem}: Plot for the total Density, $B_z$-component, Ion $x$-velocity, and Electron $x$-velocity on the mess $512 \times 256$, using scheme {\bf O4-ES-IMEX}, at time t=80.0. }
		\label{fig:gem_o4_80}
	\end{center}
\end{figure}

\begin{figure}[!htbp]
	\begin{center}
		\subfigure{
			\includegraphics[width=3.0in, height=2.2in]{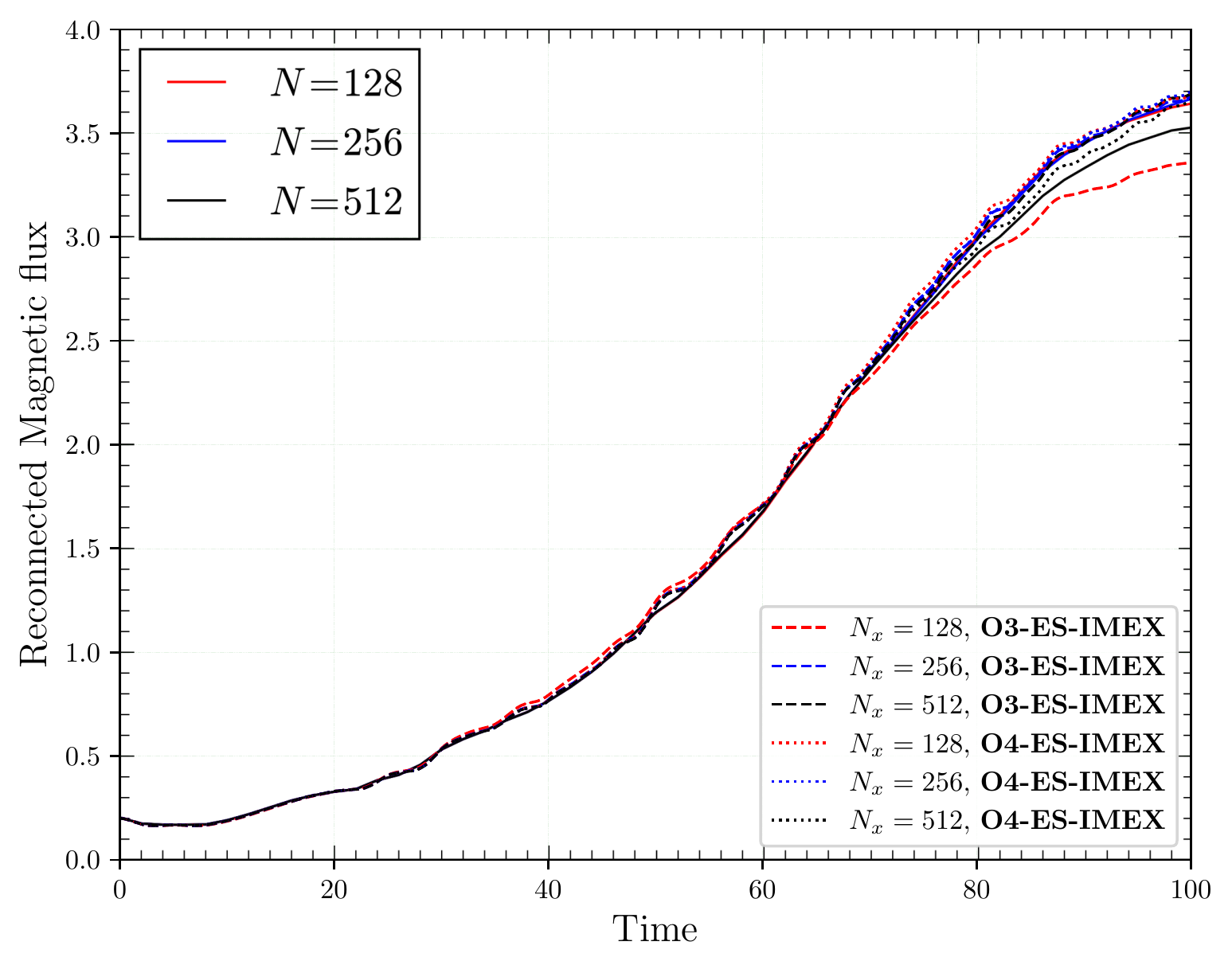}
		}
		\caption{\nameref{test:2d_gem}: Time development of the reconnected magnetic flux for $N_x=128,\ 256,\ 512$, using schemes {\bf O3-ES-IMEX} and {\bf O4-ES-IMEX}. We overlay the plot on Amano's GEM data \cite{Amano2016} (solid lines). }
		\label{fig:gem_recon}
	\end{center}
\end{figure}

\reva{
	\begin{figure}[!htbp]
		\begin{center}
			\subfigure[Plot of $|2\Delta x (\nabla \cdot \mathbf{B})_{i,j}|$ using the {\bf O3-ES-IMEX} scheme at time $t=80.0$.]{
				\includegraphics[width=2.9in, height=1.5in]{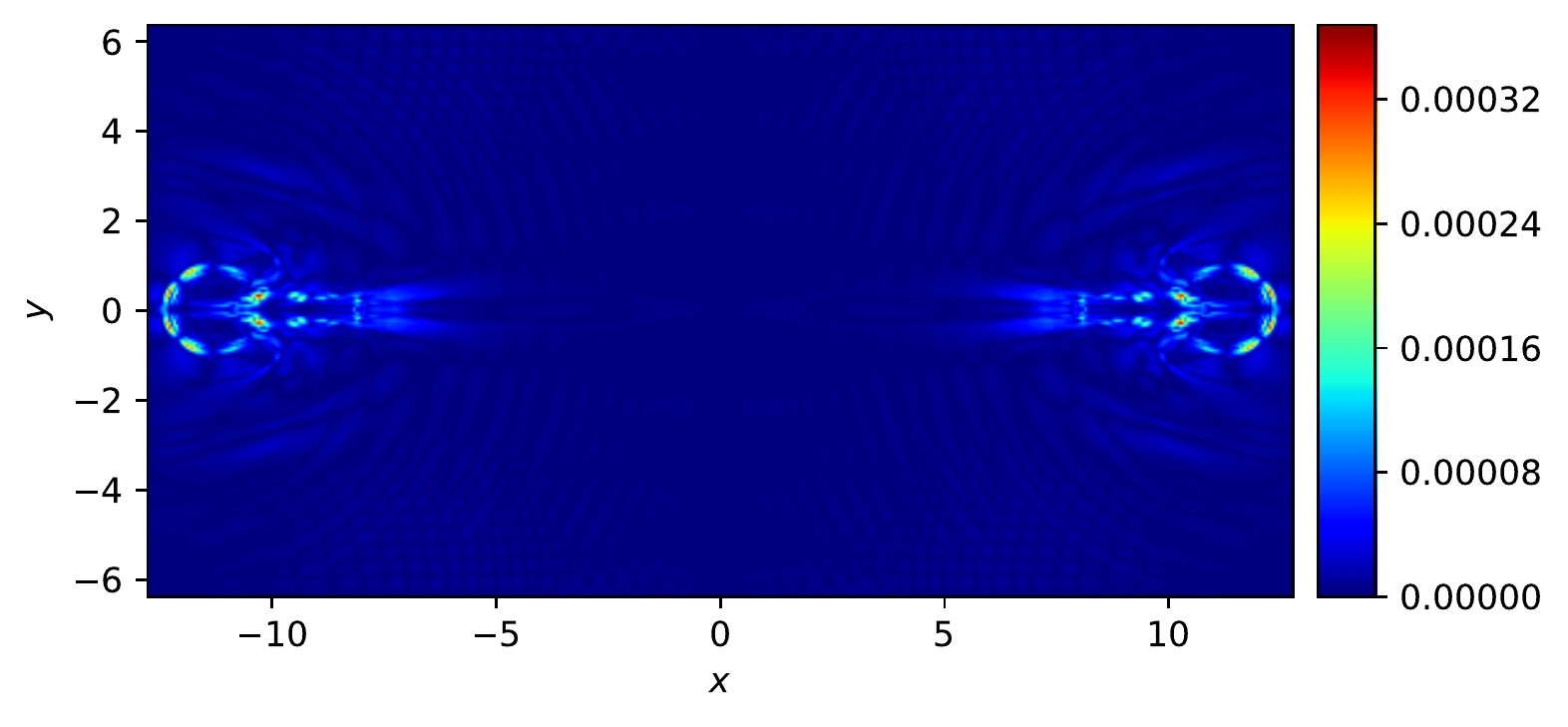}
				\label{fig:gem_2Dnorm_o3}}
			\quad
			\subfigure[Plot of $|2\Delta x (\nabla \cdot \mathbf{B})_{i,j}|$ using the {\bf O4-ES-IMEX} scheme at time $t=80.0$.]{
				\includegraphics[width=2.9in, height=1.5in]{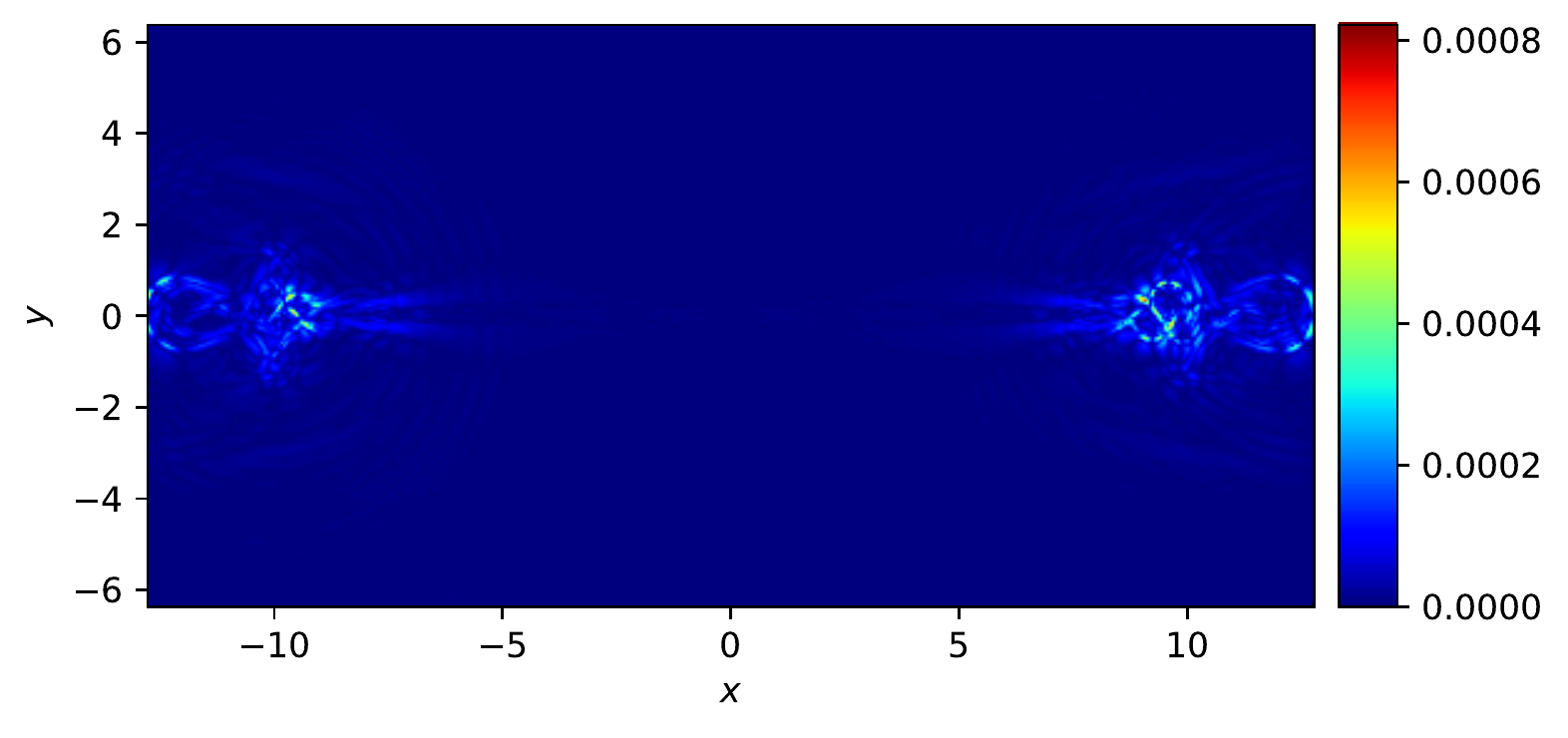}
				\label{fig:gem_2Dnorm_o4}}
			\quad
			\subfigure[Time evolution of $L^1$-norms of divergence of $\mathbf{B}$ till time $t=100.0$ for {\bf O3-ES-IMEX} and {\bf O4-ES-IMEX} schemes.]{
				\includegraphics[width=3.0in, height=2.0in]{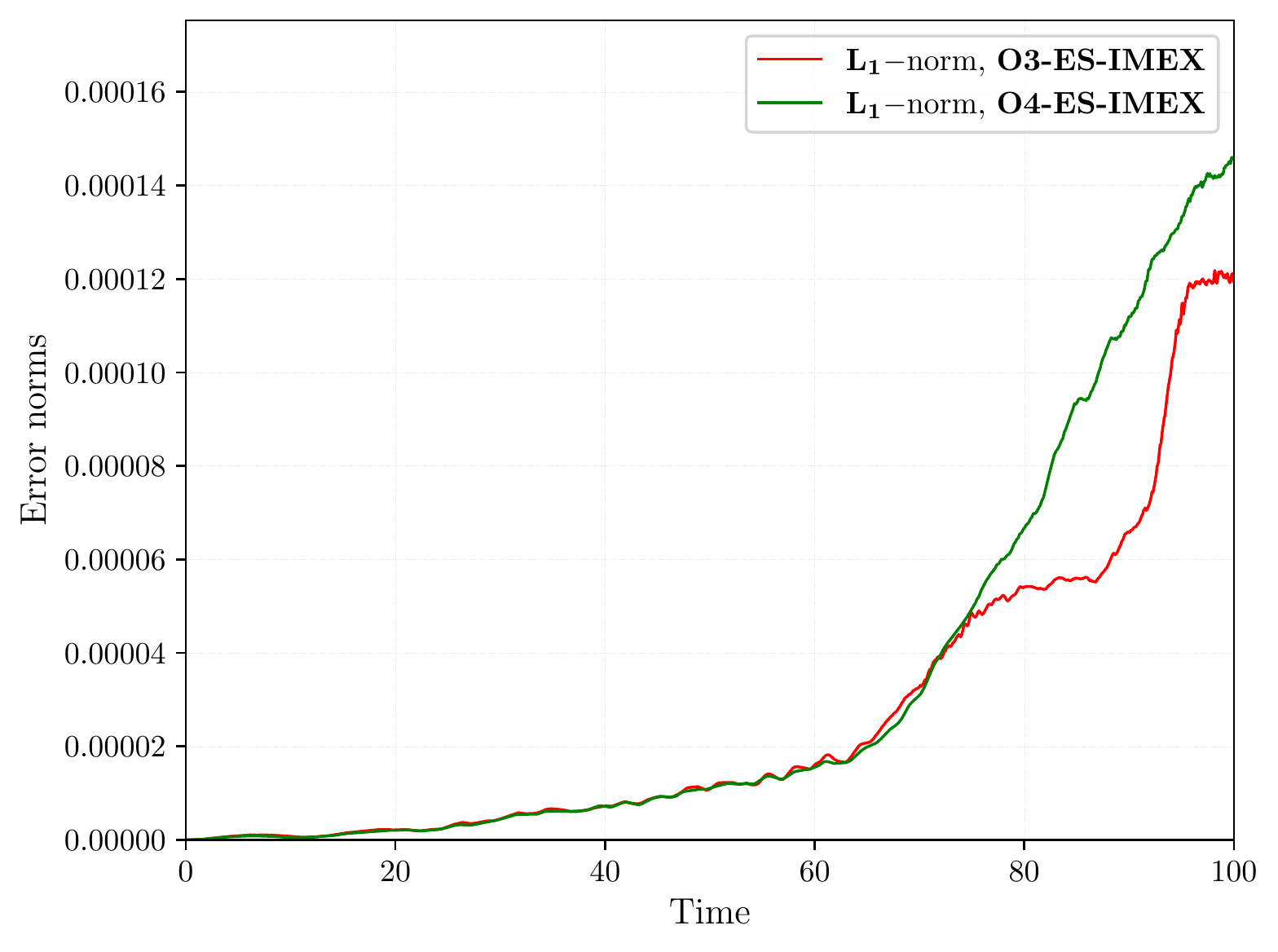}
				\label{fig:gem_l1norm}}
			\caption{\nameref{test:2d_gem}: \reva{Plots of the ($|2\Delta x (\nabla \cdot \mathbf{B})_{i,j}|$) and time evolution of the $L^1$-norms of divergence of $\mathbf{B}$ for schemes {\bf O3-ES-IMEX} and {\bf O4-ES-IMEX} schemes using $512\times 256$ cells.}}
			\label{fig:norms_gem}
		\end{center}
	\end{figure}
	
	In Fig.~\ref{fig:norms_gem}, we plot errors for the divergence of the magnetic fields. In Figs.~\ref{fig:gem_2Dnorm_o3} and~\ref{fig:gem_2Dnorm_o4} we plot the undivided divergence of magnetic field using {\bf O3-ES-IMEX} and {\bf O4-ES-IMEX} schemes, respectively. We again observe that the errors are concentrated where flows have strong gradients. Both schemes have similar errors, with {\bf O3-ES-IMEX} having more spread-out errors. In Fig.~\ref{fig:gem_l1norm}, we plot the time evolution of the ${L^1}$-norms of divergence of the magnetic field. We again observe that both schemes have similar $L^1$ errors.
}

\section{Conclusion}
The two-fluid relativistic plasma flow equations are a multi-physics model that couples relativistic hydrodynamic conservation laws with Maxwell's equations. This model is non-linear and exhibits non-smooth and multi-scale phenomena.  In this article, we have designed arbitrary high-order finite-difference entropy stable schemes for the model. This was achieved by exploiting the structure of the flux and by demonstrating that the source terms do not contribute to the fluid entropies. Furthermore, we have also presented ARK-IMEX schemes to overcome the time-step restriction imposed by the stiff source terms. The implicit step in the ARK-IMEX scheme is efficient as it involves solving only one set of nonlinear equations in each cell and there is no need for any global nonlinear solver.

The proposed schemes are applied to various test cases in one and two dimensions. First, we demonstrate that the proposed schemes have formal order of accuracy. Then we test the schemes on the Brio-Wu test problem, where we have finite plasma skin depth. We note that the proposed schemes can resolve the dispersive effects, and higher-order schemes can capture the finite skin depth effects even on coarser meshes. We also demonstrate that on the coarser meshes with the stiff source terms, IMEX schemes are more efficient. The SSP-RK and ARK-IMEX schemes have similar accuracy and entropy decay performance. We then test the scheme on a current sheet problem where resistive effects are considered. We observe that the proposed schemes accurately capture the RMHD solution. We compute the Orzag-Tang vortex problem, blast problem, and GEM challenge problem in two-dimensional test cases. We again show that the schemes are able to capture solution features very well and ensure entropy stability of the solutions. For the GEM problem, the schemes are also able to predict magnetic reconnection flux.

\section*{Acknowledgments}
The work of Harish Kumar is supported by VAJRA grant No. VJR/2018/000129 by the Dept. of Science and Technology, Govt. of India. The work of Praveen Chandrashekar was supported by SERB-DST, Govt. of India, under the MATRICS grant No. MTR/2018/000006, and by the Department of Atomic Energy,  Government of India, under project no.~12-R\&D-TFR-5.01-0520. The authors would also like to thank Prof. Dinshaw Balsara, Univ. of Notre-Dame, USA, for his several suggestions.
\bibliographystyle{elsarticle-num}

\begin{thebibliography}{10}
\expandafter\ifx\csname url\endcsname\relax
  \def\url#1{\texttt{#1}}\fi
\expandafter\ifx\csname urlprefix\endcsname\relax\def\urlprefix{URL }\fi
\expandafter\ifx\csname href\endcsname\relax
  \def\href#1#2{#2} \def\path#1{#1}\fi

\bibitem{Bhoriya2020}
D.~Bhoriya, H.~Kumar, {Entropy-stable schemes for relativistic hydrodynamics
  equations}, Zeitschrift fur Angewandte Mathematik und Physik 71~(1) (2020).
\newblock \href {https://doi.org/10.1007/s00033-020-1250-8}
  {\path{doi:10.1007/s00033-020-1250-8}}.

\bibitem{Gallant1994}
Y.~A. Gallant, J.~Arons, {Structure of relativistic shocks in pulsar winds: A
  model of the wisps in the Crab Nebula}, The Astrophysical Journal 435 (1994)
  230.
\newblock \href {https://doi.org/10.1086/174810} {\path{doi:10.1086/174810}}.

\bibitem{Mochkovitch1995}
R.~Mochkovitch, V.~Maitia, R.~Marques, {Internal shocks in a relativistic wind
  as a source for gamma-ray bursts?}, Astrophysics and Space Science 231~(1-2)
  (1995) 441--444.
\newblock \href {https://doi.org/10.1007/BF00658666}
  {\path{doi:10.1007/BF00658666}}.

\bibitem{Landau1987}
L.~D. Landau, E.~M. Lifshitz, {Relativistic Fluid Dynamics}, in: Fluid
  Mechanics, Elsevier, 1987, pp. 505--514.
\newblock \href {https://doi.org/10.1016/b978-0-08-033933-7.50023-4}
  {\path{doi:10.1016/b978-0-08-033933-7.50023-4}}.

\bibitem{Wardle1998}
J.~F. Wardle, D.~C. Homan, R.~Ojha, D.~H. Roberts, {Electron-positron jets
  associated with the quasar 3C279}, Nature 395~(6701) (1998) 457--461.
\newblock \href {https://doi.org/10.1038/26675} {\path{doi:10.1038/26675}}.

\bibitem{Komissarov1999}
S.~S. Komissarov, {A Godunov-type scheme for relativistic
  magnetohydrodynamics}, Monthly Notices of the Royal Astronomical Society
  303~(2) (1999) 343--366.
\newblock \href {https://doi.org/10.1046/j.1365-8711.1999.02244.x}
  {\path{doi:10.1046/j.1365-8711.1999.02244.x}}.

\bibitem{Balsara2001}
D.~Balsara, {Total Variation Diminishing Scheme for Relativistic
  Magnetohydrodynamics}, The Astrophysical Journal Supplement Series 132~(1)
  (2001) 83--101.
\newblock \href {https://doi.org/10.1086/318941} {\path{doi:10.1086/318941}}.

\bibitem{DelZanna2003}
L.~{Del Zanna}, N.~Bucciantini, P.~Londrillo, {An efficient shock-capturing
  central-type scheme for multidimensional relativistic flows II.
  Magnetohydrodynamics}, Astronomy and Astrophysics 400~(2) (2003) 397--413.
\newblock \href {https://doi.org/10.1051/0004-6361:20021641}
  {\path{doi:10.1051/0004-6361:20021641}}.

\bibitem{Mignone2006}
A.~Mignone, G.~Bodo, {An HLLC Riemann solver for relativistic flows – II.
  Magnetohydrodynamics}, Monthly Notices of the Royal Astronomical Society
  368~(3) (2006) 1040--1054.
\newblock \href {https://doi.org/10.1111/J.1365-2966.2006.10162.X}
  {\path{doi:10.1111/J.1365-2966.2006.10162.X}}.

\bibitem{Komissarov2007}
S.~S. Komissarov, {Multidimensional numerical scheme for resistive relativistic
  magnetohydrodynamics}, Monthly Notices of the Royal Astronomical Society
  382~(3) (2007) 995--1004.
\newblock \href {https://doi.org/10.1111/j.1365-2966.2007.12448.x}
  {\path{doi:10.1111/j.1365-2966.2007.12448.x}}.

\bibitem{Balsara2016aderweno}
D.~S. Balsara, J.~Kim, {A subluminal relativistic magnetohydrodynamics scheme
  with ADER-WENO predictor and multidimensional Riemann solver-based
  corrector}, Journal of Computational Physics 312 (2016) 357--384.
\newblock \href {https://doi.org/10.1016/j.jcp.2016.02.001}
  {\path{doi:10.1016/j.jcp.2016.02.001}}.

\bibitem{Amano2016}
T.~Amano, {A second-order divergence-constrained multidimensional numerical
  scheme for Relativistic Two-Fluid Electrodynamics}, The Astrophysical Journal
  831~(1) (2016) 100.
\newblock \href {https://doi.org/10.3847/0004-637x/831/1/100}
  {\path{doi:10.3847/0004-637x/831/1/100}}.

\bibitem{Shumlak2003}
U.~Shumlak, J.~Loverich, {Approximate Riemann solver for the two-fluid plasma
  model}, Journal of Computational Physics 187~(2) (2003) 620--638.
\newblock \href {https://doi.org/10.1016/S0021-9991(03)00151-7}
  {\path{doi:10.1016/S0021-9991(03)00151-7}}.

\bibitem{Hakim2006}
A.~Hakim, J.~Loverich, U.~Shumlak, {A high resolution wave propagation scheme
  for ideal Two-Fluid plasma equations}, Journal of Computational Physics
  219~(1) (2006) 418--442.
\newblock \href {https://doi.org/10.1016/j.jcp.2006.03.036}
  {\path{doi:10.1016/j.jcp.2006.03.036}}.

\bibitem{Kumar2012}
H.~Kumar, S.~Mishra, {Entropy stable numerical schemes for two-fluid plasma
  equations}, Journal of Scientific Computing 52~(2) (2012) 401--425.
\newblock \href {https://doi.org/10.1007/s10915-011-9554-7}
  {\path{doi:10.1007/s10915-011-9554-7}}.

\bibitem{Abgrall2014}
R.~Abgrall, H.~Kumar, {Robust Finite Volume Schemes for Two-Fluid Plasma
  Equations}, Journal of Scientific Computing 60~(3) (2014) 584--611.
\newblock \href {https://doi.org/10.1007/s10915-013-9809-6}
  {\path{doi:10.1007/s10915-013-9809-6}}.

\bibitem{bond2016plasma}
D.~M. Bond, V.~Wheatley, R.~Samtaney, {Plasma flow simulation using the
  two-fluid model}, in: Proceedings of the 20th Australasian Fluid Mechanics
  Conference, 2016.

\bibitem{Li2020}
Y.~Li, R.~Samtaney, D.~Bond, V.~Wheatley, {Richtmyer-Meshkov instability of an
  imploding flow with a two-fluid plasma model}, Physical Review Fluids 5~(11)
  (2020) 113701.
\newblock \href {https://doi.org/10.1103/PhysRevFluids.5.113701}
  {\path{doi:10.1103/PhysRevFluids.5.113701}}.

\bibitem{Meena2019}
A.~K. Meena, H.~Kumar, {Robust numerical schemes for Two-Fluid Ten-Moment
  plasma flow equations}, Zeitschrift fur Angewandte Mathematik und Physik
  70~(1) (2019) 1--30.
\newblock \href {https://doi.org/10.1007/s00033-018-1061-3}
  {\path{doi:10.1007/s00033-018-1061-3}}.

\bibitem{Zenitani2009b}
S.~Zenitani, M.~Hesse, A.~Klimas, {Relativistic two-fluid simulations of guide
  field magnetic reconnection}, Astrophysical Journal 705~(1) (2009) 907--913.
\newblock \href {https://doi.org/10.1088/0004-637X/705/1/907}
  {\path{doi:10.1088/0004-637X/705/1/907}}.

\bibitem{Zenitani2010}
S.~Zenitani, M.~Hesse, A.~Klimas, {Resistive magnetohydrodynamic simulations of
  relativistic magnetic reconnection}, Astrophysical Journal Letters 716~(2)
  (2010) L214--L218.
\newblock \href {https://doi.org/10.1088/2041-8205/716/2/L214}
  {\path{doi:10.1088/2041-8205/716/2/L214}}.

\bibitem{Amano2013}
T.~Amano, J.~G. Kirk, {The role of superluminal electromagnetic waves in pulsar
  wind termination shocks}, Astrophysical Journal 770~(1) (2013) 18.
\newblock \href {https://doi.org/10.1088/0004-637X/770/1/18}
  {\path{doi:10.1088/0004-637X/770/1/18}}.

\bibitem{Barkov2014}
M.~Barkov, S.~S. Komissarov, V.~Korolev, A.~Zankovich, {A multidimensional
  numerical scheme for two-fluid relativistic magnetohydrodynamics}, Monthly
  Notices of the Royal Astronomical Society 438~(1) (2014) 704--716.
\newblock \href {https://doi.org/10.1093/mnras/stt2247}
  {\path{doi:10.1093/mnras/stt2247}}.

\bibitem{Barkov2016}
M.~V. Barkov, S.~S. Komissarov, {Relativistic tearing and drift-kink
  instabilities in two-fluid simulations}, Monthly Notices of the Royal
  Astronomical Society 458~(2) (2016) 1939--1947.
\newblock \href {https://doi.org/10.1093/mnras/stw384}
  {\path{doi:10.1093/mnras/stw384}}.

\bibitem{LeVeque2002}
R.~J. LeVeque, {Finite Volume Methods for Hyperbolic Problems}, Cambridge
  University Press, 2002.
\newblock \href {https://doi.org/10.1017/cbo9780511791253}
  {\path{doi:10.1017/cbo9780511791253}}.

\bibitem{Chiodaroli2015}
E.~Chiodaroli, C.~{De Lellis}, O.~Kreml, {Global Ill-Posedness of the
  Isentropic System of Gas Dynamics}, Communications on Pure and Applied
  Mathematics 68~(7) (2015) 1157--1190.
\newblock \href {https://doi.org/10.1002/CPA.21537}
  {\path{doi:10.1002/CPA.21537}}.

\bibitem{Godlewski1996}
E.~Godlewski, P.-A. Raviart, {Numerical Approximation of Hyperbolic Systems of
  Conservation Laws}, Vol. 118 of Applied Mathematical Sciences, Springer New
  York, New York, NY, 1996.
\newblock \href {https://doi.org/10.1007/978-1-4612-0713-9}
  {\path{doi:10.1007/978-1-4612-0713-9}}.

\bibitem{Zenitani2009a}
S.~Zenitani, M.~Hesse, A.~Klimas, {Two-fluid magnetohydrodynamic simulations of
  relativistic magnetic reconnection}, Astrophysical Journal 696~(2) (2009)
  1385--1401.
\newblock \href {https://doi.org/10.1088/0004-637X/696/2/1385}
  {\path{doi:10.1088/0004-637X/696/2/1385}}.

\bibitem{Balsara2016}
D.~S. Balsara, T.~Amano, S.~Garain, J.~Kim, {A high-order relativistic
  two-fluid electrodynamic scheme with consistent reconstruction of
  electromagnetic fields and a multidimensional Riemann solver for
  electromagnetism}, Journal of Computational Physics 318 (2016) 169--200.
\newblock \href {https://doi.org/10.1016/j.jcp.2016.05.006}
  {\path{doi:10.1016/j.jcp.2016.05.006}}.

\bibitem{Fjordholm2012}
U.~S. Fjordholm, S.~Mishra, E.~Tadmor, {Arbitrarily high-order accurate entropy
  stable essentially nonoscillatory schemes for systems of conservation laws},
  SIAM Journal on Numerical Analysis 50~(2) (2012) 544--573.
\newblock \href {https://doi.org/10.1137/110836961}
  {\path{doi:10.1137/110836961}}.

\bibitem{Fjordholm2013}
U.~S. Fjordholm, S.~Mishra, E.~Tadmor, {ENO Reconstruction and ENO
  Interpolation Are Stable}, Foundations of Computational Mathematics 13~(2)
  (2013) 139--159.
\newblock \href {https://doi.org/10.1007/s10208-012-9117-9}
  {\path{doi:10.1007/s10208-012-9117-9}}.

\bibitem{Chandrashekar2013}
P.~Chandrashekar, {Kinetic energy preserving and entropy stable finite volume
  schemes for compressible euler and Navier-Stokes equations}, Communications
  in Computational Physics 14~(5) (2013) 1252--1286.
\newblock \href {https://doi.org/10.4208/cicp.170712.010313a}
  {\path{doi:10.4208/cicp.170712.010313a}}.

\bibitem{Sen2018}
C.~Sen, H.~Kumar, {Entropy Stable Schemes For Ten-Moment Gaussian Closure
  Equations}, Journal of Scientific Computing 75~(2) (2018) 1128--1155.
\newblock \href {https://doi.org/10.1007/S10915-017-0579-4/FIGURES/8}
  {\path{doi:10.1007/S10915-017-0579-4/FIGURES/8}}.

\bibitem{Duan2021}
J.~Duan, H.~Tang, {High-order accurate entropy stable finite difference schemes
  for the shallow water magnetohydrodynamics}, Journal of Computational Physics
  431 (2021) 110136.
\newblock \href {https://doi.org/10.1016/J.JCP.2021.110136}
  {\path{doi:10.1016/J.JCP.2021.110136}}.

\bibitem{Duan2020}
J.~Duan, H.~Tang, {High-Order Accurate Entropy Stable Finite Difference Schemes
  for One- And Two-Dimensional Special Relativistic Hydrodynamics}, Advances in
  Applied Mathematics and Mechanics 12~(1) (2020) 1--29.
\newblock \href {https://doi.org/10.4208/AAMM.OA-2019-0124}
  {\path{doi:10.4208/AAMM.OA-2019-0124}}.

\bibitem{Biswas2022}
B.~Biswas, H.~Kumar, D.~Bhoriya, {Entropy stable discontinuous Galerkin schemes
  for the special relativistic hydrodynamics equations}, Computers {\&}
  Mathematics with Applications 112~(September 2021) (2022) 55--75.
\newblock \href {https://doi.org/10.1016/j.camwa.2022.02.019}
  {\path{doi:10.1016/j.camwa.2022.02.019}}.

\bibitem{Munz2000}
C.~D. Munz, P.~Omnes, R.~Schneider, E.~Sonnendr{\"{u}}cker, U.~Vo{\ss},
  {Divergence Correction Techniques for Maxwell Solvers Based on a Hyperbolic
  Model}, Journal of Computational Physics 161~(2) (2000) 484--511.
\newblock \href {https://doi.org/10.1006/jcph.2000.6507}
  {\path{doi:10.1006/jcph.2000.6507}}.

\bibitem{Schneider1993}
V.~Schneider, U.~Katscher, D.~H. Rischke, B.~Waldhauser, J.~A. Maruhn, C.~D.
  Munz, {New Algorithms for Ultra-relativistic Numerical Hydrodynamics},
  Journal of Computational Physics 105~(1) (1993) 92--107.
\newblock \href {https://doi.org/10.1006/jcph.1993.1056}
  {\path{doi:10.1006/jcph.1993.1056}}.

\bibitem{Tadmor1987}
E.~Tadmor, {The Numerical Viscosity of Entropy Stable Schemes for Systems of
  Conservation Laws. I}, Mathematics of Computation 49~(179) (1987) 91.
\newblock \href {https://doi.org/10.2307/2008251} {\path{doi:10.2307/2008251}}.

\bibitem{Ismail2009}
F.~Ismail, P.~L. Roe, {Affordable, entropy-consistent Euler flux functions II:
  Entropy production at shocks}, Journal of Computational Physics 228~(15)
  (2009) 5410--5436.
\newblock \href {https://doi.org/10.1016/j.jcp.2009.04.021}
  {\path{doi:10.1016/j.jcp.2009.04.021}}.

\bibitem{Lefloch2002}
P.~G. Lefloch, J.~M. Mercier, C.~Rohde, {Fully discrete, entropy conservative
  schemes of arbitrary order}, SIAM Journal on Numerical Analysis 40~(5) (2002)
  1968--1992.
\newblock \href {https://doi.org/10.1137/S003614290240069X}
  {\path{doi:10.1137/S003614290240069X}}.

\bibitem{Barth1999}
T.~J. Barth, {Numerical methods for gas-dynamics systems on unstructured
  meshes, An introduction to recent developments in theory and numerics of
  conservation Laws, Lecture notes in computational science and engineering
  volume {\{}5{\}}, Springer, Berlin. Eds: D. Kroner, M.}, Springer, Berlin,
  Heidelberg, 1999.

\bibitem{Jiang1996}
G.~S. Jiang, C.~W. Shu, {Efficient implementation of weighted ENO schemes},
  Journal of Computational Physics 126~(1) (1996) 202--228.
\newblock \href {https://doi.org/10.1006/jcph.1996.0130}
  {\path{doi:10.1006/jcph.1996.0130}}.

\bibitem{Gottlieb2001}
S.~Gottlieb, C.~W. Shu, E.~Tadmor, {Strong stability-preserving high-order time
  discretization methods}, SIAM Review 43~(1) (2001) 89--112.
\newblock \href {https://doi.org/10.1137/S003614450036757X}
  {\path{doi:10.1137/S003614450036757X}}.

\bibitem{Pareschi2005}
L.~Pareschi, G.~Russo, {Implicit-explicit Runge-Kutta schemes and applications
  to hyperbolic systems with relaxation}, Journal of Scientific Computing
  25~(1) (2005) 129--155.
\newblock \href {https://doi.org/10.1007/s10915-004-4636-4}
  {\path{doi:10.1007/s10915-004-4636-4}}.

\bibitem{Dennis1996}
J.~E. Dennis, R.~B. Schnabel, {Numerical Methods for Unconstrained Optimization
  and Nonlinear Equations}, Society for Industrial and Applied Mathematics,
  1996.
\newblock \href {https://doi.org/10.1137/1.9781611971200}
  {\path{doi:10.1137/1.9781611971200}}.

\bibitem{Kennedy2003}
C.~A. Kennedy, M.~H. Carpenter, {Additive Runge-Kutta schemes for
  convection-diffusion-reaction equations}, Applied Numerical Mathematics
  44~(1-2) (2003) 139--181.
\newblock \href {https://doi.org/10.1016/S0168-9274(02)00138-1}
  {\path{doi:10.1016/S0168-9274(02)00138-1}}.

\bibitem{Orszag1979}
S.~A. Orszag, C.~M. Tang, {Small-scale structure of two-dimensional
  magnetohydrodynamic turbulence}, Journal of Fluid Mechanics 90~(1) (1979)
  129--143.
\newblock \href {https://doi.org/10.1017/S002211207900210X}
  {\path{doi:10.1017/S002211207900210X}}.

\bibitem{Birn2001}
J.~Birn, J.~F. Drake, M.~A. Shay, B.~N. Rogers, R.~E. Denton, M.~Hesse,
  M.~Kuznetsova, Z.~W. Ma, A.~Bhattacharjee, A.~Otto, P.~L. Pritchett,
  {Geospace Environmental Modeling (GEM) Magnetic Reconnection Challenge},
  Journal of Geophysical Research: Space Physics 106~(A3) (2001) 3715--3719.
\newblock \href {https://doi.org/10.1029/1999ja900449}
  {\path{doi:10.1029/1999ja900449}}.

\end{thebibliography}

\appendix

\section{Proof of Proposition~\ref{prop:entropy}}
\label{ap:entropy}
We will give the proof of Proposition~\eqref{prop:entropy} in two steps. Firstly, we will prove two lemmas, and then we give the proof of proposition using these two lemmas. The proof is similar to those in \cite{Bhoriya2020} for relativistic hydrodynamics except for the additional source terms and three-dimensional velocity field.
\begin{lemma} \label{lemma1} The smooth solutions of the system~\eqref{conservedform_1d} satisfies the following identity
	\begin{gather}
		\frac{1}{\Gamma_\alpha^2} \partial_t p_\alpha= \partial_t p_\alpha +u_{x\alpha} \partial_x p_\alpha +\left( \frac{\rho_\alpha h_\alpha}{\Gamma_\alpha} \right) (\partial_t \Gamma_\alpha 	+\partial_x {(\Gamma_\alpha u_{x\alpha})}) - \rho_\alpha h_\alpha \partial_x u_{x,\alpha} - \mathbf{u}_\alpha\cdot(\mathbf{s}_{\mathbf{M}_\alpha} - \mathbf{u}_\alpha \mathbf{s}_{\mathcal{E}_\alpha}),
		\label{lemma1_eq}
	\end{gather}
	where 
	$$
	\mathbf{s}_{\mathbf{M}_\alpha} =r_\alpha \Gamma_\alpha \rho_\alpha (\mathbf{E}+\mathbf{u}_\alpha \times \mathbf{B}), \qquad
	\mathbf{s}_{\mathcal{E}_\alpha}=r_\alpha \Gamma_\alpha \rho_\alpha (\mathbf{u}_\alpha \cdot \mathbf{E}).
	$$
	are the source terms in the momentum and energy equations.
\end{lemma}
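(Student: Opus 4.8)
The identity \eqref{lemma1_eq} is a purely algebraic consequence of the fluid conservation laws in \eqref{conservedform_1d}, and its derivation mirrors the classical passage from the conservative energy balance to a ``pressure equation'' in relativistic hydrodynamics. The plan is to combine the energy equation \eqref{ion_energy}/\eqref{elec_energy} with the three components of the momentum equation \eqref{ion_momentum}/\eqref{elec_momentum}; the continuity equation is not needed for this lemma. Writing $Q_\alpha := \rho_\alpha h_\alpha \Gamma_\alpha^2$, so that $\mathcal{E}_\alpha = Q_\alpha - p_\alpha$ and $\mathcal{E}_\alpha + p_\alpha = Q_\alpha$, the energy equation becomes $\partial_t Q_\alpha - \partial_t p_\alpha + \partial_x(Q_\alpha u_{x_\alpha}) = \mathbf{s}_{\mathcal{E}_\alpha}$, i.e. $\partial_t Q_\alpha + \partial_x(Q_\alpha u_{x_\alpha}) = \partial_t p_\alpha + \mathbf{s}_{\mathcal{E}_\alpha}$.

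First I would expand each momentum component $k\in\{x,y,z\}$, which has the form $\partial_t(Q_\alpha u_{k_\alpha}) + \partial_x(Q_\alpha u_{x_\alpha} u_{k_\alpha}) + \delta_{kx}\partial_x p_\alpha = (\mathbf{s}_{\mathbf{M}_\alpha})_k$, using the product rule to isolate the factor $u_{k_\alpha}\bigl(\partial_t Q_\alpha + \partial_x(Q_\alpha u_{x_\alpha})\bigr)$. Substituting the rearranged energy equation into this factor eliminates the $Q_\alpha$-transport terms and gives, for each $k$, $Q_\alpha(\partial_t u_{k_\alpha} + u_{x_\alpha}\partial_x u_{k_\alpha}) = (\mathbf{s}_{\mathbf{M}_\alpha})_k - u_{k_\alpha}\mathbf{s}_{\mathcal{E}_\alpha} - u_{k_\alpha}\partial_t p_\alpha - \delta_{kx}\partial_x p_\alpha$. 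Multiplying by $u_{k_\alpha}$ and summing over $k$ then produces $Q_\alpha\bigl(\tfrac12\partial_t|\mathbf{u}_\alpha|^2 + \tfrac12 u_{x_\alpha}\partial_x|\mathbf{u}_\alpha|^2\bigr) = \mathbf{u}_\alpha\cdot\mathbf{s}_{\mathbf{M}_\alpha} - |\mathbf{u}_\alpha|^2\mathbf{s}_{\mathcal{E}_\alpha} - |\mathbf{u}_\alpha|^2\partial_t p_\alpha - u_{x_\alpha}\partial_x p_\alpha$.

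The key structural identity to invoke next is $\Gamma_\alpha^2(1-|\mathbf{u}_\alpha|^2)=1$, whose differentiation gives $\tfrac12\partial|\mathbf{u}_\alpha|^2 = \Gamma_\alpha^{-3}\partial\Gamma_\alpha$ for $\partial\in\{\partial_t,\partial_x\}$; together with $Q_\alpha\Gamma_\alpha^{-3} = \rho_\alpha h_\alpha/\Gamma_\alpha$, the left-hand side collapses to $(\rho_\alpha h_\alpha/\Gamma_\alpha)(\partial_t\Gamma_\alpha + u_{x_\alpha}\partial_x\Gamma_\alpha)$. Finally I would write $u_{x_\alpha}\partial_x\Gamma_\alpha = \partial_x(\Gamma_\alpha u_{x_\alpha}) - \Gamma_\alpha\partial_x u_{x_\alpha}$, so that $(\rho_\alpha h_\alpha/\Gamma_\alpha)u_{x_\alpha}\partial_x\Gamma_\alpha = (\rho_\alpha h_\alpha/\Gamma_\alpha)\partial_x(\Gamma_\alpha u_{x_\alpha}) - \rho_\alpha h_\alpha\partial_x u_{x_\alpha}$, move the term $-|\mathbf{u}_\alpha|^2\partial_t p_\alpha$ to the left (giving $\partial_t p_\alpha - (1-|\mathbf{u}_\alpha|^2)\partial_t p_\alpha = \partial_t p_\alpha - \Gamma_\alpha^{-2}\partial_t p_\alpha$), and recognize $\mathbf{u}_\alpha\cdot\mathbf{s}_{\mathbf{M}_\alpha} - |\mathbf{u}_\alpha|^2\mathbf{s}_{\mathcal{E}_\alpha} = \mathbf{u}_\alpha\cdot(\mathbf{s}_{\mathbf{M}_\alpha} - \mathbf{u}_\alpha\mathbf{s}_{\mathcal{E}_\alpha})$. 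Collecting the terms yields exactly \eqref{lemma1_eq}.

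There is no genuine analytic obstacle here — the statement is an identity for smooth solutions — so the only real difficulty is bookkeeping: tracking the three momentum components simultaneously, correctly carrying the $\delta_{kx}$ pressure term that appears only in the $x$-momentum flux, and not losing factors of $\Gamma_\alpha$ when converting between $|\mathbf{u}_\alpha|^2$- and $\Gamma_\alpha$-derivatives. A convenient consistency check along the way is $\mathbf{u}_\alpha\cdot(\mathbf{u}_\alpha\times\mathbf{B})=0$, which forces $\mathbf{u}_\alpha\cdot\mathbf{s}_{\mathbf{M}_\alpha}=\mathbf{s}_{\mathcal{E}_\alpha}$ and hence collapses the source combination on the right to $\Gamma_\alpha^{-2}\mathbf{s}_{\mathcal{E}_\alpha}$; this is not needed to prove the stated form but confirms that the Lorentz-force source terms enter the pressure balance consistently, which is the fact exploited in the subsequent proof of Proposition~\eqref{prop:entropy}.
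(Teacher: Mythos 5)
Your proposal is correct and follows essentially the same route as the paper: the paper likewise multiplies each momentum component by $u_{k_\alpha}$ after eliminating $\partial_t Q_\alpha + \partial_x(Q_\alpha u_{x_\alpha})$ via the energy equation (its Eqns.~(A.7)--(A.9)), and then invokes $\partial\Gamma_\alpha = \Gamma_\alpha^3\,\mathbf{u}_\alpha\cdot\partial\mathbf{u}_\alpha$ together with $u_{x_\alpha}^2 = 1-\Gamma_\alpha^{-2}-u_{y_\alpha}^2-u_{z_\alpha}^2$. The only difference is organizational — you sum the three component identities at once and use $\tfrac12\partial|\mathbf{u}_\alpha|^2=\Gamma_\alpha^{-3}\partial\Gamma_\alpha$, whereas the paper works through the $x$-component and back-substitutes the $y$- and $z$-identities — and your closing observation that $\mathbf{u}_\alpha\cdot\mathbf{s}_{\mathbf{M}_\alpha}=\mathbf{s}_{\mathcal{E}_\alpha}$ is exactly the fact the paper uses in the subsequent Lemma.
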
  
\begin{proof}
	Let $\mathbf{U}$ be the smooth solution of the system \eqref{conservedform_1d}. Then we can write,
	\begin{align}
		\partial_t(\rho_\alpha \Gamma_\alpha) + \partial_x (\rho_\alpha \Gamma_\alpha u_{x_\alpha}) = 0, \label{mass} 
		\\ 
		\partial _t(\rho_\alpha h_\alpha \Gamma_\alpha ^2 u_{x_\alpha}) + \partial _x (\rho_\alpha h_\alpha \Gamma_\alpha ^2 u_{x_\alpha}^2) + \partial_x p_\alpha = \mathbf{s}_{M_{x_\alpha}}, \label{momentum1}
		\\ 
		\partial _t(\rho_\alpha h_\alpha \Gamma_\alpha ^2 u_{y_\alpha}) + \partial _x (\rho_\alpha h_\alpha \Gamma_\alpha ^2 u_{x_\alpha} u_{y_\alpha})  = \mathbf{s}_{M_{y_\alpha}}, \label{momentum2}
		\\ 
		\partial _t(\rho_\alpha h_\alpha \Gamma_\alpha ^2 u_{z_\alpha}) + \partial _x (\rho_\alpha h_\alpha \Gamma_\alpha ^2 u_{x_\alpha} u_{z_\alpha})  = \mathbf{s}_{M_{z_\alpha}}, \label{momentum3}
		\\ 
		\partial _t(\rho_\alpha h_\alpha \Gamma_\alpha ^2 ) -\partial_t p_\alpha  + \partial _x (\rho_\alpha h_\alpha \Gamma_\alpha ^2 u_{x_\alpha})  = \mathbf{s}_{\mathcal{E}_\alpha}, \label{ener}
	\end{align}
	where, we have used $\mathbf{s}_{\mathbf{M}_\alpha} = (\mathbf{s}_{M_{x_\alpha}}, \mathbf{s}_{M_{y_\alpha}}, \mathbf{s}_{M_{z_\alpha}})$,  as the momentum source vector, i.e., source terms of the Eqns.~\eqref{ion_momentum} and \eqref{elec_momentum}. Also, $\mathbf{s}_{\mathcal{E}_\alpha}$ is the source term of the energy equations~\eqref{ion_energy}, and \eqref{elec_energy}. We apply the product rule on Equations~\eqref{momentum1}-\eqref{momentum3} and substitute the value of $\partial_x(\rho_\alpha h_\alpha \Gamma^2_\alpha u_{x_\alpha})$ from Eqn.~\eqref{ener} in each of the three equations to obtain the following set of three identities:
	\begin{subequations}
		\begin{align}
			(\rho_\alpha h_\alpha \Gamma^2_{\alpha} u_{x_\alpha})(\partial_t u_{x_\alpha} + u_{x_\alpha} \partial_x u_{x_\alpha})+ u_{x_\alpha}^2 \partial_t p_{\alpha} + u_{x_\alpha} \partial_x p_{\alpha} = u_{x_\alpha}(\mathbf{s}_{M_{x_\alpha}} - u_{x_\alpha} \mathbf{s}_{\mathcal{E}_\alpha}) \label{lemma1a} 
			\\
			(\rho_{\alpha} h_{\alpha} \Gamma^2_{\alpha} u_{y_\alpha}) ( \partial_t u_{y_\alpha} + u_{y_\alpha} \partial_x u_{y_\alpha}) + u_{y_\alpha}^2 \partial_t p_{\alpha}  = u_{y_\alpha}(\mathbf{s}_{M_{y_\alpha}} - u_{y_\alpha} \mathbf{s}_{\mathcal{E}_\alpha}) \label{lemma1b}
			\\
			(\rho_{\alpha} h_{\alpha} \Gamma^2_{\alpha} u_{z_\alpha}) ( \partial_t u_{z_\alpha} + u_{z_\alpha} \partial_x u_{z_\alpha}) + u_{z_\alpha}^2 \partial_t p_{\alpha}  = u_{z_\alpha}(\mathbf{s}_{M_{z_\alpha}} - u_{z_\alpha} \mathbf{s}_{\mathcal{E}_{\alpha}}) \label{lemma1c}	
		\end{align}
	\end{subequations}
	We observe that 
	$ \partial_t\Gamma_\alpha = \Gamma^3_\alpha \boldsymbol{u}_\alpha \cdot \partial_t \boldsymbol{u}_\alpha$, and 
	$ \partial_x\Gamma_\alpha = \Gamma^3_\alpha \boldsymbol{u}_\alpha \cdot \partial_x \boldsymbol{u}_\alpha.$
	Consequently, Eqn.~\eqref{lemma1a} simplifies to
	\begin{gather}
		\left( \frac{\rho_\alpha h_\alpha}{\Gamma_\alpha} \right)(\partial_t \Gamma_\alpha + u_{x_\alpha} \partial_x \Gamma_\alpha)
		- \rho_\alpha h_\alpha \Gamma_\alpha^2 u_{y_\alpha} ( \partial_t u_{y_\alpha} + u_{x_\alpha} \partial_x u_{y_\alpha})  
		- \rho_\alpha h_\alpha \Gamma_\alpha^2 u_{z_\alpha} ( \partial_t u_{z_\alpha} + u_{x_\alpha} \partial_x u_{z_\alpha}) \nonumber
		\\
		+ u_{x_\alpha}^2 \partial_t p_\alpha + u_{x_\alpha} \partial_x p_\alpha 
		= 
		u_{x_\alpha}(\mathbf{s}_{M_{x_\alpha}} - u_{x_\alpha} \mathbf{s}_{\mathcal{E}_\alpha}), \label{temp2}
	\end{gather} 
	From the definition of Lorentz factor $\Gamma_\alpha$, we have $u_{x_\alpha}^2= 1- \dfrac{1}{\Gamma_\alpha^2 } - u_{y_\alpha}^2 - u_{z_\alpha}^2$. We substitute this value of $u_{x_\alpha}^2$ in Eqn.~\eqref{temp2}, followed by the substitution of value of $u_{y_\alpha}^2 \partial_t p_\alpha$ and $u_{z_\alpha}^2 \partial_t p_\alpha$ from the Eqn.~\eqref{lemma1b}-\eqref{lemma1c}, to obtain the required identity~\eqref{lemma1_eq}.
\end{proof}
\begin{lemma} \label{lemma2}
	For smooth solutions of the system \eqref{conservedform_1d}, we have the following identity
	\begin{equation}
		(\partial_t p_\alpha +u_{x_\alpha} \partial_x p_\alpha )+\frac{p_\alpha \gamma_\alpha}{\Gamma_\alpha}(\partial_t \Gamma_\alpha +\partial_x {(\Gamma_\alpha u_{x_\alpha})}) = 0.
		\label{lemma2_eq}
	\end{equation}
\end{lemma}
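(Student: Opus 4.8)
The plan is to derive Lemma~\ref{lemma2} by substituting the identity of Lemma~\ref{lemma1} into the energy equation~\eqref{ener} so that the Lorentz--force source terms cancel, and then to close the argument using the ideal equation of state and the continuity equation~\eqref{mass}. Throughout I write $\mathrm{D}_t := \partial_t + u_{x_\alpha}\partial_x$ for the material derivative in the $x$--direction.

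First I would recast~\eqref{lemma1_eq} in a more compact form. Writing $m_{x_\alpha}=\Gamma_\alpha u_{x_\alpha}$ and $\partial_x u_{x_\alpha} = \Gamma_\alpha^{-1}\partial_x(\Gamma_\alpha u_{x_\alpha}) - \Gamma_\alpha^{-1}u_{x_\alpha}\partial_x\Gamma_\alpha$, the two spatial-gradient terms $\tfrac{\rho_\alpha h_\alpha}{\Gamma_\alpha}\partial_x(\Gamma_\alpha u_{x_\alpha}) - \rho_\alpha h_\alpha\partial_x u_{x_\alpha}$ collapse to $\tfrac{\rho_\alpha h_\alpha}{\Gamma_\alpha}u_{x_\alpha}\partial_x\Gamma_\alpha$; and since $\mathbf{u}_\alpha\cdot(\mathbf{u}_\alpha\times\mathbf{B})=0$ we have $\mathbf{u}_\alpha\cdot\mathbf{s}_{\mathbf{M}_\alpha}=\mathbf{s}_{\mathcal{E}_\alpha}$, so the source term $\mathbf{u}_\alpha\cdot(\mathbf{s}_{\mathbf{M}_\alpha}-\mathbf{u}_\alpha\mathbf{s}_{\mathcal{E}_\alpha})$ in~\eqref{lemma1_eq} reduces to $(1-|\mathbf{u}_\alpha|^2)\mathbf{s}_{\mathcal{E}_\alpha} = \Gamma_\alpha^{-2}\mathbf{s}_{\mathcal{E}_\alpha}$. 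After multiplying by $\Gamma_\alpha^2$, Lemma~\ref{lemma1} becomes the single relation $\partial_t p_\alpha = \Gamma_\alpha^2\,\mathrm{D}_t p_\alpha + \rho_\alpha h_\alpha\Gamma_\alpha\,\mathrm{D}_t\Gamma_\alpha - \mathbf{s}_{\mathcal{E}_\alpha}$.

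Next I would process the energy equation~\eqref{ener}: writing $\rho_\alpha h_\alpha\Gamma_\alpha^2 = (\rho_\alpha\Gamma_\alpha)(h_\alpha\Gamma_\alpha)$, expanding by the product rule, and discarding the factor multiplying $\partial_t(\rho_\alpha\Gamma_\alpha)+\partial_x(\rho_\alpha\Gamma_\alpha u_{x_\alpha})$, which vanishes by continuity~\eqref{mass}, one is left with $\rho_\alpha\Gamma_\alpha\,\mathrm{D}_t(h_\alpha\Gamma_\alpha) - \partial_t p_\alpha = \mathbf{s}_{\mathcal{E}_\alpha}$. Equating this with the expression for $\mathbf{s}_{\mathcal{E}_\alpha}$ from the previous paragraph and expanding $\mathrm{D}_t(h_\alpha\Gamma_\alpha)=h_\alpha\mathrm{D}_t\Gamma_\alpha+\Gamma_\alpha\mathrm{D}_t h_\alpha$, the terms $\rho_\alpha h_\alpha\Gamma_\alpha\,\mathrm{D}_t\Gamma_\alpha$ and $\partial_t p_\alpha$ cancel, leaving $\mathrm{D}_t p_\alpha = \rho_\alpha\,\mathrm{D}_t h_\alpha$ (the differential first law along particle paths). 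Substituting the ideal law $h_\alpha = 1 + \tfrac{\gamma_\alpha}{\gamma_\alpha-1}\tfrac{p_\alpha}{\rho_\alpha}$ and simplifying with $k_\alpha-1=\tfrac{1}{\gamma_\alpha-1}$ turns this into $\mathrm{D}_t p_\alpha = \tfrac{\gamma_\alpha p_\alpha}{\rho_\alpha}\mathrm{D}_t\rho_\alpha$; finally, expanding continuity~\eqref{mass} gives $\mathrm{D}_t\rho_\alpha = -\tfrac{\rho_\alpha}{\Gamma_\alpha}\bigl(\partial_t\Gamma_\alpha+\partial_x(\Gamma_\alpha u_{x_\alpha})\bigr)$, and inserting this produces exactly~\eqref{lemma2_eq}.

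The product-rule manipulations and the equation-of-state bookkeeping are routine. The step that needs the most care — and which I expect to be the main obstacle — is the compactification of Lemma~\ref{lemma1} in the first step: one must verify, using $\partial_x\Gamma_\alpha=\Gamma_\alpha^3\mathbf{u}_\alpha\cdot\partial_x\mathbf{u}_\alpha$ and the definition of $m_{x_\alpha}$, that the $\partial_x u_{x_\alpha}$ term combines with the divergence term into a clean material derivative of $\Gamma_\alpha$, and that the Lorentz-force work term reduces precisely to $\Gamma_\alpha^{-2}\mathbf{s}_{\mathcal{E}_\alpha}$ so that it cancels exactly against the source coming from the energy equation. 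Once this identity is in hand, the remaining steps are forced.
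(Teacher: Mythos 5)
Your proposal is correct and follows essentially the same route as the paper's proof: combine the identity of Lemma~\ref{lemma1} with the product-rule expansion of the energy equation \eqref{ener}, use $\mathbf{u}_\alpha\cdot\mathbf{s}_{\mathbf{M}_\alpha}=\mathbf{s}_{\mathcal{E}_\alpha}$ so the Lorentz-force work cancels, and close with the continuity equation \eqref{mass} and the ideal equation of state. The only difference is organizational --- you keep $h_\alpha$ abstract and isolate the EOS-independent relation $\partial_t p_\alpha+u_{x_\alpha}\partial_x p_\alpha=\rho_\alpha(\partial_t h_\alpha+u_{x_\alpha}\partial_x h_\alpha)$ as a clean intermediate step before specializing, whereas the paper substitutes the equation of state into the energy equation up front; every individual manipulation in your argument checks out.
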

\begin{proof}
	We expand Eqn.~\eqref{ener} using the product rule and simplify further using the substitution of variables from Eqn.~\eqref{mass} to obtain,
	\begin{gather}
		\partial_t p_\alpha+ u_{x_\alpha} \partial_x p_\alpha = \left( \dfrac{\gamma_\alpha-1}{\Gamma_\alpha^2 \gamma_\alpha}\right) \mathbf{s}_{\mathcal{E}_\alpha} + \left(\frac{\gamma_\alpha-1}{\Gamma_\alpha^2 \gamma_\alpha} \right)\partial_t p_\alpha 
		-\left(\frac{\rho_\alpha}{\Gamma_\alpha}\frac{\gamma_\alpha-1}{\gamma_\alpha}\right)(\partial_t \Gamma_\alpha +\partial_x {(\Gamma_\alpha u_{x_\alpha})})\nonumber \\
		- 2(\partial_t \Gamma_\alpha +\partial_x {(\Gamma_\alpha u_{x_\alpha})})\frac{p_\alpha}{\Gamma_\alpha} 
		+ \rho_\alpha \left(\frac{\gamma_\alpha-1}{\gamma_\alpha} \right)\partial_x u_{x_\alpha}  + p_\alpha\partial_x u_{x_\alpha}. 
	\end{gather}
	Substituting the value of $\dfrac{1}{\Gamma_\alpha^2} \partial_t p_\alpha$ from the identity of Lemma~\eqref{lemma1}, and simplifying further we get
	\begin{align}
		\partial_t p_\alpha+u_{x_\alpha} \partial_x p_\alpha
		&=
		\left(\frac{\gamma_\alpha-1}{\gamma_\alpha}\right) \left(- \mathbf{u}_\alpha\cdot\mathbf{s}_{\mathbf{M}_\alpha}  +  \left(\frac{1+\mathbf{u}_\alpha^2}{\Gamma_\alpha^2}\right) \mathbf{s}_{\mathcal{E}_\alpha}\right)\nonumber
		\\
		&+ 
		\left(\frac{\gamma_\alpha-1}{\gamma_\alpha}\right)(\partial_t p_\alpha+u_{x_\alpha} \partial_x p_\alpha) \nonumber
		\\ 
		&+
		\left(\frac{\gamma_\alpha-1}{\gamma_\alpha}\right) \left[ \left( \frac{\rho_\alpha h_\alpha}{\Gamma_\alpha}\right) (\partial_t \Gamma_\alpha +\partial_x (\Gamma_\alpha u_{x_\alpha})) -\rho_\alpha h_\alpha \partial_x u_{x_\alpha} \right] \nonumber
		\\ 
		&-
		\left(\frac{\rho_\alpha}{\Gamma_\alpha}\frac{\gamma_\alpha-1}{\gamma_\alpha} \right)(\partial_t \Gamma_\alpha +\partial_x {(\Gamma_\alpha u_{x_\alpha})})- \frac{2p_\alpha}{\Gamma_\alpha}(\partial_t \Gamma_\alpha +\partial_x {(\Gamma_\alpha u_{x_\alpha})})  \nonumber
		\\
		&+ \left(\frac{\gamma_\alpha-1}{\gamma_\alpha}\right)\rho_\alpha \partial_x u_{x_\alpha}   +p_\alpha \partial_x u_{x_\alpha}, \nonumber
	\end{align}
	We observe that $\mathbf{u}_\alpha\cdot \mathbf{s_M}_\alpha  -  \mathbf{s}_{\mathcal{E}_\alpha} = 0$, thus, after further simplifications and cancellations \cite{Bhoriya2020}, we obtain~\eqref{lemma2_eq}.
\end{proof}

\begin{proof}[\textbf{Proof of Proposition~\eqref{prop:entropy}}]
	We find the partial derivatives of $s_\alpha = \ln(p_\alpha \rho_\alpha^{-\gamma_\alpha})$ with respect to $t$ and $x$, to obtain 
	\begin{equation}
		\partial_t s_\alpha= \frac{1}{p_\alpha} \partial_t p_\alpha -  \frac{\gamma_\alpha}{\rho_\alpha} \partial_t \rho_\alpha 
		\quad \text{ and } \quad
		\partial_x s_\alpha=  \frac{1}{p_\alpha} \partial_x p_\alpha -  \frac{\gamma_\alpha}{\rho_\alpha} \partial_x \rho_\alpha.   	\label{partial_s} 
	\end{equation} 
	Applying the product rule on the mass density Eqn.~\eqref{mass} we obtain an identity in terms of the partial derivatives of $\rho_\alpha$ 
	\begin{equation}
		\frac{1}{\rho_\alpha} (\partial_t \rho_\alpha +u_{x_\alpha} \partial_x \rho_\alpha) = -\frac{1}{\Gamma_\alpha } (\partial_t \Gamma_\alpha +\partial_x {(\Gamma_\alpha u_{x_\alpha})}). \label{masscomb}
	\end{equation}
	Combining Eqn.~\eqref{lemma2_eq} and Eqn.~\eqref{masscomb}, we obtain the expression
	\begin{equation*}
		\frac{1}{p_\alpha} (\partial_t p_\alpha + u_{x_\alpha} \partial_x p_\alpha) -\frac{\gamma_\alpha}{\rho_\alpha} (\partial_t \rho_\alpha + u_{x_\alpha} \partial_x \rho_\alpha)=0, 
	\end{equation*}
	which, on using Eqn.~\eqref{partial_s}, is equivalent to $\partial_t s_\alpha+u_{x_\alpha} \partial_x s_\alpha=0$. The identity~\eqref{h_s} is a direct consequence of the product rule. To obtain the final entropy equality~\eqref{entropy_pair_equality}, we simplify the identity~\eqref{h_s} by choosing particular value of $H(s_\alpha)$ as $H(s_\alpha)=\dfrac{-s_\alpha}{\gamma_\alpha-1}$.
\end{proof}
\
\section{Barth scaling of right eigenvectors}
Here, we present expressions for the right eigenvectors and scaled right eigenvectors for the conservative system~\eqref{conservedform}. The right eigenvectors are described in Section~\ref{ap:right_eig}, while the expressions and procedure to obtain the scaled eigenvectors is presented in Section~\ref{ap:scaled_eig} 
\subsection{Right eigenvectors} \label{ap:right_eig}

The set of right eigenvectors of the matrix $\mathbf{A}^x = \frac{\partial \mathbf{f}^x}{\partial \mathbf{U}}$ corresponding to the eigenvalues $\Lambda^x$ of the system~\eqref{conservedform} is obtained by taking $d=x$ in the ordered set $\mathbf{R}^d_{\Lambda^d} = \left\{\left(\mathbf{R}_{\Lambda^d}^d\right)_n: \ n = 1, \ 2 ,\ 3,\dots, \ 18\right\}$ where the vectors $\left(\mathbf{R}_{\Lambda^d}^d\right)_n$ are defined as 
\begin{align} \label{x_eigvec}
	\left(\mathbf{R}_{\Lambda^d}^d\right)_n =
	\begin{cases}
		\Big((\mathbf{R}_{i,k}^{d})_{1 \times 5}, 
		\mathbf{0}_{1 \times 5},
		\mathbf{0}_{1 \times 8}\Big)^\top, 
		&
		1 \leq n \leq 5, \ k = n
		\\
		\Big(\mathbf{0}_{1 \times 5},
		(\mathbf{R}_{e,k}^{d})_{1 \times 5}, 
		\mathbf{0}_{1 \times 8}\Big)^\top, 
		&
		6 \leq n \leq 10, \ k = n-5
		\\
		\Big(\mathbf{0}_{1 \times 5},
		\mathbf{0}_{1 \times 5}, 
		(\mathbf{R}_{m,k}^{d})_{1 \times 8}\Big)^\top, 
		&
		11 \leq n \leq 18, \ k = n-10,
	\end{cases}
\end{align}		
where $\mathbf{R}^{d}_{\alpha,k}$, $\alpha \in \{i,e\}$, is the $k^{th}$ column vector of the $5 \times 5$ right eigenvector matrices $\mathbf{R}^d_\alpha$ of the flux jacobians $\dfrac{\partial\mathbf{f}^d_\alpha}{\partial \mathbf{U}_\alpha}$, and $\mathbf{R}^{d}_{m,k}$ is the $k^{th}$ column vector of the right eigenvector matrix $\mathbf{R}^d_m$ of the flux jacobian matrix $\dfrac{\partial\mathbf{f}^d_m}{\partial \mathbf{U}_m}$. The matrices $\mathbf{R}^{k}_{\alpha}$ and $\mathbf{R}^{k}_{m}$ have the following expressions.
\begin{itemize}
	\item For $d =x,y$, and $\alpha \in \{i,e\}$, the right eigenvector matrix $\mathbf{R}^d_\alpha$ is given by the relation, 
	\begin{equation} \label{right_eig_con}
		\mathbf{R}_{\alpha}^{d}= \left(\dfrac{\partial \mathbf{U}_\alpha}{\partial \mathbf{W}_\alpha} \right) \mathbf{R}_{\alpha,\mathbf{W}}^d,
	\end{equation}
	where the matrix $\mathbf{R}_{\alpha,\mathbf{W}}^d$ for $d=x$ is given by, 
	%
	\begin{gather}
		\mathbf{R}_{\alpha,\mathbf{W}}^x = 
		\begin{pmatrix}
			\frac{1}{c_\alpha^2 h_\alpha} & 1 & 0 & 0 & \frac{1}{c_\alpha^2 h_\alpha}  
			\\
			\frac{-\sqrt{Q^x_\alpha}}{c_\alpha h_\alpha \Gamma_\alpha \rho_\alpha} & 0 & 0 & 0 &  \frac{+\sqrt{Q^x_\alpha}}{c_\alpha h_\alpha \Gamma_\alpha \rho_\alpha}
			\\
			\frac{\left(c_\alpha-\Gamma_\alpha \sqrt{Q^x_\alpha} {u_{x_\alpha}}\right) {u_{y_\alpha}}}{c_\alpha h_\alpha \Gamma^2_\alpha \rho_\alpha \left(u_{x_\alpha}^2-1\right)} & 0 & 1 & 0 & \frac{\left(c_\alpha+\Gamma_\alpha \sqrt{Q^x_\alpha} {u_{x_\alpha}}\right) {u_{y_\alpha}}}{c_\alpha h_\alpha \Gamma^2_\alpha \rho_\alpha \left(u_{x_\alpha}^2-1\right)}  
			\\
			\frac{\left(c_\alpha-\Gamma_\alpha \sqrt{Q^x_\alpha} {u_{x_\alpha}}\right) {u_{z_\alpha}}}{c_\alpha h_\alpha \Gamma^2_\alpha \rho_\alpha \left(u_{x_\alpha}^2-1\right)} & 0 & 0 & 1 & \frac{\left(c_\alpha+\Gamma_\alpha \sqrt{Q^x_\alpha} {u_{x_\alpha}}\right) {u_{z_\alpha}}}{c_\alpha h_\alpha \Gamma^2_\alpha \rho_\alpha \left(u_{x_\alpha}^2-1\right)}  
			\\
			1 & 0 & 0 & 0 & 1
		\end{pmatrix}. \label{x_eigvec:fluid}
	\end{gather}
	%
	\item The eigenvector matrix $\mathbf{R}^{d}_{m}$ for $d=x$ is given by,
	\begin{gather*}
		\mathbf{R}_{m}^{x}=
			\begin{pmatrix}
			0 &-1 & 0 & 0 & 0 & 0 & 1 & 0 \\
			0 & 0 & 0 & 1 & 0 &-1 & 0 & 0 \\
			0 & 0 &-1 & 0 & 1 & 0 & 0 & 0 \\
			-1 & 0 & 0 & 0 & 0 & 0 & 0 & 1 \\
			0 & 0 & 1 & 0 & 1 & 0 & 0 & 0 \\
			0 & 0 & 0 & 1 & 0 & 1 & 0 & 0 \\
			1 & 0 & 0 & 0 & 0 & 0 & 0 & 1 \\
			0 & 1 & 0 & 0 & 0 & 0 & 1 & 0 
		\end{pmatrix}.  
	\end{gather*}
\end{itemize}

\begin{remark}
	For the y-directional flux, $\mathbf{f}^y$, we proceed similarly to get the set of eigenvalues $\Lambda^y$ of the jacobian matrix $\dfrac{\partial \mathbf{f}^y}{\partial \mathbf{U}}$ as,
	\begin{align*} 
		\Lambda^y 
		= 
		\biggl\{
		& 	\frac{(1-c_i^2)u_{y_i}-(c_i/\Gamma_i) \sqrt{Q_i^y}}{1-c_i^2 |\mathbf{u}_i|^2},\
		u_{y_i}, \ u_{y_i}, \ u_{y_i},
		\frac{(1-c_i^2)u_{y_i}+(c_i/\Gamma_i) \sqrt{Q_i^y}}{1-c_i^2 |\mathbf{u}_i|^2}, 
		\nonumber
		\\  
		& 	\frac{(1-c_e^2)u_{y_e}-(c_e/\Gamma_e) \sqrt{Q_e^y}}{1-c_e^2 |\mathbf{u}_e|^2},\
		u_{y_e}, \ u_{y_e}, \ u_{y_e},
		\frac{(1-c_e^2)u_{y_e}+(c_e/\Gamma_e) \sqrt{Q_e^y}}{1-c_e^2 |\mathbf{u}_e|^2}, 
		\nonumber
		\\
		&	-\chi, \
		-\kappa, \
		-1, \
		-1, \
		1, \
		1, \
		\kappa, \
		\chi \
		\biggr\},
	\end{align*}
	where, $Q_\alpha^y=1-u_{y_\alpha}^2-c_\alpha^2 (u_{x_\alpha}^2+u_{z_\alpha}^2), \ \alpha \in \{ i, \ e \}$. The corresponding right eigenvectors are given by fixing $d=y$ in the ordered set $\mathbf{R}^d_{\Lambda^d} = \left\{\left(\mathbf{R}_{\Lambda^d}^d\right)_n: \ n = 1, \ 2 ,\ 3,\dots, \ 18\right\}$ where $\left(\mathbf{R}_{\Lambda^d}^d\right)_n$ is defined by Eqn.~\eqref{x_eigvec} and the updated $y$-directional matrices $\mathbf{R}_{\alpha,\mathbf{W}}^y$ and $\mathbf{R}^{y}_{m}$ are given by
	\begin{gather}
		\mathbf{R}_{\alpha,\mathbf{W}}^y = 
		\begin{pmatrix}
			\frac{1}{c_\alpha^2 h_\alpha} & 1 & 0 & 0 & \frac{1}{c_\alpha^2 h_\alpha}  
			\\
			\frac{\left(c_\alpha-\Gamma_\alpha \sqrt{Q^y_\alpha} {u_{y_\alpha}}\right) {u_{x_\alpha}}}{c_\alpha h_\alpha \Gamma^2_\alpha \rho_\alpha \left(u_{y_\alpha}^2-1\right)} & 0 & 1 & 0 & \frac{\left(c_\alpha+\Gamma_\alpha \sqrt{Q^y_\alpha} {u_{y_\alpha}}\right) {u_{x_\alpha}}}{c_\alpha h_\alpha \Gamma^2_\alpha \rho_\alpha \left(u_{y_\alpha}^2-1\right)}  
			\\
			\frac{-\sqrt{Q^y_\alpha}}{c_\alpha h_\alpha \Gamma_\alpha \rho_\alpha} & 0 & 0 & 0 &  \frac{+\sqrt{Q^y_\alpha}}{c_\alpha h_\alpha \Gamma_\alpha \rho_\alpha}
			\\
			\frac{\left(c_\alpha-\Gamma_\alpha \sqrt{Q^y_\alpha} {u_{y_\alpha}}\right) {u_{z_\alpha}}}{c_\alpha h_\alpha \Gamma^2_\alpha \rho_\alpha \left(u_{y_\alpha}^2-1\right)} & 0 & 0 & 1 & \frac{\left(c_\alpha+\Gamma_\alpha \sqrt{Q^y_\alpha} {u_{y_\alpha}}\right) {u_{z_\alpha}}}{c_\alpha h_\alpha \Gamma^2_\alpha \rho_\alpha \left(u_{y_\alpha}^2-1\right)}  
			\\
			1 & 0 & 0 & 0 & 1
		\end{pmatrix}, \label{y_eigvec:fluid}
		\\
		\mathbf{R}_{m}^{y}=
			\begin{pmatrix*}
			0 & 0 & 0 &-1 & 0 & 1 & 0 & 0 \\
			0 &-1 & 0 & 0 & 0 & 0 & 1 & 0 \\
			0 & 0 & 1 & 0 &-1 & 0 & 0 & 0 \\
			0 & 0 & 1 & 0 & 1 & 0 & 0 & 0 \\
			-1 & 0 & 0 & 0 & 0 & 0 & 0 & 1 \\
			0 & 0 & 0 & 1 & 0 & 1 & 0 & 0 \\
			1 & 0 & 0 & 0 & 0 & 0 & 0 & 1 \\
			0 & 1 & 0 & 0 & 0 & 0 & 1 & 0
		\end{pmatrix*}.  
	\end{gather}
\end{remark}

\subsection{Barth scaling and entropy scaled right eigenvectors}
\label{ap:scaled_eig}
We ignore the superscript $x$ to derive the entropy scaled right eigenvectors for the $x$-directional part; for the $y$-directional flux, we proceed similarly. We have the entropy variable vector $\mathbf{V}_\alpha$ as

\begin{equation*}
	\mathbf{V}_\alpha=\begin{pmatrix}
		\dfrac{\gamma_\alpha- s_\alpha}{\gamma_\alpha -1} +{\beta_\alpha} \\
		{u_{x_\alpha} \Gamma_\alpha \beta_\alpha } \\ 
		{u_{y_\alpha} \Gamma_\alpha \beta_\alpha } \\
		{u_{z_\alpha} \Gamma_\alpha \beta_\alpha } \\
		-{\Gamma_\alpha \beta_\alpha}
	\end{pmatrix}, \qquad \text{ where $\beta_\alpha=\frac{\rho_\alpha}{p_\alpha}$.} 
\end{equation*}
We want to find scaling matrices $\mathbf{T}^d_{\alpha}$ ($d\in\{x,y\} \text{ and } \alpha\in\{i,e\}$) such that the scaled right eigenvector matrices ${\mathbf{\tilde{R}}}^d_{\alpha}= \mathbf{R}^d_{\alpha} \mathbf{T}^d_{\alpha}$, with $\mathbf{R}^d_\alpha$ as in Eqn.~\eqref{right_eig_con}, satisfy
\begin{equation*}
	\frac{\partial \mathbf{U}_\alpha}{\partial \mathbf{V}_\alpha} =
	{\mathbf{\tilde{R}}^d_\alpha} ({\mathbf{\tilde{R}}^d_{\alpha}})^\top. 
\end{equation*}
The matrices $\mathbf{T}^d_\alpha$ are known as Barth scaling matrices.
Following the procedure of \cite{Barth1999}, the scaling matrices $\mathbf{T}^d_\alpha$ are given by the square root of the matrices $\mathbf{Y}^d_\alpha$ where the expression for matrices $\mathbf{Y}^d_\alpha$ is given by the formulae 
\begin{equation*}
	\mathbf{Y}^d_\alpha= ({\mathbf{R}^d_{\alpha,\mathbf{W}}})^{-1} 
	\frac{\partial \mathbf{W}_\alpha}{\partial \mathbf{V}_\alpha}
	\frac{\partial \mathbf{U}_\alpha}{\partial \mathbf{W}_\alpha}^{- \top}
	{\mathbf{R}^d_{\alpha,\mathbf{W}}}^{- \top},
\end{equation*}
where the matrices $\mathbf{R}_{\alpha,\mathbf{W}}^d$ are given by Eqn.~\eqref{x_eigvec:fluid} for $d=x$, and for the case $d=y$ the expressions for the matrices follows from Eqn.~\eqref{y_eigvec:fluid}. 
Assuming $d=x$ for the clarity. A long simplification leads to the expressions
\begin{equation*}
	\mathbf{Y}^x_\alpha=
	\begin{pmatrix}
		\frac{c_\alpha^2 h_\alpha p_\alpha \left(1+\frac{c_\alpha {u_{x_\alpha}}}{\Gamma_\alpha \sqrt{Q^x_\alpha}}\right)}{2 \Gamma_\alpha \left(1-c_\alpha^2 |\mathbf{u}|_\alpha^2\right)} & 0 & 0 & 0 & 0
		\\ 
		0 & \frac{\rho_\alpha}{\Gamma_\alpha} \frac{\gamma_\alpha-1}{\gamma_\alpha}& 0 & 0 & 0 
		\\ 
		0 & 
		0 & 
		Y^x_{\alpha_{33}} &
		Y^x_{\alpha_{34}} &
		0 
		\\ 
		0 & 
		0 & 
		Y^x_{\alpha_{43}} &
		Y^x_{\alpha_{44}} &
		0 
		\\ 
		0 & 0 & 0 & 0 &
		\frac{c_\alpha^2 h_\alpha p_\alpha \left(1-\frac{c_\alpha {u_{x_\alpha}}}{\Gamma_\alpha \sqrt{Q^x_\alpha}}\right)}{2 \Gamma_\alpha \left(1-c_\alpha^2 |\mathbf{u}|_\alpha^2\right)} 
	\end{pmatrix}
\end{equation*}
where, 
$Q_\alpha^x=
1-u_{x_\alpha}^2-c_\alpha^2 (u_{y_\alpha}^2+u_{z_\alpha}^2)$, 
$Y^x_{\alpha_{33}}=
C^x_\alpha {(1-u_{x_\alpha}^2-u_{y_\alpha}^2)} $, 
$Y^x_{\alpha_{44}}=
C^x_\alpha {(1-u_{x_\alpha}^2-u_{z_\alpha}^2)} $, 
$Y^x_{\alpha_{34}} = Y^x_{\alpha_{43}}= -C^x_\alpha u_{y_\alpha} u_{z_\alpha}$ and
\begin{equation*}
	C^x_\alpha = 
	\frac{p_\alpha}{h_\alpha \Gamma_\alpha^3 \rho_\alpha^2 \left(1-u_{x_\alpha}^2\right)} . 
\end{equation*} 
Consequently, the matrix $\mathbf{T}^x_\alpha = \sqrt{ \mathbf{Y}^x_\alpha}$ is given by
\begin{equation*}
	\mathbf{T}^x_\alpha=
	\begin{pmatrix}
		\sqrt{
			\frac{c_\alpha^2 h_\alpha p_\alpha \left(1+\frac{c_\alpha {u_{x_\alpha}}}{\Gamma_\alpha \sqrt{Q^x_\alpha}}\right)}{2 \Gamma_\alpha \left(1-c_\alpha^2 |\mathbf{u}|_\alpha^2\right)} 
		} 
		& 0 & 0 & 0 & 0
		\\ 
		0 & \sqrt{\frac{\rho_\alpha}{\Gamma_\alpha} \frac{\gamma_\alpha-1}{\gamma_\alpha} }& 0 & 0 & 0 
		\\ 
		0 & 
		0 & 
		T^x_{\alpha_{33}} &
		T^x_{\alpha_{34}} &
		0 
		\\ 
		0 & 
		0 & 
		T^x_{\alpha_{43}} &
		T^x_{\alpha_{44}} &
		0 
		\\ 
		0 & 0 & 0 & 0 &
		\sqrt{\frac{c_\alpha^2 h_\alpha p_\alpha \left(1-\frac{c_\alpha {u_{x_\alpha}}}{\Gamma_\alpha \sqrt{Q^x_\alpha}}\right)}{2 \Gamma_\alpha \left(1-c_\alpha^2 |\mathbf{u}|_\alpha^2\right)} } 
	\end{pmatrix}
\end{equation*}
where, 
\begin{equation*}
	\scriptstyle
	\begin{pmatrix}
		T^x_{\alpha_{33}} & T^x_{\alpha_{34}} \\
		T^x_{\alpha_{43}} & T^x_{\alpha_{44}} 
	\end{pmatrix}
	=
	\scriptstyle
	\begin{cases}
		\begin{pmatrix}
			\sqrt{Y^x_{\alpha_{33}}} & 0 \\
			0 & \sqrt{Y^x_{\alpha_{44}}}
		\end{pmatrix} & \text{ if } u_{y_\alpha} = u_{z_\alpha} = 0,
		\\ \\
		\sqrt{C^x_\alpha}
		\scriptstyle
		\begin{pmatrix}
			\frac{u_{z_\alpha}^2 \sqrt{1-u_{x_\alpha}^2}+\frac{u_{y_\alpha}^2}{\Gamma_\alpha}}{u_{y_\alpha}^2+u_{z_\alpha}^2}
			& 
			\frac{u_{y_\alpha} u_{z_\alpha} }{u_{y_\alpha}^2+u_{z_\alpha}^2} 
			\left(
			\frac{1}{\Gamma_\alpha} - \sqrt{1-u_{x_\alpha}^2}
			\right)
			\\ 
			\frac{u_{y_\alpha} u_{z_\alpha} }{u_{y_\alpha}^2+u_{z_\alpha}^2} 
			\left(
			\frac{1}{\Gamma_\alpha} - \sqrt{1-u_{x_\alpha}^2}
			\right)
			&
			\frac{u_{y_\alpha}^2 \sqrt{1-u_{x_\alpha}^2}+\frac{u_{z_\alpha}^2}{\Gamma_\alpha}}{u_{y_\alpha}^2+u_{z_\alpha}^2}
		\end{pmatrix} & \text{ otherwise.}
	\end{cases}
\end{equation*}

\begin{remark}
	Proceeding similarly in the y-direction with the matrix $\mathbf{R}_{\alpha,\mathbf{W}}^y$ as in Eqn.~\eqref{y_eigvec:fluid}, the scaling matrix is given by
	\begin{equation*}
		\mathbf{T}^y_\alpha=
		\begin{pmatrix}
			\sqrt{
				\frac{c_\alpha^2 h_\alpha p_\alpha \left(1+\frac{c_\alpha {u_{y_\alpha}}}{\Gamma_\alpha \sqrt{Q^y_\alpha}}\right)}{2 \Gamma_\alpha \left(1-c_\alpha^2 |\mathbf{u}|_\alpha^2\right)} 
			} 
			& 0 & 0 & 0 & 0
			\\ 
			0 & \sqrt{\frac{\rho_\alpha}{\Gamma_\alpha} \frac{\gamma_\alpha-1}{\gamma_\alpha} }& 0 & 0 & 0 
			\\ 
			0 & 
			0 & 
			T^y_{\alpha_{33}} &
			T^y_{\alpha_{34}} &
			0 
			\\ 
			0 & 
			0 & 
			T^y_{\alpha_{43}} &
			T^y_{\alpha_{44}} &
			0 
			\\ 
			0 & 0 & 0 & 0 &
			\sqrt{\frac{c_\alpha^2 h_\alpha p_\alpha \left(1-\frac{c_\alpha {u_{y_\alpha}}}{\Gamma_\alpha \sqrt{Q^y_\alpha}}\right)}{2 \Gamma_\alpha \left(1-c_\alpha^2 |\mathbf{u}|_\alpha^2\right)} } 
		\end{pmatrix}
	\end{equation*}
	where, 
	$Q_\alpha^y=
	1-u_{y_\alpha}^2-c_\alpha^2 (u_{x_\alpha}^2+u_{z_\alpha}^2)$, and the block entries are given by the matrices 
	\begin{equation*}
		\scriptstyle
		\begin{pmatrix}
			T^y_{\alpha_{33}} & T^y_{\alpha_{34}} \\
			T^y_{\alpha_{43}} & T^y_{\alpha_{44}} 
		\end{pmatrix}
		=
		\scriptstyle
		\begin{cases}
			\begin{pmatrix}
				\sqrt{Y^y_{\alpha_{33}}} & 0 \\
				0 & \sqrt{Y^y_{\alpha_{44}}}
			\end{pmatrix} & \text{ if } u_{x_\alpha} = u_{z_\alpha} = 0,
			\\ \\
			\sqrt{C^y_\alpha}
			\scriptstyle
			\begin{pmatrix}
				\frac{u_{z_\alpha}^2 \sqrt{1-u_{y_\alpha}^2}+\frac{u_{x_\alpha}^2}{\Gamma_\alpha}}{u_{x_\alpha}^2+u_{z_\alpha}^2}
				& 
				\frac{u_{x_\alpha} u_{z_\alpha} }{u_{x_\alpha}^2+u_{z_\alpha}^2} 
				\left(
				\frac{1}{\Gamma_\alpha} - \sqrt{1-u_{y_\alpha}^2}
				\right)
				\\ 
				\frac{u_{x_\alpha} u_{z_\alpha} }{u_{x_\alpha}^2+u_{z_\alpha}^2} 
				\left(
				\frac{1}{\Gamma_\alpha} - \sqrt{1-u_{y_\alpha}^2}
				\right)
				&
				\frac{u_{x_\alpha}^2 \sqrt{1-u_{y_\alpha}^2}+\frac{u_{z_\alpha}^2}{\Gamma_\alpha}}{u_{x_\alpha}^2+u_{z_\alpha}^2}
			\end{pmatrix} & \text{ otherwise,}
		\end{cases}
	\end{equation*}
	with 
	$Y^y_{\alpha_{33}}=
	C^y_\alpha {(1-u_{y_\alpha}^2-u_{x_\alpha}^2)} $, 
	$Y^y_{\alpha_{44}}=
	C^y_\alpha {(1-u_{y_\alpha}^2-u_{z_\alpha}^2)} $  and 
	\begin{equation*}
		C^y_\alpha = 
		\frac{p_\alpha}{h_\alpha \Gamma_\alpha^3 \rho_\alpha^2 \left(1-u_{y_\alpha}^2\right)} . 
	\end{equation*} 
	
\end{remark}

\pagebreak
\section{ARK3-IMEX coefficients table} \label{ap:ark_coeff}
The coefficients for the third order ARK-IMEX time update are given as follows,
\begin{table}[ht]
	\centering
	\begin{tabular}{ ll|cccc }
		&&&&& \\
		&  & $l=0$ &  $l=1$ & $l=2$ & $l=3$ \\
		\hline
		&&&&& \\
		& $m=1$ & 
		$\frac{1767732205903}{2027836641118}$ &
		0 &
		0 &
		0
		\\
		&&&&& \\
		$a_{ml}^{[NS]}$ &
		$m=2$ & 
		$\frac{5535828885825}{10492691773637}$ &
		$\frac{788022342437}{10882634858940}$ &
		0 &
		0
		\\
		&&&&& \\
		& $m=3$ & 
		$\frac{6485989280629}{16251701735622}$ &
		$-\frac{4246266847089}{9704473918619}$ &
		$\frac{10755448449292}{10357097424841}$ &
		0
		\\
		&&&&& \\
		\hline
		&&&&& \\
		& $m=1$ & 
		$\frac{1767732205903}{4055673282236}$ &
		$\frac{1767732205903}{4055673282236}$ &
		0 &
		0
		\\
		&&&&& \\
		$a_{ml}^{[S]}$ &
		$m=2$ & 
		$\frac{2746238789719}{10658868560708}$ &
		$-\frac{640167445237}{6845629431997}$ &
		$\frac{1767732205903}{4055673282236}$ &
		0
		\\
		&&&&& \\
		& $m=3$ & 
		$\frac{1471266399579}{7840856788654}$ &
		$-\frac{4482444167858}{7529755066697}$ &
		$\frac{11266239266428}{11593286722821}$ &
		$\frac{1767732205903}{4055673282236}$ 
		\\
		&&&&& \\
		\hline
		&&&&& \\
		& $b_{l}^{[NS]}$   & 
		$\frac{1471266399579}{7840856788654}$ &
		$-\frac{4482444167858}{7529755066697}$ &
		$\frac{11266239266428}{11593286722821}$ &
		$\frac{1767732205903}{4055673282236}$ 
		\\
		&&&&& \\
		\hline
		&&&&& \\
		& $b_{l}^{[S]}$   & 
		$\frac{2756255671327}{12835298489170}$ &
		$-\frac{10771552573575}{22201958757719}$ &
		$\frac{9247589265047}{10645013368117}$ &
		$\frac{2193209047091}{5459859503100}$ 
		\\
		&&&&& \\
		\hline
		&&&&& \\
		& $c_{l}$   &
		0 &
		$\frac{1767732205903}{2027836641118}$ &
		$\frac{3}{5}$ &
		1	
		\\
		&&&&& \\
		\hline
	\end{tabular}
	\caption[h]{The coefficients for ARK3-IMEX time update scheme.} 
	\label{table:carpenter}
\end{table}
\end{document}